\documentclass[10pt,a4paper,reqno]{amsart}
\usepackage{amsthm,amssymb,amstext,graphicx, comment}
\usepackage{bbm}
\usepackage{enumerate}
\usepackage{amscd}
\usepackage{amssymb}
\usepackage{amsthm}
\usepackage{mathtools}
\usepackage{enumitem}
\usepackage[utf8]{inputenc}
\usepackage[T1]{fontenc}
\usepackage{lmodern}
\usepackage{bibentry}
\usepackage{soul}
%\usepackage[toc,page]{appendix}
%\usepackage{MnSymbol} % \llangle,\rrangle
% \usepackage[ngerman]{babel}
%\usepackage[numbers,square]{natbib}
%\usepackage[style=alphabetic,bibstyle=alphabetic,maxnames=99,maxalphanames=5,natbib=true]{biblatex}
%%%%%%%%%%%%%%%%%%%%%%%%%%%%%%%%%%%%%%%%%%%%%%%%%%
%   fonts selection
%\usepackage{eulervm}
%\usepackage{cmbright}
%\usepackage{txfonts}
%%%%%%%%%%%%%%%%%%%%%%%%%%%%%%%%%%%%%%%%%%%%%%%%%%
\usepackage{hyperref}
\hypersetup{
  colorlinks=true,
  pdfpagelabels=true,
  pdfpagelayout==TwoColumnRight,
  pdftitle={},
  pdfauthor={© Sven Karbach},
  pdfsubject={Project1},
  pdfkeywords={}
}
%%%%%%%%%%%%%%%%%%%%%%%%%%%%%%%%%%%%%%%%%%%%%%%%%%
%$\numberwithin{equation}{section}
%\numberwithin{figure}{section}
%\numberwithin{table}{section}

\DeclareMathOperator\dom{dom}

\DeclareMathOperator\lin{lin}

\DeclareMathOperator*{\esssup}{ess\,sup}
%%%%%%%%%%%%%%%%%%%%%%%%%%%%%%%%%%%%%%%%%%%%%%%%%% 
% new commands
%%%%%%%%%%%%%%%%%%%%%%%%%%%%%%%%%%%%%%%%%%%%%%%%%%
\parindent0pt
\newcommand*\D{\mathop{}\!\textnormal{d}}

\newcommand*\E{\mathop{}\!\textnormal{e}}

%%%%%%%%%%%%%%%%%%%%%%%%%%%%%%%%%%%%%%%%%%%%%%%%%%

\numberwithin{equation}{section}

% new enviroments
%%%%%%%%%%%%%%%%%%%%%%%%%%%%%%%%%%%%%%%%%%%%%%%%%%
% \theoremstyle{theorem}
\newtheorem{theorem}{Theorem}[section]
\newtheorem{lemma}[theorem]{Lemma}
\newtheorem{proposition}[theorem]{Proposition}
\newtheorem{corollary}[theorem]{Corollary}
\theoremstyle{plain}

\theoremstyle{definition}
\newtheorem{definition}[theorem]{Definition}
\newtheorem{remark}[theorem]{Remark}

\newtheoremstyle{example}
  {.3\baselineskip}
  {.3\baselineskip}
  {\normalsize}  % BODYFONT
  {0pt}       % INDENT (empty value is the same as 0pt)
  {\bfseries} % HEADFONT
  {.}         % HEADPUNCT
  {5pt plus 1pt minus 1pt} % HEADSPACE
  {}          % CUSTOM-HEAD-SPEC
\theoremstyle{example}

\newtheorem*{assumption*}{\assumptionnumber}
\providecommand{\assumptionnumber}{}
\makeatletter

\newenvironment{proofoutline}
{\proof[Outline of the proof]}
{\endproof}
\makeatother
\newtheorem{example}[theorem]{Example}
% 
% \newtheoremstyle{remark}
%   {.2\baselineskip}
%   {.2\baselineskip}
%   % {\itshape}  % BODYFONT
%   {\normalfont}
%   {}
%   {\bfseries}
%   {\ifx\thmnote\@gobble.\else\normalfont.\fi}
%   {.5em}
%   {}
% \theoremstyle{remark}

% 
% \newcommand{\Task}[2][~]{\noindent{%~\newline
%        \marginpar[\tiny #1 $\opRightarrow$]{$\Leftarrow$ \tiny #1}
%        $\blacktriangleright$ \textsf{Task \#\theTask
%          \addtocounter{Task}{1}: }}{\small \color{blue}\textsf{#2}}\\}

\setlist[enumerate,1]{label=\roman*),ref=\roman*)}

\def\e{\operatorname{e}} 
\def\eps{\varepsilon}

\renewcommand{\MR}{\mathbb{R}}

\newcommand{\MN}{\mathbb{N}}
\newcommand{\MP}{\mathbb{P}}
\newcommand{\MQ}{\mathbb{Q}}

\newcommand{\R}{\MR}
\newcommand{\N}{\MN}

\newcommand{\cF}{\mathcal{F}}
\newcommand{\cA}{\mathcal{A}}
\newcommand{\cB}{\mathcal{B}}

\newcommand{\cD}{\mathcal{D}}

\newcommand{\cH}{\mathcal{H}}

\newcommand{\cL}{\mathcal{L}}

\newcommand{\cS}{\mathcal{S}}
\newcommand{\cG}{\mathcal{G}}

\newcommand{\sI}{\mathsf{I}}

%%

%%%% Math %%%%%%%%%%%%%%%%%%%%%%%%%%%%%%%%%%%%%%%%

\newcommand{\df}{\coloneqq}

\newcommand{\one}{\mathbf{1}}
\newcommand{\interior}[1]{({\kern0pt#1})^{\textnormal{o}}}
\newcommand{\set}[1]{\left\{ #1\right\}}
\newcommand{\norm}[1]{\|#1\|}
\newcommand{\llangle}{\langle\langle}
\newcommand{\rrangle}{\rangle\rangle}

\newcommand{\EX}[1]{\mathbb{E}\left[#1\right]}
\newcommand{\EXspec}[2]{\mathbb{E}_{#1}\left[#2\right]}

\newcounter{Task}\setcounter{Task}{1}

%%%%%%%%%%%%%%%% Project 1 %%%%%%%%%%%%%%%%%%%%%%
\newcommand{\cHplus}{\cH^{+}}

\newcommand{\cHpluso}{\cHplus\setminus \{0\}}
\newcommand{\MRplus}{\MR^{+}}
\newcommand{\dm}{m(\D\xi)}
\newcommand{\dmu}{\mu(\D\xi)}
\newcommand{\dmk}{m^{(k)}(\D\xi)}
\newcommand{\dmuk}{\mu^{(k)}(\D\xi)}

%%%%%%%%%%%%%%%%%%%%%%%%%%%%%%%%%%%%%%%%%%%%%%%%%

% \renewcommand\theenumi{\roman{enumi}}

\newcommand{\sgc}{\begin{color}{blue}}
\newcommand{\cgs}{\end{color}}

\newcommand{\sgcrep}{\begin{color}{red}}
\newcommand{\cgsrep}{\end{color}}
%%%%%%%%%%%%%%%%%%%%%%%%%%%%%%%%%%%%%%%%%%%%%%%%%%%%%%%%%%%%
% Start of document
%%%%%%%%%%%%%%%%%%%%%%%%%%%%%%%%%%%%%%%%%%%%%%%
\begin{document}
%%%%%%%%%%%%%%%%%%%%%%%%%%%%%%%%%%%%%%%%%%%%%%%
\title[]{Affine pure-jump processes on positive Hilbert-Schmidt operators}  
\author{Sonja Cox, Sven Karbach, Asma Khedher}
\thanks{The authors gratefully acknowledge Christa Cuchiero for fruitful
  discussions. Moreover, this research is partially funded by The Dutch Research
  Council (NWO)}
\begin{abstract}
We show the existence of a broad class of affine Markov processes on the cone
of positive self-adjoint Hilbert-Schmidt operators. Such processes are
well-suited as infinite-dimensional stochastic covariance
models. The class of processes we consider is an infinite-dimensional analogue
of the affine processes on the cone of positive semi-definite and symmetric matrices studied in Cuchiero et al.\ [\textit{Ann.\ Appl.\ Probab.} \textbf{21} (2011) 397--463]. %\cite{CFMT11}.

As in the finite-dimensional case, the processes we construct allow for a
drift depending affine linearly on the state, as well as jumps governed by a jump
measure that depends affine linearly on the state. 
%\sgcrep{}However, because the
%infinite-dimensional cone of positive self-adjoint Hilbert-Schmidt operators
%has empty interior, we do not consider a diffusion term.\cgsrep{}
The fact that the cone of positive self-adjoint Hilbert-Schmidt operators
has empty interior calls for a new approach to proving existence: instead of using standard
localisation techniques, we employ the theory on generalized Feller semigroups
introduced in D\"orsek and Teichmann [\textit{arXiv} (2010)] and further
developed in Cuchiero and Teichmann [\textit{J.\  Evol.\ Equ.} \textbf{20} (2020) 1301--1348]. Our approach requires a second moment condition on the jump measures involved, consequently, we obtain explicit formulas for the first and second moments of the affine process.
\end{abstract}
\maketitle

\section{Introduction}
In this article we show the existence of time-homogeneous {\it affine} Markov processes on the {\it cone of
positive self-adjoint Hilbert-Schmidt} operators. The affine class is known for its tractability and flexibility. 

It is tractable because the Fourier-Laplace transform of such processes depends in an exponentially affine way on the initial state vector of the process. More specifically,
denote by $(\mathcal{H}, \langle \cdot, \cdot\rangle)$ the Hilbert space of
self-adjoint Hilbert-Schmidt operators on a Hilbert space
$(H,\langle\cdot,\cdot\rangle_{H})$ and by $\mathcal{H}^+\subseteq \cH$ the
cone of positive self-adjoint Hilbert-Schmidt operators. A $\cHplus$-valued
time-homogeneous Markov process $(X_{t})_{t\geq 0}$ is affine, if there exist
functions $\phi\colon\MRplus\times \mathcal{H}^+ \to \MRplus$, $\psi\colon
\MRplus\times \mathcal{H}^+\to \mathcal{H}^+$ such that
\begin{align}\label{eq:affine-formula-intro}
 \EX{\E^{-\langle X_{t},u\rangle}\rvert X_{0}=x}=\E^{-\phi(t,u)-\langle x,
  \psi(t,u)\rangle}, \qquad t\geq 0\,,
  \end{align}
for all $u\in\mathcal{H}^+$. The functions $\phi$ and $\psi$
%Hence
%it allows to obtain explicit expressions for e.g. option prices in terms of solutions of generalized Riccati equations (GREs) using 
%Fourier techniques, see, e.g.~\cite{}. 
are typically solutions of ordinary differential equations given in terms of the parameters of the model. 

The affine class is 
flexible because the parameters of the model satisfy certain assumptions that allow for desired features such as constant and
bounded linear drifts and constant and affine state-dependent jumps of infinite-variation. 

Our motivation for studying affine processes in the state space $\cHplus$ lies
in the fact that such processes are well-qualified as models for infinite
dimensional covariance processes, i.e., they can be used for the modeling of
stochastic volatility in, for example, bond and commodity markets. See
e.g.~\cite{Fil01, CT06, BK14, BK15} for the modeling of forward price dynamics
in bond and commodity markets as a process with values in a Hilbert
space. In particular, in \cite{BRS15} a stochastic volatility model is
constructed that involves a covariance process driven by L\'evy noise and
taking values in the positive Hilbert-Schmidt operators. Our model extends the
covariance model in~\cite{BRS15} from L\'evy driven processes to processes
allowing for state-dependent jumps (see also~\cite[Section
4.1]{cox2021infinitedimensional}). More specifically, the affine processes we
consider in this paper are of {\it pure-jump} type
where the jumps can be {\it state-dependent} and of {\it infinite variation}.

Let us state our main result in an abbreviated form, see also
Theorem~\ref{thm:existence-affine-process} below and its proof:
\begin{theorem}\label{thm:existence-affine-process-intro}
Let $(b,B,m,\mu)$ be a tuple consisting of a vector $b\in \mathcal{H}$, a bounded linear operator $B\in
\cL(\mathcal{H})$, a measure  $m$ on the Borel-$\sigma$-algebra $\cB(\mathcal{H}^+\setminus
\set{0})$ and a $\mathcal{H}$-valued measure $\mu$ on $\cB(\mathcal{H}^+\setminus \set{0})$,
satisfying the \emph{admissibility assumptions} posed in Definition
\ref{def:admissibility} below. Then there exists an affine process $(X_{t})_{t\geq 0}$ in $\mathcal{H}^+$, such that the functions $\phi$ and
$\psi$ in equation \eqref{eq:affine-formula-intro} are the 
unique solution to the so called \emph{generalized Riccati equations}
associated to $(b,B,m,\mu)$:
\begin{align}
 \frac{\partial}{\partial  t}\phi(t,u)&=\langle b,\psi(t,u)\rangle-\int_{\cHpluso}\big(\E^{-\langle
  \xi,\psi(t,u)\rangle}-1+\langle \chi(\xi),\psi(t,u)\rangle\big)\dm,\label{eq:Riccati-intro-phi} \\
  \frac{\partial}{\partial t}\psi(t,u)&=B^{*}(\psi(t,u))-\int_{\cHpluso}\big(\E^{-\langle
  \xi,\psi(t,u)\rangle}-1+\langle \chi(\xi),
  \psi(t,u)\rangle\big)\frac{\dmu}{\norm{\xi}^{2}}\,,\label{eq:Riccati-intro-psi} 
\end{align}
with initial values $\phi(0,u)=0$ and $\psi(0,u)=u$ for $u\in \mathcal{H}^+$.  
\end{theorem}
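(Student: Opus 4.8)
The plan is to avoid constructing the process directly on $\cHplus$ — localisation fails here because $\cHplus$ has empty interior — and instead to build the candidate transition semigroup on a weighted function space, invoking the generalized Feller theory of D\"orsek--Teichmann and Cuchiero--Teichmann.

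First I would analyse the generalized Riccati equations \eqref{eq:Riccati-intro-phi}--\eqref{eq:Riccati-intro-psi}. The admissibility assumptions of Definition~\ref{def:admissibility} — in particular a second-moment control of the form $\int_{\cHpluso}(\norm{\xi}^{2}\wedge1)\,\dmu<\infty$ together with integrability of $\norm{\chi(\xi)}$ away from the origin — make the map $u\mapsto\int_{\cHpluso}\big(\E^{-\langle\xi,u\rangle}-1+\langle\chi(\xi),u\rangle\big)\dmu/\norm{\xi}^{2}$ and its $m$-counterpart well-defined and locally Lipschitz on $\cHplus$, so Picard--Lindel\"of yields local solutions. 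Forward invariance $\psi(t,u)\in\cHplus$ I would get from a quasi-monotonicity argument: on $\partial\cHplus$ the $\psi$-vector field $B^{*}\psi-\int(\cdots)\dmu/\norm{\xi}^{2}$ points into the cone, which is precisely what the structural conditions on $B$ and $\mu$ in the admissibility definition encode. Invariance forces $\langle x,\psi(t,u)\rangle\geq0$, hence an a priori bound ruling out blow-up, so the solution is global and unique — this also settles the uniqueness assertion for $(\phi,\psi)$.

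Next, I would fix an admissible weight $\rho$ on $\cHplus$, e.g.\ $\rho(x)=1+\norm{x}^{2}$, and work in the generalized Feller space $\cB_{\rho}(\cHplus)$. On the linear span of the exponentials $f_{u}(x)\df\E^{-\langle x,u\rangle}$, $u\in\cHplus$, set $P_{t}f_{u}\df\E^{-\phi(t,u)-\langle\cdot,\psi(t,u)\rangle}$, which lies in $\cB_{\rho}$ since the drift admissibility condition gives $\phi(t,u)\geq0$. Using the flow property of $(\phi,\psi)$ one checks $P_{t+s}=P_{t}P_{s}$ on this span, positivity, a uniform-in-$t$ weighted bound $\norm{P_{t}f_{u}}_{\rho}\leq C\E^{\omega t}\norm{f_{u}}_{\rho}$, and strong continuity $t\mapsto P_{t}f_{u}$; as the exponentials separate points and are multiplicatively closed, their span is dense in $\cB_{\rho}(\cHplus)$, so $P_{t}$ extends to a generalized Feller semigroup. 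One may instead approximate by affine jump processes with truncated, finite-activity jump measures, constructed more classically, and pass to the generalized Feller limit; this route also supplies the weighted estimates and conservativeness. The generalized Feller existence theorem then yields a conservative Markov kernel $p_{t}(x,\cdot)$ on $\cHplus$ with $\int f\,\D p_{t}(x,\cdot)=P_{t}f(x)$ and, after the standard path regularisation, a time-homogeneous $\cHplus$-valued Markov process $(X_{t})_{t\geq0}$ with $\EX{\E^{-\langle X_{t},u\rangle}\mid X_{0}=x}=\E^{-\phi(t,u)-\langle x,\psi(t,u)\rangle}$, i.e.\ \eqref{eq:affine-formula-intro}; this is the asserted affine process. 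To connect $(P_{t})$ with $(b,B,m,\mu)$ I would differentiate $P_{t}f_{u}$ at $t=0$ and check that the generator acts on $f_{u}$ as multiplication by $-\langle b,u\rangle+\langle\cdot,B^{*}u\rangle$ plus the jump terms built from $m$ and $\mu$, matching the right-hand sides of \eqref{eq:Riccati-intro-phi}--\eqref{eq:Riccati-intro-psi} at $t=0$; uniqueness in the generation theorem then pins down $(P_{t})$. Finally, the second-moment hypothesis licenses differentiating $u\mapsto\EX{\E^{-\langle X_{t},u\rangle}\mid X_{0}=x}$ twice at $u=0$, which together with the linear ODEs from differentiating the Riccati equations gives the explicit first and second moment formulas.

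I expect the main difficulty to be twofold. One part is the forward invariance of $\cHplus$ under the $\psi$-flow: the boundary analysis is delicate exactly because $\cHplus$ has empty interior, so the usual inward-pointing criteria must be reformulated for the Hilbert--Schmidt cone and matched to the admissibility conditions on $B$ and $\mu$. The other, and technically deeper, part is showing that the exponential-defined family genuinely extends to a generalized Feller semigroup on $\cB_{\rho}(\cHplus)$ — the uniform weighted bound together with strong continuity and conservativeness — which is what dictates the second-moment condition on $\mu$ and the choice of $\rho$, and which is the crux of the whole construction.
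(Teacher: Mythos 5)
Your overall architecture coincides with the paper's: analyse the Riccati system with quasi-monotonicity securing cone-invariance, work in $\cB_{\rho}$ with $\rho=1+\norm{\cdot}^{2}$, approximate by finite-activity affine jump semigroups, and finish with the Kolmogorov extension theorem for generalized Feller semigroups. The genuine gap is in the route you present as primary: defining $P_{t}$ on $\lin\set{\E^{-\langle\cdot,u\rangle}:u\in\cHplus}$ by the exponential-affine formula and extending by density. For such an extension to be a generalized Feller semigroup you must verify positivity \emph{on the span}, i.e.\ that whenever $\sum_{j}c_{j}\E^{-\langle\cdot,u_{j}\rangle}\geq0$ pointwise, also $\sum_{j}c_{j}\E^{-\phi(t,u_{j})-\langle\cdot,\psi(t,u_{j})\rangle}\geq0$. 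This cannot be ``checked'' from the formula: by a Bernstein/Bochner-type argument it is essentially equivalent to $\E^{-\phi(t,u)-\langle x,\psi(t,u)\rangle}$ being the Laplace transform of a sub-probability measure on $\cHplus$, which is precisely the existence statement to be proved. The paper therefore takes what you offer only as a fallback: it constructs the truncated semigroups $(P^{(k)}_{t})_{t\geq0}$ rigorously via a perturbation result for generalized Feller semigroups (Proposition~\ref{prop:Gk_generator}), where positivity is automatic, and passes to the limit with a Trotter--Kato-type theorem. The step your sketch does not identify — and which is the technical crux — is that this limit theorem needs growth bounds $\norm{P^{(k)}_{t}}_{\cL(\cB_{\rho})}\leq M\E^{wt}$ that are \emph{uniform in $k$}; the bounds produced by the perturbation argument blow up as $k\to\infty$. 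The paper obtains uniformity by computing $\EXspec{x}{\norm{X^{(k)}_{t}}^{2}}$ explicitly from the one-sided derivatives of $(\phi^{(k)},\psi^{(k)})$ at $u=0$ (Lemma~\ref{lem:uniform-bound-k} and Proposition~\ref{prop:uniform-bound-k}); this is where the second-moment hypotheses and the regularity analysis of the Riccati flow are actually consumed.

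Two further points. First, ``invariance forces $\langle x,\psi(t,u)\rangle\geq0$, hence an a priori bound ruling out blow-up'' does not stand as written: positivity of $\psi(t,u)$ gives no control on $\norm{\psi(t,u)}$. You need in addition a one-sided estimate $\langle u,R(u)\rangle\leq C\norm{u}^{2}$ on $\cHplus$ (which does follow from~\eqref{eq:exp_est} and the positivity of $\mu$) plus Gronwall, or, as in the paper, the comparison $\psi(t,u)\leq_{\cHplus}\psi^{(k)}(t,u)\leq_{\cHplus}\psi^{(1)}(t,u)$ together with the regularity of the cone (Proposition~\ref{prop:exist-uniq-solut}) — the latter also yields $\psi=\lim_{k}\psi^{(k)}$, which is needed to identify the limiting semigroup. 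Second, $\rho=1+\norm{\cdot}^{2}$ is an admissible weight function only for the \emph{weak} topology on $\cHplus$ (its sublevel sets are weakly compact by Banach--Alaoglu but never norm-compact in infinite dimensions); without passing to $\cHplus_{\textnormal{w}}$ the generalized Feller framework does not apply and the Stone--Weierstrass argument giving density of the exponentials in $\cB_{\rho}$ is unavailable. The paper's choice of topology is load-bearing and should be made explicit in your argument.
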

More specifically, the processes we consider have a constant drift vector $b$, 
a linear drift term $B$, a constant jump measure $m$, and a state-dependent jump measure $\mu$. In addition to Theorem~\ref{thm:existence-affine-process-intro}, and as a by-product of our method of proof, we establish explicit formulas for the first and second moments of the affine processes, see Proposition~\ref{prop:explicit-formula}. \par 

Note that equation \eqref{eq:Riccati-intro-psi} is a non-linear
differential equation on the cone of positive self-adjoint Hilbert-Schmidt
operators which, in general, cannot be solved explicitly. Numerical methods for approximating solutions to infinite-dimensional Riccati equations are considered in e.g.~\cite{EEM19} and~\cite{Ros91}. A numerical approximation method tailored for this specific equation will be analysed in forthcoming work~\cite{Kar21}.

There is a vast number of articles dealing with affine processes in several
state spaces in {\it finite dimensions}, we mention, for example,
\cite{cuchiero2011affine, DFS03, KST13, KM15, SVV11, KM10, CFMT11}. 
In \cite{DFS03} and \cite{CFMT11}, the authors considered affine processes respectively on the canonical state space
$\MR_{+}^{d}\times\MR^{m}$, $d,m\in\MN$, and on the cone of positive semi-definite symmetric matrices. Both articles give
sufficient and necessary admissible parameter conditions and characterize the
class of stochastically continuous affine processes by means of their
Markovian generator. 
%specified. We only give sufficient parameter conditions, but in Remark
%\ref{rem:comparison_finite_dim_admissability} we benchmark our admissible
%parameter conditions in Definition \ref{def:admissibility} with the corresponding
%conditions in the matrix valued case in Definition 2.3 in \cite{CFMT11}.\\
The literature on affine processes in {\it infinite-dimensional} state spaces is
more sparse. Existence of affine diffusion processes
on Hilbert spaces was investigated in \cite{STY20}. In \cite{Gra16}, the
author investigated affine processes in general locally convex vector spaces
% On the other hand
% there are many articles dealing with infinite-dimensional processes that have
% the affine property, but the focus therein is different. For example branching processes are know to be
% affine and \cite{KS88} considers branching processes in a space of measures
% with an adjoint process solving an stochastic partial differential equation
% involving an instantaneuos volatility term, which is the square-root of the
% process itself. This shows that in general there exist affine diffusions (with
% state dependent noise) in infinite-dimensional cones. Note however that the
% cone in the latter example has non-empty interior and no state dependent
% jumps are considered.
and in \cite{CT20}, existence of affine Markovian lifts of finite-dimensional Volterra processes
was shown. The Markovian lift process takes values in a certain cone in a
space of measures and shares many features of the affine processes which we
consider. 

%In fact, this work was very influential for our work here and we
%utilize many result therein. Specifially, the class of \emph{generalized
 % Feller processes} was introduced and the existence result is based on an
%approximation procedure of the aossciated \emph{generalized Feller
  %semigroups}. Note that the idea of approximating the  Markovian semigroup in
%order to show existence of affine processes, was already used on convex sets
%in finite dimensions, where it is known that affine processes are (classical) Feller processes.\\

%%%%%%%%%%%%%%%%%%%%%%% Challenges & Outlook %%%%%%%%%%%%%%%%%%%%%%%%%%%%%%%%%%%%%%%%%
%The consideration of affine processes in infinite-dimensions lead to new
%phenomenons, since the geometry changes considerably. 
The biggest challenge we face is that like many infinite-dimensional cones, the cone of positive
self-adjoint Hilbert-Schmidt operators has empty interior. One consequence is that one cannot employ classical localisation arguments to establish existence of the desired processes; we take a different approach outlined below. Another consequence is that it is difficult to incorporate a diffusion term. Indeed, although formally this involves a non-commutative version of the superprocesses studied in e.g.~\cite{KS88}, the methods in~\cite{KS88} break down in the non-commutative setting. Thus it remains an open question whether and under what conditions infinite-dimensional affine processes on positive Hilbert-Schmidt operators allow for a diffusion term. 
% affine and \cite{KS88} considers branching processes in a space of measures
% with an adjoint process solving an stochastic partial differential equation
% involving an instantaneuos volatility term, which is the square-root of the
% process itself.

%Consequently local Lipschitz arguments as were
% exploited in the finite dimensional case are not valid.

Our new approach involves approximating the transition semigroup associated with our Markov process by simpler 
transition semigroups corresponding to affine finite-activity jump
processes. We then exploit the {\it generalized Feller theory} introduced in
\cite{DT10} and the approximation results \cite[Proposition 3.3 and Theorem
3.2]{CT20} as well as a version of the Kolmogorov extension theorem proven in
\cite[Theorem 2.11]{CT20} to show that the limiting semigroup gives rise to a
generalized Feller process. Note that the idea of showing the existence of affine processes with
jumps of infinite variation through
an approximation with simpler affine processes was already used on e.g.\ convex sets
in finite dimensions, where it is known that affine processes are (classical)
Feller processes (see \cite{DFS03} and \cite{CFMT11}). However, our approach is
somewhat different, and a considerable amount of effort goes into verifying
that the approximating generalized Feller semigroups satisfy all necessary
conditions to ensure convergence. In particular, a subtle analysis of the
regularity of $\phi$ and $\psi$ is conducted and we derive a uniform growth
bound for the approximating semigroups.

\subsection{Layout of the article}
%We introduce 
%begin the operational part of the article by fixing various 
%the notations in the next two Sections~\ref{ssec:notation} and~\ref{ssec:setting}.
%The precise definition of affine processes on the cone of
%positive self-adjoint Hilbert-Schmidt operators can be found in
In Section \ref{sec:setting-main-result} we provide the definition of admissible parameter sets and we
state our main result (Theorem~\ref{thm:existence-affine-process}) on the
existence of affine pure-jump processes on the cone of positive self-adjoint
Hilbert-Schmidt operators. Moreover, we specify the exact form of the weak
generator of these Markov affine processes on the linear span of the Fourier
basis elements in terms of the introduced admissible parameter set. A brief outline of the proof of 
Theorem~\ref{thm:existence-affine-process} is presented in Section~\ref{sec:setting-main-result} and the full proof is left to Section~\ref{sec:exist-affine-pure}. 
In Section~\ref{sec:an-associated-class} we show the existence and uniqueness of the
solution to the generalized Riccati equations~\eqref{eq:Riccati-intro-phi} and~\eqref{eq:Riccati-intro-psi} and we study the
regularity of this solution with respect to its initial value. We recall the
generalized Feller setting in Section~\ref{ssec:prel-gener-fell}. Then in
Section~\ref{ssec:gener-fell-semigr} and~\ref{proof:existence-affine-process}
we making use of the results in Section~\ref{sec:an-associated-class} and
some intricate approximation techniques for generalized Feller semigroups to
%in Section~\ref{ssec:gener-fell-semigr} Using approximation procedures and the results from Section~\ref{sec:an-associated-class},
complete the proof of Theorem~\ref{thm:existence-affine-process}. In Appendices~\ref{sec:comparison-theorem}, \ref{sec:integr-with-resp} and \ref{sec:proof-lemma-refl}, we, respectively, add a comparison theorem that
we need in our derivations, collect some 'standard' results on integration
with respect to vector-valued measures, and provide a regularity result of the solution to our considered generalized Riccati equations.
%by
%an approximation procedure in the realm of generalized Feller processes, where
%we make use of the results from Section~\ref{sec:an-associated-class}.
\subsection{Notation}\label{ssec:notation}
%%%%%%%%%%%%%%%%%%%%%%%%%%%%%%%%%%%%%%%%%%%%%%%%%%%%%%%%%%%%%%%%%%%%%%%%%%%%%%%
% \subsection{Notation}
We set $\MN= \{1,2,\ldots\}$ and $\MN_0 = \{0,1,\ldots\}$. 
For a vector space $X$
and $U\subseteq X$ we denote the linear span of $U$ by $\lin(U)$.
For $(X,\tau)$ a topological space and $S\subseteq X$ we let $\cB(S)$
denote the \emph{Borel-$\sigma$-algebra} generated by the relative topology on $S$. 
Let $(H,\langle \cdot,\cdot \rangle_H)$ be a Hilbert space. Then we denote by $C(S,H)$ 
the space of $H$-valued functions on $S$ that 
are continuous with respect to the relative topology
and we denote by $C_b(S,H)$ the space of bounded $H$-valued continuous functions on $S$.
%Note that $C_b(S,H)$
 This is a Banach space 
when endowed with the supremum norm $\| \cdot \|_{C(S)}$. 
%Moreover, we let $C^k(S,H)$ 
%denote the space of $k$-times continuously differentiable $H$-valued functions on $S$.
Notice that when $H=\R$, we typically omit $H$ in the notation: $C(S):=C(S,\R)$.
Let $\cL(X)$ denote the space of \emph{bounded linear operators} from a Banach space $X$ to $X$.
This
%denotes the space of \emph{bounded linear operators} from $H$ to $H$, this 
is a Banach space when equipped with the operator norm $\| \cdot
\|_{\cL(X)}$.
If $\cG$ is a linear operator on a Banach space $X$, we denote 
its \emph{domain} by $\dom(\cG)$ %, its \emph{resolvent set} by $\rho(\cG)$, and 
%we set $\opRes(\cG,\lambda)\df (\lambda\sI-\cG)^{-1}$, for $\lambda \in \rho(\cG)$
and denote by $\sI$ the identity in $\cL(X)$. We denote unbounded operators by a
calligraphic font and bounded ones by the standard font, e.g., $\cG$ versus $G$. 
%Let $(H, \langle \cdot , \cdot \rangle_H)$ be a Hilbert space and as before let
%The space $\cL(H)$ 
%denotes the space of \emph{bounded linear operators} from $H$ to $H$, this 
%is a Banach space when equipped with the operator norm $\| \cdot
%\|_{\cL(H)}$. 
Let $\cL^{(2)}(H\times H,H)$
denote the space of continuous bilinear forms from $H\times H$ to $H$.
% % $$\norm{A}_{\cL(H)} = \sup_{\substack{x \in H\\ \|x\| =1}} \|Ax\|\,.$$ % SGC: not necessary
The \emph{adjoint} of an operator $A \colon  H \rightarrow H$ is denoted by $A^{*}$. 
An operator $A\in \cL(H)$
is \emph{positive} if $\langle A x , x \rangle_H \geq 0$ for all $x\in H$.
%As we already introduced in the introduction, 
We let $\cL_{2}(H)$ denote the
space of \emph{Hilbert-Schmidt operators} from $H$ to $H$, this is a Hilbert space when endowed with the inner product
\begin{align*}
 \langle A, B \rangle_{\cL_2(H)} = \sum_{n=1}^{\infty} \langle A e_n, B e_n \rangle_H,
\end{align*}
where $(e_n)_{n\in \MN}$ is an orthonormal basis for $H$ and $\langle \cdot, \cdot \rangle_{\cL_2(H)}$
is independent of the choice of the orthonormal basis (see, e.g.,~\cite[Section VI.6]{Wer00}). 
A nonempty subset $K$ of a vector space is called a \emph{wedge} if $K+K\subseteq K$ and $\alpha K \subseteq K$
for all $\alpha \geq 0$, if moreover $K\cap (-K) = \{ 0\}$ then we call $K$ a \emph{cone}. A cone $K$ in a vector space $X$ induces a partial ordering: we 
write $x \leq_{K} y$ if $y-x\in K$ (and $x\geq_K y$ if $x-y\in K$). If 
%$(H, \langle \cdot , \cdot \rangle_H)$ is a Hilbert space and 
$K\subset H$ is a wedge, 
we define the \emph{dual} of $K$ by
\begin{equation}
K^* = \{ x \in H \colon \langle x, y \rangle_H \geq 0 \text{ for all } y\in K \},
\end{equation}
and we say that $K$ is \emph{self-dual} if $K=K^*$. Note that if $K$ is self-dual then $0 \leq_K x\leq_K y$ implies 
$\| x \|_H^2 \leq \langle x , y \rangle_H  \leq \| x \|_H \| y \|_H$, i.e.,
\begin{equation}\label{eq:monotonic}
 0\leq_K x \leq_K y \Rightarrow \| x \|_H \leq \| y \|_H
\end{equation}
(in other words, $K$ is \emph{monotonic}).

%An immediate consequence\footnote{Sonja:perhaps we can omit this.} 
%is that $K$ is \emph{normal}, indeed,
%\begin{equation}
% x,y\in K \Rightarrow \| x + y \|_{H} \geq (\| x \|_{H} \vee \| y \|_{H} ). 
%\end{equation}
We say that a cone $K$ is \emph{regular} if for all $y, x_1,x_2, \ldots \in K$
satisfying $x_1\leq_K x_2 \leq_K \ldots \leq_K y$ there exists an $x\in H$
such that $\lim_{n\rightarrow \infty} \| x_n - x \|_H = 0$.
A cone $K$ is said to have \emph{generating dual} if $B^{*}=K^{*}-K^{*}$. It
is true that $K$ has generating dual if and only if $K$ is \emph{normal},
i.e. $0\leq_{K}x\leq_{K} y$ for $y\in K$, implies $\norm{x}\leq \lambda
\norm{y}$ where $\lambda>0$, see e.g. \cite{KG85}. In finite dimensions, self-dual
normal cones have non-empty interior. However, in infinite dimensions,
the property $H=K-K$ does in general not imply that $K$ has non-empty
interior, see \cite{KLS89}.
Let $(S,\cS)$ be a measurable space and $U\subseteq H$. A mapping $\mu\colon \cS \rightarrow U$ is called a  \emph{$U$-valued measure} (on $S$) if it is weakly countably additive, i.e., if 
for every pairwise disjoint sequence $U_1, U_2 \ldots \in \cS$ satisfying $\cup_{n\in \MN} U_n = U$ it holds that
\begin{align*}
\langle \mu(U)x, y \rangle_H = \sum_{k \in \mathbb{N}} \langle \mu(U_k)x, y\rangle_H
\end{align*}
for all $x,y \in H$. We know 
from the work of Pettis~\cite{Pet38} that if $\mu\colon \cF \rightarrow H$ 
is weakly $\sigma$-additive, then it is also strongly $\sigma$-additive.  
For a $H$-valued measure $\mu$ and $h\in H$ we define the signed measure 
$\langle \mu, h \rangle \colon \cF \rightarrow \R$ by $\langle \mu, h \rangle (A) 
= \langle \mu(A), h\rangle_{H}$, $A\in \cF$. Throughout this work we are required to integrate with
respect to vector-valued measures, for a better readability we added a section
on this matter to Appendix \ref{sec:integr-with-resp}.

%%%%%%%%%%%%%%%%%%%%%%%%%%%%%%%%%%%%%%%%%%%%%%%%
\subsection{Setting}\label{ssec:setting}
Throughout this article we let $(H, \langle \cdot,\cdot\rangle_H)$ be a
separable infinite-dimensional real Hilbert space.
For notational brevity we reserve $\langle \cdot, \cdot \rangle$ to denote the inner product on $\cL_2(H)$,
and $\| \cdot \|$ for the norm induced by $\langle \cdot, \cdot \rangle$. 
In addition, we define $\cH$ to be the space of all self-adjoint Hilbert-Schmidt operators on $H$ and 
$\cH^+$ to be the cone of all positive operators in $\cH$:
\begin{equation*}
 \cH \df \{ A \in \cL_2(H) \colon A = A^* \}, \ \text{and}\ 
 \cH^{+} \df \{ A \in \cH \colon \langle Ah, h\rangle_H \geq 0 \text{ for all } h\in H \}. 
\end{equation*}
Note that $\cH$ is a closed subspace of $\cL_2(H)$, and that $\cH^{+}$ is a self-dual 
cone in $\cH$ (indeed, $(\cH^+)^*\subseteq \cH^+$ by the spectral theorem for compact operators, 
and the reverse inclusion is trivial). Consequently, $\cH$ is monotonic.
Moreover, $\cH^{+}$ is regular (see, e.g., \cite[Theorem 1]{Kar59}), we have
$\cH=\cHplus-\cHplus$ and $\cHplus$ has empty interior.

% Note to self: the space of positive operators on $H$ is not a cone but a wedge. Indeed, $A = x \otimes y - y \otimes x$
% is a positive operator for any $x,y\in H$, and $-A$ is also positive. However, this wedge is also self-adjoint. 
% Further note to self: $\cH$ is not a Riesz space.
 
We define the truncation function $\chi: \mathcal{H} \rightarrow  \mathcal{H}$
by $\chi(\xi) = \xi\one_{\{\|\xi\|\leq 1\}}$ and fix it throughout this work.

%%%%%%%%%%%%%%%%%%%%%%%%%%%%%%%%%%%%%%%%%%%%%%%%%%%%%%%%%%
\section{Affine processes on \texorpdfstring{$\cHplus$}{the cone of positive
    self-adjoint Hilbert-Schmidt operators} and statement of main result}\label{sec:setting-main-result}
% {Existence of pure-jump affine processes in the cone of positive Hilbert-Schmidt operators}
%\label{sec:existence-pure-jump} 
In this section we give a detailed definition of affine processes on the state
space $\cHplus$ and introduce the notion of admissible parameter sets. We
compare our admissible parameter conditions with the matrix valued case, this
is done in Remark \ref{rem:comparison_finite_dim_admissability}. Given an admissible
parameter set we deduce first properties of the right-hand side functions of
the differential equations in \eqref{eq:Riccati-intro-phi}-\eqref{eq:Riccati-intro-psi}. At the end of this
section we state our main result of this article in Theorem
\ref{thm:existence-affine-process}, which guarantees the existence of affine Markov processes
on $\cHplus$ associated with a given admissible parameter set and specifies the form of
their weak generator on the Fourier-basis elements. However, we
postpone the proof to Section \ref{proof:existence-affine-process} and only
give a brief outline at the end of this section.\\
We consider a time-homogeneous Markov process $X$ with state space
$\mathcal{H}^+$ and transition semigroup $(P_t)_{t\geq 0}$ acting on functions
$f \in C_b(\mathcal{H}^+)$,
$$P_tf(x) = \int_{\mathcal{H}^+} f(\xi) p_t(x, \D \xi)\,, \qquad  x \in \mathcal{H}^+\,,$$
where $p_t(x,\cdot)$, $t \geq 0$, $x \in \mathcal{H}^+$, is the transition kernel of $X$. Moreover for $x\in\cHplus$, we denote the law of $X$ given $X_{0}=x$ by $\mathbb{P}_{x}$.
\begin{definition}
The Markov process $(X, (\mathbb{P}_x)_{x \in \mathcal{H}^+})$ is called {\it affine} if its Laplace transform has exponential-affine dependence on the initial state, i.e., if 
\begin{align}\label{eq:laplace-transform}
P_t \e^{-\langle x, u\rangle} = \int_{\mathcal{H}^+}\e^{-\langle u, \xi\rangle} p_t(x, \D \xi) = \e^{-\phi(t,u) -\langle x,\psi(t,u)\rangle}\,,
\end{align}
for all $t\geq 0$, and $u, x \in \mathcal{H}^+$, for some functions
$\phi\colon \mathbb{R}_+ \times \mathcal{H}^+ \rightarrow \MR_+$ and $\psi\colon
\MR_+ \times \mathcal{H}^+ \rightarrow \mathcal{H}^+$.
\end{definition}
We follow the approach in \cite{CFMT11} and consider the Laplace transform
instead of the characteristic function which is justified by the
non-negativity of $X$. \par
Note, that we do not require stochastic continuity of the affine process here,
as in this work we are not aiming to provide a characterization of
affine processes. As discussed in the introduction, our existence result
requires an analysis of the corresponding generalized Riccati equations. In particular, a direct consequence of our approach (see Theorem~\ref{thm:existence-affine-process} below) is that the processes we consider are \emph{regular} in the sense of~\cite[Def.\ 2.2]{CFMT11}. We recall this concept for the reader's convenience:
\begin{definition}\label{def:regular}
  We call the affine process \textit{regular}, whenever the functions
  \begin{align*}
   \frac{\partial \phi(t,u)}{\partial t}\rvert_{t=0+}\qquad\text{ and } \qquad \frac{\partial \psi(t,u)}{\partial t}\rvert_{t=0+}, 
  \end{align*}
  exist and are continuous at $u=0$.
\end{definition}
As we will see, the established class of affine processes satisfy an even
stronger regularity condition, see Section~\ref{ssec:regul-with-resp}. In
finite dimensions stochastically continuous affine processes are always
regular (see \cite{KST13}), however, there exist finite-dimensional affine
processes that are not stochastically continuous. Arguably, such processes are
of minor interest in applications. In infinite dimensions the regularity
condition is somewhat more restrictive, as it implies e.g. that the operator
$B$ in Definition~\ref{def:admissibility} must be bounded. We refer to~\cite[Section 3]{Kar21} for a construction of an infinite-dimensional affine process involving unbounded $B$.\par
In order to identify {\it pure-jump} affine processes, we introduce an \emph{admissible parameter set} in the following definition. We think of $b$ as the \textbf{constant drift vector}, $B$ the \textbf{linear term in the drift},
$m$ the \textbf{constant jump measure}, and $\mu$ the \textbf{state-dependent jump measure}. \par 
Recall that Appendix~\ref{sec:integr-with-resp} summarizes theory on integration with respect to a Hilbert space valued measure.

\begin{definition}\label{def:admissibility}
An \emph{admissible parameter set}
$(b, B, m, \mu)$ % associated with $\chi$ 
consists of
\begin{enumerate}
\item \label{eq:m-2moment} a measure $m\colon\cB(\cHpluso)\to [0,\infty]$ such that
    \begin{enumerate}
    \item[(a)] $\int_{\cHpluso} \| \xi \|^2 \,\dm < \infty$ and
    \item[(b)] $\int_{\cHpluso}|\langle\chi(\xi),h\rangle|\,\dm<\infty$ for all $h\in\cH$  
   and there exists an element $I_{m}\in \cH$ such that $\langle
   I_{m},h\rangle=\int_{\cHpluso}\langle \chi(\xi),h\rangle\, m(\D\xi)$ for every $h\in\cH$\,;
 \end{enumerate}   
\item\label{eq:drift} a vector $b\in\cH$ such that
  \begin{align}\label{ass:b_positive}
    \langle b, v\rangle - \int_{\cHpluso} \langle \chi(\xi), v\rangle \,m(\D\xi) \geq 0\, \quad\text{for all}\;v\in\cHplus\,;
  \end{align}
\item \label{eq:affine-kernel} a $\cH^{+}$-valued measure 
$\mu \colon \mathcal{B}(\cHpluso) \rightarrow \cH^+$ such that
\begin{align*}
\int_{\cH^+\setminus \{0\}} \langle \chi(\xi), u\rangle \frac{\langle \dmu, x \rangle}{\| \xi \|^2 }< \infty,  
\end{align*}
for all $u,x\in \cH^{+}$ satisfying $\langle u,x \rangle = 0$\,;
 \item \label{eq:linear-operator} an operator $B\in \mathcal{L}(\mathcal{H})$ 
 with adjoint $B^{*}$ satisfying
\begin{align*}
    \left\langle B^{*}(u) , x \right\rangle 
    - 
    \int_{\cHpluso}
        \langle \chi(\xi),u\rangle 
        \frac{\langle \dmu, x \rangle}{\| \xi\|^2 }
    \geq 0,
  \end{align*}
  for all $x,u \in \cHplus$ satisfying $\langle u,x\rangle=0$.
\end{enumerate}
\end{definition}
\begin{remark}[Comparison to the finite-dimensional case]\label{rem:comparison_finite_dim_admissability}
 Definition~\ref{def:admissibility} above is analogous to the 
definition of an admissible parameter set for $\R^d_{+}$-valued processes 
see \cite[Def.\ 2.6]{DFS03}) and the case of positive semi-definite and symmetric matrices,
see \cite[Def.\ 2.3]{CFMT11}. However, as mentioned in the
introduction, we do not consider any diffusion terms in this work. A more subtle difference is that we require second moment conditions on the measures $\dm$ and 
$\frac{\dmu}{\| \xi \|^2}$, whereas no moment conditions are needed in the
finite-dimensional setting. These second moment conditions are a consequence of our \emph{generalized Feller approach}, for which we take the weight function $\rho = \| \cdot \|^2 + 1$. See Remark~\ref{rem:second-moment} for a detailed discussion regarding the necessity of these moment conditions to our approach.
\end{remark}

In what follows we will frequently use the following observation:
\begin{equation}\label{eq:exp_est}
\begin{aligned}
\forall \xi,u \in \cH^{+}\colon  \\
% - \min\{\| \xi \| \| u \|, 1\} \one_{\{ \| \xi \|>1\}} \leq 
-\min(\langle \xi ,u \rangle, 1)\one_{\{ \| \xi \| > 1 \}}
& \leq 
 \e^{ - \langle \xi, u \rangle}
 -1
 + \langle \chi(\xi), u \rangle 
 \\ &
 \leq \tfrac{1}{2} |\langle \xi, u \rangle|^2 \one_{\{ \| \xi\| \leq 1 \}}
 \leq \tfrac{1}{2} \| \xi \|^2 \| u \|^2 \one_{\{ \| \xi \| \leq 1\}}.
 \end{aligned}
\end{equation}

Given admissible parameters $(b, B, m, \mu)$, we define $F\colon\cHplus\to \MR$ and $R\colon\cHplus\to\cH$, respectively, by
\begin{subequations}\label{eq:FR}
\begin{align}
F(u)&= \langle b,u\rangle-\int_{\cHpluso}\big(\E^{-\langle
  \xi,u\rangle}-1+\langle \chi(\xi),u\rangle\big)\dm, \\
  R(u)&= B^{*}(u)-\int_{\cHpluso}\big(\E^{-\langle
  \xi,u\rangle}-1+\langle \chi(\xi),
  u\rangle\big)\frac{\dmu}{\norm{\xi}^{2}}\,.
\end{align}
\end{subequations}
Note that the admissibility conditions (see Definition~\ref{def:admissibility}),
Corollary~\ref{cor:simpleintcond}, and~\eqref{eq:exp_est} ensure that $F$ and
$R$ are well-defined. We also have that $F$ and $R$ are continuous and grow at
most quadratically:
\begin{lemma}\label{lemma:RFcontinuous}
Let $(b,B,m,\mu)$ be an admissible parameter set conform Definition~\ref{def:admissibility} 
and let $F$ and $R$ be given by~\eqref{eq:FR}. 
Then $F$ and $R$ are continuous on $\cHplus$.
\end{lemma}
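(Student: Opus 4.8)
The plan is to prove continuity of $F$ and $R$ separately, treating each as a sum of an ``easy'' linear part and a ``hard'' integral part. For $F$, the map $u\mapsto\langle b,u\rangle$ is obviously continuous, so everything reduces to showing that
\[
u\longmapsto \int_{\cHpluso}\bigl(\E^{-\langle\xi,u\rangle}-1+\langle\chi(\xi),u\rangle\bigr)\,\dm
\]
is continuous on $\cHplus$. Likewise for $R$, continuity of $u\mapsto B^*(u)$ is immediate since $B\in\cL(\cH)$, and one is left with the integral term against $\dmu/\norm{\xi}^2$. Here one should first reduce the $\cH$-valued statement to a scalar one: by the definition of the $\cH$-valued integral (Appendix~\ref{sec:integr-with-resp}), it suffices to show that for each fixed $h\in\cH$ the scalar function
\[
u\longmapsto \int_{\cHpluso}\bigl(\E^{-\langle\xi,u\rangle}-1+\langle\chi(\xi),u\rangle\bigr)\,\frac{\langle\dmu,h\rangle}{\norm{\xi}^2}
\]
is continuous, and moreover that the resulting linear functional in $h$ is bounded (which follows from the uniform bound obtained along the way); alternatively, if the Appendix provides a dominated convergence theorem for vector-valued integrals directly, invoke that.

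The core of both cases is a dominated convergence argument, and the essential input is exactly the pointwise bound~\eqref{eq:exp_est}. Fix $u\in\cHplus$ and a sequence $u_n\to u$ in $\cHplus$; then $\langle\xi,u_n\rangle\to\langle\xi,u\rangle$ for every $\xi$, and by continuity of $t\mapsto\E^{-t}$ the integrands converge pointwise in $\xi$. For the dominating function, split the domain into $\{\norm{\xi}\le 1\}$ and $\{\norm{\xi}>1\}$. On $\{\norm{\xi}\le1\}$, the upper bound in~\eqref{eq:exp_est} gives $|\,\cdot\,|\le\tfrac12\norm{\xi}^2\norm{u_n}^2$, and since $\norm{u_n}$ is bounded (convergent sequence), this is dominated by $C\norm{\xi}^2\one_{\{\norm{\xi}\le1\}}$, which is $m$-integrable by condition~\ref{eq:m-2moment}(a) (resp.\ $\dmu/\norm{\xi}^2$-integrable, since $\int_{\norm{\xi}\le1}\norm{\xi}^2\,\frac{\langle\dmu,h\rangle}{\norm{\xi}^2}=\int_{\norm{\xi}\le1}\langle\dmu,h\rangle<\infty$ because $\mu$ is a finite $\cH^+$-valued measure—this needs a short justification using finiteness/total variation of $\mu$ on bounded-away-from-zero-and-infinity sets, or one keeps the $\norm{\xi}^2$ and uses it). On $\{\norm{\xi}>1\}$ one has $\chi(\xi)=0$, so the integrand is $\E^{-\langle\xi,u_n\rangle}-1$, bounded in absolute value by $1$; to get integrability against $m$ on $\{\norm{\xi}>1\}$ one uses $m(\{\norm{\xi}>1\})\le\int\norm{\xi}^2\,\dm<\infty$, and similarly $\int_{\norm{\xi}>1}\frac{\langle\dmu,h\rangle}{\norm{\xi}^2}\le\|h\|\,\mu(\{\norm{\xi}>1\})_{\text{tot.var.}}<\infty$. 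A mild care point: the dominating function must not depend on $n$, so one fixes $M=\sup_n\norm{u_n}^2<\infty$ once and for all.

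With pointwise convergence and an $n$-independent integrable dominating function in hand, the dominated convergence theorem yields convergence of the integrals, hence continuity of $F$ and of each scalar functional $\langle R(\cdot),h\rangle$; sequential continuity suffices since $\cHplus$ is metrizable. I expect the main obstacle to be the bookkeeping around the $\{\norm{\xi}>1\}$ region for the state-dependent measure: one must check that $\mu$ restricted to $\{\norm{\xi}>1\}$ (equivalently $\dmu/\norm{\xi}^2$ there) has finite total variation against each $h$, which should follow either from $\mu$ being a genuine (countably additive, hence bounded) $\cH^+$-valued measure on $\cB(\cHpluso)$ together with a standard total-variation estimate, or from the admissibility integrability conditions combined with~\eqref{eq:exp_est} and Corollary~\ref{cor:simpleintcond} (which the excerpt already cites as guaranteeing $F,R$ well-defined); the cleanest route is to invoke Corollary~\ref{cor:simpleintcond} to get all needed integrability in one stroke and then only do the domination/convergence step explicitly. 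The $\cH$-valued upgrade for $R$ is then routine via the linearity and boundedness of $h\mapsto\langle R(u),h\rangle$.
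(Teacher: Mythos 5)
Your proposal is correct and matches the paper's (one-line) proof, which simply invokes the estimate \eqref{eq:exp_est} as the dominating function together with the vector-valued dominated convergence theorem (Theorem~\ref{thm:DCT}). One caution: your primary route for $R$ --- reducing to the scalar functionals $\langle R(\cdot),h\rangle$ and adding a uniform bound --- only yields \emph{weak} continuity of $R$ in $\cH$, so you should take your stated alternative and apply Theorem~\ref{thm:DCT} directly, which gives norm convergence of the $\cH$-valued integrals and is exactly what the paper does (norm continuity is what is later needed, e.g.\ to conclude $\psi(\cdot,u)\in C^1([0,\infty),\cH)$ in Proposition~\ref{prop:exist-uniq-solut}).
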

\begin{proof}
This follows immediately from~\eqref{eq:exp_est} and Theorem~\ref{thm:DCT}.
\end{proof}

\begin{lemma}\label{lemma:RFquadratic}
Let $(b,B,m,\mu)$ be an admissible parameter set conform Definition~\ref{def:admissibility} 
and let $F$ and $R$ be given by~\eqref{eq:FR}. 
Then for all $u\in \cH^+$ we have   
\begin{equation}\label{eq:Fquadratic}
 | F(u)|
 \leq 
 \left( \| b \|
    +
    \int_{\cH^+\setminus \{0\}}\|\xi\|^2\dm
 \right)
 (1+\|u\|^2)\,,
\end{equation}
and
\begin{equation}\label{eq:Rquadratic}
 \| R(u)\|
 \leq 
 \left( 
    \| B^{*} \|_{\cL(\cH)} 
    +
    \| \mu(\cH^+\setminus \{0\}) \|
 \right)
 (1+\|u\|^2)\,. 
\end{equation}
\end{lemma}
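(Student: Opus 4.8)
The plan is to estimate $F$ and $R$ directly from their defining formulas \eqref{eq:FR}, splitting the integrals over $\{\|\xi\|\le 1\}$ and $\{\|\xi\|>1\}$ and applying the two-sided bound \eqref{eq:exp_est} on each piece. First I would treat $F$: the linear term contributes $|\langle b,u\rangle|\le\|b\|\,\|u\|\le\|b\|(1+\|u\|^2)$ by Cauchy--Schwarz. For the integral term, on $\{\|\xi\|\le 1\}$ the upper bound in \eqref{eq:exp_est} gives $|\E^{-\langle\xi,u\rangle}-1+\langle\chi(\xi),u\rangle|\le\tfrac12\|\xi\|^2\|u\|^2$, and on $\{\|\xi\|>1\}$ we have $\chi(\xi)=0$, so the integrand is $|\E^{-\langle\xi,u\rangle}-1|\le 1\le\|\xi\|^2$ (using $\langle\xi,u\rangle\ge0$ and $\|\xi\|>1$); in both regions the integrand is dominated by $\|\xi\|^2(1+\|u\|^2)$ up to the constant, and integrating against $m$ produces the factor $\int_{\cHpluso}\|\xi\|^2\,\dm$, which is finite by admissibility condition \eqref{eq:m-2moment}(a). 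Collecting terms yields \eqref{eq:Fquadratic}, possibly with the harmless improvement that the $\tfrac12$ can be absorbed.

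For $R$, the argument is parallel. The term $B^*(u)$ is bounded in norm by $\|B^*\|_{\cL(\cH)}\|u\|\le\|B^*\|_{\cL(\cH)}(1+\|u\|^2)$. For the integral term, since $\mu$ is an $\cH$-valued (indeed $\cH^+$-valued) measure, I would estimate $\big\|\int_{\cHpluso} g(\xi)\,\tfrac{\dmu}{\|\xi\|^2}\big\|$ using the integration theory of Appendix~\ref{sec:integr-with-resp}: the norm of a vector-measure integral is controlled by the supremum over $\xi$ of $|g(\xi)|$ times the total variation/semivariation of the measure $\tfrac{\mu(\D\xi)}{\|\xi\|^2}$. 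Here $g(\xi)=\E^{-\langle\xi,u\rangle}-1+\langle\chi(\xi),u\rangle$, and by \eqref{eq:exp_est} we have $|g(\xi)|\le\tfrac12\|\xi\|^2\|u\|^2\one_{\{\|\xi\|\le1\}}+\one_{\{\|\xi\|>1\}}\le\max(\tfrac12\|u\|^2,1)\le\tfrac12(1+\|u\|^2)$ after noting $\|\xi\|^2\le1$ on the first region; crucially the factor $\|\xi\|^2$ cancels the $\|\xi\|^{-2}$ weight on $\{\|\xi\|\le1\}$, while on $\{\|\xi\|>1\}$ the weight $\|\xi\|^{-2}\le1$ is harmless. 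Thus the integral term is bounded by $(1+\|u\|^2)$ times the semivariation of $\mu$, which for a positive-operator-valued measure equals $\|\mu(\cHpluso)\|$. Adding the two contributions gives \eqref{eq:Rquadratic}.

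The main obstacle, and the place where care is needed, is the $R$-estimate: one must be sure the relevant inequality for integrals against the $\cH$-valued measure $\tfrac{\mu(\D\xi)}{\|\xi\|^2}$ — namely that the norm of the integral is bounded by $\esssup|g|$ times $\|\mu(\cHpluso)\|$ — is actually available from Appendix~\ref{sec:integr-with-resp} in the precise form needed, including that the total semivariation of an $\cH^+$-valued measure is its value on the whole space. A secondary subtlety is that the weight $1/\|\xi\|^2$ makes $\tfrac{\mu(\D\xi)}{\|\xi\|^2}$ a genuine (possibly infinite-total-mass) measure, so one should first observe that admissibility guarantees the relevant integrals are finite and then carry out the split-and-dominate argument on $\{\|\xi\|\le1\}$ and $\{\|\xi\|>1\}$ separately, exactly as for $F$; once that bookkeeping is set up, both estimates are routine applications of Cauchy--Schwarz, \eqref{eq:exp_est}, and the triangle inequality for vector-measure integrals.
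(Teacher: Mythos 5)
Your proposal is correct and follows essentially the same route as the paper, whose proof is just the one-line citation of the admissibility conditions, the pointwise bound \eqref{eq:exp_est}, the vector-measure estimate \eqref{eq:normintest}, and the monotonicity \eqref{eq:measuremonotone}; your write-up simply fills in those details (Cauchy--Schwarz for the linear terms, the split over $\{\|\xi\|\le 1\}$ and $\{\|\xi\|>1\}$ with the cancellation of $\|\xi\|^{2}$ against the weight $\|\xi\|^{-2}$). The ``obstacle'' you flag is indeed already resolved in Appendix~\ref{sec:integr-with-resp}: \eqref{eq:normintest} gives exactly the bound $\|\int_E f\,\D\mu\|\le\esssup_{E}|f|\,\|\mu(E)\|$ for the $\cH^{+}$-valued measure $\mu$, applied to the integrand with the $\|\xi\|^{-2}$ weight absorbed.
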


\begin{proof}
This follows immediately from the admissibility conditions,~\eqref{eq:exp_est},~\eqref{eq:normintest}, and~\eqref{eq:measuremonotone}.
\end{proof}

Inspired by the finite-dimensional theory, we consider a system of ordinary differential equations
associated with the admissible parameter set $(b,B,m,\mu)$ as introduced in the equations
\eqref{eq:Riccati-intro-phi}-\eqref{eq:Riccati-intro-psi}. The equations are commonly known as
the associated \textit{generalized Riccati equations} which is due to the typically quadratic
growth of $F$ and $R$ and by using the formulas for $F$ and $R$ in \eqref{eq:FR}, we write:
\begin{align}\label{eq:riccati-conc}
  \begin{cases}
    \frac{\partial \phi}{\partial t}(t,u)=F(\psi(t,u))\,,\quad t\geq 0;\qquad  \phi(0,u)=0\,, \\
    \frac{\partial \psi}{\partial t}(t,u)=R(\psi(t,u))\,,\quad t\geq 0;\qquad \psi(0,u)=u\,.
  \end{cases}  
\end{align}

\begin{definition}
Let $u\in \cHplus$. We say that $(\phi(\cdot,u), \psi(\cdot,u))\colon [0,\infty)\rightarrow \R \times \cH$ is a \emph{solution to~\eqref{eq:riccati-conc}} if $(\phi(\cdot,u), \psi(\cdot,u))$ is continuously differentiable, takes values in $\R^{+}\times \cH^{+}$, 
and satisfies \eqref{eq:riccati-conc}.
\end{definition}
% Note, that it is implicitly assumed that $\psi(t,u)\in\cHplus$ for all $t\geq 0$ and
% $u\in\cHplus$, otherwise the term $R(\psi(t,u))$ might not be finite. This
% also leads to $\phi(t,u)\in\MRplus$ for all $t\geq 0$ and $u\in\cHplus$.
For a transition semigroup $(P_{t})_{t\geq 0}$ defined on bounded measurable
functions on $\cHplus$ we recall the notion of a
\emph{weak generator} $(\cA,\dom(\cA))$ of $(P_{t})_{t\geq 0}$ (see
\cite[Definition 9.36]{PZ07}) i.e. $f\in C_{b}(\cHplus)$ belongs to
$\dom(\cA)$, whenever $\cA f(x)\df\lim\limits_{t\to 0+}\frac{P_{t}f(x)-f(x)}{t}$
exists for every $x\in\cHplus$; $\cA f\in C_{b}(\cHplus)$ and
\begin{align*}
P_{t}f(x)=f(x)+\int_{0}^{t}P_{s}\cA f(x)\D s,\quad x\in\cHplus.   
\end{align*}
The following theorem is our main result, it asserts the existence of affine
pure-jump processes on the cone of positive self-adjoint Hilbert-Schmidt
operators admitting for state-dependent jumps of infinite variation and
it specifies the form of the weak generator on a space of
functions containing the Fourier basis elements. For
the proof see Section \ref{proof:existence-affine-process}, which relies on
Section~\ref{sec:an-associated-class} and Section~\ref{sec:exist-affine-pure}.
\begin{theorem}\label{thm:existence-affine-process}
Let $(b, B, m, \mu)$ be an admissible parameter set (cf.\
Definition~\ref{def:admissibility}). Then there exist constants
$M,\omega \in [1,\infty)$ and a time-homogeneous $\mathcal{H}^+$-valued Markov process $X$  with transition semigroup
$(P_{t})_{t\geq 0}$ such that 
\begin{align}\label{eq:exp_bound}
\mathbb{E} [ \| X_t \|^2 | X_0 = x ] \leq M e^{\omega t} (\| x \|^2 +1)
\end{align}
and
\begin{align*}
  P_{t}\left(\E^{-\langle \cdot, u\rangle}\right)(x)=\E^{-\phi(t,u)-\langle x,\psi(t,u)\rangle},
\end{align*}
for all $t\geq 0$ and $u,x\in\cHplus$,  where $(\phi(\cdot,u),\psi(\cdot,u))$
is the unique solution to the associated generalized Riccati equations in
\eqref{eq:riccati-conc}. Moreover let $(\cA,\dom(\cA))$ be the weak
generator of $(P_{t})_{t\geq 0}$, then $\lin\set{\E^{-\langle \cdot,
    u\rangle}:\,u\in\cHplus}\subseteq \dom(\cA)$ and for every $f\in
\lin\set{\E^{-\langle \cdot, u\rangle}:\,u\in\cHplus}$ we have:
\begin{align}\label{eq:affine-generator-form}
\mathcal{\cA} f(x) &= \langle b +B(x) , f'(x) \rangle + \int_{\cHpluso}
                   \left(f(x+\xi) -f(x)-\langle \chi(\xi), f'(x)\rangle\right)\,\nu(x,\D\xi),
\end{align}
where $\nu(x,\D\xi)\df\dm +\frac{\langle\dmu,x\rangle}{\norm{\xi}^2}$.
% The operator $(\cA,\dom(A))$ is called \textit{extended
% generator} of the Markov process $(X_{t})_{t\geq 0}$, whenever $\dom(\cA)$ consists of the
% Borel measurable functions $f:\cHplus\to\MR$ for which there exists a function
% $\cA f$ s.t. $f(X_{t})-f(X_{0})-\int_{0}^{t}\cA f(X_{s})\D s$ is well-defined and
% for every $X_{0}\in\cHplus$ a local martingale wrt. the natural filtration of
% $(X_{t})_{t\geq 0}$.
% One thing that we should keep in mind though is, that we are
% changing the underlying topology on the state space $\cHplus$ for the sake of
% the generalized Feller approach, thus we have to transfer the obtained affine
% process back to the classical setting, by switching to the strong topology
% again. This however comes with loosing the generalized Feller property and
% thus in the statement we deal with the extended generator, as opposed to
% the infinitesimal generator. Although the Kolmogorov extension theorem does not
% yield c\`adl\`ag paths per se, we are able to show the existence of a
% c\`adl\`ag version by exploiting the affine structure of the process and
% regularity of the solution to the associated ODEs, this is established in Proposition \ref{prop:gateaux-diff}.  

\end{theorem}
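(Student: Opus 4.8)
The plan is to construct the process via the generalized Feller machinery of \cite{DT10, CT20} applied to a sequence of simpler affine processes whose jump measures have finite activity, and then pass to the limit. First I would set the weight function $\rho = \|\cdot\|^2 + 1$ on $\cHplus$ and work in the corresponding generalized Feller space $\cB_\rho(\cHplus)$; the point of the second-moment conditions in Definition~\ref{def:admissibility} is precisely that the candidate generator $\cA$ maps the Fourier elements into this space (using Lemma~\ref{lemma:RFquadratic} for the growth control). For $n\in\MN$, I would truncate the jump measures by restricting $m$ and $\mu/\|\xi\|^2$ to $\{\|\xi\| > 1/n\}$ (keeping the compensator $\langle\chi(\xi),\cdot\rangle$ so the drift stays admissible), obtaining admissible parameter sets $(b_n, B, m_n, \mu_n)$ with \emph{finite} total jump activity. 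For each such $n$, the associated process is a piecewise-deterministic affine jump process whose existence and transition semigroup $(P_t^{(n)})_{t\geq 0}$ can be built by hand (interlacing a linear ODE flow with state-dependent jumps), and one checks directly that $(P_t^{(n)})$ is a generalized Feller semigroup on $\cB_\rho$ satisfying the affine/Riccati identity with right-hand sides $(F_n, R_n)$.

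The second block of work is the analysis of the Riccati equations, which is carried out in Section~\ref{sec:an-associated-class}: existence, uniqueness, and $\cHplus$-invariance of the solution $(\phi(\cdot,u),\psi(\cdot,u))$ to \eqref{eq:riccati-conc}, together with continuity/regularity in $u$. The invariance of $\cHplus$ under the $\psi$-flow is the quasi-monotonicity argument — on the boundary face $\{\langle u, x\rangle = 0\}$ the admissibility conditions \ref{eq:affine-kernel} and \ref{eq:linear-operator} give $\langle R(u), x\rangle \geq 0$ — combined with the comparison theorem of Appendix~\ref{sec:comparison-theorem}; positivity of $\phi$ follows from \ref{eq:m-2moment}(b) and \ref{eq:drift}. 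I would also record that $F_n \to F$ and $R_n \to R$ locally uniformly on $\cHplus$ (dominated convergence using \eqref{eq:exp_est} and the second-moment bounds), hence $(\phi_n, \psi_n) \to (\phi, \psi)$ locally uniformly in $(t,u)$ by continuous dependence of ODE solutions on the vector field.

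Next I would establish the crucial \textbf{uniform growth bound}: there exist $M, \omega \in [1,\infty)$, independent of $n$, with $\|P_t^{(n)}\|_{\cL(\cB_\rho)} \leq M e^{\omega t}$. This is where the quadratic growth of $F$ and $R$ enters decisively: testing the generator of $P_t^{(n)}$ against $\rho$ itself yields $\cA_n \rho(x) \leq C(\|x\|^2 + 1) = C\rho(x)$ with $C$ depending only on $\|b\|, \|B\|_{\cL(\cH)}, \|\mu(\cHpluso)\|$ and $\int \|\xi\|^2\,\dm$ — all finite and $n$-independent — and then a Gronwall/Dynkin argument gives the bound, which simultaneously delivers \eqref{eq:exp_bound}. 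With the uniform bound in hand, I would invoke \cite[Proposition 3.3 and Theorem 3.2]{CT20}: since the generators $\cA_n$ converge on the core $\lin\{\E^{-\langle\cdot,u\rangle}\}$ (this convergence is exactly the convergence of the Riccati data) and the semigroups are uniformly bounded, $(P_t^{(n)})$ converges strongly to a generalized Feller semigroup $(P_t)$ on $\cB_\rho$, whose action on Fourier elements is $\E^{-\phi(t,u)-\langle x, \psi(t,u)\rangle}$ by passing to the limit in the affine formula. The Kolmogorov-type extension theorem \cite[Theorem 2.11]{CT20} then produces the Markov process $X$ with transition semigroup $(P_t)$.

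Finally, for the generator formula \eqref{eq:affine-generator-form}: one differentiates $P_t\E^{-\langle\cdot,u\rangle} = \E^{-\phi(t,u)-\langle x,\psi(t,u)\rangle}$ at $t=0{+}$ using $\phi(0,u)=0$, $\psi(0,u)=u$, $\partial_t\phi|_{0{+}} = F(u)$, $\partial_t\psi|_{0{+}} = R(u)$, which gives $\cA\E^{-\langle\cdot,u\rangle}(x) = (-F(u) - \langle x, R(u)\rangle)\E^{-\langle x,u\rangle}$; plugging in the definitions \eqref{eq:FR} of $F$ and $R$ and recognizing $\E^{-\langle x+\xi,u\rangle} - \E^{-\langle x,u\rangle} + \langle\chi(\xi), u\rangle\E^{-\langle x,u\rangle} = (\E^{-\langle\xi,u\rangle} - 1 + \langle\chi(\xi),u\rangle)\E^{-\langle x,u\rangle}$ rearranges this into the claimed integro-differential form with $\nu(x,\D\xi) = \dm + \langle\dmu,x\rangle/\|\xi\|^2$; linearity extends it to the span, and membership in $\dom(\cA)$ (including the integrated identity $P_tf = f + \int_0^t P_s\cA f\,\D s$) follows from the generalized Feller convergence together with the fact that $\cA f \in \cB_\rho$. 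I expect the \textbf{main obstacle} to be the uniform-in-$n$ growth bound and the verification that all hypotheses of \cite[Theorem 3.2]{CT20} hold simultaneously — in particular that the limiting objects genuinely define a generalized Feller semigroup on the chosen weighted space — since this is where the interplay between the second-moment assumptions, the quadratic growth of the Riccati vector field, and the choice of weight $\rho$ must be made to work in concert; a secondary technical nuisance is handling the infinite-variation compensator term $\langle\chi(\xi),\cdot\rangle$ carefully enough that the approximating drifts $b_n$ remain admissible and converge to $b$.
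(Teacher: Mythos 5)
Your outline is essentially the paper's proof: same weight $\rho=\norm{\cdot}^{2}+1$, same truncation of the jump measures at $\set{\norm{\xi}>1/k}$, the same Riccati analysis with quasi-monotonicity and the comparison theorem, the same appeal to \cite[Proposition 3.3 and Theorem 3.2]{CT20} plus the Kolmogorov extension theorem, and the same derivation of the generator by differentiating the affine formula at $t=0+$. The one step where you genuinely diverge is the uniform-in-$k$ growth bound, which you correctly identify as the crux. You propose a Lyapunov/Dynkin argument: apply the (compensated) generator to $\rho$ itself, get $\cA_{n}\rho\leq C\rho$ with $C$ independent of $n$, and conclude by Gronwall. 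The paper instead proves Proposition~\ref{prop:uniform-bound-k} by computing $\EXspec{x}{\norm{X^{(k)}_{t}}^{2}}=1+\sum_{n}\EXspec{x}{\langle X^{(k)}_{t},e_{n}\rangle^{2}}$ via Parseval and the \emph{explicit} first- and second-moment formulas of Lemma~\ref{lem:uniform-bound-k}, which come from the one-sided derivatives of $\phi^{(k)},\psi^{(k)}$ at $u=0$ (Proposition~\ref{prop:gateaux-diff} and the variation-of-constants formulas~\eqref{eq:variational-solution-3-0}--\eqref{eq:variational-solution-3-1}); uniformity in $k$ then follows because these solve linear ODEs driven by $\D R(0)$, $\D^{2}R(0)$, whose norms are controlled by $\norm{\mu(\cHpluso)}$ and $\int\norm{\xi}^{2}\dm$. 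Your route is morally sound but has an unaddressed technicality: $\rho\notin\cB_{\rho}(\cHplus_{\textnormal{w}})$ (it violates~\eqref{eq:function-growth}), so ``testing the generator against $\rho$'' and the identity $\tfrac{\D}{\D t}P^{(k)}_{t}\rho=P^{(k)}_{t}\cG^{(k)}\rho$ are not directly available; one must approximate $\rho$ from below by elements of $\cB_{\rho}$ (as in Remark~\ref{rem:semigroup-on-b-rho}) and justify the interchange, using the $k$-dependent a priori bound from the CT20 construction to rule out circularity. What your approach buys, if completed, is a shorter argument that avoids the second-derivative analysis of the Riccati flow; what the paper's approach buys is the explicit moment formulas of Proposition~\ref{prop:explicit-formula} as a by-product. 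A further minor difference: you build the finite-activity semigroups ``by hand'' as piecewise-deterministic processes, whereas the paper obtains them from \cite[Proposition 3.3]{CT20} with the additional cut-off $\tfrac{n}{\rho(\xi+x)\wedge n}$ precisely because the jump intensity $\nu^{(k)}(x,\cHpluso)$ is affine in $x$, hence unbounded, so the interlacing construction would itself require a non-explosion argument.
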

\begin{proofoutline}
The proof is based on the approximation procedure that we conduct in detail
in Section~\ref{ssec:gener-fell-semigr}, where we work in the realm of generalized
Feller semigroups, see the preliminaries given in
Section~\ref{ssec:prel-gener-fell}. Here we limit ourselves to give a brief
outline of the proof that shall give a rough guidance for the upcoming sections and
condensing the main ideas therein. The detailed proof is then given in Section~\ref{proof:existence-affine-process}. Inspired by \cite{CT20}, we approximate the
Kolmogorov type operator $\cA$ in \eqref{eq:affine-generator-form}
% on the set
% $\cD\df\set{\E^{-\langle \cdot, u\rangle}:\,u\in\cHplus}$
by operators $(\cA^{(k)})_{k\in\MN}$ corresponding to processes of pure-jump
type with finite activity, i.e. for every $k\in\MN$ we replace the constant
jump measure $m(\D\xi)$ in formula \eqref{eq:affine-generator-form} by
$\one_{\set{\xi\geq 1/k}}m(\D\xi)$ and the linear jump measures $\mu(\D\xi)$ by
$\one_{\set{\xi\geq 1/k}}\mu(\D\xi)$. The approximation operators $\cA^{(k)}$
generate strongly continuous semigroups $(P_{t}^{(k)})_{t\geq 0}$ on a space
of functions, being weakly continuous with sub-quadratic growth, see
Proposition~\ref{prop:Gk_generator}. Having established the existence of
affine processes of pure-jump type associated with the strongly continuous
semigroups $(P_{t}^{(k)})_{t\geq 0}$, we next apply a Trotter-Kato type result
from \cite{CT20} to obtain the limiting semigroup $(P_{t})_{t\geq 0}$, see
Proposition~\ref{prop:affinejump-genFeller}. To this end we first need to
establish growth bounds on $(P_{t}^{(k)})_{t\geq 0}$, that are uniform in $k$,
see Proposition~\ref{prop:uniform-bound-k}.
This requires understanding the associated generalized Riccati equations
\eqref{eq:Riccati-intro-phi}-\eqref{eq:Riccati-intro-psi}. We provide
global existence and uniqueness results in
Section~\ref{sec:an-associated-class}. The crucial importance of the
associated ODEs is that they substitute for the Kolmogorov equations, hence
semigroup theoretic arguments involving the Kolmogorov type operators or the
abstract Cauchy problem can be reduced to ODE theoretic arguments.\\
Lastly, we apply a version of Kolmogorov's extension theorem (see
Theorem~\ref{thm:Kol-extension-theorem}) to the limiting semigroup $(P_{t})_{t\geq 0}$,
which then yields the existence of an underlying Markovian process. This
process associated via the semigroup to the operator $(\cA,\dom(\cA))$ is
the desired affine process identified by the admissible parameter set $(b, B, m, \mu)$. 
% The approximants $\cG^{(k)}$ are of a similar form as $\cG$ in
% \eqref{eq:affine-generator-form}, but with $m$ and $\mu$ replaced by
% finite-variational measures (resp. vector-valued measures) $m^{(k)}$ and
% $\mu^{(k)}$. Since for all $k\in\MN$ the operators $\cG^{(k)}$ coincide on a
% dense subset with a generator of a transport type semigroup perturbed by a
% finite-variational jump operator, it will generate a strongly continuous
% semigroup $(P_{t}^{(k)})_{t\geq 0}$ itself by Proposition \ref{prop:Gk_generator}. By the
% approximation results this transfers to the limit $\cG$, more precisely its
% closure, this is exaclty the Proposition \ref{thm:existence-affine-process}.
\end{proofoutline}

%Let us remark here on the tractability of the generalized Riccati
%equations~\eqref{eq:riccati-conc}. 
The second equation for $\psi(\cdot,u)$ in the generalized Riccati
equations~\eqref{eq:riccati-conc} is a non-linear differential equation on the cone of
positive self-adjoint Hilbert-Schmidt operators. 
%This type of equations might be delicate to solve numerically.
This type of
infinite-dimensional differential equations has been of interest in the literature as they also show up e.g.~in optimal control problems and stochastic filtering theory
\cite{CZ95, GJP88, Lio71}. Hence several articles deal with the problem of numerical tractability of this type of equations. See, e.g.~\cite{Ros91} where Galerkin approximation and convergence theory was 
developed for operator-valued Riccati differential equations formulated in the
space of Hilbert-Schmidt operators and \cite{EEM19} where the author studied
a backward Euler approximation scheme and convergence results for this
type of equations. In a subsequent article \cite{Kar21}, we investigate the
Galerkin approximation further and draw a connection to matrix-valued affine processes.\\
An example of a stochastic volatility model where the covariance process is an affine Markov process on $\cH^+$ is the infinite-dimensional lift of the BNS model constructed in \cite{BRS15} to model forward rates in commodity markets. In \cite[Section 4]{cox2021infinitedimensional}
we constructed several other examples to model stochastic volatility in this context of forward rates in commodity markets
and we showed that our model class allow multiple modeling options for the instantaneous covariance process, including state-dependent jump intensity.

%%%%%%%%%%%%%%%%%%%%%%%%%%%%%%%%%%%%%%%%%
\section{Analysis of the generalized Riccati equations}\label{sec:an-associated-class}
In this section we investigate the generalized Riccati equations given by~\eqref{eq:riccati-conc}. In Subsection~\ref{ssec:sol_ODE} we introduce Lipschitz continuous approximations of the mappings $R$ and $F$
in~\eqref{eq:FR} and use these approximations to show existence and uniqueness
of a solution to~\eqref{eq:riccati-conc}. In Subsection~\ref{ssec:regul-with-resp} we establish regularity properties of $R$ and $F$
and use this to show that the solution map depends in a differentiable way on
its initial value. % The well-posedness and regularity results are used in
% Section~\ref{sec:exist-affine-pure} to deduce existence of the affine process
% $X$ associated to an admissible parameter set.\par 
% We begin with properties of the right-hand side functions
% $F$ and $R$.
%
\subsection{Solving the generalized Riccati equations \texorpdfstring{\eqref{eq:riccati-conc}}{}}\label{ssec:sol_ODE}

The goal of this subsection is to prove the existence of a unique
solution to the generalized Riccati equations given an admissible parameter
set $(b,B,m,\mu)$.
A common approach in the finite-dimensional case, e.g. in the case of the cone
of positive semi-definite and symmetric matrices,
% or the canonical state space case where one considers the cone $\MC^m_{--}$
is to use a localisation argument exploiting the fact that the function $R$ is analytic on the interior of the cone.
%(where $\cHplus$ is equal to $\MC_{--}^{m}$ in the canonical state space case  or $\MS^{d}_{+}$ in the matrix valued case), 
Note, however, that in general $R$ fails to be Lipschitz continuous on the
boundary of the cone. The cone of positive self-adjoint Hilbert-Schmidt
operators has an empty interior, a property that is shared by many cones in
infinite dimensions. This has the consequence that localisation
arguments for solving equations~\eqref{eq:riccati-conc} on the
interior of $\MRplus\times\cHplus$ are not valid anymore.
Instead, for every $k\in \N$ we  introduce approximations $F^{(k)}$ of $F$ in equation \eqref{eq:Fk} and
$R^{(k)}$ of $R$ in equation \eqref{eq:Rk}, which involve only finite-activity
jump-measures, see~\eqref{eq:mk_muk} below. 
These approximations are Lipschitz continuous on $\cHplus$, and in
Proposition~\ref{prop:exist-uniq-solut} we show that the solution to the
generalized Riccati equations associated with $(b,B,m^{(k)},\mu^{(k)})$ converges to 
the (unique) solution to equation~\eqref{eq:riccati-conc}.\par 

We begin by introducing the approximating functions for $F$ and $R$: for $k\in\MN$ we set
\begin{align}\label{eq:mk_muk}
\dmk \coloneqq \one_{\{\norm{\xi}>1/k\}} \dm \quad  \text{and}\quad 
\dmuk \coloneqq \one_{\{\norm{\xi}>1/k\}} \dmu,
\end{align}
and we introduce the functions 
$F^{(k)}\colon \cH^{+}\rightarrow \R$ and 
$R^{(k)}\colon \cH^{+}\rightarrow \cH$ defined respectively as follows
%\begin{subequations}\label{eq:FkRk}
\begin{align}
 F^{(k)}(u)&= \langle b,u\rangle-\int_{\cHpluso}\big(\E^{-\langle
 \xi,u\rangle}-1+\langle \chi(\xi),u\rangle\big)\dmk\,,\label{eq:Fk}\\ 
  R^{(k)}(u)&= B^{*}(u)-\int_{\cHpluso}\big(\E^{-\langle
  \xi,u\rangle}-1+\langle \chi(\xi),
  u\rangle\big)\frac{\dmuk}{\norm{\xi}^{2}}\,.\label{eq:Rk}
\end{align}
% \end{subequations}
We denote the generalized Riccati equations associated to
$(b,B,m^{(k)},\mu^{(k)})$ by:
\begin{align}\label{eq:riccati-conc-k}
 \begin{cases}
 \frac{\partial \phi^{(k)}}{\partial t}(t,u)=F^{(k)}(\psi^{(k)}(t,u))\,,\quad t\geq 0;\qquad \phi^{(k)}(0,u)=0\,,\\
    \frac{ \partial \psi^{(k)}}{\partial t}(t,u)=R^{(k)}(\psi^{(k)}(t,u))\,,\quad t\geq 0;\qquad \psi^{(k)}(0,u)=u\,.
\end{cases}  
\end{align}
The notion of quasi-monotonicity will be needed 
to guarantee that the solution to~\eqref{eq:riccati-conc-k} stays in $\MRplus\times\cH^{+}$.
\begin{definition}\label{def:quasi-mono}
Let $(V,\|\cdot \|_V)$ be a Hilbert space 
and let $K\subset V$ be a self-dual cone. 
In addition, let $D\subseteq V$ and let $f\colon D\to V$, then $f$ is called
\emph{quasi-monotone with respect to $K$} if for all $v_1, v_2\in D$ satisfying $v_1 \leq_K v_2$
and for all $u\in K$ satisfying $\langle v_2 - v_1, u \rangle =0$ we have 
\begin{align*}
 \langle f(v_{2})-f(v_{1}),u\rangle\geq 0. 
\end{align*}
\end{definition}
Intuitively, quasi-monotone functions are pointing 'inwards' at the boundary
points, which ensures that solutions stay in a cone (see Theorem~\ref{thm:comparison-theorem}). For details 
on quasi-monotone functions on
Banach spaces and their connection to differential equations see~\cite[Section
5.3]{Dei77}.\par 

%As in the matrix valued case in \cite{CFMT11} 
The following lemma states that the admissibility of 
parameters implies that $R^{(k)}$, $k\in \MN$, % and $R$ 
is quasi-monotone with respect to
$\cHplus$. The proof is analogous to the proof of~\cite[Lemma 5.1]{CFMT11}, we present an abridged version. 
% Note that the quasi-monotonicity of $R$ 
% cannot be shown directly as it is not clear whether the linear part of $R$,
% when considered separately, is well-defined.

\begin{lemma}\label{lemma:quasi-mono}
Let $B$ and $\mu$ satisfy the admissibility conditions~\ref{eq:affine-kernel} 
and~\ref{eq:linear-operator} in Definition \ref{def:admissibility}.
Then for all $k\in\MN$ the function $R^{(k)}$ given by~\eqref{eq:Rk} %and $R$ are 
is quasi-monotone with respect to $\cHplus$. 
\end{lemma}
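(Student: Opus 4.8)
The plan is to verify Definition~\ref{def:quasi-mono} directly, following the template of~\cite[Lemma~5.1]{CFMT11}. Fix $k\in\MN$, take $u_1,u_2\in\cHplus$ with $u_1\leq_{\cHplus}u_2$ and $w\in\cHplus$ with $\langle u_2-u_1,w\rangle=0$, and write $v\df u_2-u_1\in\cHplus$. First I would record the elementary facts that get used repeatedly: since $\mu$ is $\cHplus$-valued and $\cHplus$ is self-dual, $\langle\mu(\cdot),w\rangle$ is a \emph{nonnegative} measure on $\cB(\cHpluso)$; since moreover $\mu(A)\leq_{\cHplus}\mu(\cHpluso)\in\cHplus$ for every $A$, its restriction to $\{\norm{\xi}>1/k\}$ is finite; and $\langle\xi,v\rangle\geq0$ for all $\xi\in\cHplus$, again by self-duality. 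Together with admissibility~\ref{eq:affine-kernel} applied with $u=v$, $x=w$ (legitimate because $\langle v,w\rangle=0$), these observations guarantee that all integrals below converge absolutely, so the splittings and rearrangements are valid.

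Then I would expand, using~\eqref{eq:Rk} and $\dmuk=\one_{\{\norm{\xi}>1/k\}}\dmu$,
\begin{align*}
\langle R^{(k)}(u_2)-R^{(k)}(u_1),w\rangle
=\langle B^{*}(v),w\rangle-\int_{\{\norm{\xi}>1/k\}}\Big(\E^{-\langle\xi,u_2\rangle}-\E^{-\langle\xi,u_1\rangle}+\langle\chi(\xi),v\rangle\Big)\frac{\langle\dmu,w\rangle}{\norm{\xi}^2}.
\end{align*}
Admissibility~\ref{eq:linear-operator} with $u=v$, $x=w$ gives $\langle B^{*}(v),w\rangle\geq\int_{\cHpluso}\langle\chi(\xi),v\rangle\,\norm{\xi}^{-2}\,\langle\dmu,w\rangle$. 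Substituting this lower bound, splitting the integral over $\{\norm{\xi}\leq1/k\}$ and $\{\norm{\xi}>1/k\}$, and cancelling the two $\langle\chi(\xi),v\rangle$-terms on $\{\norm{\xi}>1/k\}$, I obtain (using $\chi(\xi)=\xi$ on $\{\norm{\xi}\leq1/k\}\subseteq\{\norm{\xi}\leq1\}$)
\begin{align*}
\langle R^{(k)}(u_2)-R^{(k)}(u_1),w\rangle
\geq\int_{\{\norm{\xi}\leq1/k\}}\langle\xi,v\rangle\frac{\langle\dmu,w\rangle}{\norm{\xi}^2}+\int_{\{\norm{\xi}>1/k\}}\big(\E^{-\langle\xi,u_1\rangle}-\E^{-\langle\xi,u_2\rangle}\big)\frac{\langle\dmu,w\rangle}{\norm{\xi}^2}.
\end{align*}
Both integrands are nonnegative: the first because $\langle\xi,v\rangle\geq0$, and the second because $\langle\xi,u_2\rangle-\langle\xi,u_1\rangle=\langle\xi,v\rangle\geq0$ forces $\E^{-\langle\xi,u_1\rangle}\geq\E^{-\langle\xi,u_2\rangle}$; and $\langle\dmu,w\rangle$ is a nonnegative measure. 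Hence the right-hand side is $\geq0$, which is precisely quasi-monotonicity of $R^{(k)}$ with respect to $\cHplus$.

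The step I expect to be the only delicate one is the bookkeeping that legitimises splitting and rearranging the integrals (i.e.\ ruling out any $\infty-\infty$); this is handled exactly by admissibility~\ref{eq:affine-kernel} together with the fact that $\mu^{(k)}$, unlike $\mu$, has finite activity and finite total mass. The sign analysis itself is immediate once the exponential and truncation terms are grouped as above.
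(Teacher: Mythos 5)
Your proof is correct and is essentially the paper's argument spelled out in full: the paper's own proof is a one-liner stating that admissibility condition~\ref{eq:linear-operator} (made meaningful by~\ref{eq:affine-kernel}) together with the monotonicity of the exponential yields quasi-monotonicity, and your expansion — bounding $\langle B^{*}(v),w\rangle$ from below via condition~\ref{eq:linear-operator}, cancelling the truncation terms on $\{\norm{\xi}>1/k\}$, and using positivity of $\langle\mu,w\rangle$ and of the exponential difference — is exactly the intended detail (following~\cite[Lemma 5.1]{CFMT11}). The integrability bookkeeping you flag is also handled correctly.
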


\begin{proof}
  The admissibility condition \ref{eq:linear-operator} in Definition~\ref{def:admissibility}
  (which makes sense thanks to condition~\ref{eq:affine-kernel} in Definition~\ref{def:admissibility})
  and the monotonicity of the exponential function imply the quasi-monotonicity of $R^{(k)}$.
%   This combined with Lemma \ref{lem:Rk-R} implies the quasi-monotonicity of $R$.
\end{proof}
By removing the small jumps and since $m$ and $\mu$ have finite first moment, we obtain Lipschitz continuous mappings on $\cHplus$:
\begin{lemma}\label{lemma:mono-Lipschitz}
Let $B$ and $\mu$ satisfy the admissibility 
conditions~\ref{eq:affine-kernel} and~\ref{eq:linear-operator} in Definition \ref{def:admissibility}.
Let $k\in \MN$ and $R^{(k)}$ given by~\eqref{eq:Rk}. Then for all $u,v\in
\cHplus$ we have
% \begin{align}\label{eq:uni-bound}
%   \langle R(u), u\rangle 
%   \leq 
%   \left(
%     \| B\|_{\cL(H)} 
%     + 
%     \tfrac{1}{2}
%     \left\| \mu \left(\cH^+\right) \right\|
%   \right)
%   \left(\norm{u}^{2}+1\right)\,,\quad u\in\cHplus,
% \end{align}
% and 
\begin{align}\label{eq:Rk_Lipschitz}
\| R^{(k)}(u) - R^{(k)}(v) \|
\leq  
\left( \| B \|_{\cL(\cH)}
+ 
2k\| \mu(\cH^+ \setminus \{0\})\| \right) 
\| u-v\|
\end{align}
\end{lemma}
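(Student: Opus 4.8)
The plan is to estimate the difference $R^{(k)}(u)-R^{(k)}(v)$ directly from its definition in~\eqref{eq:Rk}. Writing $g(w) \df \E^{-\langle \xi,w\rangle}-1+\langle\chi(\xi),w\rangle$, we have
\begin{align*}
R^{(k)}(u)-R^{(k)}(v) = B^{*}(u-v) - \int_{\cHpluso}\big(g(u)-g(v)\big)\frac{\dmuk}{\norm{\xi}^{2}}\,,
\end{align*}
so the operator-norm estimate follows from $\|B^{*}\|_{\cL(\cH)} = \|B\|_{\cL(\cH)}$ together with a bound, uniform in $\xi$ in the support of $m^{(k)}$, on $\|g(u)-g(v)\|/\norm{\xi}^{2}$ after integrating against the $\cH^{+}$-valued measure $\mu^{(k)}$.

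The key step is the pointwise bound on $|g(u)-g(v)|$ for $u,v\in\cHplus$. First I would note that $\chi$ contributes $\langle\chi(\xi),u-v\rangle$, which is bounded by $\norm{\chi(\xi)}\norm{u-v}\le\norm{\xi}\one_{\{\norm\xi\le1\}}\norm{u-v}$. For the exponential part, the function $t\mapsto\E^{-t}$ is $1$-Lipschitz on $[0,\infty)$ and $\langle\xi,u\rangle,\langle\xi,v\rangle\ge0$ since $\xi,u,v\in\cHplus$; hence $|\E^{-\langle\xi,u\rangle}-\E^{-\langle\xi,v\rangle}|\le|\langle\xi,u-v\rangle|\le\norm{\xi}\norm{u-v}$. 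Combining, $|g(u)-g(v)|\le\norm{\xi}\norm{u-v}(1+\one_{\{\norm\xi\le1\}})\le 2\norm\xi\norm{u-v}$. Dividing by $\norm\xi^{2}$ gives the integrand bound $\frac{|g(u)-g(v)|}{\norm\xi^{2}}\le\frac{2}{\norm\xi}\norm{u-v}$, and on the support of $m^{(k)}$ we have $\norm\xi>1/k$, so this is at most $2k\norm{u-v}$.

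With this in hand I would conclude by integrating: since $\mu^{(k)}$ is an $\cH^{+}$-valued (hence, via~\eqref{eq:measuremonotone}, monotone) measure supported where the above bound holds, the standard estimates on integration against Hilbert-space-valued measures collected in Appendix~\ref{sec:integr-with-resp} (in particular~\eqref{eq:normintest}) yield
\begin{align*}
\left\|\int_{\cHpluso}\big(g(u)-g(v)\big)\frac{\dmuk}{\norm{\xi}^{2}}\right\|
\le 2k\norm{u-v}\,\|\mu(\cHpluso)\|\,,
\end{align*}
using $\mu^{(k)}(\cHpluso)\le_{\cHplus}\mu(\cHpluso)$ and monotonicity of the cone. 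Adding the $B^{*}$ term gives~\eqref{eq:Rk_Lipschitz}. The only mildly delicate point is making the manipulation with the vector-valued measure rigorous — i.e.\ justifying that one may pull the scalar bound on the integrand through the $\cH$-valued integral and arrive at $\|\mu(\cHpluso)\|$ rather than a total-variation quantity — but this is exactly what the auxiliary results in Appendix~\ref{sec:integr-with-resp} are designed to handle, so no genuine obstacle is expected.
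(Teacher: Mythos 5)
Your proof is correct and follows essentially the same route as the paper: the $1$-Lipschitz bound $|\E^{-\langle\xi,u\rangle}-\E^{-\langle\xi,v\rangle}|\le\norm{\xi}\norm{u-v}$, the bound $\norm{\chi(\xi)}\le\norm{\xi}\one_{\{\norm{\xi}\le 1\}}$ for the truncation term, the cutoff $\norm{\xi}>1/k$ to convert $1/\norm{\xi}$ into $k$, and the appeal to~\eqref{eq:normintest} and~\eqref{eq:measuremonotone} to integrate against the $\cH^{+}$-valued measure. The only cosmetic difference is that the paper splits the integral into the truncation part (over $\{1/k<\norm{\xi}<1\}$) and the exponential part (over $\{\norm{\xi}>1/k\}$), bounding each by $k\norm{\mu(\cHpluso)}\,\norm{u-v}$, whereas you combine the two pointwise bounds first; both yield the same constant $2k$.
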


\begin{proof}
% Observe that by~\eqref{eq:measuremonotone},~\eqref{eq:int_pos1},~\eqref{eq:int_pos2},
%  and~\eqref{eq:exp_est}
%   \begin{align*}
%     R(u) &= 
%     B^{*}(u)
%     -\int_{\cH^+ \cap \{0< \| \xi \| \leq 1 \}}
%         \left(
%             \e^{-\langle u,\xi\rangle} - 1 + \langle\xi,u\rangle
%         \right)
%         \frac{\D\mu(\xi)}{\norm{\xi}^{2}} \\
%     &\qquad
%     - \int_{\cH^{+}\cap\{ \norm{\xi}>1\}}
%         \left(\e^{-\langle u,\xi\rangle}-1\right)
%     \frac{\D\mu(\xi)}{\norm{\xi}^{2}}\\
%     &\leq_{\cH^+} B^{*}(u)+\mu\left(\cH^+ \right)
%   \end{align*}
%  for all $u\in \cH^{+}$, which implies 
%  $\langle R(u), u \rangle\leq \langle B^{*}(u),u\rangle+\langle
%   \mu(\cH^+),u\rangle$ and inequality
%   \eqref{eq:uni-bound} follows.
%   %holds for some positive constant $K$. 
%  In order to prove the second assertion we first 
Observe that 
 for all $u,v,\xi \in \cH^{+}$ we have 
 \begin{equation*}
  \left|
    \e^{-\langle \xi, u \rangle } 
    - 
    \e^{- \langle \xi , v \rangle }
  \right|
  \leq 
  \| \xi \|
  \| u - v \|.
 \end{equation*}
Thus,~\eqref{eq:measuremonotone} and~\eqref{eq:normintest} imply that 
\begin{align*}
  %\begin{aligned}
 \| R^{(k)}(u) - R^{(k)}(v) \|
 & \leq 
 \left\| 
    B^*(u-v)
 \right\|
 +
 \left\|\,
    \int_{\cH^+\setminus \{0\}\cap\{\frac{1}{k}<\|\xi\| < 1\}}
        \langle \xi, u - v \rangle 
    \frac{\dmu}{\| \xi \|^2 }
 \right\|
 \\ & \quad 
 + 
 \left\|\,
    \int_{\cH^+ \setminus \{0\}\cap\{\|\xi\| > \frac{1}{k}\}}
     (\E^{-\langle \xi,u\rangle} - \E^{-\langle \xi,v\rangle})  
    \frac{\dmu}{\| \xi \|^2 }
\right\|
\\ & \leq 
\left( \| B \|_{\cL(\cH)}
+ 
2k\| \mu(\cH^+\setminus \{0\}))\| \right) 
\| u-v\|.
%\end{aligned} 
\end{align*}
\end{proof}
%%%%%%%%%%%%%%%%%%%%%%%%%%%%%%%%%%%%%%%%%%%%%%%%%%%%%%%%%%%%%%%%%%%%%%%%%%%%%%%%%%%%%%%%%
Note that $R$ is typically not Lipschitz continuous on the whole $\cHplus$:
\begin{remark}\label{rem:R_loc_lip}
Note that 
\begin{equation}\begin{aligned}
 \left|
    e^{-\langle \xi, u \rangle} - e^{-\langle \xi, v \rangle} 
    +\langle \xi, u-v\rangle
 \right|
%  & =
%  \left|
%     \int_{\langle \xi, u \rangle}^{\langle \xi, v \rangle} e^{-s} -1 \,ds
%  \right|
% \\& 
\leq 
 \left|
    \int_{\langle \xi, u \rangle}^{\langle \xi, v \rangle} s \,ds
 \right|
 \leq \|\xi\|^2 (\|u\|\vee \|v\|) \| u-v\|
 \end{aligned}
\end{equation}
for all $\xi,u,v\in \cH^+$. This implies that $R$ is in general Lipschitz
continuous only on bounded sets in $\cH^+$.
\end{remark}

%%%%%%%%%%%%%%%%%%%%%%%%%%%%%%%%%%%%%%%%%%%%%%%%%%%%%%%%%%%%%%%%%%%%%%%%%%%%%%%%%%%%%%%%%

By Lemmas~\ref{lemma:quasi-mono} and~\ref{lemma:mono-Lipschitz} we have that
$R^{(k)}$ is Lipschitz continuous on $\cH^{+}$ and quasi-monotone with respect
to $\cH^{+}$. Hence classical infinite dimensional ODE theory
guarantees the existence of a global solution to the equations~\eqref{eq:riccati-conc-k}: 
\begin{proposition}\label{prop:global_sol_finact}
Let $(b,B,m,\mu)$ be an admissible parameter set conform Definition~\ref{def:admissibility}
% . 
% Let $B$ and $\mu$ satisfy the admissibility conditions~\ref{eq:affine-kernel} and \ref{eq:linear-operator} in Definition \ref{def:admissibility}
and let $R^{(k)}$, $k\in \MN$, be given by equation~\eqref{eq:Rk}.
Then for every $k\in\MN$ and $u\in\cHplus$ there exists a solution $(\phi^{(k)}(\cdot,u), \psi^{(k)}(\cdot,u))$ to \eqref{eq:riccati-conc-k}. Moreover,
\begin{equation}\label{eq:phik_monotone}
 \psi^{(k)}(t,u) \leq_{\cH^+} \psi^{(k)}(t,v),\quad \forall u,v\in \cH^+ \text{ satisfying }u\leq_{\cH^+}v,
\end{equation} 
for all $t\geq 0$ and
\begin{equation}\label{eq:phik_growthbound}
 \| \psi^{(k)}(t,u) - \psi^{(k)}(t,v) \| 
 \leq 
 \exp\left(\left( \| B \|_{\cL(\cH)}
    + 
    2k\| \mu(\cH^+ \setminus \{0\})\| 
 \right)t\right)
 \| u - v \|
\end{equation} 
for all $t\geq 0$ and $u,v\in \cH^+$.
\end{proposition}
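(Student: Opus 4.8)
The plan is to produce $\psi^{(k)}(\cdot,u)$ as the unique global solution of the $\psi$-equation in \eqref{eq:riccati-conc-k}, obtained from a Lipschitz extension and Picard--Lindel\"of, to show via quasi-monotonicity that it remains in $\cHplus$, and then to recover $\phi^{(k)}$ by quadrature. First I would extend $R^{(k)}$, which by Lemma~\ref{lemma:mono-Lipschitz} is Lipschitz on $\cHplus$, to a globally Lipschitz map on all of $\cH$: since $\cHplus$ is a closed convex cone, the metric projection $\Pi\colon\cH\to\cHplus$ is a $1$-Lipschitz retraction, so $\widetilde R^{(k)}\df R^{(k)}\circ\Pi$ agrees with $R^{(k)}$ on $\cHplus$ and is Lipschitz on $\cH$ with constant $L_{k}\df\|B\|_{\cL(\cH)}+2k\|\mu(\cHpluso)\|$. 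Classical Banach-space ODE theory (Picard--Lindel\"of together with the usual continuation argument, global existence being guaranteed by the global Lipschitz bound) then yields, for every $u\in\cH$, a unique $\psi^{(k)}(\cdot,u)\in C^{1}([0,\infty),\cH)$ solving $\partial_{t}\psi^{(k)}=\widetilde R^{(k)}(\psi^{(k)})$ with $\psi^{(k)}(0,u)=u$.

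The crux is invariance of $\cHplus$. By Lemma~\ref{lemma:quasi-mono}, $R^{(k)}$ is quasi-monotone with respect to $\cHplus$, and by the discussion in Section~\ref{ssec:setting} the cone $\cHplus$ is regular; moreover a direct computation shows $R^{(k)}(0)=0$, so the constant path $0$ is the (necessarily unique) solution started at the apex. Theorem~\ref{thm:comparison-theorem} therefore applies and ensures both that a solution started in $\cHplus$ stays in $\cHplus$ and that it depends monotonically on the initial value. In particular $\psi^{(k)}(t,u)\in\cHplus$ for $u\in\cHplus$ and $t\geq 0$, so $\widetilde R^{(k)}=R^{(k)}$ along this trajectory, i.e.\ $\psi^{(k)}(\cdot,u)$ genuinely solves the $\psi$-equation in \eqref{eq:riccati-conc-k}; applied to a pair $u\leq_{\cHplus}v$ the same theorem gives exactly the monotonicity~\eqref{eq:phik_monotone}.

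To complete the existence claim I would set $\phi^{(k)}(t,u)\df\int_{0}^{t}F^{(k)}(\psi^{(k)}(s,u))\,\D s$. Exactly as in Lemma~\ref{lemma:RFcontinuous}, $F^{(k)}$ is continuous on $\cHplus$ by~\eqref{eq:exp_est} and dominated convergence, the dominating function being integrable against $m$ and hence against $m^{(k)}\leq m$; thus $t\mapsto F^{(k)}(\psi^{(k)}(t,u))$ is continuous, $\phi^{(k)}(\cdot,u)\in C^{1}$, and it solves the $\phi$-equation. Non-negativity of $\phi^{(k)}$ follows from $F^{(k)}\geq 0$ on $\cHplus$: writing
\[
 F^{(k)}(u)=\Big(\langle b,u\rangle-\int_{\cHpluso}\langle\chi(\xi),u\rangle\,\dmk\Big)+\int_{\cHpluso}\big(1-\E^{-\langle\xi,u\rangle}\big)\,\dmk\,,
\]
the last integral is $\geq 0$ because $\langle\xi,u\rangle\geq 0$, while self-duality of $\cHplus$ gives $\langle\chi(\xi),u\rangle\geq 0$, so $\int_{\cHpluso}\langle\chi(\xi),u\rangle\,\dmk\leq\int_{\cHpluso}\langle\chi(\xi),u\rangle\,\dm$ and the bracket is bounded below by the non-negative quantity of admissibility condition~\ref{eq:drift}. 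Hence $(\phi^{(k)}(\cdot,u),\psi^{(k)}(\cdot,u))$ takes values in $\R^{+}\times\cHplus$ and solves \eqref{eq:riccati-conc-k}.

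Finally, \eqref{eq:phik_growthbound} is Gr\"onwall's inequality: since both $\psi^{(k)}(\cdot,u)$ and $\psi^{(k)}(\cdot,v)$ stay in $\cHplus$, where $R^{(k)}$ is $L_{k}$-Lipschitz (Lemma~\ref{lemma:mono-Lipschitz}), the integral form of the ODE gives $\|\psi^{(k)}(t,u)-\psi^{(k)}(t,v)\|\leq\|u-v\|+L_{k}\int_{0}^{t}\|\psi^{(k)}(s,u)-\psi^{(k)}(s,v)\|\,\D s$, whence $\|\psi^{(k)}(t,u)-\psi^{(k)}(t,v)\|\leq e^{L_{k}t}\|u-v\|$; taking $u=v$ also gives uniqueness of the $\cHplus$-valued solution. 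I expect the only genuinely delicate point to be the cone-invariance step: one must invoke Theorem~\ref{thm:comparison-theorem} in a form that does not presuppose the trajectory already lies in $\cHplus$, which is why it is essential both that $0$ be an equilibrium of $R^{(k)}$ (so invariance is deduced by comparison with the known zero solution) and that $\cHplus$ be regular (so the comparison theorem applies despite the cone having empty interior).
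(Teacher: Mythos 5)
Your overall strategy---global existence from Lipschitz continuity, linear growth and quasi-monotonicity, recovery of $\phi^{(k)}$ by quadrature, and Gronwall for~\eqref{eq:phik_growthbound}---is the same as the paper's, and those parts are fine (your verification that $F^{(k)}\geq 0$ on $\cHplus$, which the paper leaves implicit, is a welcome addition). The gap is exactly at the point you yourself flag as delicate: the cone-invariance step. You construct the solution for the extended field $\widetilde R^{(k)}=R^{(k)}\circ\Pi$ on all of $\cH$ and then want to conclude $\psi^{(k)}(t,u)\in\cHplus$ by comparing with the zero solution via Theorem~\ref{thm:comparison-theorem}. But that theorem requires the vector field to be quasi-monotone with respect to $\cHplus$ along \emph{both} trajectories, and before invariance is known the trajectory of the extended equation may leave $\cHplus$; so you need $\widetilde R^{(k)}$ to be quasi-monotone on all of $\cH$, not merely $R^{(k)}$ on $\cHplus$ (Lemma~\ref{lemma:quasi-mono} only gives the latter). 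This does not follow from composing with the metric projection: unlike the orthant in $\R^{n}$, the projection onto the cone of positive operators (the positive-part map) is not order-preserving, so from $v_{1}\leq_{\cH^+}v_{2}$ and $\langle v_{2}-v_{1},u\rangle=0$ you cannot pass to the corresponding statements for $\Pi v_{1}$, $\Pi v_{2}$ and then invoke the quasi-monotonicity of $R^{(k)}$. As written, the invariance claim is unsupported.

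The paper avoids this by not extending at all: it cites the flow-invariance result \cite[VI.3, Theorem 3.1 and Proposition 3.2]{Mar76}, which produces a unique global solution with values in the closed convex set $\cHplus$ directly from the Lipschitz bound, the linear growth $\|R^{(k)}(u)\|\leq L_{k}\|u\|$ (using $R^{(k)}(0)=0$), and the subtangential condition implied by quasi-monotonicity of $R^{(k)}$ \emph{on} $\cHplus$. If you wish to keep the extension route you must either exhibit an extension that is provably quasi-monotone on all of $\cH$, or establish invariance by a separate argument (for instance a Gronwall estimate on $t\mapsto\operatorname{dist}(\psi^{(k)}(t,u),\cHplus)$); the cleanest repair is simply to replace the projection step by the citation to Martin. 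Once invariance is secured, your derivations of~\eqref{eq:phik_monotone} (comparison of two solutions) and of~\eqref{eq:phik_growthbound} (Gronwall) coincide with the paper's.
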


\begin{proof}
 Let $k\in \N$.
  % By the Kirszbraun theorem (see, e.g., \cite[Theorem
% 1.31]{Sch69}) there exists an extension $\tilde{R}^{(k)}\colon \cH \rightarrow
% \cH$ of $R^{(k)}$ that is Lipschitz continuous with the same Lipschitz
% constant as $R^{(k)}$. It follows from a standard Picard-iteration argument
% that for every $u\in \cH$ there exists a unique mapping $\tilde{\psi}(\cdot,u) \in C^{1}([0,\infty);\cH)$ such that 
% \begin{equation}
%  \frac{\partial \tilde{\psi}(t,u)}{\partial t} (t) = \tilde{R}^{(k)}(\tilde{\psi}(t,u))\,,\quad t \geq 0;\qquad \tilde{\psi}(0,u)=u.
% \end{equation}
% Next, note that $\tilde{R}^{(k)}(0) = 0$.
By Lemma~\ref{lemma:mono-Lipschitz} the function $R^{(k)}$ is Lipschitz continuous on
$\cHplus$, by~\eqref{eq:Rk_Lipschitz} with $v=0$ the function $R^{(k)}$
satisfies the linear growth condition $\norm{R^{(k)}(u)}\leq \left( \| B
  \|_{\cL(\cH)}+ 2k\| \mu(\cH^+ \setminus \{0\})\| \right)\norm{u}$ and by
Lemma~\ref{lemma:quasi-mono} $R^{(k)}$ is quasi-monotone with respect to $\cHplus$,
% Hereto we note that since $\cH$ is a
% Hilbert space and the interior of $\cHplus$ is empty, the boundary
% condition $\liminf_{\lambda\to
%   0+}\lambda^{-1}\inf_{y\in\cHplus}\norm{x+R^{(k)}(x)-y}=0$ for every
% $x\in\cHplus$ translates to $\langle R^{(k)}(x), u\rangle\geq 0$ for $\langle
% x,u\rangle=0$, see \cite[Section 4.1 Example 4.1 and p.71]{Dei77}, which follows
% from the quasi-monotonicity of $R^{(k)}$.
thus by \cite[VI.3. Theorem 3.1 and Proposition 3.2]{Mar76} there exists a
unique global solution $\psi^{(k)}(\cdot,u)\colon [0,\infty)\to\cHplus$ to the
second equation of~\eqref{eq:riccati-conc-k}.
Now, setting $\phi^{(k)}(t,u) = \int_0^tF^{(k)}(\psi^{(k)}(s,u))\, \D s$, for
all $t\geq 0$, we obtain by continuity of $F^{(k)}$ and $\psi^{(k)}(\cdot, u)$
a solution $(\phi^{(k)}(\cdot,u), \psi^{(k)}(\cdot,u))$
to~\eqref{eq:riccati-conc-k} satisfying the inequality~\eqref{eq:phik_monotone}.
Finally, observe that Lemma~\ref{lemma:mono-Lipschitz} implies that
\begin{align*}
& \frac{\partial}{\partial t}\| \psi^{(k)}(t,u) - \psi^{(k)}(t,v) \|^2
\\ & = 2\left\langle \psi^{(k)}(t,u) - \psi^{(k)}(t,v), R^{(k)}(\psi^{(k)}(t,u)) - R^{(k)}(\psi^{(k)}(t,v))\right\rangle 
\\ & 
\leq 
2\left( \| B \|_{\cL(\cH)}
+ 
2k\| \mu(\cH^+ \setminus \{0\})\| \right)\| \psi^{(k)}(t,u) - \psi^{(k)}(t,v)\|^{2}.
\end{align*}
This and Gronwall's lemma implies the second inequality~\eqref{eq:phik_growthbound}.
\end{proof}
The next proposition guarantees the existence of a unique solution to
the original generalized Riccati equations~\eqref{eq:riccati-conc} on $[0,\infty)$.
First, we prove the following lemma:
\begin{lemma}\label{lem:Rk-R}
Let $B$ and $\mu$ satisfy the admissibility conditions~\ref{eq:affine-kernel} 
and~\ref{eq:linear-operator} in Definition~\ref{def:admissibility},
let %$F^{(k)}$, 
$R^{(k)}$ and $R$ be respectively given by equation~\eqref{eq:Rk} and \eqref{eq:riccati-conc}.
Then for every $M>0$ we have
% \begin{align*}
%  \lim\limits_{k\to\infty}\sup_{u\in \R^{+}, |u|\leq M}\lvert F^{(k)}(u)-F(u)\rvert=0\,, 
% \end{align*}
% and 
\begin{align*}
 \lim\limits_{k\to\infty}\sup_{u\in \cH^+: \| u \|\leq M}\norm{R^{(k)}(u)-R(u)}=0\,. 
\end{align*}
\end{lemma}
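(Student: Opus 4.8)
The key observation is that $R^{(k)}(u) - R(u)$ is precisely the integral of the integrand defining $R$ restricted to the region of small jumps $\{\|\xi\| \le 1/k\}$. Indeed, from~\eqref{eq:Rk} and~\eqref{eq:FR}, since $\dmuk = \one_{\{\|\xi\|>1/k\}}\dmu$, we have
\begin{align*}
 R^{(k)}(u) - R(u) = \int_{\cHpluso \cap \{\|\xi\|\le 1/k\}}\big(\E^{-\langle \xi,u\rangle}-1+\langle \chi(\xi),u\rangle\big)\frac{\dmu}{\norm{\xi}^{2}}\,.
\end{align*}
For $k$ large enough that $1/k \le 1$ we have $\chi(\xi) = \xi$ on the domain of integration, so the upper bound in~\eqref{eq:exp_est} gives the pointwise estimate
\begin{align*}
 \left|\big\langle\E^{-\langle \xi,u\rangle}-1+\langle \xi,u\rangle\big\rangle\right|\le \tfrac12\|\xi\|^2\|u\|^2 \le \tfrac12 M^2\|\xi\|^2
\end{align*}
for all $u$ with $\|u\|\le M$. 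So first I would take the norm inside the integral (using~\eqref{eq:normintest} to pass the norm through the vector-valued integral, and the fact that $\E^{-\langle\xi,u\rangle}-1+\langle\xi,u\rangle \ge 0$ so the $\cH^+$-valued integrand is a genuine positive multiple of the positive measure-valued $\dmu$), obtaining
\begin{align*}
 \sup_{\|u\|\le M}\norm{R^{(k)}(u) - R(u)}
 \le \tfrac12 M^2 \int_{\cHpluso \cap \{\|\xi\|\le 1/k\}}\|\xi\|^2\,\frac{\|\mu\|(\D\xi)}{\|\xi\|^2}
 = \tfrac12 M^2\,\|\mu\|\!\left(\cHpluso \cap \{\|\xi\|\le 1/k\}\right),
\end{align*}
where $\|\mu\|$ denotes the (finite) total variation measure of the $\cH$-valued measure $\mu$ — finiteness of $\|\mu\|(\cHpluso)$ follows from $\mu$ taking values in $\cH^+$ and admissibility condition~\ref{eq:affine-kernel}, as recorded in Appendix~\ref{sec:integr-with-resp} and used already in Lemma~\ref{lemma:RFquadratic}.

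Having reduced matters to this single scalar bound, the conclusion is immediate: the sets $E_k \df \cHpluso \cap \{\|\xi\|\le 1/k\}$ decrease to $\bigcap_k E_k = \emptyset$ (since every $\xi\in\cHpluso$ has $\|\xi\|>0$, hence $\|\xi\|>1/k$ eventually), so by continuity from above of the finite measure $\|\mu\|$ we get $\|\mu\|(E_k)\to 0$ as $k\to\infty$. Since the right-hand side of the displayed bound does not depend on $u$, this yields $\sup_{\|u\|\le M}\norm{R^{(k)}(u)-R(u)}\to 0$, which is exactly the claim. For small $k$ with $1/k>1$ the statement is vacuous in the limit, so no separate argument is needed there; alternatively one notes the whole sequence is eventually in the regime $1/k\le 1$.

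The only mildly delicate point — and the step I would be most careful about — is justifying that one may bound $\norm{\int (\cdots)\,\dmu/\|\xi\|^2}$ by $\int \tfrac12 M^2\,\|\mu\|(\D\xi)$. This rests on two facts: that the scalar coefficient $\E^{-\langle\xi,u\rangle}-1+\langle\xi,u\rangle$ is nonnegative (so the $\cH$-valued integrand is of the form $g(\xi)\,\mu(\D\xi)$ with $g\ge 0$ and $\mu$ being $\cH^+$-valued, letting us use the monotonicity/norm estimates~\eqref{eq:measuremonotone} and~\eqref{eq:normintest} from the appendix exactly as in the proof of Lemma~\ref{lemma:mono-Lipschitz}), and that $0\le g(\xi)\le \tfrac12 M^2\|\xi\|^2$ on $E_k$ uniformly in $\|u\|\le M$. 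Everything else is the elementary measure-theoretic limit above; there is no genuine obstacle, which is why this lemma can be dispatched quickly and then fed into the proof of the existence and uniqueness of the solution to~\eqref{eq:riccati-conc} in the next proposition.
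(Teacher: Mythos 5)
Your strategy coincides with the paper's: write $R^{(k)}(u)-R(u)$ as the integral of the nonnegative integrand over the small-jump region $E_k\df\cHpluso\cap\{\|\xi\|\le 1/k\}$, bound that integrand by $\tfrac12\|\xi\|^2\|u\|^2/\|\xi\|^2\le\tfrac12 M^2$ via~\eqref{eq:exp_est}, pull the norm through the vector-valued integral, and let $E_k\downarrow\emptyset$. The one step that would fail as written is your appeal to ``the (finite) total variation measure $\|\mu\|$'': for an infinite-dimensional $\cH^+$-valued measure the total variation need not be finite (e.g.\ a measure placing mass $n^{-1}\,e_n\otimes e_n$ at the $n$-th of countably many atoms, with $(e_n)$ an ONB of $H$, is countably additive with $\|\mu(\cHpluso)\|=(\sum_n n^{-2})^{1/2}<\infty$ but has total variation $\sum_n n^{-1}=\infty$), and neither Appendix~\ref{sec:integr-with-resp} nor Lemma~\ref{lemma:RFquadratic} asserts such finiteness --- they only control $\|\mu(E)\|$, the norm of the single vector $\mu(E)$. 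The repair is immediate and is exactly what the paper does: estimate~\eqref{eq:normintest} bounds $\bigl\|\int_{E_k}g\,\dmu\bigr\|$ by $\esssup_{E_k}|g|\cdot\|\mu(E_k)\|$, yielding $\sup_{\|u\|\le M}\|R^{(k)}(u)-R(u)\|\le \tfrac12 M^2\,\|\mu(E_k)\|$, and $\|\mu(E_k)\|\to 0$ then follows from the continuity of the vector measure~\eqref{eq:meas_continuous} (equivalently from~\eqref{eq:measuremonotone} together with strong countable additivity), not from continuity from above of a scalar variation measure. With $\|\mu\|(E_k)$ replaced by $\|\mu(E_k)\|$ throughout, your argument is the paper's proof.
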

\begin{proof}
% We show the statement for $R$, the proof of the statement for $F$ is analogous.
It follows immediately from~\eqref{eq:normintest} and~\eqref{eq:exp_est} that 
\begin{equation}
  \| R^{(k)}(u)-R(u) \|
  \leq 
  \| \mu(\{\xi \in \cH^{+}\colon \| \xi \| \leq \tfrac{1}{k} \}) \|
  \| u \|^2.
\end{equation}
The assertion follows from the above and the continuity of $ \mu $, see~\eqref{eq:meas_continuous}.
\end{proof}
\begin{proposition}\label{prop:exist-uniq-solut}
Let $(b,B,m,\mu)$ be an admissible parameter set conform Definition~\ref{def:admissibility}.
Then for every $u\in\cHplus$ there exists a
unique solution $(\phi(\cdot,u),\psi(\cdot,u))$ to~\eqref{eq:riccati-conc}.
Moreover,
\begin{align*}
\psi(t,u)\leq_{\cHplus} \psi^{(k)}(t,u)\quad  \forall\;k\in \N,\;t\geq 0\,\text{and}\; u\in \cH^+,   
\end{align*}
and $\psi(t,u)=\lim_{k\rightarrow \infty} \psi^{(k)}(t,u)$ for all $t\geq 0$ and $u\in \cH^+$, as well as 
\begin{equation}\label{eq:monotone_sol_riccati}
\psi(t,u)\leq_{\cHplus} \psi(t,v),\quad\forall\;t\geq 0\,\text{and}\;u,v\in\cH^+ \text{ with }
u\leq_{\cH^+} v,
\end{equation}
and
\begin{equation}\label{eq:expgrowth_sol_riccati}
 \| \psi(t,u) \| 
 \leq 
 \exp\left(\left( \| B \|_{\cL(\cH)}
    + 
    2\| \mu(\cH^+ \setminus \{0\})\| 
 \right)t\right)\| u \|,\quad \forall\;t\geq 0,\,u\in \cH^+.
\end{equation}
Finally, for all $M,T\geq 0$ there exists a $K(M,T)\geq 0$ 
such that for all $u,v\in \cH^+$ satisfying $\|u\|,\|v\|\leq M$ and all $t\in [0,T]$ it holds that 
\begin{equation}\label{eq:continuity_sol_riccati}
 \| \psi(t,u) - \psi(t,v)\| 
 \leq 
 K(M,T)\| u - v\|.
\end{equation}

\end{proposition}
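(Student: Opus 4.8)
The plan is to obtain $\psi(\cdot,u)$ as the monotone limit of the finite-activity solutions $\psi^{(k)}(\cdot,u)$ from Proposition~\ref{prop:global_sol_finact}, and then to read off all remaining properties from the corresponding properties of the $\psi^{(k)}$. The crucial first step is to show that $(\psi^{(k)}(t,u))_{k\in\MN}$ is \emph{decreasing} with respect to $\leq_{\cHplus}$. For this I would note that for every $u\in\cHplus$, using $\chi(\xi)=\xi$ on $\{\|\xi\|\leq 1\}$,
\[
 R^{(k+1)}(u)-R^{(k)}(u)=-\int_{\{\frac{1}{k+1}<\|\xi\|\leq\frac{1}{k}\}}\big(\E^{-\langle\xi,u\rangle}-1+\langle\xi,u\rangle\big)\frac{\dmu}{\|\xi\|^{2}}\,,
\]
and the integrand is nonnegative by convexity of $s\mapsto\E^{-s}$ (cf.~\eqref{eq:exp_est}) while $\mu$ is $\cHplus$-valued, so $R^{(k+1)}(u)\leq_{\cHplus}R^{(k)}(u)$ for all $u\in\cHplus$. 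Since each $R^{(k)}$ is Lipschitz continuous and quasi-monotone with respect to $\cHplus$ (Lemmas~\ref{lemma:mono-Lipschitz} and~\ref{lemma:quasi-mono}), applying the comparison theorem (Theorem~\ref{thm:comparison-theorem}) to the two equations in~\eqref{eq:riccati-conc-k} for indices $k$ and $k+1$ with common initial value $u$ yields $\psi^{(k+1)}(t,u)\leq_{\cHplus}\psi^{(k)}(t,u)$ for all $t\geq 0$. As $R^{(1)}(0)=0$ we have $\psi^{(1)}(\cdot,0)\equiv 0$, so by monotonicity of $\cHplus$ and~\eqref{eq:phik_growthbound} (with $v=0$, $k=1$) we get the $k$-uniform bound $\|\psi^{(k)}(t,u)\|\leq\|\psi^{(1)}(t,u)\|\leq\exp\big((\|B\|_{\cL(\cH)}+2\|\mu(\cHpluso)\|)t\big)\|u\|$.

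Next, fix $u\in\cHplus$ and $t\geq 0$. The sequence $(\psi^{(k)}(t,u))_k$ is decreasing in $\leq_{\cHplus}$ and bounded below by $0$; applying the regularity of $\cHplus$ to the increasing, bounded-above sequence $\psi^{(1)}(t,u)-\psi^{(k)}(t,u)$ shows $\psi^{(k)}(t,u)$ converges in $\cH$ to a limit $\psi(t,u)\in\cHplus$. To see $\psi(\cdot,u)$ solves~\eqref{eq:riccati-conc}, I would pass to the limit in $\psi^{(k)}(t,u)=u+\int_{0}^{t}R^{(k)}(\psi^{(k)}(s,u))\,\D s$. Fixing $T>0$, the bound above confines all $\psi^{(k)}(s,u)$, $s\in[0,T]$, to a ball $\{w\in\cHplus:\|w\|\leq M\}$ with $M=M(T,u)$. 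On this ball $R^{(k)}\to R$ uniformly (Lemma~\ref{lem:Rk-R}) and $R$ is continuous (Lemma~\ref{lemma:RFcontinuous}), so $R^{(k)}(\psi^{(k)}(s,u))\to R(\psi(s,u))$ for each $s$; moreover $\|\mu^{(k)}(\cHpluso)\|\leq\|\mu(\cHpluso)\|$ by~\eqref{eq:measuremonotone}, so~\eqref{eq:Rquadratic} holds for $R^{(k)}$ with a $k$-independent constant, providing a $k$-uniform dominating constant on the ball. Dominated convergence gives $\psi(t,u)=u+\int_{0}^{t}R(\psi(s,u))\,\D s$ for all $t\geq 0$, so $\psi(\cdot,u)$ is $C^{1}$ with $\partial_t\psi=R(\psi)$, $\psi(0,u)=u$. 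Setting $\phi(t,u)=\int_{0}^{t}F(\psi(s,u))\,\D s$ gives a $C^{1}$ function with $\partial_t\phi=F(\psi)$, $\phi(0,u)=0$; and $\phi(t,u)\geq 0$ because $\phi^{(k)}(t,u)=\int_0^t F^{(k)}(\psi^{(k)}(s,u))\,\D s\geq 0$ (one checks $F^{(k)}(v)\geq 0$ on $\cHplus$ using~\eqref{ass:b_positive}, $\langle\chi(\xi),v\rangle\geq 0$, $\dmk\leq\dm$ and $\E^{-\langle\xi,v\rangle}\leq 1$) and $\phi^{(k)}(t,u)\to\phi(t,u)$ by the same dominated-convergence argument, now using the $k$-uniform bound~\eqref{eq:Fquadratic}.

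For uniqueness, let $(\phi,\psi)$ and $(\tilde\phi,\tilde\psi)$ be two solutions on $[0,\infty)$; being continuous, on any $[0,T]$ they stay in a common ball on which $R$ is $L$-Lipschitz (Remark~\ref{rem:R_loc_lip}), whence $\frac{\D}{\D t}\|\psi(t,u)-\tilde\psi(t,u)\|^{2}=2\langle\psi(t,u)-\tilde\psi(t,u),R(\psi(t,u))-R(\tilde\psi(t,u))\rangle\leq 2L\|\psi(t,u)-\tilde\psi(t,u)\|^{2}$, and Gronwall's lemma with $\psi(0,u)=\tilde\psi(0,u)=u$ forces $\psi\equiv\tilde\psi$ on $[0,T]$, hence everywhere, and then $\phi\equiv\tilde\phi$. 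Finally the displayed claims follow quickly: $\psi(t,u)\leq_{\cHplus}\psi^{(k)}(t,u)$ since $\psi^{(j)}(t,u)\leq_{\cHplus}\psi^{(k)}(t,u)$ for $j\geq k$ and $\cHplus$ is closed; \eqref{eq:monotone_sol_riccati} by letting $k\to\infty$ in~\eqref{eq:phik_monotone}; \eqref{eq:expgrowth_sol_riccati} from $\psi(t,u)\leq_{\cHplus}\psi^{(1)}(t,u)$, monotonicity of $\cHplus$, and the $k=1$ bound from the first paragraph; and \eqref{eq:continuity_sol_riccati} from~\eqref{eq:expgrowth_sol_riccati} (which confines $\psi(t,u),\psi(t,v)$ to a ball of radius depending only on $M,T$) together with the same Gronwall estimate applied to the initial data $u,v$. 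The main obstacle I expect is the first step: establishing the $\leq_{\cHplus}$-monotone decreasing property of $(\psi^{(k)})_k$, and with it the $k$-uniform growth bound that makes the limiting argument go through; once that is in place, everything else is a limiting argument plus Gronwall.
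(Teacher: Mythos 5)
Your proposal is correct and follows essentially the same route as the paper's proof: the monotone-decreasing property of $(\psi^{(k)})_k$ via the positivity of $R^{(k)}-R^{(k+1)}$ and the comparison theorem, convergence via regularity of the cone, passage to the limit in the integral equation by dominated convergence together with Lemma~\ref{lem:Rk-R}, and uniqueness plus the remaining estimates via local Lipschitz continuity of $R$ and Gronwall. The only (welcome) additions are your explicit check that $\phi\geq 0$ and your careful rephrasing of the cone-regularity argument in terms of the increasing sequence $\psi^{(1)}(t,u)-\psi^{(k)}(t,u)$, both of which the paper leaves implicit.
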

\begin{proof}
First of all note that uniqueness of a solution follows 
from the fact that $R$ is Lipschitz continuous on bounded sets, see Remark~\ref{rem:R_loc_lip}.
  Observe that by~\eqref{eq:int_pos1},~\eqref{eq:exp_est}, and~\eqref{eq:Rk} we have, for all $u\in \cH^{+}$ and $k\in \N$,
  \begin{align}\label{eq:Rdecreasing}
    R^{(k)}(u)-R^{(k+1)}(u)
    &=\int_{\cHplus\cap \{\frac{1}{k+1}<\norm{\xi}\leq \frac{1}{k}\}}
    \left(
        \E^{-\langle u,\xi\rangle}-1+\langle \xi,u\rangle\right)
    \frac{\dmu}{\norm{\xi}^{2}}
    \nonumber\\ &
    \geq_{\cH^+} 0.
  \end{align}\par 
  Now fix $u\in \cH^{+}$. 
  By Proposition \ref{prop:global_sol_finact} we know that there exists a unique global
  solution $\psi^{(k)}(\cdot,u)$ to equation \eqref{eq:riccati-conc-k} for every $k\in\MN$. 
  This combined with~\eqref{eq:Rdecreasing} implies that for all $k\in \MN$ and $t\geq 0$ we have
  \begin{align*}
    \frac{\partial \psi^{(k+1)}}{\partial t}(t,u)-R^{(k+1)}(\psi^{(k+1)}(t,u))&=\frac{\partial \psi^{(k)}}{\partial t}(t,u)-R^{(k)}(\psi^{(k)}(t,u)) \\
    &\leq_{\cH^+} 
    \frac{\partial \psi^{(k)}}{\partial t}(t,u)-R^{(k+1)}(\psi^{(k)}(t,u)). 
  \end{align*}
  It follows from Lemma~\ref{lemma:mono-Lipschitz} and Theorem~\ref{thm:comparison-theorem} with $K=\cH^+$, $F=R^{(k+1)}$,
  $f=\psi^{(k+1)}(\cdot,u)$ and $g=\psi^{(k)}(\cdot,u)$ that 
  \begin{equation}\label{eq:phik_decreasing}
    \psi^{(k+1)}(t,u) \leq_{\cH^+} \psi^{(k)}(t,u)\,, \quad t\geq 0.
  \end{equation}
  As moreover $\psi^{(k)}(t,u)\geq_{\cH^{+}} 0$ for all $t\geq 0$ and $k\in \N$, the regularity of the 
  cone $\cH^+$ implies that for all $t\geq 0$ there exists a $\psi(t,u)\in \cH^+$ such that
  \begin{equation}\label{eq:phik_conv}
   \psi(t,u) = \lim_{k\rightarrow \infty} \psi^{(k)}(t,u).
  \end{equation}
  Note that by~\eqref{eq:phik_decreasing}, the 
  monotonicity of $\cH^+$, and the continuity of $\psi^{(1)}(\cdot,u)$ we
  have, for all $T>0$, 
  \begin{equation} \label{eq:upper-bound-phi}
    \sup_{k\in \N, s\in [0,T]} \| \psi^{(k)}(s,u) \| 
    \leq 
    \sup_{s\in [0,T]} \| \psi^{(1)}(s,u)  \| < \infty\,.
  \end{equation}
  It follows from this,~\eqref{eq:phik_conv}, the dominated convergence theorem, and Lemmas~\ref{lem:Rk-R} and~\ref{lemma:RFquadratic} that for all $t\geq 0$ we have 
  \begin{equation*}
  \begin{aligned}
   \psi(t,u) &= \lim_{k\rightarrow \infty} \psi^{(k)}(t,u)
   \\ 
   & = u + \lim_{k\rightarrow \infty} \int_{0}^{t} R^{(k)}(\psi^{(k)}(s,u))\D s
   \\   
   & = u + \lim_{k\rightarrow \infty} \int_{0}^{t} \left( R^{(k)}(\psi^{(k)}(s,u)) - R(\psi^{(k)}(s,u))\right)\D s
   \\ &
   \quad + \lim_{k\rightarrow \infty} \int_{0}^{t} R(\psi^{(k)}(s,u))\D s
   \\
   & = u + \int_{0}^{t} R(\psi(s,u))\D s.
   \end{aligned}
  \end{equation*}
 The equation above combined with Lemma~\ref{lemma:RFquadratic} implies that the map $\psi(\cdot,u)$ is continuous,
 whence Lemma~\ref{lemma:RFcontinuous} and the fundamental theorem of calculus 
 imply that $\psi(\cdot,u)\in C^1([0,\infty),\cH)$ and 
 \begin{equation}
  \frac{\partial\psi}{\partial t}(t,u) = R(\psi(t,u)),\quad t\geq 0;\qquad \psi(0,u)=u.
 \end{equation}
 Moreover, the continuity of $F$ and of $\psi(\cdot,u)$ ensures that by setting 
 \begin{equation}
  \phi(t,u) = \int_{0}^{t} F(\psi(s,u))\D s, \quad t\geq 0,
 \end{equation}
 we obtain that $(\phi(\cdot,u),\psi(\cdot,u))$ is a solution to~\eqref{eq:riccati-conc}.\par 
 Next, note that~\eqref{eq:monotone_sol_riccati} follows from~\eqref{eq:phik_monotone} and~\eqref{eq:phik_conv}.
 Moreover,~\eqref{eq:expgrowth_sol_riccati} follows from~\eqref{eq:phik_growthbound} with $k=1$,~\eqref{eq:phik_decreasing},~\eqref{eq:phik_conv}, and the fact 
 that $\psi^{(1)}(t,0)\equiv 0$. 
 Finally,~\eqref{eq:continuity_sol_riccati} follows from the Lipschitz continuity
 of $R$ on bounded sets (see Remark~\ref{rem:R_loc_lip}),~\eqref{eq:expgrowth_sol_riccati}, and 
 the same reasoning as we used to obtain~\eqref{eq:phik_growthbound}.
\end{proof}

\subsection{Regularity with respect to the initial value of the
  solution}\label{ssec:regul-with-resp}
Having established the existence of a unique solution to~\eqref{eq:riccati-conc}, we now turn to the regularity of the solution with respect to the initial value. To this end we first must introduce a fitting concept of differentiability:%, which we call \emph{one-sided (G\^ateaux) differentiability}:
\begin{definition}\label{def:one-sided-derivative}
  Let $X$ and $Y$ be Banach spaces and
  $D\subseteq X$ a convex subset. We say that
  a function $f\colon D\subseteq X\to
Y$ has a \emph{one-sided derivative at $x\in D$ in the direction $v\in
X$}, whenever $x+\lambda v\in D$ for all $\lambda$ sufficiently small and the limit
\begin{align*}
 \lim_{\lambda\to 0+}\frac{f(x+\lambda v) -f(x)}{\lambda}, 
\end{align*}
exists in $Y$. We denote this limit by $\D_{+}f(x)(v)$.
% Whenever $\D_{+}f(x)$ extends uniquely to an element of $\cL(X,Y)$, we
% call this functional the \emph{(first) one-sided G\^ateaux derivative of $f$ at $x$} and denote this functional by $\D
% f(x)$. 
We define the second one-sided derivative in $x\in D$ in direction $(v,w)\in X\times X$ as
\begin{align*}
 \lim_{\lambda\to 0+}\frac{\D_{+}f(x+\lambda w)(v) -\D_{+}f(x)(v)}{\lambda}, 
\end{align*}
whenever $x+\lambda w\in D$ and $\D_{+}f(x+\lambda w)(v)$ exists
for all $\lambda$ sufficiently small and moreover the limit exists in $Y$. We
denote the second one-sided derivative of $f$ at $x$ in directions $(v,w)$ by
$\D^{2}_{+}f(x)(v,w)$. 
% Moreover we call $f$ \emph{two-times one-sided G\^ateaux differentiable in $x\in X$} whenever $\D^{2}_{+}f(x)$ extends uniquely to a continuous
% bilinear form from $X\times X$ to $Y$, in this case we denote the bilinear form by $\D^{2}f(x)$.
\end{definition}

% \sgc{}
% To this end, let $f\in\cD$ i.e. $f(\cdot)=\sum_{i=1}^{n}\alpha_{i}\E^{-\langle u_{i},\cdot\rangle}$ for
% some $n\in\MN$ and $\alpha_{i}\in\MR$ and $u_{i}\in\cHplus$ for all
% $i=1,\ldots,n$. We denote by $f'(x)$ the Fr\'echet derivative of $f$ considered as a function on $\cH$
% i.e. $f'(x)v=\sum_{i=1}^{n}-\alpha_{i}\langle u_{i},v\rangle\E^{-\langle u_{i},\cdot\rangle}$ for all $v\in\cH$.
% \cgs{}

\begin{lemma}\label{lem:R-one-sided-dif} 
  Let $(b,B,m,\mu)$ be an admissible parameter set conform Definition~\ref{def:admissibility} 
and let $F$ and $R$ be given by~\eqref{eq:FR}. 
For $u\in\cH^+$ define $\D R(u) \in \cL(\cH)$ by
\begin{equation}\label{eq:def_dR}
 \D R (u)v = B^{*}(v)+\int_{\cHpluso}\langle \xi, v\rangle\E^{-\langle \xi,u\rangle}-\langle \chi(\xi),
                 v\rangle \frac{\dmu}{\norm{\xi}^{2}},\quad v\in \cH,
\end{equation}
and $\D F(u)\in\cL(H,\MR)$ by
\begin{equation}\label{eq:def_dF}
 \D F(u)v= \langle b, v\rangle+\int_{\cHpluso}\langle \xi, v\rangle\E^{-\langle \xi,u\rangle}-\langle \chi(\xi),
                 v\rangle \dm,\quad v\in \cH.
\end{equation}
Moreover define $\D^2 R(u) \in \cL^{(2)}( \cH \times \cH, \cH)$ by
\begin{equation}\label{eq:def_d2R}
 \D^2 R (u)(v,w) = -\int_{\cHpluso}\langle \xi, v\rangle\langle \xi, w\rangle\E^{-\langle \xi,u\rangle} \frac{\dmu}{\norm{\xi}^{2}},\quad v,w\in\cH.
\end{equation}
and $\D^{2}F(u)\in \cL^{(2)}( \cH \times \cH, \MR)$ by
\begin{equation}\label{eq:def_d2F}
 \D^2 F (u)(v,w) = -\int_{\cHpluso}\langle \xi, v\rangle\langle \xi,
 w\rangle\E^{-\langle \xi,u\rangle} \dm,\quad v,w\in\cH.
\end{equation}
Then the operator $\D R(u)$ is quasi-monotone for all $u\in \cH^+$, and 
for all $u_0,u_1\in\cH^+$ and $v,w\in \cH$ we have 
\begin{align}
 \| \D R(u_0) (v) \| 
 & 
 \leq 
 \| B^* \|_{\cL(\cH)}\|v\|
 +
 \left\|
 \mu(\cH^+\setminus \{0\})
 \right\|
 (1+\| u_0\|)
 \| v\| \label{eq:dR_bdd}
\\
 \| \D R(u_0)(v) - \D R(u_1)(v) \|
 & \leq 
 \left\|
    \mu(\cH^+\setminus \set{0})
 \right\| 
 \| u_0 - u_1\|
 \| v\|,\label{eq:dR_lipschitz}
 \\ 
 \| \D^2 R(u_0)(v,w) \|
 & \leq 
 \left\|
 \mu(\cH^+\setminus \{0\})
 \right\| \| v \| \| w \|, \label{eq:d2R_bdd}
\end{align}
and $u\mapsto \D^2 R(u)(v,w)$ is continuous.
Moreover, $F$ and $R$ are two-times one-sided differentiable 
in $u$ in the direction $(v,w)$ for all $u,v,w\in \cH^{+}$, and for all $u,v,w\in \cH^{+}$ we have:
  \begin{align}\label{eq:R-one-sided-derivative}
   \D_{+}R(u)(v)&=\D R(u)v,
    \\    
%   \end{align}
%   \begin{align}
  \label{eq:R-one-sided-dif-second}
    \D^{2}_{+}R(u)(v,w)& =\D^2 R(u)(v,w),
    \\    
%   \end{align}
%   as well as
%   \begin{align*}
  \D_{+}F(u)(v)&=\D F(u)v, \label{eq:F-one-sided-derivative}
    \\
%   \end{align*}
% and
%   \begin{align*}
   \D^{2}_{+}F(u)(v,w)&=\D F(u)(v,w). 
                       \label{eq:F-one-sided-dif-second}
  \end{align}   
\end{lemma}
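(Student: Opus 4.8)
The plan is to verify the claimed formulas for $\D R$, $\D^2 R$, $\D F$, $\D^2 F$ by differentiating under the integral sign, and then to establish the bounds \eqref{eq:dR_bdd}--\eqref{eq:d2R_bdd}, the quasi-monotonicity of $\D R(u)$, and the continuity of $u \mapsto \D^2 R(u)(v,w)$, using the admissibility conditions together with the elementary estimate \eqref{eq:exp_est} and the integration-with-respect-to-vector-measures results from Appendix \ref{sec:integr-with-resp} (in particular~\eqref{eq:normintest} and~\eqref{eq:measuremonotone}).

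First I would address well-definedness of the integrals defining $\D R(u)$, $\D^2R(u)$, $\D F(u)$, $\D^2F(u)$. For $\D^2 R$ and $\D^2 F$ this is immediate: the integrand in~\eqref{eq:def_d2R} is bounded in norm by $\|\xi\|^2\|v\|\|w\|\E^{-\langle \xi,u\rangle}/\|\xi\|^2 = \|v\|\|w\|\E^{-\langle\xi,u\rangle} \le \|v\|\|w\|$, so integrability follows from finiteness of the total mass $\|\mu(\cHpluso)\|$ (resp.\ the fact that the estimate is uniform), which gives~\eqref{eq:d2R_bdd}; the same argument works for $\D^2 F$ using $\int \|\xi\|^2\,\dm <\infty$. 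For $\D R(u)$ I would split the integrand $\langle \xi, v\rangle\E^{-\langle \xi, u\rangle} - \langle \chi(\xi),v\rangle$ into the part on $\{\|\xi\|\le 1\}$, where $\chi(\xi)=\xi$ and one writes $\langle\xi,v\rangle(\E^{-\langle\xi,u\rangle}-1)$, bounded by $\|\xi\|^2\|v\|\|u\|$, and the part on $\{\|\xi\|>1\}$, where $\chi(\xi)=0$ and the integrand $\langle\xi,v\rangle\E^{-\langle\xi,u\rangle}$ is bounded by $\|\xi\|\|v\| \le \|\xi\|^2\|v\|$; combined with admissibility condition~\ref{eq:affine-kernel} (giving $\int \|\xi\|^{-2}\,\langle\dmu,x\rangle$-type bounds via~\eqref{eq:normintest}) this yields integrability and the bound~\eqref{eq:dR_bdd}. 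The Lipschitz bound~\eqref{eq:dR_lipschitz} follows from $|\E^{-\langle\xi,u_0\rangle} - \E^{-\langle\xi,u_1\rangle}| \le \|\xi\|\|u_0-u_1\|$ applied under the integral, again using $\|\xi\|\cdot\|\xi\|^{-2} = \|\xi\|^{-1}$ is not integrable in general, so one instead keeps $\langle\xi,v\rangle$ paired to get $\|\xi\|^2$ in the numerator and cancels against $\|\xi\|^{-2}$, leaving a bound by the total mass $\|\mu(\cHpluso)\|$.

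Next I would prove the one-sided differentiability statements~\eqref{eq:R-one-sided-derivative}--\eqref{eq:F-one-sided-dif-second}. For the first derivative of $R$: fix $u,v\in\cHplus$; for $\lambda>0$ small, $u+\lambda v \in \cHplus$, and $\tfrac{1}{\lambda}(R(u+\lambda v) - R(u))$ equals $B^*(v)$ minus $\tfrac{1}{\lambda}\int (\E^{-\langle\xi,u+\lambda v\rangle} - \E^{-\langle\xi,u\rangle})\,\dmu/\|\xi\|^2$ (the $\langle\chi(\xi),\cdot\rangle$-terms cancel in the difference). Pointwise in $\xi$, the integrand converges to $\langle\xi,v\rangle\E^{-\langle\xi,u\rangle}$ as $\lambda\to 0+$; for the dominating function, use the mean value theorem — $|\E^{-\langle\xi,u+\lambda v\rangle}-\E^{-\langle\xi,u\rangle}| \le \lambda\langle\xi,v\rangle\E^{-\langle\xi, u+\theta\lambda v\rangle}\le \lambda\|\xi\|\|v\|$, and on $\{\|\xi\|\le 1\}$ one improves this to $\lambda\|\xi\|^2\|v\|\|u\| + O(\lambda)$ type bounds — so that after dividing by $\lambda$ we get a $\xi$-integrable dominating function and dominated convergence (Theorem~\ref{thm:DCT}) applies, giving~\eqref{eq:R-one-sided-derivative}. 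The second one-sided derivative~\eqref{eq:R-one-sided-dif-second} is obtained analogously by differentiating $\D_+R(u+\lambda w)(v) = \D R(u+\lambda w)(v)$ in $\lambda$: the $B^*(v)$ and $\langle\chi(\xi),v\rangle$ terms are constant in $\lambda$ and drop out, and one is left to differentiate $\int \langle\xi,v\rangle\E^{-\langle\xi,u+\lambda w\rangle}\,\dmu/\|\xi\|^2$, where the difference quotient converges to $-\langle\xi,v\rangle\langle\xi,w\rangle\E^{-\langle\xi,u\rangle}$ with dominating function $\|v\|\|w\|$ (uniform), so dominated convergence again yields~\eqref{eq:R-one-sided-dif-second}. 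The statements for $F$,~\eqref{eq:F-one-sided-derivative}--\eqref{eq:F-one-sided-dif-second}, are proved identically, replacing $\mu/\|\xi\|^2$ by $m$ and using $\int\|\xi\|^2\,\dm<\infty$ instead of admissibility~\ref{eq:affine-kernel}; note the typo-looking $\D F(u)(v,w)$ on the right of~\eqref{eq:F-one-sided-dif-second} should read $\D^2 F(u)(v,w)$.

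Finally, quasi-monotonicity of $\D R(u)$: let $v_1\leq_{\cHplus} v_2$ in $\cH$ and $h\in\cHplus$ with $\langle v_2-v_1, h\rangle = 0$; writing $w = v_2 - v_1 \in \cHplus$, I must show $\langle \D R(u)w, h\rangle \ge 0$. Expanding, $\langle \D R(u)w, h\rangle = \langle B^*(w), h\rangle + \int (\langle\xi,w\rangle\E^{-\langle\xi,u\rangle} - \langle\chi(\xi),w\rangle)\,\langle\dmu,h\rangle/\|\xi\|^2$. The key point is that admissibility condition~\ref{eq:linear-operator} states $\langle B^*(w),h\rangle - \int\langle\chi(\xi),w\rangle\,\langle\dmu,h\rangle/\|\xi\|^2 \ge 0$ whenever $\langle w,h\rangle = 0$; adding the nonnegative term $\int\langle\xi,w\rangle\E^{-\langle\xi,u\rangle}\,\langle\dmu,h\rangle/\|\xi\|^2 \ge 0$ (nonnegative since $\xi, w, h, u\in\cHplus$ so $\langle\xi,w\rangle\ge 0$ and $\langle\dmu,h\rangle$ is a nonnegative measure) gives exactly $\langle\D R(u)w,h\rangle\ge 0$. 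Continuity of $u\mapsto \D^2R(u)(v,w)$ follows from dominated convergence: if $u_n\to u$ in $\cH$, then $\langle\xi,v\rangle\langle\xi,w\rangle\E^{-\langle\xi,u_n\rangle}/\|\xi\|^2 \to \langle\xi,v\rangle\langle\xi,w\rangle\E^{-\langle\xi,u\rangle}/\|\xi\|^2$ pointwise with uniform dominating bound $\|v\|\|w\|$, which is $\|\mu(\cHpluso)\|$-integrable. The main obstacle, and where care is genuinely needed, is managing the small-jump singularity $\|\xi\|^{-2}$ in all the $\mu$-integrals: one must consistently keep one or two factors of $\langle\xi,\cdot\rangle$ (each contributing a factor $\|\xi\|$) paired with the exponential difference so as to produce $\|\xi\|^2$ in the numerator that cancels the singularity — naive bounds that only produce $\|\xi\|$ will fail to be integrable near $0$. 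The rest is routine dominated-convergence bookkeeping.
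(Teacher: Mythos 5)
Your overall strategy is sound and, apart from one technical choice, coincides with the paper's: the bounds \eqref{eq:dR_bdd}--\eqref{eq:d2R_bdd} from the pointwise estimate $|\langle \xi,v\rangle\E^{-\langle\xi,u\rangle}-\langle\chi(\xi),v\rangle|\leq\norm{\xi}\norm{v}(\one_{\{\norm{\xi}>1\}}+\norm{\xi}\norm{u}\one_{\{\norm{\xi}\leq 1\}})$ together with \eqref{eq:normintest}, quasi-monotonicity of $\D R(u)$ by adding the nonnegative exponential term to admissibility condition \ref{eq:linear-operator}, and continuity of $u\mapsto\D^2R(u)(v,w)$ by dominated convergence. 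The one methodological difference is that the paper justifies the interchange of limit and integral in the first derivative by observing that $\lambda\mapsto\E^{-\langle\xi,u+\lambda v\rangle}$ is convex, so the difference quotients are monotone in $\lambda$ and the \emph{monotone} convergence theorem applies; you use a mean-value/Taylor bound and the dominated convergence theorem (Theorem~\ref{thm:DCT}). Both work; the paper's route avoids having to exhibit an integrable majorant, yours requires one.

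And that is exactly where your write-up has a concrete error. You assert that in $\tfrac{1}{\lambda}\bigl(R(u+\lambda v)-R(u)\bigr)$ ``the $\langle\chi(\xi),\cdot\rangle$-terms cancel in the difference.'' They do not: the integrand of $R$ is $\E^{-\langle\xi,\cdot\rangle}-1+\langle\chi(\xi),\cdot\rangle$, so the difference quotient carries the extra summand $+\langle\chi(\xi),v\rangle$ inside the integral. This is not cosmetic. Without it, the integrand near $\xi=0$ is of order $\norm{\xi}\norm{v}/\norm{\xi}^{2}=\norm{\xi}^{-1}$ against $\dmu$, which the admissibility conditions do not control, so your proposed integral $\int(\E^{-\langle\xi,u+\lambda v\rangle}-\E^{-\langle\xi,u\rangle})\,\dmu/\norm{\xi}^{2}$ need not even converge; likewise your claimed improvement of the mean-value bound to order $\lambda\norm{\xi}^{2}\norm{v}\norm{u}$ on $\{\norm{\xi}\leq 1\}$ is false for the bare exponential difference (it is genuinely of order $\lambda\norm{\xi}$ there). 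The correct second-order bound is obtained only for the combination
\begin{equation*}
\Bigl|\tfrac{1}{\lambda}\bigl(\E^{-\langle\xi,u+\lambda v\rangle}-\E^{-\langle\xi,u\rangle}\bigr)+\langle\xi,v\rangle\Bigr|
\leq \langle\xi,v\rangle\bigl(1-\E^{-\langle\xi,u\rangle}\bigr)+\tfrac{\lambda}{2}\langle\xi,v\rangle^{2}
\leq \norm{\xi}^{2}\norm{v}\bigl(\norm{u}+\tfrac{\lambda}{2}\norm{v}\bigr)
\end{equation*}
on $\{\norm{\xi}\leq 1\}$, i.e.\ precisely by retaining the $\chi$-term you discarded. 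Your closing paragraph shows you understand that a factor $\norm{\xi}^{2}$ must be produced in the numerator, so this is a repairable slip rather than a wrong approach --- but as written the domination step fails. (Your observation that the right-hand side of \eqref{eq:F-one-sided-dif-second} should read $\D^{2}F(u)(v,w)$ is correct.)
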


\begin{proof}
The quasi-monotonicity of $\D R$ follows directly from the admissibility assumption. As 
\begin{equation*}
 \left|
    \langle \xi, v\rangle\E^{-\langle \xi,u\rangle}-\langle \chi(\xi),v\rangle
 \right|
 \leq 
 \| \xi \| \|v\| (\one_{\{\| \xi\| > 1 \}} + \| \xi \| \| u \| \one_{\{\|\xi\|\leq 1\}} )
\end{equation*}
for all $u,\xi\in \cH^+$ and all $v\in \cH$, we obtain~\eqref{eq:dR_bdd}. Estimate~\eqref{eq:dR_lipschitz}
is obtained similarly, estimate~\eqref{eq:d2R_bdd} is immediate from the definition, and the 
continuity of $u\mapsto \D^2 R(u)(v,w)$ follows from the dominated convergence theorem (Theorem~\ref{thm:DCT}).\par
We next confirm the asserted differentiability of the map $u\mapsto
  R(u)$. Let $u,v\in\cHplus$ then
\begin{align}
  \D_{+}R(u)(v)&= \lim_{\lambda \to  0+}\frac{R(u+\lambda v)-R(u)}{\lambda} \nonumber\\
               &=B^{*}(v)-\lim_{\lambda\to 0+}\int_{\cHpluso}\frac{\E^{-\langle
                 \xi,u+\lambda v\rangle}-\E^{-\langle
                 \xi,u\rangle}}{\lambda}+\langle \chi(\xi),
                 v\rangle\frac{\mu(\D\xi)}{\norm{\xi}^{2}}\nonumber \\
               &=B^{*}(v)-\int_{\cHpluso}\lim_{\lambda\to 0+}\frac{\E^{-\langle
                 \xi,u+\lambda v\rangle}-\E^{-\langle
                 \xi,u\rangle}}{\lambda}+\langle \chi(\xi),
                 v\rangle\frac{\mu(\D\xi)}{\norm{\xi}^{2}} \label{eq:R-one-sided-derivative-interchange}\\
               &=B^{*}(v)+\int_{\cHpluso}\langle \xi, v\rangle\E^{-\langle \xi,u\rangle}-\langle \chi(\xi),
                 v\rangle \frac{\mu(\D\xi)}{\norm{\xi}^{2}}.\nonumber
\end{align}
where the interchange of the integral and the limit in equation
\eqref{eq:R-one-sided-derivative-interchange} is justified, since
$\lambda\mapsto \E^{-\langle u+\lambda v,\xi \rangle}$ is a convex
mapping, hence its differential quotient is non-decreasing in $\lambda$ and non-negative 
% and as such the differential quotient will be non-increasing and is non-negative.
% thus for
% a sequence $(\lambda_{n})_{n\in\MN}\subseteq \MRplus$ with  $\lambda_{n} \to 0$
% and $\lambda_{n+1}\leq \lambda_{n}$ for every $n\in\MN$,
% there exits a $k\in\MN$ such that 
% \begin{align*}
%  \frac{\E^{-\langle
%                  \xi,u+\lambda_{k} v\rangle}-\E^{-\langle
%                  \xi,u\rangle}}{\lambda_{k}}- \langle \chi(\xi),v\rangle,
% \end{align*}
% is an integrable majorant for the sequence. The integrability follows by noting that
% \begin{align*}
% \E^{-\langle \xi,u+\lambda_{k} v\rangle}-\E^{-\langle \xi,u\rangle}=\E^{-\langle
%   \xi,u\rangle}\big(\E^{-\langle \xi,\lambda_{k} v\rangle}-1\big)\leq
%   \E^{-\langle \xi,u\rangle}\langle \xi,\lambda_{k} v\rangle.
    %   \end{align*}
and thus we can apply the monotone convergence theorem to obtain that the one-sided
derivative of $R$ exists in $u$ in the direction $v$
and~\eqref{eq:R-one-sided-derivative} holds. An analogous derivation for $F$
leads to equation~\eqref{eq:F-one-sided-derivative}.\par 
% By an analogous calculation
% we see that the one-sided G\^ateaux derivative of first order is again differentiable, now
% taking into account the formula for the first one-sided G\^ateaux derivative in
% \eqref{eq:R-one-sided-derivative} and considering the definition of the second 
% one-sided derivative in Definition \ref{def:one-sided-derivative} we obtain
% the desired formula in equation \eqref{eq:R-one-sided-dif-second}:
% \begin{align*}
%   \D^{2}_{+}R(u)(v,w)&=\lim_{\lambda\to 0+}\frac{\D_{+}R(u+\lambda w)(v)-\D_{+}R(u)(v)}{\lambda}\\
%                      &=\lim_{\lambda\to 0+}\int_{\cHplus\setminus0}\langle
%                        \xi, v\rangle\frac{(\E^{-\langle \xi, u+\lambda w
%                        \rangle}-\E^{-\langle \xi,
%                        u\rangle})}{\lambda}\frac{\mu(\D\xi)}{\norm{\xi}^{2}}\\
%                      &=\int_{\cHpluso}\langle\xi, v\rangle \langle \xi,
%                        w\rangle \E^{-\langle \xi, u\rangle}\frac{\mu(\D\xi)}{\norm{\xi}^{2}},
% \end{align*}
% where again the interchange of the limit and integral is justified by
% convexity and thus monotonicity of the differential quotients. From formula
% \eqref{eq:R-one-sided-dif-second} we see that $\D^{2}_{+}$ is indeed a bilinear form in $v$ and $w$ and continuous. 
The proof that the \emph{second} one-sided directional derivative of both $F$
and $R$ exist and
that~\eqref{eq:R-one-sided-dif-second}--\eqref{eq:F-one-sided-dif-second} hold
is again analogous. Note in particular that for the existence of the second derivatives we use that the measures $\dm$ and $\frac{\dmu}{\|\xi\|^2}$ have finite second moments.
\end{proof}

Proposition~\ref{prop:gateaux-diff} below states that the solution
$(\phi(\cdot,u),\psi(\cdot,u))$ to \eqref{eq:riccati-conc} is such that the
mappings $u\mapsto \psi(t,u)$ and $u\mapsto \phi(t,u)$ are twice one-sided differentiable in $0$ in all directions. The techniques to prove this are well-known, however, as we are dealing with a non-standard concept of differentiability we provide the details of the proof in Appendix~\ref{sec:proof-lemma-refl}.

\begin{remark}
In fact, one can prove that $u\mapsto \psi(t,u)$ and $u\mapsto \phi(t,u)$
are twice one-sided differentiable in $u$ for every $u\in \cH^+$, in every direction $(v,w)\in \cH^+\times \cH^+$. We do not need this, but we \emph{do} need the existence of the first derivative in $u\in \cH^+$ for $u$ 
sufficiently small in order to obtain the second derivative. See also Appendix~\ref{sec:proof-lemma-refl}.
\end{remark}
% ,
% i.e., continuous differentiability of the right-hand side of a differential
% equation gives continuous differentiable dependence of the solution on the
% initial value. Since our setting is special enough, we added a full proof in
% the Appendix \ref{sec:proof-lemma-refl}, we should also reduce to the particular statement at $u=0$:
\begin{proposition}\label{prop:gateaux-diff}
Let $(b,B,m,\mu)$ be an admissible parameter set conform Definition~\ref{def:admissibility}, 
for every $u\in\cHplus$ let $(\phi(\cdot,u),\psi(\cdot,u))$ be the solution to \eqref{eq:riccati-conc}, and let $\D R$, $\D F$, $\D^2 R$, and $\D^2 F$ be defined by~\eqref{eq:def_dR}--\eqref{eq:def_d2F}.
Then the maps $u\mapsto \psi(t,u)$ and $u\mapsto \phi(t,u)$
are twice one-sided differentiable in $0$ in all directions $(v,w)\in\cHplus \times \cHplus$. Moreover, 
$\D_{+}\psi(t,0)(v),\D_+^2\psi(t,0)(v,w)\in \cH^+$ for all $v,w\in \cH^+$ and the mappings $t\mapsto
\D_{+}\phi(t,0)(v)$ and $t\mapsto\D_{+}\psi(t,0)(v)$  solves the following pair of differential
equations:
  \begin{align}
    \frac{\partial}{\partial t} \D_{+}\phi(t,0)(v)= \D F(0)\big(\D_{+}\psi(t,0)(v)\big),
    \quad t\geq 0;\quad \D_{+}\phi(0,0)(v)=0, \label{eq:dif-eq-phi-1}\\ 
    \frac{\partial}{\partial t} \D_{+}\psi(t,0)(v)= \D R(0)\big(\D_{+}\psi(t,0)(v)\big),
    \quad t\geq 0;\quad \D_{+}\psi(0,0)(v)=v, \label{eq:dif-eq-psi-1}
  \end{align}
%   with initial values
%   \begin{align*}
%     \D_{+}\psi(0,0)(v)=v \quad\text{and}\quad \D_{+}\phi(0,0)(v)=0.
%   \end{align*}
  Moreover, the
  mappings $t\mapsto\D^{2}_{+}\psi(t,0)(v,w)$ and
  $t\mapsto\D^{2}_{+}\phi(t,0)(v,w)$ solve the following pair of differential equations:
  \begin{align}
   \frac{\partial}{\partial t} \D^{2}_{+}\phi(t,0)(v,w)
   &= \D^{2}F(0)(\D_{+}\psi(t,0)(v),\D_{+}\psi(t,0)(w))
   \nonumber\\ & \quad + \D F(0)\big(\D^{2}_+\psi(t,0)(v,w)\big),
                 \quad t\geq 0;\quad \D_{+}^2\phi(0,0)(v,w)=0,\label{eq:dif-eq-phi-2}\\
    \frac{\partial}{\partial t} \D^{2}_{+}\psi(t,0)(v,w)&=
    \D^{2} R(0)\big(\D_{+}\psi(t,0)(v),\D_{+}\psi(t,0)(w)\big) 
   \nonumber\\ & \quad + 
    \D R(0)(\D^{2}_{+}\psi(t,0)(v,w)),
    \quad t\geq 0;\quad \D_{+}^2\psi(0,0)(v,w)=0.\label{eq:dif-eq-psi-2}
  \end{align}
%   with initial values
%   \begin{align*}
%     \D^{2}_{+}\psi(0,0)(v,v)=0 \quad\text{and}\quad \D^{2}_{+}\phi(0,0)(v,v)=0.
%   \end{align*}
\end{proposition}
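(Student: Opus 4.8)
The plan is to differentiate the integral form of the generalized Riccati equations~\eqref{eq:riccati-conc}, namely $\psi(t,u)=u+\int_0^t R(\psi(s,u))\,\D s$ and $\phi(t,u)=\int_0^t F(\psi(s,u))\,\D s$, with respect to the initial value, controlling the difference quotients by Gronwall's lemma. Since $R(0)=0$ and $F(0)=0$, uniqueness gives $\psi(\cdot,0)\equiv 0$ and $\phi(\cdot,0)\equiv 0$. It is convenient to first treat the \emph{first} one-sided derivative at an arbitrary base point $u_0\in\cHplus$ (only small $u_0$ will be needed), since the second derivative at $0$ in direction $w$ requires the first derivative at the points $\lambda w$. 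Fix $u_0\in\cHplus$ and $v\in\cHplus$ and let $\zeta=\zeta_{u_0,v}$ denote the unique global solution of the linear integral equation $\zeta(t)=v+\int_0^t \D R(\psi(s,u_0))\,\zeta(s)\,\D s$; this exists because $t\mapsto \D R(\psi(t,u_0))$ is norm-continuous (by~\eqref{eq:dR_lipschitz} and the continuity of $\psi(\cdot,u_0)$) and locally bounded in operator norm (by~\eqref{eq:dR_bdd} and~\eqref{eq:expgrowth_sol_riccati}), and $\zeta(t)\in\cHplus$ for all $t$ by the quasi-monotonicity of $\D R(\psi(t,u_0))$ (Lemma~\ref{lem:R-one-sided-dif}) together with Theorem~\ref{thm:comparison-theorem}.

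I would then show that $\lambda^{-1}\big(\psi(t,u_0+\lambda v)-\psi(t,u_0)\big)\to\zeta_{u_0,v}(t)$ as $\lambda\to 0+$, uniformly for $t$ in compact intervals. The key observation is that by the monotonicity~\eqref{eq:monotone_sol_riccati} the increment $\sigma_\lambda(t):=\psi(t,u_0+\lambda v)-\psi(t,u_0)$ lies in $\cHplus$, hence the segment $\{\psi(t,u_0)+s\sigma_\lambda(t):s\in[0,1]\}$ lies in $\cHplus$; since on $\cHplus$ the map $R$ has a one-sided derivative in every direction of $\cHplus$ equal to the bounded operator $\D R(\cdot)$, which is itself Lipschitz in the base point (Lemma~\ref{lem:R-one-sided-dif},~\eqref{eq:R-one-sided-derivative} and~\eqref{eq:dR_lipschitz}), the map $s\mapsto R(\psi(t,u_0)+s\sigma_\lambda(t))$ is continuously differentiable on $[0,1]$, so $R(\psi(t,u_0+\lambda v))-R(\psi(t,u_0))=\int_0^1 \D R\big(\psi(t,u_0)+s\sigma_\lambda(t)\big)\,\sigma_\lambda(t)\,\D s$. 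Subtracting the equation for $\zeta_{u_0,v}$, estimating the resulting error by~\eqref{eq:dR_lipschitz} and the local Lipschitz bound~\eqref{eq:continuity_sol_riccati} for $\psi(t,\cdot)$, and applying Gronwall's lemma shows the difference quotient differs from $\zeta_{u_0,v}$ by a term of order $\lambda$ on $[0,T]$. Hence $\D_{+}\psi(t,u_0)(v)=\zeta_{u_0,v}(t)$; the same argument with $F$ instead of $R$ gives $\D_{+}\phi(t,u_0)(v)=\int_0^t \D F(\psi(s,u_0))\,\zeta_{u_0,v}(s)\,\D s$. Specialising to $u_0=0$ yields that $t\mapsto\D_+\phi(t,0)(v)$ and $t\mapsto\D_+\psi(t,0)(v)$ solve~\eqref{eq:dif-eq-phi-1}--\eqref{eq:dif-eq-psi-1}, and $\D_+\psi(t,0)(v)\in\cHplus$.

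For the second one-sided derivative at $0$ in directions $(v,w)\in\cHplus\times\cHplus$, set $\zeta_\lambda:=\D_{+}\psi(\cdot,\lambda w)(v)=\zeta_{\lambda w,v}$ (available from the previous step for $\lambda\ge 0$ small), $z_v:=\D_+\psi(\cdot,0)(v)$ and $z_w:=\D_+\psi(\cdot,0)(w)$. The difference quotient $\eta_\lambda:=\lambda^{-1}(\zeta_\lambda-z_v)$ satisfies $\eta_\lambda(0)=0$ and $\eta_\lambda(t)=\int_0^t\big[\D R(0)\eta_\lambda(s)+\lambda^{-1}\big(\D R(\psi(s,\lambda w))-\D R(0)\big)\zeta_\lambda(s)\big]\,\D s$. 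To handle the inhomogeneous term I would use the second one-sided differentiability of $R$ (Lemma~\ref{lem:R-one-sided-dif},~\eqref{eq:R-one-sided-dif-second}) with the continuity and bound~\eqref{eq:d2R_bdd} of $\D^2R$: since $\psi(s,\lambda w)\in\cHplus$, one has $\D R(\psi(s,\lambda w))-\D R(0)=\int_0^1\D^2R\big(r\psi(s,\lambda w)\big)\big(\,\cdot\,,\psi(s,\lambda w)\big)\,\D r$, so after dividing by $\lambda$ and using $\lambda^{-1}\psi(s,\lambda w)\to z_w(s)$, $\zeta_\lambda(s)\to z_v(s)$ together with the uniform-in-$s$ bounds from Proposition~\ref{prop:exist-uniq-solut}, dominated convergence (in $r$) shows the inhomogeneity converges, uniformly on $[0,T]$, to $\D^2R(0)\big(z_v(s),z_w(s)\big)$. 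Gronwall's lemma then gives $\eta_\lambda\to\xi$ on $[0,T]$, where $\xi$ is the unique solution of~\eqref{eq:dif-eq-psi-2}; thus $\D^2_{+}\psi(t,0)(v,w)=\xi(t)$, and the analogous computation starting from $\D_+\phi(t,\lambda w)(v)=\int_0^t\D F(\psi(s,\lambda w))\,\zeta_{\lambda w,v}(s)\,\D s$ produces~\eqref{eq:dif-eq-phi-2}. The membership $\D^2_{+}\psi(t,0)(v,w)\in\cHplus$ then follows by applying Theorem~\ref{thm:comparison-theorem} to~\eqref{eq:dif-eq-psi-2}, using the quasi-monotonicity of $\D R(0)$ together with the sign of the forcing term $\D^2R(0)(z_v(s),z_w(s))$, which is fixed by the self-duality of $\cHplus$ and the positivity of $\mu$.

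I expect the difficulty to be bookkeeping rather than any single deep step. Because $\cHplus$ has empty interior, the classical theorems on differentiable dependence on initial data are unavailable, so every differentiation must be carried out explicitly with one-sided derivatives; the fundamental-theorem-of-calculus representations of the increments of $R$ and of $\D R$ have to be justified by checking that the relevant segments remain in $\cHplus$ and that the one-sided derivatives --- which are in fact genuine bounded linear maps in the direction variable and are Lipschitz, resp.\ continuous, in the base point by Lemma~\ref{lem:R-one-sided-dif} --- are regular enough. Moreover every limit must be shown to be uniform in $t$ on compact intervals, which is precisely where the a priori growth and local Lipschitz bounds~\eqref{eq:expgrowth_sol_riccati} and~\eqref{eq:continuity_sol_riccati} of Proposition~\ref{prop:exist-uniq-solut} are used; and the fact that the second derivative at $0$ requires the first derivative at the nearby points $\lambda w$ is the reason the first-derivative step is set up at a general small base point $u_0$ rather than only at $u_0=0$.
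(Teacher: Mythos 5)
Your proposal is correct and follows essentially the same route as the paper's Appendix~\ref{sec:proof-lemma-refl} proof: both establish the first one-sided derivative at a general (small) base point via the fundamental-theorem-of-calculus representation of increments of $R$ along segments in $\cHplus$ (Lemma~\ref{lem:onesided_FTC}), then obtain the second derivative at $0$ from difference quotients of the first, using the $\D^2R$ representation of increments of $\D R$, dominated convergence, and a Gronwall/variation-of-constants argument; the only cosmetic difference is that the paper writes the solution of the linearized equation as an operator exponential and compares exponentials directly, where you subtract integral equations and apply Gronwall. One caveat: by self-duality the forcing term $\D^2R(0)(z_v,z_w)$ is $\leq_{\cH^+}0$ rather than $\geq_{\cH^+}0$, so your concluding comparison argument as stated yields the opposite sign for $\D^2_{+}\psi(t,0)(v,w)$ --- though the paper's own proof is silent on that membership claim as well.
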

\begin{proof}
See Appendix~\ref{sec:proof-lemma-refl}.
\end{proof}
For $u=0$ we derive explicit formulas for the solutions to the pairs of differential
equations in \eqref{eq:dif-eq-psi-1}
% -\eqref{eq:dif-eq-phi-1} 
and
\eqref{eq:dif-eq-psi-2}
% -\eqref{eq:dif-eq-phi-2} 
of Proposition
\ref{prop:gateaux-diff}, as those will be needed for proving Lemma
\ref{lem:uniform-bound-k} in the approximating case and for Proposition \ref{prop:explicit-formula} below. First, note that
\begin{align*}
 \D_{+}R(0)(v)=B^{*}(v)+\int_{\cHplus\cap \{\norm{\xi}\geq 1\}}\langle \xi, v\rangle\frac{\dmu}{\norm{\xi}^{2}}. 
\end{align*}
Recall the definition of $\D R(0)$ from~\eqref{eq:def_dR}.
%  \begin{align}\label{eq:expBhat}
% \E^{t \D R(0)}v \df\exp\Bigl(t\Bigl(B^{*}v+\int_{\cHplus\cap \norm{\xi}\geq 1}\langle \xi, v\rangle\frac{\mu(\D\xi)}{\norm{\xi}^{2}}\Bigr)\Bigr),  \quad v\in\cH,\,t\in [0,\infty).
% \end{align} 
The solution of equation~\eqref{eq:dif-eq-psi-1} is then given by
\begin{align}\label{eq:variational-solution-3-0}
\D_{+}\psi(t,0)(v)&= \E^{t \D R(0)}v.
\end{align}
By inserting formula~\eqref{eq:variational-solution-3-0} into
equation~\eqref{eq:dif-eq-psi-2} (note that $e^{t \D R(0)}v \in \cH^+$) and solving this inhomogeneous linear
equation we obtain
\begin{align}\label{eq:variational-solution-3-1}
 \D^{2}_{+}\psi(t,0)(v,w)
 &=
 \int_{0}^{t}
    \E^{(t-s)\D R(0)}
    \D^2 R(0)
    (\E^{s \D R(0)}v, \E^{s \D R(0)}w)
 \D s.
%     \left(
%         \int_{\cHplus\setminus \set{0}}
%             \langle 
%                 \xi,
%                 \E^{s \D R(0)}v
%             \rangle
%             \langle 
%                 \xi,
%                 \E^{s \D R(0)}w
%             \rangle
%         \frac{\dmu}{\norm{\xi}^{2}}
%     \right)
%  \D s.
\end{align}
% also note that the initial value is $\D^{2}_{+}\psi(0,0)(v,v)=0$.
% Finally we can deduce the analogous formulas for the first- and second one-sided derivatives of $\phi(t,\cdot)$ at $u=0$:
% \begin{align*}
%   \D_{+}\phi(t,0)(v)
%   &=
%     -\int_{0}^{t}
%         \D_{+}F(0)\big(\E^{s \D R(0)}v\big)\D s
%   \\
%   & = -
%   \left\langle 
%     b 
%     +
%     \int_{\cH^+ \cap \{ \| \xi \|\geq 1\}}
%         \xi
%     \,\dm
%     ,
%     \int_{0}^t 
%         \E^{s \D R(0)}v
%     \,\D s
%   \right\rangle                
% \end{align*}
% as well as
% \begin{align}\label{eq:variational-solution-3-3}
% \D^{2}_{+}\phi(t,0)(v,w)
% &=
% \int_{0}^{t} 
%     \int_{\cHpluso}
%         \langle
%             \xi, \D^{2}_{+}\psi(s,0)(v,w)
%         \rangle 
%         -
%         \langle 
%             \xi,
%             \E^{s \D R(0)}v
%         \rangle
%         \langle 
%             \xi,
%             \E^{s \D R(0)}w
%         \rangle
%     \, \dm
% \,\D s
%   \nonumber\\
%   &\quad 
%   + 
% \int_{0}^{t}
%     \langle b, \D^{2}_{+}\psi(s,0)(v,w)\rangle 
% \D s,
% \end{align}
% where for the sake of readability we did not insert the term $\D^{2}_{+}\psi(s,0)(v,w)$ from equation~\eqref{eq:variational-solution-3-1}.
%
%
\section{Existence of affine pure-jump processes in
  \texorpdfstring{$\cHplus$}{the space of positive self-adjoint Hilbert-Schmidt operators}}\label{sec:exist-affine-pure}
In this section we use the well-posedness and regularity results of the
generalized Riccati equations~\eqref{eq:riccati-conc} from Section
\ref{sec:an-associated-class} to show the existence of an affine process in
$\cHplus$ associated to a given admissible parameter set $(b, B, m, \mu)$
conform Definition \ref{def:admissibility}. Due to the lack of local compactness of the underlying state space, standard Feller theory cannot be employed in our context and 
we use the theory of generalized Feller processes as introduced in \cite{DT10}.
The existence proof is based on the
approximation procedure roughly sketched at the end of Section
\ref{sec:setting-main-result}. In this section we rigorously build up this
approximation procedure in the \emph{generalized Feller} setting. 
 Essentially,
we approximate the transition semigroup $(P_{t})_{t\geq 0}$, that can be
associated to an affine process in $\cHplus$ with infinite-activity jump
behavior, by simpler transition semigroups corresponding
to affine finite-activity jump processes. The considered semigroups are
strongly continuous semigroups on a certain Banach space of real functions being
weakly-continuous on compact sets and having at most quadratic growth in the
tails. We briefly introduce the \textit{generalized Feller} setting, that is
we  define \emph{generalized Feller} semigroups and processes in Section
\ref{ssec:prel-gener-fell} and consequently in Section
\ref{ssec:gener-fell-semigr} we apply approximation results from the theory of
strongly continuous semigroups adapted to the generalized Feller setting by \cite{CT20}.   
\subsection{Preliminaries: generalized Feller semigroups}\label{ssec:prel-gener-fell}  
We recall the concept of generalized Feller semigroups introduced in
\cite{DT10} and further developed in \cite{CT20}.\\ Throughout this section let $(Y,\tau)$ be a complete regular Hausdorff space. 

\begin{definition}
A function $\rho\colon Y \to (0,\infty)$ such that for every $R>0$ the set
$K_{R}\df\set{x\in Y:\;\rho(x)\leq R}$ is compact is called an \emph{admissible
weight function}. The pair $(Y,\rho)$ is called \emph{weighted space}. 
\end{definition}

Let $\rho\colon Y \rightarrow (0,\infty)$ be an admissible weight function. 
For $f\colon Y \rightarrow \MR$ we define $\| f \|_{\rho} \in [0,\infty]$ by
\begin{equation}\label{eq:weighted_norm}
 \| f \|_{\rho} \df \sup_{x\in Y} \tfrac{|f(x)|}{\rho(x)}.
\end{equation}
Note that $\| \cdot \|_{\rho}$ defines a norm on the vector space 
% \begin{align*}
$
  B_{\rho}(Y)\df
  \set{
    f\colon Y\to \MR 
    \colon 
    \| f \|_{\rho} < \infty
    }
$
which renders $(B_{\rho}(Y),\|\cdot \|_{\rho})$ a Banach space.
% \end{align*}
Recall that $C_b(Y)$ denotes the space of bounded $\R$-valued $\tau$-continuous functions on $Y$. 
% $f\colon Y \rightarrow \MR$ endowed with the 
% norm $\|f\|_{ C_b(Y)} := \sup_{x\in Y} |f(x)|$\,. 
As any admissible weight function satisfies $\inf_{x\in Y}\rho(x)>0$, we have that $C_b(Y) \subseteq  B_{\rho}(Y)$.
%We further define the following
%space of functions on $Y$ satisfying a certain growth condition.
\begin{definition}
 We define $\cB_{\rho}(Y)$ to be the closure of $C_{b}(Y)$ in $B_{\rho}(Y)$.
 \end{definition}
The following useful characterization of $\cB_{\rho}(Y)$ is proven
in~\cite[Theorem 2.7]{DT10}:
\begin{theorem}\label{thm:charac_Brho}
 Let $(Y,\rho)$ be a weighted space. Then $f\in \cB_{\rho}(Y)$
 if and only if $f\rvert_{K_{R}}\in C(K_{R})$ for all $R>0$
and 
\begin{equation}\label{eq:function-growth}
\lim_{R\to\infty}\sup_{x\in Y\setminus K_{R}}\tfrac{|f(x)|}{\rho(x)}=0\,.
\end{equation}
\end{theorem}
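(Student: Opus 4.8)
Write $\fA$ for the collection of functions $f\colon Y\to\MR$ such that $f|_{K_R}\in C(K_R)$ for every $R>0$ and such that~\eqref{eq:function-growth} holds; the goal is to show $\cB_\rho(Y)=\fA$. For the inclusion $\cB_\rho(Y)\subseteq\fA$ I would argue that $\fA$ is a $\|\cdot\|_\rho$-closed linear subspace of $B_\rho(Y)$ that contains $C_b(Y)$; since $\cB_\rho(Y)$ is by definition the $\|\cdot\|_\rho$-closure of $C_b(Y)$, this is enough. Linearity is immediate (the tail condition~\eqref{eq:function-growth} is subadditive), and $C_b(Y)\subseteq\fA$ because a bounded continuous function restricts continuously to each compact $K_R$ and satisfies $\sup_{x\in Y\setminus K_R}|f(x)|/\rho(x)\le\|f\|_{C(Y)}/R\to0$, using that $\rho>R$ off $K_R$. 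For closedness, suppose $f_n\in\fA$ and $\|f_n-f\|_\rho\to0$: on $K_R$ we have $\rho\le R$, so $\sup_{K_R}|f_n-f|\le R\,\|f_n-f\|_\rho\to0$, exhibiting $f|_{K_R}$ as a uniform limit of continuous functions, hence continuous; and $\sup_{Y\setminus K_R}|f|/\rho\le\sup_{Y\setminus K_R}|f_n|/\rho+\|f-f_n\|_\rho$, which goes to $0$ upon first choosing $n$ and then $R$ large.

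For the reverse inclusion $\fA\subseteq\cB_\rho(Y)$, fix $f\in\fA$ and set $M_n\df\sup_{K_n}|f|$ (finite, since $K_n$ is compact and $f|_{K_n}$ continuous) and $\delta_n\df\sup_{x\in Y\setminus K_n}|f(x)|/\rho(x)$, so $\delta_n\to0$ by~\eqref{eq:function-growth}. Since $K_n$ is a compact subset of the completely regular Hausdorff space $Y$, it is $C^{*}$-embedded in $Y$ (embed $K_n$ as a closed subset of the Stone--Čech compactification $\beta Y$, extend $f|_{K_n}$ to $\beta Y$ by Tietze, and restrict to $Y$), so there is $\tilde f_n\in C_b(Y)$ with $\tilde f_n|_{K_n}=f|_{K_n}$; truncating at height $M_n$ we may assume $\|\tilde f_n\|_{C(Y)}\le M_n$. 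Then $f-\tilde f_n$ vanishes on $K_n$, while on $Y\setminus K_n$ one has $\rho>n$, so $\|f-\tilde f_n\|_\rho\le\delta_n+M_n/n$. The crucial point is a two-scale bound on $M_n$: for any fixed $n_0$ and any $x\in K_n$, either $x\in K_{n_0}$ and $|f(x)|\le M_{n_0}$, or $\rho(x)\in(n_0,n]$ and $|f(x)|\le\delta_{n_0}\,\rho(x)\le\delta_{n_0}n$; hence $M_n\le\max(M_{n_0},\delta_{n_0}n)$ and $\limsup_{n\to\infty}M_n/n\le\delta_{n_0}$. Letting $n\to\infty$ and then $n_0\to\infty$ gives $\|f-\tilde f_n\|_\rho\to0$, so $f\in\overline{C_b(Y)}=\cB_\rho(Y)$.

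I expect the extension step to be the main obstacle: the argument hinges on producing, for every compact sublevel set $K_n$, a genuinely global bounded continuous function on $Y$ extending $f|_{K_n}$, which is exactly where complete regularity of $Y$ is used (so that compact subsets are $C^{*}$-embedded). The subtlety is that such an extension is completely uncontrolled on $Y\setminus K_n$, so one cannot hope $\tilde f_n$ itself to decay; the two-scale splitting of $M_n/n$ against the tail quantities $\delta_{n_0}$ is precisely what rescues the approximation, and care is needed there because $\rho$ is merely an admissible weight and need not be continuous, so one must phrase everything in terms of the sublevel sets $K_R$ rather than pointwise properties of $\rho$.
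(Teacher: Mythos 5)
Your proof is correct. The paper does not prove this statement itself but refers to \cite[Theorem 2.7]{DT10}, whose argument is essentially the one you give: the inclusion $\cB_{\rho}(Y)\subseteq\fA$ because the two stated conditions cut out a $\|\cdot\|_{\rho}$-closed subspace of $B_{\rho}(Y)$ containing $C_b(Y)$, and the converse by extending $f\rvert_{K_n}$ to a bounded continuous function on $Y$ (using complete regularity, so that compact sets are $C^{*}$-embedded) and controlling $\|f-\tilde f_n\|_{\rho}\leq\delta_n+M_n/n$ via the two-scale estimate $\limsup_{n}M_n/n\leq\delta_{n_0}$.
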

%

%We can realize strongly continuous semigroups on the space
%$\cB_{\rho}(Y)$ (see \cite{DT10}). 
We can now present the definition of a generalized Feller semigroup, as introduced in \cite[Section 3]{DT10}.
\begin{definition}\label{def:genFeller-semigroup}
A family of bounded linear operators $(P_t)_{t\geq 0}$ in $\cL(\cB_{\rho}(Y))$
is called a {\it generalized Feller semigroup (on $\cB_{\rho}(Y)$)}, if
\begin{enumerate}
\item \label{semigroup1} $P_0 = I$, the identity on $\cB_{\rho}(Y)$, 
\item  \label{semigroup2}  $P_{t+s} = P_t P_s$ for all $t,s \geq 0$, 
\item  \label{semigroup3}  $\lim\limits_{t\to 0+}P_{t}f(x)=f(x)$ for all $f\in \cB_{\rho}(Y)$ and $x\in Y$,
\item   \label{semigroup4} there exist constants $C\in\MR$ and $\varepsilon>0$ such that $\norm{P_{t}}_{\cL(\cB_{\rho}(Y))}\leq C$ for all $t\in [0,\varepsilon]$,
\item  \label{semigroup5}  $(P_{t})_{t\geq 0}$ is a positive semigroup, i.e., $P_{t}f\geq 0$ for all $t\geq 0$ and
  for all $f\in \cB_{\rho}(Y)$ satisfying $f\geq0$.
\end{enumerate}
\end{definition}
By \cite[Theorem 3.2]{DT10} any generalized Feller semigroup is strongly continuous.
% \begin{theorem}\label{thm:gfsg_stonglycont}
% Assume $(P_{t})_{t\geq 0}$ satisfies \ref{semigroup1}-\ref{semigroup4} in Definition \ref{def:genFeller-semigroup}. 
% Then $(P_{t})_{t\geq 0}$ is a
% strongly continuous semigroup on $\cB_{\rho}(Y)$.  
% \end{theorem}
Moreover, generalized Feller semigroups allow for a Kolmogorov extension theorem, see \cite[Theorem 2.11]{CT20}
for a proof:
\begin{theorem}\label{thm:Kol-extension-theorem}
Let $(P_t)_{t \geq 0}$ be a generalized Feller semigroup on $\cB_{\rho}(Y)$ satisfying $P_t1=1$ for all $t\geq 0$. 
Then there exists a filtered measurable space $(\Omega, (\mathcal{F}_t)_{t\geq0})$ with a
right-continuous filtration and a family of functions $X_t\colon \Omega \rightarrow Y$, $t\geq 0$, such that $X_t$ is $\cF_{t}$ measurable for all $t\geq 0$ and for any initial value $x \in Y$ there exists a probability measure $\mathbb{P}_{x}$ such that
\begin{equation}\label{eq:GF-process}
\mathbb{E}_{\mathbb{P}_{x}} [f(X_t)] = P_tf(x)
\end{equation}
for every $t\geq 0$ and every $f \in \cB_{\rho}(Y)$. Moreover, for all $x\in Y$ the process $(X_t)_{t\geq 0}$ 
is a time-homogeneous $\mathbb{P}_{x}$-Markov process, i.e., for all $x\in Y$, $0\leq s < t$, $f\in \cB_{\rho}(Y)$ we have 
\begin{align}\label{eq:Kol-extension-theorem-1}
\mathbb{E}_{\mathbb{P}_{x}}[f(X_t)\,|\, \cF_s] = P_{t-s}f(X_s),  
\end{align}
almost surely with respect to $\mathbb{P}_{x}$.
\end{theorem}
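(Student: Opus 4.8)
The plan is to recover a Markov process from $(P_t)_{t\ge0}$ along the classical route: construct transition kernels from the semigroup, assemble a consistent family of finite-dimensional distributions, invoke Kolmogorov's extension theorem, and upgrade to a right-continuous filtration. Throughout, the compactness of the sublevel sets $K_R$ of $\rho$ will play the role that local compactness plays in the ordinary Feller setting.

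\emph{Transition kernels.} Fix $t\ge0$ and $x\in Y$. The functional $\ell_{t,x}\colon f\mapsto P_tf(x)$ is linear, positive by the positivity in Definition~\ref{def:genFeller-semigroup}, satisfies $\ell_{t,x}(1)=1$ since $P_t1=1$, and obeys $|\ell_{t,x}(f)|\le\|P_t\|_{\cL(\cB_\rho(Y))}\,\rho(x)\,\|f\|_\rho$ on $\cB_\rho(Y)$; its restriction to $C_b(Y)$ is thus a positive normalized linear functional. The technical heart is to represent $\ell_{t,x}$ by a genuine \emph{probability} Radon measure $p_t(x,\cdot)$ on $Y$. I would first check tightness of $\ell_{t,x}\rvert_{C_b(Y)}$: for $R>0$ one has $\psi_R\df((\rho-R)^+/R)\wedge 1\in\cB_\rho(Y)$ (by Theorem~\ref{thm:charac_Brho}) with $\one_{Y\setminus K_{2R}}\le\psi_R\le 1$ and $\|\psi_R\|_\rho\le 1/R$, so any $f\in C_b(Y)$ with $0\le f\le 1$ vanishing on $K_{2R}$ satisfies $f\le\psi_R$ and hence $\ell_{t,x}(f)\le\ell_{t,x}(\psi_R)\le\|P_t\|_{\cL(\cB_\rho(Y))}\rho(x)/R$. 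Since $Y$ is completely regular Hausdorff and $\sigma$-compact (being the union of the compact sublevel sets of $\rho$), a tight positive normalized functional on $C_b(Y)$ is integration against a Radon probability measure $p_t(x,\cdot)$ — this is the Riesz-type representation of $\cB_\rho(Y)^*$ carried out in \cite{DT10}. The same estimate yields $p_t(x,Y\setminus K_{2R})\le\|P_t\|_{\cL(\cB_\rho(Y))}\rho(x)/R$ and, via $\int(\rho\wedge m)\,\D p_t(x,\cdot)=\ell_{t,x}(\rho\wedge m)\le\|P_t\|_{\cL(\cB_\rho(Y))}\rho(x)$ and monotone convergence, $\int_Y\rho\,\D p_t(x,\cdot)\le\|P_t\|_{\cL(\cB_\rho(Y))}\rho(x)<\infty$; dominated convergence then upgrades $P_tf(x)=\int_Yf\,\D p_t(x,\cdot)$ from $f\in C_b(Y)$ to all $f\in\cB_\rho(Y)$, since such $f$ are $\cB_\rho(Y)$-norm limits of $C_b(Y)$ functions dominated by a multiple of $\rho$. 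Measurability of $x\mapsto p_t(x,A)$ for Borel $A$ follows from continuity of $x\mapsto P_tf(x)$ for $f\in C_b(Y)$ together with a Dynkin-class argument passing from open to Borel sets.

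\emph{Chapman--Kolmogorov and Kolmogorov extension.} The first three properties of Definition~\ref{def:genFeller-semigroup} and $P_t1=1$ translate into $p_0(x,\cdot)=\delta_x$, $p_t(x,Y)=1$, and $\int_Yp_s(\xi,A)\,p_t(x,\D\xi)=p_{t+s}(x,A)$. For $x\in Y$ and $0=t_0<t_1<\dots<t_n$ define the candidate law on $Y^{n+1}$ by the iterated kernels $\mu^x_{t_0,\dots,t_n}(\D\xi_0\cdots\D\xi_n)\df\delta_x(\D\xi_0)\,p_{t_1-t_0}(\xi_0,\D\xi_1)\cdots p_{t_n-t_{n-1}}(\xi_{n-1},\D\xi_n)$; Chapman--Kolmogorov makes this family consistent under insertion and deletion of time points, and each member is a Radon probability measure since its one-dimensional marginals are. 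The Radon/tight version of Kolmogorov's extension theorem — valid for consistent families of Radon laws on a completely regular space, again using that the compact sublevel sets make the marginals Radon — then yields a probability measure $\mathbb P_x$ on $\Omega\df Y^{[0,\infty)}$ with these finite-dimensional distributions; take $X_t$ to be the coordinate maps and $\mathcal F^0_t\df\sigma(X_u\colon u\le t)$. By construction $\mathbb E_{\mathbb P_x}[f(X_t)]=P_tf(x)$ for $f\in C_b(Y)$, hence for all $f\in\cB_\rho(Y)$ by the extension above, and the multiplicative structure of the $\mu^x_{t_0,\dots,t_n}$ together with Chapman--Kolmogorov gives the Markov property $\mathbb E_{\mathbb P_x}[f(X_t)\mid\mathcal F^0_s]=P_{t-s}f(X_s)$.

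\emph{Right-continuous filtration, and the main obstacle.} Set $\mathcal F_t\df\bigcap_{r>t}\mathcal F^0_r$; this is automatically right-continuous and contains $\sigma(X_t)$. To carry the Markov property over, use that for $s<r<t$ one has $\mathbb E_{\mathbb P_x}[f(X_t)\mid\mathcal F^0_r]=P_{t-r}f(X_r)$, and let $r\downarrow s$: the left sides converge $\mathbb P_x$-a.s.\ and in $L^1$ to $\mathbb E_{\mathbb P_x}[f(X_t)\mid\mathcal F_s]$ by backward martingale convergence (the $\sigma$-algebras $\mathcal F^0_r$ decrease to $\mathcal F_s$), while the right sides are identified with $P_{t-s}f(X_s)$ using strong continuity of $r\mapsto P_rf$ on $\cB_\rho(Y)$ (\cite[Theorem 3.2]{DT10}), the uniform moment bound $\sup_{r\le T}\mathbb E_{\mathbb P_x}[\rho(X_r)]<\infty$ coming from $\int_Y\rho\,\D p_r(x,\cdot)\le\|P_r\|_{\cL(\cB_\rho(Y))}\rho(x)$ and the near-zero bound in Definition~\ref{def:genFeller-semigroup} iterated along the semigroup, and the identity $\mathbb E_{\mathbb P_x}[g(X_r)\mid\mathcal F^0_s]=(P_{r-s}g)(X_s)$. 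I expect the main obstacle to be precisely the first step — the weighted Riesz representation: ensuring that $\ell_{t,x}$ loses no mass at infinity, so that $p_t(x,\cdot)$ is a probability and not merely a subprobability measure, and that the integral representation persists on all of $\cB_\rho(Y)$ rather than only on $C_b(Y)$; this is exactly where compactness of the sublevel sets of $\rho$ is indispensable. Everything in between — Chapman--Kolmogorov, the Kolmogorov extension, and the Markov property for both the natural and the right-continuous filtration — is then standard machinery, modulo the mild care needed because $\rho\notin\cB_\rho(Y)$.
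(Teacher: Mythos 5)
This theorem is not proved in the paper at all: it is quoted verbatim from \cite[Theorem 2.11]{CT20}, and your outline reconstructs essentially the proof given there (weighted Riesz representation to obtain Radon transition kernels, Chapman--Kolmogorov, the Kolmogorov/Prokhorov extension theorem for consistent families of Radon laws, and a downward martingale argument for the right-continuous filtration), so the overall route is the right one. One repairable slip in the first step: your cutoff $\psi_R=((\rho-R)^+/R)\wedge 1$ need not lie in $\cB_{\rho}(Y)$, because an admissible weight function is only required to have compact sublevel sets and is therefore lower semicontinuous but in general \emph{not} continuous --- indeed in this paper's application $\rho=1+\|\cdot\|^2$ on $\cH^+_{\textnormal{w}}$ is not weakly continuous on the balls $K_{R'}$, so $\psi_R\rvert_{K_{R'}}\notin C(K_{R'})$ and Theorem~\ref{thm:charac_Brho} does not apply to it. The detour is unnecessary: any $f\in C_b(Y)$ with $0\leq f\leq 1$ vanishing on $K_R$ satisfies $\|f\|_{\rho}\leq 1/R$ directly, which gives the tightness estimate $|\ell_{t,x}(f)|\leq \|P_t\|_{\cL(\cB_{\rho}(Y))}\rho(x)/R$; alternatively one simply invokes the representation result \cite[Theorem 2.4]{CT20} wholesale, as the paper already does in Remark~\ref{rem:semigroup-on-b-rho}.

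The genuinely under-argued step is the last one. Set $Y_r:=(P_{t-r}f)(X_r)$, a reverse martingale for $r\downarrow s$. The three ingredients you list --- strong continuity of $r\mapsto P_r f$, the uniform moment bound, and $\mathbb{E}_{\mathbb{P}_x}[g(X_r)\mid\cF^0_s]=(P_{r-s}g)(X_s)$ --- only yield that the a.s.\ and $L^1$ limit $Y_{s+}:=\lim_{r\downarrow s}Y_r=\mathbb{E}_{\mathbb{P}_x}[f(X_t)\mid\cF_s]$ satisfies $\mathbb{E}_{\mathbb{P}_x}[Y_{s+}\mid\cF^0_s]=Y_s=(P_{t-s}f)(X_s)$; they do not identify $Y_{s+}$ with $Y_s$, because with no path regularity there is no way to pass from $g(X_r)$ to $g(X_s)$. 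The missing argument is the following: for bounded continuous $f$ the family $(Y_r)$ is uniformly bounded by $\|f\|_{\infty}$ (positivity and $P_t1=1$), and
\begin{equation*}
\mathbb{E}_{\mathbb{P}_x}\big[Y_r^2\big]=P_r\big[(P_{t-r}f)^2\big](x)\longrightarrow P_s\big[(P_{t-s}f)^2\big](x)=\mathbb{E}_{\mathbb{P}_x}\big[Y_s^2\big]
\end{equation*}
as $r\downarrow s$, using that $(P_{t-r}f)^2\to(P_{t-s}f)^2$ in $\cB_{\rho}(Y)$; since $\mathbb{E}_{\mathbb{P}_x}[Y_{s+}Y_s]=\mathbb{E}_{\mathbb{P}_x}[Y_s^2]$ by the tower property, this gives $\mathbb{E}_{\mathbb{P}_x}[(Y_{s+}-Y_s)^2]=\mathbb{E}_{\mathbb{P}_x}[Y_{s+}^2]-\mathbb{E}_{\mathbb{P}_x}[Y_s^2]=0$, hence $Y_{s+}=Y_s$ almost surely; the case of general $f\in\cB_{\rho}(Y)$ follows by density and the uniform moment bound. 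With this insertion your argument is complete and coincides with the one in the cited source.
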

Let $(P_t)_{t\geq 0}$ be a generalized Feller semigroup satisfying $P_t 1 = 1$ for all $t\geq 0$. 
The process $(X_t)_{t\geq 0}$, the existence of which is guaranteed by Theorem~\ref{thm:Kol-extension-theorem},
is called a {\it generalized Feller process} with initial value $x$ with respect to the measure $\mathbb{P}_{x}$.\par  
From now on we write $\mathbb{E}_x$ for expectations with respect to the
probability measure $\mathbb{P}_{x}$.
\begin{remark}\label{rem:semigroup-on-b-rho}
Let $(P_{t})_{t\geq 0}$ be a generalized Feller semigroup and let $x\in Y$, then by a Riesz representation-type result (see~\cite[Theorem 2.4 and Remark 2.8]{CT20}) $P_{t}\rho(x)\in \R$ can be defined by the integral of $\rho$ with respect to the measure representing the linear functional $f\mapsto P_{t}f(x)$, $f\in\cB_{\rho}(Y)$. Moreover, as there exist $M>1$, $\omega\in \R$ such that $|P_{t}f(x)|\leq M\exp(\omega t)\rho(x)\norm{f}_{\rho}$ for all $f\in \cB_{\rho}(Y)$, we obtain
\begin{align}\label{eq:growth-bound-expectation}
P_{t}\rho\leq M\exp(\omega t)\rho  
\end{align}
for $t\geq 0$.
If moreover $(P_{t})_{t\geq 0}$ is associated to a Markov process
$(X_{t})_{t\geq 0}$ such that equation~\eqref{eq:GF-process} holds, we obtain:
\begin{align*}
\mathbb{E}_{x} [\rho(X_t)] = P_t\rho(x)\leq M\exp(\omega t)\rho.  
\end{align*}
This can be seen by equation \eqref{eq:growth-bound-expectation} and a
monotone convergence argument by choosing for every $n\in\MN$ the approximations
$\rho_{n}=\sum_{i=1}^{n}\langle \cdot, e_{i}\rangle^{2} \wedge n\in\cB_{\rho}(Y)$,
where $(e_{i})_{i\in\MN}$ is an ONB of $\cH$, then $\rho_{n}\to \rho$ in
pointwise as $n\to\infty$ and $\rho_{n}\leq \rho_{n+1}$ for all $n\in\MN$. 
\end{remark}
\subsection{Approximation of semigroups associated to affine processes in
  \texorpdfstring{$\cHplus$}{the cone of positive self-adjoint Hilbert-Schmidt
  operators}}\label{ssec:gener-fell-semigr}
We equip the Hilbert space $\cH$ with its weak topology $\sigma(\cH, \cH')$ 
(which, by the Riesz representation theorem, is the weak-$*$-topology). Note that as $\cHplus$ is self-dual, it is closed in $(\cH, \sigma(\cH,\cH'))$. 
For brevity of notation we let $\cH^+_{\textnormal{w}}$ denote the complete
regular Hausdorff space $(\cH^+, \sigma(\cH,\cH')_{\cH^+})$, where
$\sigma(\cH,\cH')_{\cH^+}$ denotes the relative topology $\sigma(\cH,\cH')$ on $\cHplus$. 
% Note that the Borel $\sigma$-algebra $\cB(\cH^+)$ coincides with the Borel $\sigma$-algebra $\cB(\cH^+_{\textnormal{w}})$. % Do we need this??
In addition, we define $\rho \colon \cH^+ \rightarrow \R$ by 
\begin{equation}\label{eq:def_rho}
 \rho(x) \df 1 + \| x \|^2\,,\quad x\in \cH^+\,,
\end{equation} 
and observe that $\rho$ is an admissible weight function on
$\cH^+_{\textnormal{w}}$ by the Banach-Alaoglu theorem, i.e.,
$(\cHplus_{\textnormal{w}},\rho)$ is a weighted space. Note that for
every $R>0$, the pre-image $\set{x\in \cHplus:\;\rho(x)\leq R}$ is compact in
$\cHplus$ equipped with the norm topology, if and only if $\cH$ is finite-dimensional.
As we assume throughout the article that $\cH$ is infinite-dimensional, we see
that $\rho$ is not an admissible weight function in the norm topology.\par{}
The linear span of the set of Fourier basis
elements $\set{\E^{-\langle\cdot, u\rangle}:\;u\in\cHplus}$ is denoted by
\begin{equation}\label{eq:span-Fourier}
\cD\coloneqq \operatorname{lin}\left(\left\{\E^{-\langle \cdot, u\rangle}:\;u\in\cHplus\right\}\right)\,.
\end{equation}
The relevance of this set lies in the following lemma.
\begin{lemma}\label{lem:dense_subset_Brho}
The set $\cD$ is dense in  $\cB_{\rho}(\cHplus_{\textnormal{w}})$.
\end{lemma}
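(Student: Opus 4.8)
The plan is to invoke the Stone--Weierstrass-type density criterion for weighted spaces, as developed in \cite[Section~2]{DT10} and \cite[Section~2]{CT20}. First I would recall that, by Theorem~\ref{thm:charac_Brho}, membership in $\cB_{\rho}(\cHplus_{\textnormal{w}})$ is characterised by continuity on each compact sublevel set $K_R = \set{x\in\cHplus:\rho(x)\leq R}$ together with the tail-decay condition~\eqref{eq:function-growth}. The key observation is that each generator $\E^{-\langle\cdot,u\rangle}$, $u\in\cHplus$, lies in $\cB_{\rho}(\cHplus_{\textnormal{w}})$: it is bounded (hence trivially satisfies the tail condition relative to $\rho$, which is bounded below), and it is continuous on $\cHplus_{\textnormal{w}}$ since $x\mapsto\langle x,u\rangle$ is weakly continuous by definition of the weak topology; restricted to the weakly compact set $K_R$ it is therefore continuous. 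Hence $\cD\subseteq\cB_{\rho}(\cHplus_{\textnormal{w}})$ and it makes sense to ask about density.

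Next I would verify that $\cD$ satisfies the hypotheses of the weighted Stone--Weierstrass theorem (the version used for generalized Feller spaces, e.g.\ \cite[Theorem~2.9 or Corollary~2.10]{CT20}, which in turn rests on \cite{DT10}). Concretely: $\cD$ is a subalgebra of $C_b(\cHplus_{\textnormal{w}})$, since $\E^{-\langle\cdot,u\rangle}\cdot\E^{-\langle\cdot,v\rangle}=\E^{-\langle\cdot,u+v\rangle}$ and $u+v\in\cHplus$ because $\cHplus$ is a cone; it is closed under scalar multiplication and contains the constant function $1=\E^{-\langle\cdot,0\rangle}$; and it separates points of $\cHplus$, since if $x\neq y$ in $\cHplus\subseteq\cH$ then, $\cH$ being a Hilbert space, there is $h\in\cH$ with $\langle x,h\rangle\neq\langle y,h\rangle$, and writing $h=h_+-h_-$ with $h_\pm\in\cHplus$ (using $\cH=\cHplus-\cHplus$) at least one of $\langle x,h_\pm\rangle\neq\langle y,h_\pm\rangle$ holds, so $\E^{-\langle x,h_\pm\rangle}\neq\E^{-\langle y,h_\pm\rangle}$. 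With these algebraic and separation properties in hand, the weighted Stone--Weierstrass theorem gives that the closure of $\cD$ in $\|\cdot\|_\rho$ contains every $f\in\cB_\rho(\cHplus_{\textnormal{w}})$, which is the claim.

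The step I expect to require the most care is pinning down the precise form of the weighted Stone--Weierstrass statement being applied and checking its hypotheses are met in our setting --- in particular that the relevant density theorem from \cite{DT10}/\cite{CT20} is stated for subalgebras of $C_b(Y)$ (not requiring, say, that the algebra already contain functions of controlled growth matching $\rho$), and that the point-separation is the only nondegeneracy condition needed on the completely regular Hausdorff space $\cHplus_{\textnormal{w}}$. One subtlety worth a remark is that the generators are \emph{real} exponentials $\E^{-\langle\cdot,u\rangle}$ with $u$ ranging only over the cone $\cHplus$ rather than all of $\cH$; the cone structure ($u,v\in\cHplus\Rightarrow u+v\in\cHplus$) is exactly what keeps $\cD$ an algebra, while the decomposition $\cH=\cHplus-\cHplus$ rescues point-separation despite this restriction. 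Everything else is a routine citation of the cited theorems.
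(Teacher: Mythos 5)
Your proof is correct and follows essentially the same route as the paper: the paper reduces to approximating $f\in C_b(\cH^+_{\textnormal{w}})$ in $\|\cdot\|_{\rho}$ by cutting off outside a weakly compact sublevel set $\{x\in\cHplus:\|x\|\leq R\}$ and applying the classical Stone--Weierstrass theorem there, which is precisely the content of the weighted Stone--Weierstrass result from D\"orsek--Teichmann that you invoke. Your explicit verification that $\cD$ is a point-separating subalgebra of $C_b(\cHplus_{\textnormal{w}})$ containing the constants (closure under products via the cone property $u+v\in\cHplus$, and point separation via $\cH=\cHplus-\cHplus$) is exactly what the paper's terse ``apply Stone--Weierstrass'' step implicitly requires.
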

\begin{proof}
It suffices to prove that for every $\eps>0$ and every $f\in C_b(\cH^+_{\textnormal{w}})$ 
there exists an $f_{\eps}\in \cD$ such that $\| f - f_{\eps}\|_{\rho} < \eps$. 
To this end, observe that for every $\eps>0$ and every $f\in C_b(\cH^+_{\textnormal{w}})$ there 
exists an $R>0$ such that $\sup_{x\in \cH^+, \| x \| > R} \tfrac{f(x)}{\rho(x)} < \tfrac{\eps}{2}$,
and apply Stone-Weierstrass to $C(\cH^+_{\textnormal{w}} \cap \{x\in \cH^+\colon \| x \| \leq R\})$.
\end{proof}

\begin{corollary}\label{cor:sepa}
The space $\cB_{\rho}(\cH^+_{\textnormal{w}})$ is separable.
\end{corollary}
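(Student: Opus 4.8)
$\cB_{\rho}(\cH^+_{\textnormal{w}})$ is separable.

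The plan is to leverage Lemma~\ref{lem:dense_subset_Brho}: since $\cD = \lin\{\E^{-\langle\cdot,u\rangle}:u\in\cHplus\}$ is dense in $\cB_\rho(\cHplus_{\textnormal{w}})$, it suffices to produce a \emph{countable} subset of $\cD$ that is $\norm{\cdot}_\rho$-dense in $\cD$. For this I would first use the separability of the underlying spaces: since $H$ is separable, $\cL_2(H)$ is a separable Hilbert space, and $\cH$, being a closed subspace, is separable as well; consequently the norm-closed subset $\cHplus\subseteq\cH$ is separable in the norm topology. Fix a countable norm-dense set $\{u_j:j\in\MN\}\subseteq\cHplus$.

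The key (elementary) estimate is that $u\mapsto\E^{-\langle\cdot,u\rangle}$ is Lipschitz from $(\cHplus,\norm{\cdot})$ into $(\cB_\rho(\cHplus_{\textnormal{w}}),\norm{\cdot}_\rho)$. Indeed, for $u,v,x\in\cHplus$ self-duality of $\cHplus$ gives $\langle x,u\rangle,\langle x,v\rangle\geq 0$, so $|\E^{-\langle x,u\rangle}-\E^{-\langle x,v\rangle}|\leq|\langle x,u-v\rangle|\leq\norm{x}\norm{u-v}$ and also $0\leq\E^{-\langle x,u\rangle}\leq 1$; dividing by $\rho(x)=1+\norm{x}^2$ and taking the supremum over $x\in\cHplus$ yields
\begin{align*}
 \norm{\E^{-\langle\cdot,u\rangle}}_\rho\leq 1
 \qquad\text{and}\qquad
 \norm{\E^{-\langle\cdot,u\rangle}-\E^{-\langle\cdot,v\rangle}}_\rho\leq\tfrac12\norm{u-v}.
\end{align*}

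To conclude, let $\cD_0$ be the (countable) set of finite $\MQ$-linear combinations of $\{\E^{-\langle\cdot,u_j\rangle}:j\in\MN\}$. Given $f=\sum_{i=1}^n c_i\E^{-\langle\cdot,v_i\rangle}\in\cD$ and $\eps>0$, choose $u_{j_i}$ with $\norm{v_i-u_{j_i}}$ small and $q_i\in\MQ$ with $|c_i-q_i|$ small; the two displayed bounds together with the triangle inequality give $\norm{f-\sum_{i=1}^n q_i\E^{-\langle\cdot,u_{j_i}\rangle}}_\rho\leq\sum_{i=1}^n|c_i-q_i|+\tfrac12\sum_{i=1}^n|c_i|\,\norm{v_i-u_{j_i}}<\eps$. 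Hence $\cD_0$ is $\norm{\cdot}_\rho$-dense in $\cD$, and by Lemma~\ref{lem:dense_subset_Brho} it is dense in $\cB_\rho(\cHplus_{\textnormal{w}})$, proving separability. There is no genuine obstacle here; the only point requiring care is that density must be measured in the weighted norm, so one needs the bounds above to hold \emph{uniformly in} $x\in\cHplus$ — which is exactly what the estimates provide.
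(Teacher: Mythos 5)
Your proof is correct and follows essentially the same route as the paper: a countable norm-dense subset of $\cH^+$, rational coefficients, and the density of $\cD$ from Lemma~\ref{lem:dense_subset_Brho}. The only difference is that you make explicit the Lipschitz estimate for $u\mapsto \E^{-\langle\cdot,u\rangle}$ in the $\|\cdot\|_{\rho}$-norm, which the paper leaves implicit.
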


\begin{proof}
Let $U$ be a countable dense set in $(\cH^+,\|\cdot \|)$ (recall from Section~\ref{ssec:setting} that 
$\cH^+$ is separable). 
Then by Lemma~\ref{lem:dense_subset_Brho} the set $\big\{\sum\limits_{j=1}^{n} q_j \E^{-\langle \cdot, u_{j}\rangle}:\; n\in \N,\, q_j \in \MQ,\, u_j \in U\big\}$
is dense in $\cB_{\rho}(\cH^+_{\textnormal{w}})$. 
\end{proof}

Throughout the remainder of this section let $(b,B,m,\mu)$ be an admissible parameter 
set, see Definition~\ref{def:admissibility}. First, we define for $k\in\N$, $\tilde{B}^{(k)} \in \cL(\cH)$ and $\tilde{b}^{(k)} \in \cH^+$ by
\begin{align*}
\tilde{B}^{(k)} (x) &\df B(x) -  \int_{\cHplus\cap\{0<\norm{\xi}\leq 1\}}\xi\, \frac{ \langle \dmuk,x\rangle}{\norm{\xi}^{2}}\,,\quad x\in \cH^+\,,\\
\tilde{b}^{(k)}&\df b-\int_{\cHplus\cap\{0<\norm{\xi}\leq 1\}}\xi \,\dmk\,,
\end{align*}
where $m^{(k)}$ and $\mu^{(k)}$
are as defined in~\eqref{eq:mk_muk}. Note that the fact that $B\in \cL(\cH)$ and that $\mu$ 
is an $\cH^+$-valued measure, as well as~\eqref{eq:normintest}~and
\eqref{eq:mk_muk} ensure that $\tilde{B}^{(k)}\in \cL(\cH)$ is
well-defined. Moreover,~\ref{eq:m-2moment} in
Definition~\ref{def:admissibility} and~\eqref{ass:b_positive} 
ensure that $\tilde{b}^{(k)} \in \cH^+$ is well-defined.
%the equation \eqref{eq:model-genFeller} with
%$J_{t}$ replaced by the finite activity approximation  $J_{t}^{(k)}$, i.e.,
%\begin{align}\label{eq:model-genFeller-k}
 % \D X^{(k)}_{t}= B(X^{(k)}_{t})\D t +\D J^{(k)}_{t}, \qquad t\geq 0
%\end{align}
%and separately consider 
For $x\in \cH^+$ and $k\in \N$ we consider the following deterministic
equation in differential form:
\begin{align}\label{eq:model-deterministic-k}
  \begin{cases}
    \D \mathbf{x}^{(x,k)}_{t}&=\big(\tilde{b}^{(k)} +   \tilde{B}^{(k)}
    (\mathbf{x}^{(x,k)}_t)\big)\D t, \qquad t\geq 0,\\
    \mathbf{x}^{(x,k)}_{0}&= x.
 \end{cases}
\end{align}
%compare this with equation \eqref{eq:jump-part} with $(J_{t})_{t\geq 0}$ replaced by
%$(J^{(k)}_{t})_{t\geq 0}$ and recall that for all $k\in\MN$ the jump part is
%of finite variation.\par 
Standard infinite-dimensional ODE theory ensures that for all $x\in \cH^+$ and $k\in \N$
the unique %(in, e.g., $L^1_{\textnormal{loc}}([0,\infty);\cH)$) 
classical solution to~\eqref{eq:model-deterministic-k} is given by
\begin{equation}\label{eq:solution-deterministic}
\mathbf{x}^{(x,k)}_{t}\coloneqq \e^{t \tilde{B}^{(k)}}x+\int_{0}^{t}\e^{(t-s) \tilde{B}^{(k)}} \tilde{b}^{(k)}\D s\,,\quad t\geq 0\,.
\end{equation}
The following lemma provides some properties of $\mathbf{x}^{(x,k)}$, $x\in \cH^+$, $k\in \N$.
\begin{lemma}\label{lemma:Xdet_positive_decreasing}
Let $(b,B,m,\mu)$ be an admissible parameter set cf. Definition~\ref{def:admissibility}.
For $x\in \cH^+$ and $k\in \N$ let $\mathbf{x}^{(x,k)}$ be given by~\eqref{eq:solution-deterministic}.
Then  
\begin{equation}\label{eq:Xdet_pos}
  0\leq_{\cH^+} \mathbf{x}^{(x,k+1)}_t \leq_{\cH^+} \mathbf{x}_{t}^{(x,k)}
\end{equation}
for all $k\in \N$, $x\in \cH^+$, and $t\geq 0$.
\end{lemma}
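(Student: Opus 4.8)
The plan is to establish both inequalities in~\eqref{eq:Xdet_pos} by applying the comparison theorem, Theorem~\ref{thm:comparison-theorem}, to the affine vector fields $g^{(k)}\colon\cH\to\cH$ defined by $g^{(k)}(x)\df\tilde{b}^{(k)}+\tilde{B}^{(k)}(x)$, in terms of which~\eqref{eq:model-deterministic-k} reads $\tfrac{\D}{\D t}\mathbf{x}^{(x,k)}_t=g^{(k)}(\mathbf{x}^{(x,k)}_t)$ with $\mathbf{x}^{(x,k)}_0=x$. First I would record the two structural properties of $g^{(k)}$ needed to invoke Theorem~\ref{thm:comparison-theorem}: (i) $g^{(k)}$ is globally Lipschitz on $\cH$, being affine with bounded linear part $\tilde{B}^{(k)}\in\cL(\cH)$; and (ii) $g^{(k)}$ is quasi-monotone with respect to $\cH^{+}$. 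Since $\tilde{B}^{(k)}$ is linear, (ii) reduces to showing $\langle\tilde{B}^{(k)}(w),u\rangle\geq 0$ whenever $w,u\in\cH^{+}$ with $\langle w,u\rangle=0$; writing $\langle\tilde{B}^{(k)}(w),u\rangle=\langle B^{*}(u),w\rangle-\int_{\cHplus\cap\{1/k<\|\xi\|\leq 1\}}\langle\xi,u\rangle\tfrac{\langle\dmu,w\rangle}{\|\xi\|^{2}}$ and using that $\langle\dmu,w\rangle$ is a positive measure and $\langle\xi,u\rangle\geq 0$ for $\xi,u\in\cH^{+}$, this follows from admissibility condition~\ref{eq:linear-operator} in Definition~\ref{def:admissibility} exactly as in Lemma~\ref{lemma:quasi-mono}, because
\[
 \int_{\cHplus\cap\{1/k<\|\xi\|\leq 1\}}\langle\xi,u\rangle\,\tfrac{\langle\dmu,w\rangle}{\|\xi\|^{2}}
 \leq\int_{\cHpluso}\langle\chi(\xi),u\rangle\,\tfrac{\langle\dmu,w\rangle}{\|\xi\|^{2}}
 \leq\langle B^{*}(u),w\rangle.
\]
I would also recall, as already observed just after the definition of $\tilde{b}^{(k)}$, that $\tilde{b}^{(k)}\in\cH^{+}$ (this uses admissibility conditions~\ref{eq:m-2moment} and~\eqref{ass:b_positive}).

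For the positivity $0\leq_{\cH^{+}}\mathbf{x}^{(x,k)}_t$ I would apply Theorem~\ref{thm:comparison-theorem} with $K=\cH^{+}$, $F=g^{(k)}$, the constant function $f\equiv 0$, and $g=\mathbf{x}^{(x,k)}$: one has $f(0)=0\leq_{\cH^{+}}x=g(0)$ and, for every $t\geq 0$, $f'(t)-F(f(t))=-g^{(k)}(0)=-\tilde{b}^{(k)}\leq_{\cH^{+}}0=g'(t)-F(g(t))$; the comparison theorem then gives $0=f(t)\leq_{\cH^{+}}g(t)=\mathbf{x}^{(x,k)}_t$ for all $t\geq 0$ and all $k\in\MN$.

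For the monotonicity in $k$ I would first check the pointwise inequality $g^{(k)}(y)\geq_{\cH^{+}}g^{(k+1)}(y)$ for all $y\in\cH^{+}$. Subtracting the defining integrals and using $\one_{\{\|\xi\|>1/k\}}\leq\one_{\{\|\xi\|>1/(k+1)\}}$ gives
\begin{align*}
 g^{(k)}(y)-g^{(k+1)}(y)
 &=\bigl(\tilde{b}^{(k)}-\tilde{b}^{(k+1)}\bigr)+\bigl(\tilde{B}^{(k)}(y)-\tilde{B}^{(k+1)}(y)\bigr)\\
 &=\int_{\cHplus\cap\{1/(k+1)<\|\xi\|\leq 1/k\}}\xi\,\Bigl(m(\D\xi)+\tfrac{\langle\dmu,y\rangle}{\|\xi\|^{2}}\Bigr),
\end{align*}
which lies in $\cH^{+}$, being the integral of the $\cH^{+}$-valued integrand $\xi\mapsto\xi$ against a positive scalar measure and $\cH^{+}$ being closed (this is the analogue for $\tilde{B}^{(k)},\tilde{b}^{(k)}$ of~\eqref{eq:Rdecreasing}). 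Then I would apply Theorem~\ref{thm:comparison-theorem} with $K=\cH^{+}$, $F=g^{(k+1)}$, $f=\mathbf{x}^{(x,k+1)}$, $g=\mathbf{x}^{(x,k)}$: here $f(0)=x=g(0)$, and using the positivity $\mathbf{x}^{(x,k)}_t\in\cH^{+}$ established above together with the pointwise inequality, $f'(t)-F(f(t))=0\leq_{\cH^{+}}g^{(k)}(\mathbf{x}^{(x,k)}_t)-g^{(k+1)}(\mathbf{x}^{(x,k)}_t)=g'(t)-F(g(t))$ for all $t\geq 0$; the comparison theorem then yields $\mathbf{x}^{(x,k+1)}_t=f(t)\leq_{\cH^{+}}g(t)=\mathbf{x}^{(x,k)}_t$, which combined with the previous step gives~\eqref{eq:Xdet_pos}.

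This argument involves no genuine difficulty; the points requiring care are only (a) matching the hypotheses of Theorem~\ref{thm:comparison-theorem}, in particular the quasi-monotonicity and Lipschitz continuity of $g^{(k)}$ and the correct direction of the inequality between the defects $f'-F\circ f$ and $g'-F\circ g$, and (b) verifying that the relevant integrands are $\cH^{+}$-valued and integrated against positive measures. Both are minor variants of computations already carried out for $R^{(k)}$ in Lemmas~\ref{lemma:quasi-mono} and~\ref{lemma:mono-Lipschitz} and in Proposition~\ref{prop:exist-uniq-solut}, so the only real obstacle is the bookkeeping.
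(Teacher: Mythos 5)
Your proof is correct and follows essentially the same route as the paper: two applications of Theorem~\ref{thm:comparison-theorem}, first with $f\equiv 0$ against $\mathbf{x}^{(x,k)}$ for positivity, then with $f=\mathbf{x}^{(x,k+1)}$, $g=\mathbf{x}^{(x,k)}$ and the pointwise ordering of the vector fields for monotonicity in $k$. You merely spell out the quasi-monotonicity of $x\mapsto\tilde{b}^{(k)}+\tilde{B}^{(k)}(x)$ and the annulus computation that the paper leaves implicit (and your choice $F=g^{(k+1)}$ in the second comparison is the consistent one).
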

\begin{proof}
It follows immediately from~Definition~\ref{def:admissibility} \ref{eq:linear-operator}
that $\cH \ni x \mapsto \tilde{b}^{(k)} + \tilde{B}^{(k)}(x) \in \cH$ is quasi-monotone 
with respect to $\cH^+$. As $\tilde{b}^{(k)}\in \cH^+$, Theorem~\ref{thm:comparison-theorem} with $K=\cH^+$,
$F(\cdot)=\tilde{b}^{(k)}+\tilde{B}^{(k)}(\cdot)$, $f\equiv 0$, and $g(\cdot)=\mathbf{x}_{\cdot}^{(x,k)}$ ensures that $\mathbf{x}^{(x,k)}_t\in \cH^+$
for all $t\geq 0$, $x\in \cH^+$, $k\in \N$.\par 
Moreover, for all $k\in \N$ and $x\in \cH^+$ we have 
\begin{align*}
% &
\tilde{b}^{(k)} + \tilde{B}^{(k)}(x) - \left(\tilde{b}^{(k+1)} + \tilde{B}^{(k+1)}(x)\right) %\\
% &\quad =\int_{\mathcal{H}^+\cap\{\frac{1}{k+1}< \|\xi\| \leq \frac{1}{k}\}} \xi \,\D m(\xi) + \int_{\mathcal{H}^+\setminus \{0\}\cap\{\frac{1}{k+1}< \|\xi\| \leq \frac{1}{k}\}} \xi\,\frac{\D\langle \mu, x\rangle(\xi)}{\|\xi\|^2}\\
% &\qquad 
& \geq_{\cH^+} 0\,.
\end{align*}
This implies that for every $x\in \cH^+$, $k \in \mathbb{N}$, and $t\geq 0$ we have
\begin{align*}
\tfrac{\partial \mathbf{x}^{(x,k+1)}_{t} }{\partial t} - \left(\tilde{b}^{(k+1)} +   \tilde{B}^{(k+1)} (\mathbf{x}^{(x,k+1)}_t)\right) 
% &= \tfrac{\D \mathbf{x}^{(x,k)}_{t}}{\D t} -\left(\tilde{b}^{(k)} + \tilde{B}^{(k)} (\mathbf{x}^{(x,k)}_t)\right)\\
  &=\tfrac{\partial \mathbf{x}^{(x,k)}_{t} }{\partial t} - \left(\tilde{b}^{(k)} +   \tilde{B}^{(k)} (\mathbf{x}^{(x,k)}_t)\right)  \\
  &\leq_{\cH^+} \tfrac{\partial \mathbf{x}^{(x,k)}_{t}}{\partial t} - \left(\tilde{b}^{(k+1)} +   \tilde{B}^{(k+1)} (\mathbf{x}^{(x,k)}_t)\right)\,.
\end{align*}
Again applying Theorem \ref{thm:comparison-theorem} with $K = \mathcal{H}^+$, $F(\cdot)=\tilde{b}^{(k)} + \tilde{B}^{(k)}(\cdot)$, 
$f(t)=\mathbf{x}^{(x,k+1)}_t$ and $g(t)=\mathbf{x}^{(x,k)}_t$, $t\geq 0$, implies that $\mathbf{x}^{(x,k+1)}_t \leq_{\cH^+} \mathbf{x}^{(x,k)}_t$
for all $t\geq 0$.
\end{proof}
For $k\in \N$, $t\geq 0$ and $f\in\cB_{\rho}(\cH^{+}_{\textnormal{w}})$ define 
$P_t^{(\textnormal{det},k)}f \colon \cH^+ \rightarrow \R$ by
\begin{align}\label{semigroup-X-deterministic}
(P_{t}^{(\textnormal{det},k)}f)(x)\coloneqq f(\mathbf{x}^{(x,k)}_{t})\,,  \quad x\in \mathcal{H}^+.
\end{align}

\begin{lemma}\label{lem:Ptk_bdd}
Let $(b,B,m,\mu)$ be an admissible parameter set conform Definition~\ref{def:admissibility}.
Let $k\in \N$, $t\geq 0$, $f \in C_b(\cH^+_{\textnormal{w}})$ and let 
$P_{t}^{(\textnormal{det},k)}f\colon \cH^+\rightarrow \R$ be defined by~\eqref{semigroup-X-deterministic}. 
In addition, let 
\begin{align} 
M & \df 
\max\{ 
    1 
    + 
    2 \| \tilde{B}^{(1)} \|^{-2}_{\cL(\cH)} \| \tilde{b}^{(1)} \|^{2}
    , 2
    \}\,,\label{eq:M_for_Pkdet}
\\
\omega & \df 2 \| \tilde{B}^{(1)} \|_{\cL(\cH)}\,.\label{eq:omega_for_Pkdet}
\end{align}
Then $P_{t}^{(\textnormal{det},k)}f \in C_b(\cH^+_{\textnormal{w}})$, 
\begin{align}\label{eq:exponential_Pt_rho}
 \| P_{t}^{(\textnormal{det},k)}f \|_{\rho} 
 &\leq M \e^{\omega t} \| f \|_{\rho}\,,
\end{align}
and  
\begin{align}\label{eq:exponential_Pt_sqrtrho}
 \| P_{t}^{(\textnormal{det},k)}f \|_{\sqrt{\rho}} 
 &\leq \sqrt{M} \e^{\omega t/2} \| f \|_{\sqrt{\rho}}\,.
\end{align}
\end{lemma}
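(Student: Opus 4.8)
The lemma has three assertions: $P_t^{(\mathrm{det},k)}f$ is bounded and $\sigma(\cH,\cH')$-continuous on $\cHplus$, and the two weighted-norm bounds~\eqref{eq:exponential_Pt_rho} and~\eqref{eq:exponential_Pt_sqrtrho}. The crux of everything is a growth estimate for the explicit flow $\mathbf{x}^{(x,k)}_t$ given by~\eqref{eq:solution-deterministic}; once we control $\|\mathbf{x}^{(x,k)}_t\|$ in terms of $\|x\|$ (uniformly in $k$), the weighted-norm bounds are immediate, and the continuity/boundedness claims follow from soft arguments. So I would start there.

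\textbf{Step 1: a uniform-in-$k$ growth bound for $\mathbf{x}^{(x,k)}_t$.} From~\eqref{eq:solution-deterministic}, $\tfrac{\partial}{\partial t}\|\mathbf{x}^{(x,k)}_t\| \le \|\tilde b^{(k)}\| + \|\tilde B^{(k)}\|_{\cL(\cH)}\|\mathbf{x}^{(x,k)}_t\|$. The point is to bound $\|\tilde b^{(k)}\|$ and $\|\tilde B^{(k)}\|_{\cL(\cH)}$ uniformly in $k$ by the corresponding $k=1$ quantities. Looking at the definitions of $\tilde B^{(k)}$ and $\tilde b^{(k)}$: increasing $k$ enlarges the region $\{\|\xi\|>1/k\}$, hence the subtracted integrals $\int_{\{0<\|\xi\|\le 1\}}\xi\,\langle\dmuk,x\rangle/\|\xi\|^2$ and $\int_{\{0<\|\xi\|\le1\}}\xi\,\dmk$ are over the \emph{shrinking} set $\{1/k<\|\xi\|\le 1\}$; so a monotonicity argument using~\eqref{eq:measuremonotone}/\eqref{eq:normintest} (the $\cH^+$-valued measures involved are monotone, and $\|\xi\|^{-1}\le k$ only on $\{\|\xi\|>1/k\}$, but on $\{1/k<\|\xi\|\le 1\}$ we have $\|\xi\|/\|\xi\|^2 = \|\xi\|^{-1}$ which is unbounded as $k\to\infty$)—wait, more carefully: on $\{1/k<\|\xi\|\le1\}$ the integrand $\xi/\|\xi\|^2$ has norm $\|\xi\|^{-1}$, which is at most... it is \emph{not} uniformly bounded. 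So the honest route is: these are the \emph{drift coefficients}, and we must instead observe that $\tilde B^{(k)}(x) - \tilde B^{(k+1)}(x)\ge_{\cH^+}0$ and $\tilde b^{(k)}-\tilde b^{(k+1)}\ge_{\cH^+}0$ (as in the proof of Lemma~\ref{lemma:Xdet_positive_decreasing}); combined with $\mathbf{x}^{(x,k)}_t\ge_{\cH^+}0$ and monotonicity of $\cH$, one gets $\|\mathbf{x}^{(x,k)}_t\| \le \|\mathbf{x}^{(x,1)}_t\|$ directly from~\eqref{eq:Xdet_pos} in Lemma~\ref{lemma:Xdet_positive_decreasing}. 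This is the clean input. So it suffices to bound $\|\mathbf{x}^{(x,1)}_t\|$: Gronwall on the inequality above with $k=1$ gives $\|\mathbf{x}^{(x,1)}_t\| \le \E^{\|\tilde B^{(1)}\|_{\cL(\cH)} t}\|x\| + \|\tilde b^{(1)}\|\,\|\tilde B^{(1)}\|_{\cL(\cH)}^{-1}(\E^{\|\tilde B^{(1)}\|_{\cL(\cH)}t}-1)$.

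\textbf{Step 2: derive the weighted bounds.} Squaring the Step-1 bound and using $(a+b)^2\le 2a^2+2b^2$, $1+\|\mathbf{x}^{(x,k)}_t\|^2 \le 1 + \|\mathbf{x}^{(x,1)}_t\|^2 \le M\E^{\omega t}(1+\|x\|^2) = M\E^{\omega t}\rho(x)$ with $M,\omega$ exactly as in~\eqref{eq:M_for_Pkdet}--\eqref{eq:omega_for_Pkdet}; this is a direct computation matching the constants (the $\E^{2\|\tilde B^{(1)}\|t}$ term gives $\omega = 2\|\tilde B^{(1)}\|_{\cL(\cH)}$, the cross term from the source absorbs into the $1 + 2\|\tilde B^{(1)}\|^{-2}\|\tilde b^{(1)}\|^2$ part, and the $\max$ with $2$ handles the constant $1$). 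Then $|P_t^{(\mathrm{det},k)}f(x)| = |f(\mathbf{x}^{(x,k)}_t)| \le \|f\|_\rho\,\rho(\mathbf{x}^{(x,k)}_t) \le \|f\|_\rho M\E^{\omega t}\rho(x)$, giving~\eqref{eq:exponential_Pt_rho}; taking square roots and using $\rho(\mathbf{x}^{(x,k)}_t)^{1/2}\le \sqrt M\E^{\omega t/2}\sqrt{\rho(x)}$ (same inequality, not re-proved) gives~\eqref{eq:exponential_Pt_sqrtrho}.

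\textbf{Step 3: $P_t^{(\mathrm{det},k)}f\in C_b(\cHplus_{\mathrm{w}})$.} Boundedness of $f$ gives boundedness of $P_t^{(\mathrm{det},k)}f$ trivially. For continuity in the weak topology: $x\mapsto \mathbf{x}^{(x,k)}_t = \E^{t\tilde B^{(k)}}x + \int_0^t \E^{(t-s)\tilde B^{(k)}}\tilde b^{(k)}\,\D s$ is an affine map with bounded linear part $\E^{t\tilde B^{(k)}}\in\cL(\cH)$; a bounded linear operator on a Hilbert space is weak-to-weak continuous (its adjoint is bounded), and translation is weakly continuous, so $x\mapsto \mathbf{x}^{(x,k)}_t$ is $\sigma(\cH,\cH')$-to-$\sigma(\cH,\cH')$ continuous; composing with the $\sigma(\cH,\cH')$-continuous $f$ yields that $P_t^{(\mathrm{det},k)}f$ is $\sigma(\cH,\cH')$-continuous on $\cHplus$, i.e.\ $P_t^{(\mathrm{det},k)}f\in C_b(\cHplus_{\mathrm{w}})$.

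\textbf{Main obstacle.} The only genuinely delicate point is making sure the constants $M,\omega$ come out \emph{exactly} as prescribed in~\eqref{eq:M_for_Pkdet}--\eqref{eq:omega_for_Pkdet} and \emph{uniformly in $k$}; this hinges on replacing the naive $k$-dependent Gronwall constant by the comparison $\|\mathbf{x}^{(x,k)}_t\|\le\|\mathbf{x}^{(x,1)}_t\|$ coming from Lemma~\ref{lemma:Xdet_positive_decreasing} and monotonicity of $\cH$, rather than trying to bound $\|\tilde B^{(k)}\|_{\cL(\cH)}$ itself (which is not uniformly bounded). Everything else is routine.
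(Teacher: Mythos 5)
Your proposal is correct and follows essentially the same route as the paper: the key step in both is the comparison $\mathbf{x}^{(x,k)}_t \leq_{\cH^+} \mathbf{x}^{(x,1)}_t$ from Lemma~\ref{lemma:Xdet_positive_decreasing} together with monotonicity of $\cH^+$, which reduces everything to a Gronwall/explicit-formula bound for $k=1$ yielding exactly the constants $M,\omega$ in~\eqref{eq:M_for_Pkdet}--\eqref{eq:omega_for_Pkdet}, while weak continuity follows from weak-to-weak continuity of the bounded operator $\e^{t\tilde B^{(k)}}$. Your initial detour about bounding $\|\tilde B^{(k)}\|_{\cL(\cH)}$ uniformly is correctly abandoned in favour of the cone comparison, which is also what the paper does.
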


\begin{proof}
% We first show that $P_t^{(\textnormal{det},k)}f \in
% C_b(\cH^+_{\textnormal{w}})$. For every $t\geq 0$ the operator
% $\e^{t\tilde{B}^{(k)}}$ is bounded and hence strong-to-strong
% continuous. Consequently, for every $h$ in the dual of $\cH$ the linear
% functional $h\circ\e^{t\tilde{B}^{(k)}}$ is in the dual of $\cH$ again. This means that $h\circ\e^{t\tilde{B}^{(k)}}$ is norm-continuous and since the weak
% topology is the coarsest topology such that all $h$ in the dual of $\cH$ are
% continuous, we find that $h(\e^{t\tilde{B}^{(k)}}\cdot)$ is also weak
% continuous. Since this hold for every $h\in\cH$ we conclude that
% $\e^{t\tilde{B}^{(k)}}$ is weak-to-weak continuous. Thus
% $P_t^{(\textnormal{det},k)}f$ is a composition of weakly continuous functions
% and hence in $C_b(\cH^+_{\textnormal{w}})$.
For every $t\geq 0$ the operator $\e^{t\tilde{B}^{(k)}}$ is strong-to-strong
continuous, hence it is also weak-to-weak continuous,
and thus $P_t^{(\textnormal{det},k)}f \in C_b(\cH^+_{\textnormal{w}})$.
Next note that
% and 
Lemma~\ref{lemma:Xdet_positive_decreasing} implies
that
\begin{equation}\label{eq:Xk_rho}
\begin{aligned}
 \tfrac{ 1 + \| \mathbf{x}^{(x,k)}_t \|^2 }{1+\| x \|^2 }
 & 
 \leq
 \tfrac{ 1 + \| \mathbf{x}^{(x,1)}_t \|^2 }{1+\| x \|^2 }
%  \\ 
%  &
 \leq
 \tfrac{ 
    1 
    + 
    2 \e^{2 t \| \tilde{B}^{(1)} \|_{\cL(\cH)}} 
        ( 
            \| \tilde{B}^{(1)} \|^{-2}_{\cL(\cH)} 
            \| \tilde{b}^{(1)} \|^{2} 
            +  \| x \|^2
        ) 
 }{1+\| x \|^2 }
 \\ &
 \leq M \e^{\omega t}
\end{aligned}
\end{equation}
for all $x\in \cH^+$.
Using the above estimate and~\eqref{semigroup-X-deterministic} we obtain
\begin{align*}
\| P_t^{(\textnormal{det},k)}f\|_{\rho}
& =
\sup_{x\in \cH^+} 
  \tfrac{ (P_{t}^{(\textnormal{det},k)}f)(x)}{1 + \| x \|^2 } 
= 
\sup_{x\in \cH^+} 
  \tfrac{ f(\mathbf{x}^{(x,k)}_t) }{1 + \| x \|^2 } 
% \\ & 
\leq
 \| f \|_{\rho}
 \sup_{x\in \cH^+} \tfrac{ 1 + \| \mathbf{x}^{(x,k)}_t \|^2 }{1+\| x \|^2 }
 \\ &\leq 
 M \e^{\omega t}
 \| f \|_{\rho}\,.
\end{align*}
Similarly, 
\begin{align*}
\| P_t^{(\textnormal{det},k)}f\|_{\sqrt{\rho}}
& =
% \sup_{x\in \cH^+} 
%   \tfrac{ (P_{t}^{(\textnormal{det},k)}f)(x)}{\sqrt{1 + \| x \|^2} } 
% = 
\sup_{x\in \cH^+} 
  \tfrac{ f(\mathbf{x}^{(x,k)}_t) }{\sqrt{1 + \| x \|^2} } 
% \\ & 
\leq
 \| f \|_{\sqrt{\rho}}
 \sup_{x\in \cH^+} \tfrac{ \sqrt{1 + \| \mathbf{x}^{(x,k)}_t \|^2 } }{\sqrt{ 1+\| x \|^2 }}
 \leq 
 \sqrt{M}\e^{\omega t/2}
 \| f \|_{\sqrt{\rho}}\,.
\end{align*}
\end{proof}
Recall that if $(A,\textnormal{dom}(A))$ is the generator 
of a strongly continuous semigroup $S=(S_t)_{t\geq 0}$ on a Banach space $X$,
then a subspace $D\subseteq \operatorname{dom}(A)$ is a \emph{core} for $A$ if 
$D$ is dense in $\operatorname{dom}(A)$ for the graph norm 
$\| \cdot \|_{\operatorname{dom}(A)} = \| \cdot \|_X + \| A \cdot \|_{X}$
% for every $x\in \operatorname{dom}(A)$ there exists a sequence $(x_n)_{n\in \N}$
% in $D$ such that $\lim_{n\rightarrow \infty} \| x - x_n \|_X = \lim_{n\rightarrow \infty} \| A(x-x_n) \|_X = 0$
(see~\cite[Chapter II, Def.\ 1.6]{EN00}). By~\cite[Chapter II, Prop.\ 1.7]{EN00} any subspace $D\subseteq \operatorname{dom}(A)$
that is dense in $X$ and invariant under $S$ is a core.
\begin{lemma}\label{lem:X-semigroup}
Let $(b,B,m,\mu)$ be an admissible parameter set conform Definition~\ref{def:admissibility}.
For all $k\in \N$, $t\geq 0$, $f\in\cB_{\rho}(\cH^{+}_{\textnormal{w}})$ let 
$P_{t}^{(\textnormal{det},k)}f\colon \cH^+\rightarrow \R$
be defined by~\eqref{semigroup-X-deterministic}. Then $(P_t^{(\textnormal{det},k)})_{t\geq 0}$ is a generalized Feller semigroup
on both $\cB_{\rho}(\cH^+_{\textnormal{w}})$ and
$\cB_{\sqrt{\rho}}(\cH^+_{\textnormal{w}})$ for all $k\in \N$.
Moreover $\cD$ is a core for the generator $\cG^{(k)}_{\textnormal{det}}$ of $(P_t^{(\textnormal{det},k)})_{t\geq 0}$ on $\cB_{\rho}(\cH^+_{\textnormal{w}})$ 
and for all $f\in \cD$ we have   
  \begin{align}\label{eq:Gk-det}
   (\cG^{(k)}_{\textnormal{det}} f)(x)=\langle \tilde{b}^{(k)}+\tilde{B}^{(k)}(x), f'(x)\rangle,\quad x\in \cH^+.  
  \end{align}
\end{lemma}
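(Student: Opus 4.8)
The plan is to verify the five defining properties of a generalized Feller semigroup from Definition~\ref{def:genFeller-semigroup} for $(P_t^{(\textnormal{det},k)})_{t\geq 0}$ on both weighted spaces, then identify the generator on the core $\cD$. Fix $k\in\N$ throughout and write $\mathbf{x}_t^x = \mathbf{x}_t^{(x,k)}$ and $P_t = P_t^{(\textnormal{det},k)}$ for brevity.

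\emph{Step 1: $P_t$ maps $\cB_\rho$ to itself and is bounded.} Lemma~\ref{lem:Ptk_bdd} shows that for $f\in C_b(\cH^+_{\textnormal{w}})$ we have $P_tf\in C_b(\cH^+_{\textnormal{w}})$ with $\|P_tf\|_\rho\leq M\e^{\omega t}\|f\|_\rho$ (and the analogous bound with $\sqrt{\rho}$). Since $C_b(\cH^+_{\textnormal{w}})$ is dense in $\cB_\rho(\cH^+_{\textnormal{w}})$ by definition and $P_t$ is linear, it extends uniquely to a bounded operator on $\cB_\rho(\cH^+_{\textnormal{w}})$ with $\|P_t\|_{\cL(\cB_\rho)}\leq M\e^{\omega t}$; this gives property~\eqref{semigroup4} (with $C = M\e^{\omega\eps}$ for any fixed $\eps>0$) and, combined with Theorem~\ref{thm:charac_Brho}, shows $P_tf\in\cB_\rho(\cH^+_{\textnormal{w}})$ for all $f\in\cB_\rho(\cH^+_{\textnormal{w}})$: indeed $P_tf$ restricted to any compact $K_R$ is continuous because $x\mapsto\mathbf{x}_t^x$ is weak-to-weak continuous (as $\e^{t\tilde B^{(k)}}$ is, being strong-to-strong continuous and linear) and maps $K_R$ into some $K_{R'}$ by~\eqref{eq:Xk_rho}, while the tail decay~\eqref{eq:function-growth} transfers via the same estimate~\eqref{eq:Xk_rho}. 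The same argument works verbatim on $\cB_{\sqrt\rho}(\cH^+_{\textnormal{w}})$ using~\eqref{eq:exponential_Pt_sqrtrho}.

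\emph{Step 2: semigroup property, strong continuity at $0$, positivity.} Properties~\eqref{semigroup1} and~\eqref{semigroup2} follow from the flow property of the ODE~\eqref{eq:model-deterministic-k}: $\mathbf{x}_0^x = x$ gives $P_0 = I$, and uniqueness of solutions gives $\mathbf{x}_{t+s}^x = \mathbf{x}_t^{\mathbf{x}_s^x}$, hence $P_{t+s}f(x) = f(\mathbf{x}_{t+s}^x) = (P_sf)(\mathbf{x}_t^x)$... wait, more carefully $P_{t+s}f(x)=f(\mathbf x^x_{t+s})=f(\mathbf x^{\mathbf x^x_t}_s)=(P_sf)(\mathbf x^x_t)=P_t(P_sf)(x)$. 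For~\eqref{semigroup3}, pointwise continuity $\lim_{t\to0+}f(\mathbf{x}_t^x) = f(x)$ holds for $f\in C_b(\cH^+_{\textnormal{w}})$ since $t\mapsto\mathbf{x}_t^x$ is norm-continuous hence weakly continuous and $f$ is weakly continuous; the extension to $f\in\cB_\rho(\cH^+_{\textnormal{w}})$ follows by a standard $3\eps$-argument using the uniform bound from Step~1. Property~\eqref{semigroup5} is immediate: if $f\geq0$ then $P_tf(x) = f(\mathbf{x}_t^x)\geq0$.

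\emph{Step 3: the generator on $\cD$.} Since $\cD\subseteq C_b(\cH^+_{\textnormal{w}})$ is dense in $\cB_\rho(\cH^+_{\textnormal{w}})$ by Lemma~\ref{lem:dense_subset_Brho}, by~\cite[Chapter II, Prop.~1.7]{EN00} it suffices to show $\cD$ is invariant under $(P_t)_{t\geq0}$ and to compute $\cG^{(k)}_{\textnormal{det}}f$ for $f\in\cD$. For $f = \e^{-\langle\cdot,u\rangle}$ we have $P_tf(x) = \e^{-\langle\mathbf{x}_t^x,u\rangle} = \exp(-\langle\e^{t\tilde B^{(k)}}x + \int_0^t\e^{(t-s)\tilde B^{(k)}}\tilde b^{(k)}\,\D s,\,u\rangle) = \e^{-\langle x,(\e^{t\tilde B^{(k)}})^*u\rangle}\e^{-c(t,u)}$ where $c(t,u) = \int_0^t\langle\e^{(t-s)\tilde B^{(k)}}\tilde b^{(k)},u\rangle\,\D s$ is a scalar, so $P_tf\in\cD$; by linearity $\cD$ is invariant. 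For the generator, differentiate at $t=0$: using~\eqref{eq:solution-deterministic} and the chain rule, $\frac{\D}{\D t}\big|_{t=0}f(\mathbf{x}_t^x) = \langle f'(x),\,\frac{\D}{\D t}\big|_{t=0}\mathbf{x}_t^x\rangle = \langle f'(x),\,\tilde b^{(k)} + \tilde B^{(k)}(x)\rangle$, which is~\eqref{eq:Gk-det}. One must check this convergence is in $\cB_\rho$-norm, not merely pointwise; for $f = \e^{-\langle\cdot,u\rangle}$ this follows from the explicit formula for $P_tf$ above together with the quadratic growth of the prefactor $\langle\tilde b^{(k)}+\tilde B^{(k)}(x),f'(x)\rangle = -\langle\tilde b^{(k)}+\tilde B^{(k)}(x),u\rangle\e^{-\langle x,u\rangle}$ and dominated-convergence-type estimates uniform on $\cH^+$, exploiting that $\e^{-\langle x,u\rangle}$ decays and $u\in\cH^+$. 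The main obstacle is precisely this last point: verifying that the difference quotients $\frac1t(P_tf - f)$ converge to~\eqref{eq:Gk-det} in the $\|\cdot\|_\rho$-norm (uniformly in $x$ after dividing by $\rho(x) = 1+\|x\|^2$), since pointwise convergence alone does not identify the generator — one needs the Taylor remainder in $t$ to be $o(t)$ uniformly, which requires care with the unbounded operator $\tilde B^{(k)}$ acting inside the exponential.
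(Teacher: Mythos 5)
Your Steps 1 and 2 match the paper's argument (boundedness via Lemma~\ref{lem:Ptk_bdd}, the flow property, pointwise continuity at $t=0$, positivity), and your reduction of the core statement to invariance of $\cD$ plus the generator formula via \cite[Chapter II, Prop.\ 1.7]{EN00} is exactly what the paper does. The gap is in Step 3, and it is precisely the point you yourself flag as ``the main obstacle'': you never actually establish that $\tfrac{1}{t}(P_t^{(\textnormal{det},k)}f-f)$ converges to $\langle \tilde b^{(k)}+\tilde B^{(k)}(\cdot),f'(\cdot)\rangle$ in $\|\cdot\|_{\rho}$. The justification you offer is moreover flawed on two counts: $\tilde B^{(k)}$ is a \emph{bounded} operator (the real difficulty is the linear growth in $x$ of the drift $x\mapsto \tilde b^{(k)}+\tilde B^{(k)}(x)$, which is why the prefactor of the generator grows like $\|x\|$ and must be absorbed by the weight $\rho$); and $\e^{-\langle x,u\rangle}$ does \emph{not} decay as $\|x\|\to\infty$ for fixed $u\in\cH^+$ --- the pairing $\langle x,u\rangle$ can vanish for $x$ of arbitrarily large norm (e.g.\ when the ranges of $x$ and $u$ are orthogonal), so one only has the bound $\e^{-\langle x,u\rangle}\le 1$.

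The paper closes this gap by a second-order Taylor expansion of $f$ along the segment from $x$ to $\mathbf{x}_t^{(x,k)}$ (using Lemma~\ref{lem:onesided_FTC} twice), which expresses the error $\tfrac{1}{t}(P_t^{(\textnormal{det},k)}f(x)-f(x))-\langle f'(x),\tilde b^{(k)}+\tilde B^{(k)}(x)\rangle$ as two integral remainders; these are then controlled by the two uniform limits
$\sup_{x}\rho(x)^{-1/2}\|\tfrac{1}{t}(\mathbf{x}_t^{(x,k)}-x)-(\tilde b^{(k)}+\tilde B^{(k)}x)\|\to 0$ and $\sup_{x}\rho(x)^{-1/2}\|\mathbf{x}_t^{(x,k)}-x\|\to 0$ as $t\to 0+$, combined with $\sup_{x}\rho(x)^{-1/2}\|f'(x)\|<\infty$ and $\sup_{x}\|f''(x)\|_{\cL(\cH)}<\infty$. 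Your alternative route via the explicit formula $P_t^{(\textnormal{det},k)}f(x)=\e^{-c(t,u)}\e^{-\langle x,(\e^{t\tilde B^{(k)}})^{*}u\rangle}$ could also be completed --- one needs that $(\e^{t\tilde B^{(k)}})^{*}$ preserves $\cH^+$ (which follows from self-duality of $\cH^+$ and the fact that $\e^{t\tilde B^{(k)}}$ preserves it) so that the scalar Taylor remainder of $s\mapsto \e^{-s}$ on $[0,\infty)$ gives an error of order $\|x\|^2 t^2/t$, which the weight $\rho(x)=1+\|x\|^2$ absorbs --- but as written this estimate is missing, so the identification of the generator is not yet proved.
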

\begin{proof}
Let $k\in \N$. It follows from Lemma~\ref{lem:Ptk_bdd} that $(P_t^{(\textnormal{det},k)})_{t\geq 0}$ is a family of 
bounded linear operators on both $\cB_{\rho}(\cH^{+}_{\textnormal{w}})$ and $\cB_{\sqrt{\rho}}(\cH^{+}_{\textnormal{w}})$. 
Moreover, properties~\ref{semigroup1},~\ref{semigroup2}, and~\ref{semigroup5} in Definition~\ref{def:genFeller-semigroup} are trivially satisfied. Property~\ref{semigroup4} follows from Lemma~\ref{lem:Ptk_bdd}. Finally, property~\ref{semigroup3}
follows from Theorem~\ref{thm:charac_Brho} and the fact that
$\lim_{t\rightarrow 0^+} \| \mathbf{x}^{(x,k)}_t - x\| = 0$.\\
It is easily verified that $\cD$ is a subspace of $\cB_{\rho}(\cH_{\textnormal{w}}^+)$ that is invariant for 
$(P_t^{(\textnormal{det},k)})_{t\geq 0}$. 
We know from Lemma~\ref{lem:dense_subset_Brho} that $\cD$ is dense in $\cB_{\rho}(\cH^+_{\textnormal{w}})$, 
thus by~\cite[Chapter II, Prop.\ 1.7]{EN00} it remains to prove that
$\cD\subseteq \operatorname{dom}(\cG_{\textnormal{det}}^{(k)})$ and
that~\eqref{eq:Gk-det} holds. To this end, let $u\in\cHplus$ and
consider $f(\cdot)=\E^{-\langle u,\cdot\rangle}\in\cD$. For $f$ of this latter form, we define $f'(x) \df -\E^{-\langle u,x \rangle}u$, for $u, x \in \cHplus$ and 
$f''(x)$ to be the bounded linear map on $\cHplus$ defined for $u, x \in \cHplus$ by $f''(x)(v) \df \E^{-\langle u,x\rangle}u \langle u , v\rangle$, $v\in \cHplus$.
%$f$ as a
%function on $\cH$, it is two times continuously Fr\'echet differentiable and we denote by its first Fr\'echet
%derivative at $x\in\cHplus$ by $f'(x)$. For all $x\in\cHplus$ the first
%derivative $f'(x)$ is a bounded linear map on $\cH$ given by $f'(x)v=\langle
%u_{i},v\rangle \E^{-\langle u_{i}, x\rangle}$ for every $v\in\cH$ and we
%identify $f'(x)$ with the vector $\E^{-\langle u,x \rangle}u$. Note that
%$f''(x)\in\cL^{(2)}(\cH\times \cH,\MR)$ for every $x\in\cH$ and we can identify $f''(x)$ with a $H_{x}\in \cL(\cH)$ such that $f''(x)=\langle H_{x}
%v,w \rangle$. For $f(x)=\E^{-\langle u,x\rangle}$ we find $H_{x}=\E^{-\langle
 % u,x\rangle}u\otimes u$. Similarly to the first derivative, we write $f''(x)$ instead of
%$H_{x}$ and denote the restrictions of $f'$ and $f''$ to $\cHplus$ by
%the same symbols $f'$ and $f''$, respectively. 
Now, observe that for
%for $f(\cdot)=\E^{-\langle u,\cdot\rangle}$ and 
$\tilde{B}(x)\df\tilde{B}^{(k)}(x)+\tilde{b}^{(k)}$, we have
\begin{equation}\label{eq:dom_Gkdet}
 \begin{aligned}
 & \tfrac{(P_t^{(\textnormal{det},k)}f)(x) - f(x)}
 {t}
 -
 \langle f'(x), \tilde{B}(x) \rangle
 \\
 & =
    \int_{0}^{1}
        \left\langle 
            f'(s (\mathbf{x}_t^{(x,k)} -x ) + x) ,
            \frac{ \mathbf{x}_t^{(x,k)} - x }{t}  
            - \tilde{B}(x)
        \right\rangle 
    \D s
 \\ 
 & \quad 
 + \int_{0}^{1}\int_0^{1}
        \left\langle 
            f''\left(u s \big(\mathbf{x}_t^{(x,k)} - x\big) + x \right) 
            \left( s\big(\mathbf{x}_t^{(x,k)} -x\big)\right)
            ,
            \tilde{B}(x)
        \right\rangle 
    \D u \D s,
\end{aligned}
\end{equation}
where we used Lemma~\ref{lem:onesided_FTC} twice, which is applicable as the
one-sided derivatives of $f$, considered as a function on $\cHplus$,
exist. Observe that
\begin{equation}\label{eq:dom_Gkdet_lim1}
\begin{aligned}
 & 
 \lim_{t \to 0+} 
    \sup_{x\in \cH^+}
        \tfrac{ 
        \left| 
            \frac{1}{t}  \left( \mathbf{x}_t^{(x,k)} - x\right) 
            - \left(\tilde{B}^{(k)}x + \tilde{b}^{(k)}\right)
        \right|
        }
        { \sqrt{\rho(x)} }
\\ & \leq 
 \lim_{t \to 0+} 
    \sup_{x\in \cH^+}
        \tfrac{ 
            \| \tilde{B}^{(k)} \|_{\cL(\cH)} \|  e^{t \tilde{B}^{(k)}} - I \|_{\cL(\cH)} \| x \| 
            + 
            \frac{1}{t} \| \tilde{b}^{(k)} \| \int_{0}^{t} \| e^{(t-s) \tilde{B}^{(k)}}  - I \|_{\cL(\cH)} \D s
        }
        { \sqrt{ 1 + \| x \|^2 }}
 =0
 .
 \end{aligned}
\end{equation}
Moreover we have 
\begin{equation}\label{eq:dom_Gkdet_lim2}
\begin{aligned}
 \lim_{t \to 0+} 
    \sup_{x\in \cH^+}
         \tfrac{ | \mathbf{x}_t^{(x,k)} -x |}{\sqrt{\rho(x)}}
 \leq 
 \lim_{t \to 0+} 
    \sup_{x\in \cH^+}
         \tfrac{ \| e^{t \tilde{B}^{(k)}}  - I \|_{\cL(\cH)} \| x \| 
         + \int_{0}^{t} \| e^{(t-s) \tilde{B}^{(k)}} \tilde{b}^{(k)}\| \D s
         }
         {\sqrt{\rho(x)}}
 =0.
\end{aligned}
\end{equation}
Since
$\sup_{x\in \cH^+} |\rho(x)|^{-\frac{1}{2}} \| f'(x) \| < \infty$
and 
$\sup_{x\in \cH^+} \| f''(x) \|_{\cL(\cH)} < \infty$, it follows from
equations~\eqref{eq:dom_Gkdet},~\eqref{eq:dom_Gkdet_lim1},
and~\eqref{eq:dom_Gkdet_lim2} that
\begin{equation}
 \lim_{t\to 0+ }
 \left\| 
    \tfrac{(P_t^{(\textnormal{det},k)}f)(x) - f(x)}
    {t}
    -
    \langle f'(x), \tilde{B}^{(k)}(x) + \tilde{b}^{(k)} \rangle
 \right\|_{\rho}
 = 0.
\end{equation}
This, the linearity of $\cG_{\textnormal{det}}^{(k)}$ and the fact that $\cD$
is invariant for $P_t^{(\textnormal{det},k)}$ (and thus
$P_t^{(\textnormal{det},k)}f \in \cB_{\rho}(\cH^+_{\operatorname{w}})$ whenever $f\in \cD$)
implies that $\cD\subseteq \operatorname{dom}(\cG_{\textnormal{det}}^{(k)})$
and that~\eqref{eq:Gk-det} holds.
\end{proof}
      %       Let $k\in\MN$ and recall $m^{(k)}$ and $\mu^{(k)}$ from~\eqref{eq:mk_muk}.
We now introduce the family of measures $\nu^{(k)}\colon \cH^+ \times
\cB(\cHplus\setminus\set{0})\to [0,\infty)$ for every $x\in\cHplus$ given by 
\begin{equation}\label{eq:def_nuk}
\nu^{(k)}(x, \D \xi) = \dmk + \frac{\langle \dmuk, x\rangle}{\|\xi\|^2}
\end{equation}
and define the operator 
$\cG^{(k)}_{\textnormal{jump}} \colon \operatorname{dom}(\cG^{(k)}_{\textnormal{jump}}) \subseteq  \cB_{\rho}(\cH^+_{\textnormal{w}}) 
\rightarrow \cB_{\rho}(\cH^+_{\textnormal{w}})$ 
by 
\begin{align}\label{eq:domDGjump}
&\operatorname{dom}(\cG^{(k)}_{\textnormal{jump}}) \notag
\\ &=
\left\{ f \in \cB_{\rho}(\cH^+_{\textnormal{w}}) \colon 
    \left( 
        x\mapsto \int_{\cHpluso}\left(f(\xi+x)-f(x)\right)\nu^{(k)}(x,\D\xi)
    \right)
    \in 
    \cB_{\rho}(\cH^+_{\textnormal{w}})
\right\}\,\,  
\end{align}
and for $f\in \operatorname{dom}(\cG^{(k)}_{\textnormal{jump}})$:
\begin{align}\label{eq:gen-jump-k}
\cG^{(k)}_{\textnormal{jump}} f(x)\coloneqq \int_{\cHpluso}\left(f(\xi+x)-f(x)\right)\nu^{(k)}(x,\D\xi),
\quad 
x\in \cH^+.
\end{align}
 Note that for all $k\in\MN$ the measure $\nu^{(k)}(x,\D\xi)$ is finite,
i.e. $\nu^{(k)}(x,\cHpluso)<\infty$ for all $x\in\cHplus$, but it is an affine
function in $x$ and hence unbounded in the first component. For that reason
$\cG^{(k)}_{\textnormal{jump}} f$ may not be in
$\cB_{\rho}(\cHplus_{\textnormal{w}})$ for all $f\in
\cB_{\rho}(\cHplus_{\textnormal{w}})$. However, the following lemma ensures that $C_b(\cH^+_{\textnormal{w}})\subseteq \operatorname{dom}(\cG^{(k)}_{\textnormal{jump}})$:
\begin{lemma}\label{lem:G-jump} 
Let $(b,B,m,\mu)$ be an admissible parameter set conform Definition~\ref{def:admissibility}.
Let $k\in \N$, and let $\cG^{(k)}_{\textnormal{jump}}$ be as defined in~\eqref{eq:domDGjump}
and~\eqref{eq:gen-jump-k}.
Then $C_b(\cH^+_{\textnormal{w}})\subseteq \operatorname{dom}(\cG^{(k)}_{\textnormal{jump}})$. 
\end{lemma}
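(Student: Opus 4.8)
The plan is to fix an arbitrary $f\in C_b(\cH^+_{\textnormal{w}})$, put $g\df\cG^{(k)}_{\textnormal{jump}}f$ (so that $g(x)=\int_{\cHpluso}(f(\xi+x)-f(x))\,\nu^{(k)}(x,\D\xi)$), and show that $g$ is well defined on $\cHplus$ and lies in $\cB_\rho(\cH^+_{\textnormal{w}})$; since $C_b(\cH^+_{\textnormal{w}})\subseteq\cB_\rho(\cH^+_{\textnormal{w}})$, this gives $f\in\operatorname{dom}(\cG^{(k)}_{\textnormal{jump}})$ by~\eqref{eq:domDGjump}. By the characterisation in Theorem~\ref{thm:charac_Brho} it is then enough to establish (i) a linear growth bound $|g(x)|\le c(1+\|x\|)$, which forces~\eqref{eq:function-growth}, and (ii) continuity of $g|_{K_R}$ on each weakly compact set $K_R=\{x\in\cHplus:\rho(x)\le R\}$.

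For (i) I would first observe that $\|\xi\|^{-2}\le k^2$ on $\{\|\xi\|>1/k\}$, so admissibility condition~\ref{eq:m-2moment} gives $m(\{\|\xi\|>1/k\})\le k^2\int_{\cHpluso}\|\xi\|^2\,\dm<\infty$, while $v_k\df\int_{\cHpluso}\one_{\{\|\xi\|>1/k\}}\frac{\dmu}{\|\xi\|^2}$ defines an element of $\cH^+$ (by~\eqref{eq:int_pos1}) with $\|v_k\|\le k^2\|\mu(\{\|\xi\|>1/k\})\|\le k^2\|\mu(\cHpluso)\|$ by~\eqref{eq:normintest} and~\eqref{eq:measuremonotone}. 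Hence $\nu^{(k)}(x,\cHpluso)=m(\{\|\xi\|>1/k\})+\langle v_k,x\rangle\le c_k(1+\|x\|)$ for a constant $c_k<\infty$ depending only on $k$, $m$, $\mu$; as $\xi\mapsto f(\xi+x)-f(x)$ is Borel measurable and bounded by $2\|f\|_{C(\cHplus)}$, the integral defining $g(x)$ converges absolutely and
\[
 |g(x)|\ \le\ 2\|f\|_{C(\cHplus)}\,\nu^{(k)}(x,\cHpluso)\ \le\ 2c_k\|f\|_{C(\cHplus)}\,(1+\|x\|),\qquad x\in\cHplus .
\]
Since $(1+\|x\|)/\rho(x)\to0$ as $\|x\|\to\infty$, this yields $\sup_{x\in\cHplus\setminus K_R}|g(x)|/\rho(x)\to0$ as $R\to\infty$, i.e.~\eqref{eq:function-growth}.

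For (ii), the set $K_R$ is compact and metrisable for $\sigma(\cH,\cH')$ because $\cH$ is separable, so it suffices to verify sequential continuity. Let $x_n\to x$ weakly with $x_n,x\in K_R$; recalling~\eqref{eq:mk_muk} we split
\[
 g(x_n)=\int_{\cHpluso}\big(f(\xi+x_n)-f(x_n)\big)\,\dmk+\langle w_n,x_n\rangle,\quad\text{where } w_n\df\int_{\cHpluso}\big(f(\xi+x_n)-f(x_n)\big)\,\frac{\dmuk}{\|\xi\|^2}\in\cH .
\]
For every $\xi\in\cHpluso$ one has $\xi+x_n\to\xi+x$ weakly, so $f(\xi+x_n)-f(x_n)\to f(\xi+x)-f(x)$ by weak continuity of $f$, and the integrands above are bounded by $2\|f\|_{C(\cHplus)}$, respectively by $2k^2\|f\|_{C(\cHplus)}$ on $\{\|\xi\|>1/k\}$. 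Ordinary dominated convergence against the finite measure $m^{(k)}$ handles the first term, while the dominated convergence theorem for integration with respect to the $\cH$-valued measure $\mu$ (Theorem~\ref{thm:DCT}) gives $w_n\to w\df\int_{\cHpluso}(f(\xi+x)-f(x))\frac{\dmuk}{\|\xi\|^2}$ \emph{in the norm of $\cH$}; therefore $\langle w_n,x_n\rangle\to\langle w,x\rangle$, because a norm-convergent and a bounded weakly convergent sequence can be paired in the limit. Adding the two limits gives $g(x_n)\to g(x)$, so $g|_{K_R}\in C(K_R)$.

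By (i), (ii) and Theorem~\ref{thm:charac_Brho} we obtain $g=\cG^{(k)}_{\textnormal{jump}}f\in\cB_\rho(\cH^+_{\textnormal{w}})$, hence $f\in\operatorname{dom}(\cG^{(k)}_{\textnormal{jump}})$, and since $f\in C_b(\cH^+_{\textnormal{w}})$ was arbitrary the lemma follows. I expect the only real obstacle to be the second term in the continuity argument~(ii), in which both the integrand and the affine measure $\nu^{(k)}(x,\cdot)$ depend on the moving point $x$: it is essential that the vector-valued dominated convergence theorem delivers \emph{norm} convergence of $w_n$ (mere weak convergence would not allow passing to the limit in the pairing against the weakly convergent $x_n$), and this is available precisely because truncating the small jumps makes $\|\xi\|^{-2}$, and thus all integrands involved, bounded on the region of integration.
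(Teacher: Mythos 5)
Your proposal is correct and follows essentially the same route as the paper: both verify membership in $\cB_{\rho}(\cH^+_{\textnormal{w}})$ via the characterisation of Theorem~\ref{thm:charac_Brho}, establishing the decay condition~\eqref{eq:function-growth} from the at-most-linear growth of $x\mapsto\nu^{(k)}(x,\cHpluso)$ and proving sequential weak continuity on the metrisable compacts $K_R$ by the vector-valued dominated convergence theorem (Theorem~\ref{thm:DCT}) combined with the uniform bound $\sup_n\|x_n\|\le\sqrt{R}$. The only cosmetic difference is that the paper treats just the term $\int f(x+\xi)\frac{\langle\mu^{(k)}(\D\xi),x\rangle}{\|\xi\|^2}$ in detail and declares the remaining terms similar, whereas you handle all terms at once via the pairing $\langle w_n,x_n\rangle$ — an equivalent bookkeeping of the same estimates.
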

\begin{proof}
Let $f\in  C_b(\cH^+_{\textnormal{w}})$
and let $g_f\colon \cH^+ \rightarrow \R$ be defined by 
\begin{equation}
g_f(x) 
=  
\int_{\cH^+\setminus\{0\}} 
    f(x+\xi) \tfrac{\langle \mu^{(k)}(\D\xi), x \rangle}{\| \xi \|^2 } 
\end{equation}

We will prove that $g_f\in \cB_{\rho}(\cH^+_{\textnormal{w}})$ using Theorem~\ref{thm:charac_Brho}. All other terms 
in the definition of $\cG_{\textnormal{jump}}^{(k)}f$ can be 
dealt with in a similar (simpler) way.\par 
To see that $g_f$ is continuous on $K_R := \{\rho \leq R \}$ for all $R> 0$ it suffices to show that $g_f$ is sequentially continuous on $K_R$ for every $R> 0$ as the weak topology restricted to $K_R$ is metrizable. Fix $R> 0$ and let $(x_n)_{n\in \N}$ be a sequence in $K_R$ converging (weakly) to an $x\in K_R$. By the dominated convergence theorem (Theorem~\ref{thm:DCT}) and the 
fact that $\sup_{n\in \N} \|x_n \| \leq \sqrt{R}$ we obtain 
% \begin{equation}
%  \lim_{n\rightarrow \infty} 
%  \left\|
%     \int_{\cH^+ \setminus \{ 0 \}}
%         (f(x_n + \xi) - f(x+\xi))
%     \frac{\mu^{(k)}(\D \xi)}{\| \xi\|^2}
%  \right\|=0.
% \end{equation}
% This implies 
\begin{align*}
 \lim_{n\rightarrow \infty}
 \left|
    g_f(x_n)
    -
    g_f(x)
 \right|
 &\leq \lim_{n\rightarrow \infty}
 \left\|
    \int_{\cH^+ \setminus \{ 0 \}}
        (f(x_n + \xi) - f(x+\xi))
    \tfrac{\mu^{(k)}(\D \xi)}{\| \xi\|^2}
 \right\| \| x_n \|
 \\ & \quad 
 +
 \lim_{n\rightarrow \infty} \left|
    \int_{\cH^+ \setminus \{ 0 \}}
        f(x+\xi)
    \tfrac{\langle \mu^{(k)}, x_n - x \rangle (\D \xi)}{\| \xi\|^2}
 \right|=0.
\end{align*}
Finally, observe that 
$\lim_{R\rightarrow \infty} \sup_{x\in \cH^+: \rho(x)\geq R}
|\rho(x)|^{-1}|g_f(x)| =0$
as $f$ is bounded and 
$\int_{\cH^+\setminus\{0\}} \frac{\mu^{(k)}(\D \xi)}{\| \xi \|^2} \in \cH$ (recall~\eqref{eq:normintest}). By Theorem~\ref{thm:charac_Brho} this ensures that $g_f\in \cB_{\rho}(\cH^+_{\textnormal{w}})$, which completes the proof of the lemma.
\end{proof}

%Using~\cite[Proposition 3.3]{CT20}, we will now prove that there
%exists a generalized Feller semigroup $(P^{(k)}_{t})_{t\geq 0}$ on
%$\cB_{\rho}(\cH^+_{\textnormal{w}})$ with generator $(\cG^{(k)},\dom(\cG^{(k)}))$ such that  
%$\cG_{\infty}^{(k)}=\overline{\cG^{(k)}_{\textnormal{det}}+\cG^{(k)}_{\textnormal{jump}}}$.
%
      %
In the next proposition we achieve an important intermediate stage, that
allows us to conclude the existence of generalized Feller processes in $\cHplus$ admitting
for bounded drifts and finite-activity jump behavior, as well as satisfying the
exponential affine formula \eqref{eq:affine-formula-intro}:
\begin{proposition}\label{prop:Gk_generator}
Let $(b,B,m,\mu)$ be an admissible parameter set conform Definition~\ref{def:admissibility}.
Let $k\in \N$, and let $(\phi^{(k)}(\cdot,u),\psi^{(k)}(\cdot,u))$ be the unique solution to \eqref{eq:riccati-conc-k} (cf.\ Proposition~\ref{prop:global_sol_finact}). 
Let $\cD \subseteq \cB_{\rho}(\cH^+_{\textnormal{w}})$ be given by~\eqref{eq:span-Fourier}
and $\cG^{(k)}_{\textnormal{det}}$ and $\cG^{(k)}_{\textnormal{jump}}$ be as
defined in~\eqref{eq:Gk-det}, respectively ~\eqref{eq:gen-jump-k}. Consider the operator $\cG^{(k)}_\textnormal{det}+\cG^{(k)}_{\textnormal{jump}}\colon
\operatorname{dom}(\cG^{(k)}_{\textnormal{det}}) \cap \operatorname{dom}(\cG^{(k)}_{\textnormal{jump}}) \subseteq
\cB_{\rho}(\cH^+_{\textnormal{w}})\rightarrow 
\cB_{\rho}(\cH^+_{\textnormal{w}})$.
Then
$\cD \subseteq 
\operatorname{dom}(\cG^{(k)}_{\textnormal{det}}) \cap \operatorname{dom}(\cG^{(k)}_{\textnormal{jump}})$.
%$ 
%\cG^{(k)}_{\textnormal{det}}+\cG^{(k)}_{\textnormal{jump}}\colon
%\operatorname{dom}(\cG^{(k)}_{\textnormal{det}}) \cap \operatorname{dom}(\cG^{(k)}_{\textnormal{jump}}) \subseteq
%\cB_{\rho}(\cH^+_{\textnormal{w}})\rightarrow 
%\cB_{\rho}(\cH^+_{\textnormal{w}})$.
%\cG^{(k)}_{\textnormal{det}}+\cG^{(k)}_{\textnormal{jump}}\colon
%\operatorname{dom}(\cG^{(k)}_{\textnormal{det}}) \cap \operatorname{dom}(\cG^{(k)}_{\textnormal{jump}}) \subseteq
%\cB_{\rho}(\cH^+_{\textnormal{w}})\rightarrow 
%\cB_{\rho}(\cH^+_{\textnormal{w}})$
%is closable, and its closure 
Moreover, there exists a generalized Feller semigroup $(P_t^{(k)})_{t\geq 0}$ with generator $(\cG^{(k)}, \operatorname{dom}(\cG^{(k)}))$ such that 
\begin{enumerate}
\item\label{it:Gk_generator:domain} 
$\cD \subseteq \operatorname{dom}(\cG^{(k)})$,
\item\label{it:Gk_generator:sum} $\cG^{(k)}f =(\cG^{(k)}_\textnormal{det}+\cG^{(k)}_{\textnormal{jump}})f$ for all $f\in \cD$,
\item\label{it:Gk_generator:Markovian} $P^{(k)}_t 1 = 1$ for all $t\geq 0$, 
and 
\item\label{it:Gk_generator:affine} for all $u,x\in \cH^{+}$, $t\geq 0$ we have 
\begin{align}\label{eq:affine-X-tilde-k}
\left( P^{(k)}_t\E^{-\langle \cdot,u \rangle}\right) (x)
% =\mathbb{E}_x[\E^{-\langle u, Y_t^{(k)}\rangle}] 
= \E^{-\phi^{(k)}(t,u)-\langle x,\psi^{(k)}(t,u)\rangle}.
\end{align}
% \item\label{it:Gk_exponentialgrowth} Recall $M,\omega$ from~\eqref{eq:M_for_Pkdet}-\eqref{eq:omega_for_Pkdet}.
%  Define 
%  $\tilde{\rho}_k\colon \cH^+ \rightarrow \R$ 
%  by $\tilde{\rho}_k(x) = \sup_{t\geq 0} e^{-\omega t} P_t^{(\textnormal{det},k)} \rho(x)$. Then $\tilde{\rho}_k$
%  is an admissible weight function, $\left\| \cdot \right\|_{\rho} \leq \left\| \cdot \right\|_{\tilde{\rho}_k} \leq M \left\| \cdot \right\|_{\rho}$, and 
%  \begin{equation}
%   \| P_t^{k} \|_{L(\cB_{\tilde{\rho}_k}(\cH^+_{\textnormal{w}}))} \leq e^{(\omega+\omega_k)t}.
%  \end{equation}

\end{enumerate}
\end{proposition}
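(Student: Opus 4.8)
The plan is to realise $(P^{(k)}_t)_{t\geq 0}$ as the transition semigroup of the finite-activity jump process whose Kolmogorov operator is $\cG^{(k)}_{\textnormal{det}}+\cG^{(k)}_{\textnormal{jump}}$, and then to identify its generator and deduce the affine formula from the Riccati equations~\eqref{eq:riccati-conc-k}. The inclusion $\cD\subseteq\operatorname{dom}(\cG^{(k)}_{\textnormal{det}})\cap\operatorname{dom}(\cG^{(k)}_{\textnormal{jump}})$ is immediate: each element of $\cD$ is a finite linear combination of maps $x\mapsto\E^{-\langle x,u\rangle}$, $u\in\cHplus$, which are bounded (as $\langle x,u\rangle\geq 0$ on the self-dual cone) and $\sigma(\cH,\cH')$-continuous, so $\cD\subseteq C_b(\cHplus_{\textnormal{w}})$; then $\cD\subseteq\operatorname{dom}(\cG^{(k)}_{\textnormal{det}})$ by Lemma~\ref{lem:X-semigroup} and $\cD\subseteq\operatorname{dom}(\cG^{(k)}_{\textnormal{jump}})$ by Lemma~\ref{lem:G-jump}.

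For the construction I would proceed pathwise. Writing $\lambda^{(k)}(x)\df\nu^{(k)}(x,\cHpluso)$, relations~\eqref{eq:mk_muk} and~\eqref{eq:normintest} show $\lambda^{(k)}$ is finite, continuous, and at most affine in $x$ (using $\norm{\xi}>1/k$ on the support and the finite second moments). Let $X^{(k)}$ be the $\cHplus$-valued process that follows the flow $\mathbf{x}^{(\cdot,k)}$ of~\eqref{eq:solution-deterministic} between jumps, jumps at hazard rate $\lambda^{(k)}(\mathbf{x}^{(x,k)}_t)$, and at a jump adds an independent increment with law $\nu^{(k)}(\cdot,\D\xi)/\lambda^{(k)}(\cdot)$. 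Since $\mathbf{x}^{(\cdot,k)}$ leaves $\cHplus$ invariant (Lemma~\ref{lemma:Xdet_positive_decreasing}) and jumps stay in $\cHplus$, the process is $\cHplus$-valued, and the crucial point is non-explosion. This, together with the growth bound required by property~\ref{semigroup4} of Definition~\ref{def:genFeller-semigroup}, follows from the moment estimate $\mathbb{E}_x[\rho(X^{(k)}_t)]\leq M_k\E^{\omega_k t}\rho(x)$, which I would obtain by Gronwall's lemma applied to the process stopped at its $N$-th jump followed by a monotone limit $N\to\infty$; the input is the pointwise bound $\cG^{(k)}\rho(x)\leq C_k\rho(x)$, which relies on the finite second moments of $\dmk$ and $\frac{\dmuk}{\norm{\xi}^2}$ and the affine growth of $\lambda^{(k)}$ (cf.\ the estimates behind Lemmas~\ref{lemma:RFcontinuous} and~\ref{lemma:RFquadratic}). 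Setting $P^{(k)}_tf(x)\df\mathbb{E}_x[f(X^{(k)}_t)]$, properties~\ref{semigroup1},~\ref{semigroup2},~\ref{semigroup5} of Definition~\ref{def:genFeller-semigroup} and~\ref{it:Gk_generator:Markovian} are clear (the last by non-explosion), property~\ref{semigroup4} is the moment estimate, and for property~\ref{semigroup3} and the fact that $P^{(k)}_t$ maps $\cB_\rho(\cHplus_{\textnormal{w}})$ into itself I would invoke Theorem~\ref{thm:charac_Brho}: continuity of $x\mapsto\mathbb{E}_x[f(X^{(k)}_t)]$ on each compact $K_R$ comes from metrizability of the weak topology on $K_R$, weak-to-weak continuity of $\mathbf{x}^{(\cdot,k)}$, and weak sequential continuity of the jump mechanism via dominated convergence (exactly as in the proof of Lemma~\ref{lem:G-jump}), while the tail decay~\eqref{eq:function-growth} follows from the moment estimate; strong continuity is then automatic by \cite[Theorem 3.2]{DT10}. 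Equivalently, $(P^{(k)}_t)$ can be assembled from the killed deterministic semigroup $\tilde P^{(k)}_tf(x)=\E^{-\int_0^t\lambda^{(k)}(\mathbf{x}^{(x,k)}_s)\D s}f(\mathbf{x}^{(x,k)}_t)$ via a Dyson--Phillips series, whose convergence in $\cL(\cB_\rho(\cHplus_{\textnormal{w}}))$ is again controlled by the same moment estimate.

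It remains to identify the generator $\cG^{(k)}$ on $\cD$ and to prove~\eqref{eq:affine-X-tilde-k}. Dynkin's formula for $X^{(k)}$ gives $P^{(k)}_tf=f+\int_0^tP^{(k)}_s\big((\cG^{(k)}_{\textnormal{det}}+\cG^{(k)}_{\textnormal{jump}})f\big)\D s$ for $f\in\cD$, and since $(\cG^{(k)}_{\textnormal{det}}+\cG^{(k)}_{\textnormal{jump}})f\in\cB_\rho(\cHplus_{\textnormal{w}})$ (by Lemma~\ref{lem:G-jump} together with the linear growth of $\cG^{(k)}_{\textnormal{det}}f$), strong continuity yields $\tfrac1t(P^{(k)}_tf-f)\to(\cG^{(k)}_{\textnormal{det}}+\cG^{(k)}_{\textnormal{jump}})f$ in $\cB_\rho(\cHplus_{\textnormal{w}})$; this proves~\ref{it:Gk_generator:domain} and~\ref{it:Gk_generator:sum}. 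For~\ref{it:Gk_generator:affine}, fix $u\in\cHplus$ and let $(\phi^{(k)}(\cdot,u),\psi^{(k)}(\cdot,u))$ be the solution to~\eqref{eq:riccati-conc-k} from Proposition~\ref{prop:global_sol_finact}; as $\psi^{(k)}(t,u)\in\cHplus$, the map $w(t,\cdot)\df\E^{-\phi^{(k)}(t,u)-\langle\cdot,\psi^{(k)}(t,u)\rangle}$ lies in $\cD\subseteq\operatorname{dom}(\cG^{(k)})$, and $t\mapsto w(t,\cdot)$ is differentiable in $\cB_\rho(\cHplus_{\textnormal{w}})$ because $(\phi^{(k)},\psi^{(k)})(\cdot,u)\in C^1$ and $\norm{\E^{-\langle\cdot,u_1\rangle}-\E^{-\langle\cdot,u_2\rangle}}_\rho\leq\tfrac12\norm{u_1-u_2}$. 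Substituting $w$ into $\cG^{(k)}=\cG^{(k)}_{\textnormal{det}}+\cG^{(k)}_{\textnormal{jump}}$, expanding $\nu^{(k)}$, and using that the compensator terms subtracted in forming $\tilde b^{(k)}$ and $\tilde B^{(k)}$ recombine with the $\langle\chi(\xi),\cdot\rangle$ terms to reproduce exactly $F^{(k)}$ and $R^{(k)}$, one finds $\partial_tw(t,\cdot)=\cG^{(k)}w(t,\cdot)$ with $w(0,\cdot)=\E^{-\langle\cdot,u\rangle}$; hence $w(\cdot,\cdot)$ and $t\mapsto P^{(k)}_t\E^{-\langle\cdot,u\rangle}$ solve the same abstract Cauchy problem in $\cB_\rho(\cHplus_{\textnormal{w}})$, and uniqueness (valid since $\cG^{(k)}$ generates $(P^{(k)}_t)$ and $\E^{-\langle\cdot,u\rangle}\in\cD\subseteq\operatorname{dom}(\cG^{(k)})$) yields~\eqref{eq:affine-X-tilde-k}.

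I expect the construction step to be the main obstacle. Because the state-dependent jump intensity $\langle\dmuk,x\rangle/\norm{\xi}^2$ grows linearly in $x$ while the weight $\rho$ is quadratic, $\cG^{(k)}_{\textnormal{jump}}$ does \emph{not} map $\cB_\rho(\cHplus_{\textnormal{w}})$ into itself but only $\cB_{\sqrt\rho}(\cHplus_{\textnormal{w}})$ boundedly into $\cB_\rho(\cHplus_{\textnormal{w}})$ — this is precisely why $(P^{(\textnormal{det},k)}_t)$ was made a generalized Feller semigroup on $\cB_{\sqrt\rho}(\cHplus_{\textnormal{w}})$ as well (Lemma~\ref{lem:X-semigroup}) — so it cannot be treated as a bounded perturbation of $(P^{(\textnormal{det},k)}_t)$ on $\cB_\rho(\cHplus_{\textnormal{w}})$; the whole construction hinges on the a priori second-moment bound $\mathbb{E}_x[\rho(X^{(k)}_t)]\leq M_k\E^{\omega_k t}\rho(x)$, equivalently on non-explosion of the finite-activity process, which simultaneously delivers property~\ref{semigroup4}, the $\cB_\rho$-mapping property, and $P^{(k)}_t1=1$.
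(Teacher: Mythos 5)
Your proposal is correct in outline but takes a genuinely different route from the paper. The paper does not construct the process pathwise: it verifies the hypotheses of the abstract perturbation result \cite[Proposition 3.3]{CT20} (with $A=\cG^{(k)}_{\textnormal{det}}$ acting on both $\cB_{\rho}(\cH^+_{\textnormal{w}})$ and $\cB_{\sqrt{\rho}}(\cH^+_{\textnormal{w}})$, and the moment bounds on $\nu^{(k)}$ that you also identify), introduces the damped jump operators $\cG^{(k,n)}_{\textnormal{jump}}f(x)=\int (f(\xi+x)-f(x))\tfrac{n}{\rho(\xi+x)\wedge n}\,\nu^{(k)}(x,\D\xi)$, which \emph{are} bounded perturbations, and obtains $(P^{(k)}_t)$ as the limit of the resulting semigroups $(P^{(k,n)}_t)$ as $n\to\infty$; items \ref{it:Gk_generator:domain} and \ref{it:Gk_generator:sum} then follow by differentiating $t\mapsto P^{(k,n)}_tf$ and passing to the limit, item \ref{it:Gk_generator:Markovian} by the Trotter product formula applied to each $P^{(k,n)}$, and item \ref{it:Gk_generator:affine} by the same Cauchy-problem uniqueness argument you give (your Riccati computation matches the paper's \eqref{eq:Kolomgorov-Abs-k} exactly). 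Your piecewise-deterministic construction buys a concrete probabilistic representation and avoids the double ($n$, then $k$) approximation, and your diagnosis of the central obstruction is accurate: since $\nu^{(k)}(x,\cdot)$ has total mass of order $\sqrt{\rho(x)}$, the operator $\cG^{(k)}_{\textnormal{jump}}$ maps $\cB_{\sqrt{\rho}}$ into $\cB_{\rho}$ but not $\cB_{\rho}$ into itself, so no naive Dyson--Phillips series on $\cB_{\rho}$ converges — this is precisely what the $n$-damping in \cite{CT20} is designed to handle. The price of your route is that the two points which \cite[Proposition 3.3]{CT20} packages up must be done by hand: (a) a rigorous non-explosion argument for the jump chain with affinely growing intensity (your Gronwall-plus-stopping sketch is the right idea, but one must also argue that explosion forces $\rho(X_s)\to\infty$ along the path, since the accumulated intensity $\int_0^t\lambda^{(k)}(X_s)\D s$ can only diverge if the norm does); and (b) the verification that $x\mapsto \mathbb{E}_x[f(X^{(k)}_t)]$ is $\sigma(\cH,\cH')$-continuous on each compact $K_R$ together with the tail decay \eqref{eq:function-growth}, which requires decomposing over the number of jumps and controlling the tail of that sum uniformly on $K_R$ — doable with your moment bound, but it is real work and is where most of the technical content of the paper's Steps 1--2 resides. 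With those two steps carried out carefully, your argument closes.
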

\begin{proof}[Proof of Proposition~\ref{prop:Gk_generator}]
Roughly speaking, we can ensure the existence of a generalized Feller semigroup $P_t^{(k)}$ satisfying~\ref{it:Gk_generator:sum} in Proposition~\ref{prop:Gk_generator} by verifying that all conditions of~\cite[Proposition 3.3]{CT20} are satisfied. However, the assertions of~\cite[Proposition 3.3]{CT20} do not immediately give us~\ref{it:Gk_generator:domain},~\ref{it:Gk_generator:Markovian}, and~\ref{it:Gk_generator:affine}. In order to obtain these statements we need to dig into the proof of~\cite[Proposition 3.3]{CT20}, which makes this proof somewhat technical and tricky. To enhance the readability, we split the proof in to several parts.\par
% We start by checking the assumptions of \cite[Proposition 3.3]{CT20} which allows us to obtain that $\cG^{(k)}$ is the generator of a generalized Feller semigroup. 
% Since the proof of \cite[Proposition 3.3]{CT20} is rather condensed and some of the conclusions are not clear to us, we add, in a second step, details concerning the assertions in \cite[Proposition 3.3]{CT20}. This will allow us to conclude the claims of our proposition.\par
% First of all observe that Lemmas~\ref{lem:X-semigroup} and~\ref{lem:G-jump} imply that $\cD \in \operatorname{dom}(\cG^{(k)}_{\textnormal{det}}) \cap \operatorname{dom}(\cG^{(k)}_{\textnormal{jump}})$.
\par\medskip 
{\it Step 1: Verifying the assumptions of~\cite[Proposition 3.3]{CT20}.}
We consider, in the notation of that Proposition, $(X,\rho)=(\cH^+_{\textnormal{w}},\rho)$,
$A= \cG^{(k)}_{\textnormal{det}}$, 
$\omega$ as in~\eqref{eq:omega_for_Pkdet}, $M_1 = M$ where $M$ is as in~\eqref{eq:M_for_Pkdet},
$\mu(x,E) = \nu^{(k)}(x,E-x\cap \cH^+)$ (recall the definition
of $\nu^{(k)}$ from~\eqref{eq:def_nuk}; here $E-x \df \{ y\in \cH \colon y+x\in E \}$),
and $B=\cG^{(k)}_{\textnormal{jump}}$.
% $$ 
% (Bf)(x) = \int_{\cH^+\setminus 0} (f(\xi)-f(x))\,\mu(x,\D \xi) 
% = \int_{\cH^+\setminus 0} (f(\xi+x)-f(x))\,\nu^{(k)}(x,\D \xi)\,.
% $$
%Note that if we verify that all assumptions of~\cite[Proposition 3.3]{CT20}, 
%then we obtain that $\cG^{(k)}$ is the generator of a generalized Feller semigroup.\par 
By Lemma~\ref{lem:X-semigroup}, $\cG^{(k)}_{\textnormal{det}}$ is the generator of a generalized 
Feller semigroup $(P^{(\textnormal{det},k)}_t)_{t\geq 0}$ of transport type on both $\cB_{\rho}(\cH^+_{\textnormal{w}})$ and $\cB_{\sqrt{\rho}}(\cH^+_{\textnormal{w}})$.
In particular, by~\cite[Theorem 3.2]{DT10}, $(P^{(\textnormal{det},k)}_t)_{t\geq 0}$ defines
a strongly continuous semigroup on both $\cB_{\rho}(\cH^+_{\textnormal{w}})$ and $\cB_{\sqrt{\rho}}(\cH^+_{\textnormal{w}})$,
i.e., it automatically holds that the domain of $\cG^{(k)}_{\textnormal{det}}$ is dense and that $P^{(\textnormal{det},k)}_t)_{t\geq 0}$ allows for exponential bounds (see Lemma~\ref{lem:Ptk_bdd} for explicit bounds). 
Lemma \ref{lem:G-jump} implies that $\cG^{(k)}_{\textnormal{jump}}f$ is weakly
continuous on compact sets $\{ \rho \leq R \}$ for all $R\geq 0$ and all $f\in C_b(\cH^+_{\textnormal{w}})$.\par 
Moreover, one easily verifies that there exists a constant $K$ (possibly depending on $k$)
such that for all $x\in \cH^+$ we have
\begin{equation}
\begin{aligned}
%  \int_{\cH^+\setminus \{0\}} \rho(y)\,\mu(x,\D y)
%  & = 
 \int_{\cH^+\setminus \{0\}} \rho(y+x) \,\nu^{(k)}(x,\D y)
%  \\ &
 \leq
  \int_{\cH^+\setminus \{0\}} ( 1 + 2\| x \|^2 + 2\|y \|^2) \,\nu^{(k)}(x,\D y)
 \leq 
 K |\rho(x)|^2\,, 
\end{aligned}
\end{equation}
\begin{equation}
\begin{aligned}
%  \int_{\cH^+\setminus \{0\}} \sqrt{\rho(y)} \,\mu(x,\D y)
%  & = 
 \int_{\cH^+\setminus \{0\}} \sqrt{\rho(y+x)} \,\nu^{(k)}(x,\D y)
%  \\ &
 \leq
  \int_{\cH^+\setminus \{0\}} ( 1 + \| x \| + \|y \|) \,\nu^{(k)}(x,\D y)
 \leq 
 K \rho(x)\,, 
\end{aligned} 
\end{equation}
and 
\begin{equation}\label{eq:CT_3.3_cond3}
 \int_{\cH^+\setminus \{0\}} \nu^{(k)}(x,\D y) \leq K \sqrt{\rho(x)}\,.
\end{equation}
Next, observe that by Lemma~\ref{lemma:Xdet_positive_decreasing} and the fact that $(0,B,0,\mu)$ is also an admissible parameter set, we have $e^{t\tilde{B}^{(k)}}\xi\in \cH^+$
whenever $\xi\in \cH^{+}$. Thus 
\begin{equation}
\begin{aligned}
 & P_t^{(\textnormal{det},k)} \rho(\xi + x)
 = 1 + \| \mathbf{x}^{(\xi+x,k)}_t \|^2
 = 1 + \| e^{t\tilde{B}^{(k)}}\xi+ \mathbf{x}^{(\xi,k)}_t \|^2 \\ &
 = P_t^{(\textnormal{det},k)} \rho(x) 
 + 2\langle e^{t\tilde{B}^{(k)}}\xi, \mathbf{x}^{(\xi,k)}_t\rangle
 + \| \e^{t\tilde{B}^{(k)}}\xi  \|^2
 \geq P_t^{(\textnormal{det},k)} \rho(x)
 \end{aligned}
\end{equation}
for all $x,\xi \in \cH^+$. This together with estimates similar to~\eqref{eq:Xk_rho} yields (note that $\|\e^{t\tilde{B}^{(k)}}\xi \| \leq \|\e^{t\tilde{B}^{(1)}}\xi \|$, and recall $\omega$ from~\eqref{eq:omega_for_Pkdet})
\begin{equation}
\begin{aligned}
&\left|
  \frac{
    \sup_{t\geq 0} \e^{-\omega t} P_t^{(\textnormal{det},k)} \rho(\xi + x) - 
    \sup_{t\geq 0} \e^{-\omega t} P_t^{(\textnormal{det},k)} \rho(x)
  }{
    \sup_{t\geq 0} \e^{-\omega t} P_t^{(\textnormal{det},k)} \rho(x)
  }
\right|
\\ & =
  \frac{
    \sup_{t\geq 0} \e^{-\omega t} P_t^{(\textnormal{det},k)} \rho(\xi + x) - 
    \sup_{t\geq 0} \e^{-\omega t} P_t^{(\textnormal{det},k)} \rho(x)
  }{
    \sup_{t\geq 0} \e^{-\omega t} P_t^{(\textnormal{det},k)} \rho(x)
  } 
\\ & \leq 
  \frac{
    \sup_{t\geq 0} \e^{-\omega t} \left|  
        \| \e^{t\tilde{B}^{(k)}}\xi  \|^2
        + 
        2\| \e^{t\tilde{B}^{(k)}}\xi\| \| \mathbf{x}^{(x,k)}_t \| 
    \right|
  }{
    1 + \| x \|^2
  }
% \\ &
\leq  
% \left|
  \frac{
    \| \xi \|^2 + 2\| \xi \| ( \| x \| + \sqrt{M})
  }{
    1 + \| x \|^2
  }
\\ & 
\leq  
% \left|
  \frac{
    (M+2\| \xi \|^2 ) (1+\|x\|)  
  }{
    1 + \| x \|^2
  }
\leq 
\tfrac{2M + 4\| \xi \|^2}
{ 1 + \| x \| }
% \right|
\end{aligned}
\end{equation}
for all $x,\xi\in \cH^+$.
It follows that for all $x\in \cH^+$ we have
\begin{equation}
\begin{aligned}\label{eq:tilde_omega_est}
 & 
 \int_{\cH^+\setminus \{0\}}
 \left| 
    \frac{ \sup_{t\geq 0} 
        \e^{-\omega t} \left( 
        P_t^{(\textnormal{det},k)}\rho
        \right)(\xi+x)
        -
        \sup_{t\geq 0}
        \e^{-\omega t} \left( 
        P_t^{(\textnormal{det},k)}\rho
        \right)(x)
    }{
        \sup_{t\geq 0}
        \e^{-\omega t} \left( 
        P_t^{(\textnormal{det},k)}\rho
        \right)(x)
    }
 \right|\,\nu^{(k)}(x,\D \xi)
 \\ & 
 \leq \sup_{y\in \cH^+}\left| 
 \int_{\cH^+\setminus \{0\}}
 \left(      
    \tfrac{2M + 4\| \xi \|^2}
    { 1 + \| y \| }
 \right)\,\nu^{(k)}(y,\D \xi) \right|
%  \\ &
%  \leq 
%  (3 + k^2 M)\left( 
%     \int_{\cH^+\setminus \{0\}} \| \xi \|^2 \,\D m(\xi)
%     +
%     \| \mu(\cH^{+}) \|
%  \right) 
 =: \tilde{\omega}_k < \infty.
\end{aligned}
\end{equation}
% for all $x\in \cH^+$.
% Finally,~\cite[Proposition 3.3]{CT20} requires
% that the operator $B$ ``yields continuous
% functions on $\{ \rho \leq R \}$ for all $R\geq 0$''. A close inspection of the 
% proof of~\cite[Proposition 3.3]{CT20}
% reveals that this condition is only needed to ensure 
% that the operators $B^{n}$, $n\in \N$, in the proof of~\cite[Proposition 3.3]{CT20}
% map both $\cB_{\rho}(\cH^{+}_{\textnormal{w}})$ and
% $\cB_{\sqrt{\rho}}(\cH^+_{\textnormal{w}})$ to themselves.
This ensures
that all conditions of~\cite[Proposition 3.3]{CT20}
are satisfied. 

{\it Step 2: Presenting the assertions of~\cite[Proposition 3.3]{CT20}.}
As in the proof of~\cite[Proposition 3.3]{CT20}, we introduce the operator
$\cG^{(k,n)}_{\textnormal{jump}} \in \cL(\cB_{\rho}(\cH^+_{\textnormal{w}}))$
which satisfies
$$ 
(\cG^{(k,n)}_{\textnormal{jump}} f)(x) 
= \int_{\cH^+\setminus \{0\}} 
    (f(\xi+x)-f(x))\tfrac{n}{\rho(\xi+x)\wedge n}
    \,\nu^{(k)}(x,\D \xi)\,
$$
for all $x\in \cH^+$, $f\in \cB_{\rho}(\cH^+_{\textnormal{w}})$.
Note that $\cD\subseteq \operatorname{dom}(\cG^{(k)}_{\textnormal{jump}})$ by Lemma~\ref{lem:G-jump}.
For future reference (see Proposition~\ref{prop:cadlag} below) we also introduce $\tilde{\rho}_k\colon \cH^+ \rightarrow \R$, $\tilde{\rho}_k(x) = \sup_{t\geq 0} e^{-\omega t} P_t^{(\textnormal{det},k)} \rho(x)$. It follows from~\cite[Remark 2.9]{CT20} that $\tilde{\rho}_k$
 is an admissible weight function and that $\left\| \cdot \right\|_{\rho} \leq \left\| \cdot \right\|_{\tilde{\rho}_k} \leq M \left\| \cdot \right\|_{\rho}$. Moreover,
it follows from the proof
%\footnote{The proof of~\cite[Proposition 3.3]{CT20} is rather condensed. 
% It took us a while to realize that the fact that both $\rho$ and $ \cG^{(k)}_{\textnormal{jump}}f$ are weakly continuous on $\{\rho < R\}$ for all $R>0$, $f\in C_{\textnormal{b}}%(\cH^{+}_{\textnormal{w}})$  
%% ensures that $\cG^{(k,n)}_{\textnormal{jump}} \in \cL(\cB_{\rho}(X) \cap \cL(\cB_{\sqrt{\rho}}(X)$. Moreover, 
%In particular, the proof of~\cite[Proposition 3.3]{CT20} closes by invoking~\cite[Proposition 3.1]{CT20}
%to conclude that $\cG^{(k)}_{\textnormal{det}} + \cG^{(k)}_{\textnormal{jump}}$ is the generator of 
%a generalized Feller semigroup. However, it is not clear to us that $\cG^{(k)}_{\textnormal{det}} + \cG^{(k)}_{\textnormal{jump}}$ is a closed operator. On the other hand, %employing well-known limiting argument ensures that the generator of the limiting semigroup $P^{(k)}$ is the closure of $\cG^{(k)}_{\textnormal{det}} + \cG^{(k)}%_{\textnormal{jump}}$, as explained in more detail below.} 
of~\cite[Proposition 3.3]{CT20} (with $A=\cG^{(k)}_{\textnormal{det}}$ and $B_n=\cG^{(k,n)}_{\textnormal{jump}}$) that 
$\cG^{(k)}_{\textnormal{det}} + \cG^{(k,n)}_{\textnormal{jump}}$
is the generator of a generalized Feller semigroup 
$(P^{(k,n)}_t)_{t\geq 0}$ on $\cB_{\rho}(\cH^+_{\textnormal{w}})$
for all $n\in \N$, such that 
\begin{enumerate}
 \item [a)]\label{it:quasi-contractive} $ \| P_t^{(k,n)}\|_{\cL(\cB_{\tilde{\rho}_k}(\cH^+_{\textnormal{w}}))} 
  \leq e^{(\omega+\tilde{\omega}_k)t}$ for all $t\geq 0$, $n\in \N$,
 \item [b)]\label{it:PknUnifbdd} $\| P^{(k,n)}_t \|_{\cL(\cB_{\rho}(\cH^+_{\textnormal{w}}))}
 \leq M \e^{(\omega+\tilde{\omega}_k)t}$ for all $t\geq 0$, $n\in \N$,
 \item [c)] \label{it:AknCauchy} $\lim_{n\rightarrow \infty}
 \|( \cG^{(k,n)}_{\textnormal{jump}} - \cG^{(k)}_{\textnormal{jump}})f \|_{\rho} = 0 $ for all $f\in \cD$\,.
 \end{enumerate}
It moreover follows from the proof of~\cite[Proposition 3.3]{CT20} that there exists a generalized Feller semigroup 
$(P^{(k)}_t)_{t\geq 0}$ on $\cB_{\rho}(\cH^+_{\textnormal{w}})$ 
with generator $\cG^{(k)}$ satisfying
\begin{equation}\label{it:Pknconv} 
\lim_{n\rightarrow \infty}\sup_{s\in [0,t]}
 \| (P_s^{(k,n)} - P_s^{(k)} )f \|_{\rho} = 0, \,\, \mbox{for all}\,\, f \in \cB_{\rho}(\cH^+_{\textnormal{w}}),\,\, t\geq 0.
\end{equation} 
\medskip 
\emph{Step 3: Proof of~\ref{it:Gk_generator:domain} and~\ref{it:Gk_generator:sum}.}
Fix $f\in \cD$. Let $u_{k,n}(t) = P^{(k,n)}_t f$, $t\geq 0$ and $n\in \N$, let $u_k(t)= P^{(k)}_t f$, $t\geq 0$, and let $v_k(t)=P^{(k)}_t (\cG^{(k)}_{\textnormal{det}} + \cG^{(k)}_{\textnormal{jump}})f$. Observe that $u_{k,n}'(t)= P^{(k,n)}_t (\cG^{(k)}_{\textnormal{det}} + \cG^{(k,n)}_{\textnormal{jump}}) f$. By a), b),
%~\ref{it:PknUnifbdd},~\ref{it:AknCauchy}, 
and~\eqref{it:Pknconv} we have, for all $T\geq 0$, that
\begin{equation}
\lim_{n\rightarrow \infty} \sup_{t\in [0,T]} 
\left( \| u_{k,n}(t) - u_k(t)\|_{\rho} + \| u_{k,n}'(t) - v_k(t)\|_{\rho}\right) = 0. 
\end{equation}
This implies that $u_k$ is differentiable and $u_k'(t) = v_k(t)$, which implies that $f\in \operatorname{dom}(\cG^{(k)})$ and $\cG^{(k)}f=u_k'(0)=(\cG^{(k)}_{\textnormal{det}} + \cG^{(k)}_{\textnormal{jump}})f$. 
%This combined with the fact that $\cG^{(k)}$, being the generator of a
%strongly continuous semigroup, is closed and Lemma~\ref{lem:dense_subset_Brho} implies that 
%$\cG^{(k)}_{\textnormal{det}} + \cG^{(k)}_{\textnormal{jump}}$
%closable and its closure is $\cG^{(k)}$. \par 
\par\medskip
\emph{Step 4: Proof of~\ref{it:Gk_generator:Markovian}.}
In order to verify
that $P_t^{(k)}1=1$ for all $t\geq 0$, observe that 
$\cG^{(k,n)}_{\textnormal{jump}} 1 = 0$ (whence $e^{t \cG^{(k,n)}_{\textnormal{jump}}} 1 = 1$ for all $t\geq 0$), whence 
the Trotter product formula (see, e.g.,~\cite[Chapter III, Corollary
5.8]{EN00}) implies that 
$P^{(k,n)}_t 1 = 1$ for all $t\geq 0$. It follows that
$P^{(k)}_t 1 = 1 $ for all $t\geq 0$.\par 

% \colon \operatorname{dom}(\cG^{(k)})\subseteq \cB_{\rho}(\cH^+_{\textnormal{w}}) \rightarrow \cB_{\rho}(\cH^+_{\textnormal{w}})$. 
\par\medskip
\emph{Step 5: Proof of~\ref{it:Gk_generator:affine}.} Recall the definition of $R^{(k)}$ and $F^{(k)}$ from~\eqref{eq:Fk} and~\eqref{eq:Rk}. 
Recall from Lemmas~\ref{lem:X-semigroup} and~\ref{lem:G-jump} that $\e^{-\langle\cdot,u\rangle}\in \cD \subseteq \operatorname{dom}(\cG^{(k)}_{\textnormal{det}})\cap \operatorname{dom}(\cG^{(k)}_{\textnormal{jump}})$ for all $u\in \cH^+$, and that   
\begin{align}\label{eq:Kolomgorov-Abs-k}
\cG^{(k)}(\e^{-\langle\cdot,u\rangle})(x)
& =
(\cG^{(k)}_{\textnormal{det}}+\cG^{(k)}_{\textnormal{jump}})(\e^{-\langle\cdot,u\rangle})(x) \notag
\\ &=\Big(-\langle
  \tilde{b}^{(k)}+\tilde{B}^{(k)}(x),u\rangle+\int\limits_{\cHplus\setminus
  \{0\}}\big(\E^{-\langle \xi, u\rangle}-1\big) \nu^{(k)}(x,\D\xi)\Big)\E^{-\langle x,
  u\rangle}\nonumber \\
  &= \big(-F^{(k)}(u)-\langle x,R^{(k)}(u)\rangle\big) \E^{-\langle x , u\rangle}
\end{align}
for all $u,x\in \cH^+$. On the other hand, Proposition~\ref{prop:global_sol_finact}
implies that
\begin{align*}
&\frac{\partial }{\partial  t}\E^{-\phi^{(k)}(t,u)-\langle x, \psi^{(k)}(t,u)\rangle}
\\ & =\big(-F^{(k)}(\psi^{(k)}(t,u))
-\langle x,R^{(k)}(\psi^{(k)}(t,u))\rangle\big)
\E^{-\phi^{(k)}(t,u)-\langle x, \psi^{(k)}(t,u)\rangle}
\end{align*}
for all $u,x\in \cH^+$.
Therefore for all $u\in \cH^+$ it holds that the function 
$[0,\infty)\ni t \mapsto \E^{-\phi^{(k)}(t,u)-\langle \cdot , \psi^{(k)}(t,u)\rangle} \in \cD\subseteq \operatorname{dom}(\cG^{(k)})$ is a classical solution to the following abstract Cauchy problem:
\begin{align*}
\left\{ 
\begin{aligned}
 \frac{\partial}{\partial t}v(t)
 &=
 \cG^{(k)} v(t),\\
  v(0)&=\E^{-\langle \cdot, u\rangle}.
\end{aligned}\right.
\end{align*}
By the uniqueness of the classical solution we conclude~\eqref{eq:affine-X-tilde-k}.
\end{proof}

From Proposition~\ref{prop:Gk_generator} on the existence of the generalized
Feller semigroup $(P_{t}^{(k)})$ with $P_{t}^{(k)}1=1$, together with
the version of Kolmogorov's extension Theorem~\ref{thm:Kol-extension-theorem},
we conclude that there exists a generalized Feller process associated to
$(P_{t}^{(k)})_{t\geq 0}$, denoted by $(X_{t}^{(k)})_{t\geq 0}$, such that
$\EXspec{x}{f(X_{t}^{(k)})}=P_{t}^{(k)}f(x)$ for every $f\in \cB_{\rho}(\cHplus_{\textnormal{w}})$.
Item~a) and equation~\eqref{it:Pknconv} in the proof of
Proposition~\ref{prop:Gk_generator} result in exponential bounds on $\|
P_t^{(k)} \|_{\cL(\cB_{\rho}(\cH^+_{\textnormal{w}}))}$ \emph{that depend on
  $k\in \N$}. In order to proceed, we need to establish bounds that are
\emph{uniform} in $k$. We begin with a lemma that builds on top of the
results in Proposition \ref{prop:gateaux-diff}:
\begin{lemma}\label{lem:uniform-bound-k}
Let $(b,B,m,\mu)$ be an admissible parameter set conform
Definition~\ref{def:admissibility}. Moreover for every $k\in\MN$, let
$(\phi^{(k)}(\cdot,u),\psi^{(k)}(\cdot,u))$ be the solution of
\eqref{eq:riccati-conc-k}, the existence of which is established in Proposition
\ref{prop:global_sol_finact}, and the mappings $\D_{+}\phi(\cdot,0)$,
$\D_{+}\psi(\cdot,0)$, $\D_{+}^{2}\phi^{(k)}(\cdot,0)$ and
$\D_{+}^{2}\psi^{(k)}(\cdot,0)$ be as in Proposition~\ref{prop:gateaux-diff}
for the admissible parameter set $(b,B,m^{(k)},\mu^{(k)})$. Moreover, let $(X_{t}^{(k)})_{t\geq 0}$ be the generalized
Feller process associated to $(P_{t}^{(k)})_{t\geq 0}$. Then for every $v,w\in\cH$ and $t\geq 0$ the following formulas hold true:
 \begin{align}\label{eq:explicit-Xv-k}
  \EXspec{x}{\langle X^{(k)}_{t}, v\rangle}&=\D_{+}\phi(t,0)(v)+\langle x, \D_{+}\psi(t,0)(v)\rangle,
 \end{align}
 and
 \begin{align}\label{eq:explicit-Xv-square-k}
  \EXspec{x}{\langle X^{(k)}_{t}, v\rangle\langle X^{(k)}_{t},w\rangle}&=-\D^{2}_{+}\phi^{(k)}(t,0)(v,w)-\langle x,
                             \D^{2}_{+}\psi^{(k)}(t,0)(v,w)\rangle \nonumber\\
  &\quad+\big(\D_{+}\phi(t,0)(v)+\langle x,
    \D_{+}\psi(t,0)(v)\rangle\big)\nonumber\\
   &\quad\times \big(\D_{+}\phi(t,0)(w)+\langle x, \D_{+}\psi(t,0)(w)\rangle\big).
 \end{align}
 % \int_{0}^{t}\Big(\int_{\cHplus\setminus 0}\langle \xi,
%                                             \cdot\rangle^{2}m^{(k)}(\D\xi)\big)\big)
%                                             \left(\E^{s\D R(0)}v\otimes \E^{s\D R(0)}v\right) \\
%                                           &\quad +\big(\langle b, \cdot\rangle +\int\limits_{\cHplus\cap \norm{\xi}\geq 1}\langle
%     \xi, \cdot\rangle
%                                             m(\D\xi)\big)\big(-\int_{0}^{s}\E^{(s-\tau)\D R(0)}\nonumber\\
%                                          &\quad\times \Big(\int_{\cHplus\setminus
%     0}\langle
%     \xi,\E^{\tau\D R(0)}v\rangle^{2}\frac{\mu^{(k)}(\D\xi)}{\norm{\xi}^{2}}\Big)\D
%     \tau \big)\Big)\D s
%     \nonumber \\
%                                           &\quad -\langle x,
%                                             \int_{0}^{t}\E^{(t-s)\D R(0)}\left(\int_{\cHplus\setminus
%                                             0}\langle
%                                             \xi,\E^{s\D R(0)}v\rangle^{2}\frac{\mu^{(k)}(\D\xi)}{\norm{\xi}^{2}}\right)\D s\rangle
%                                             \nonumber \\
%                                           &\quad +\left(\int_{0}^{t}\langle b, \E^{s{B}^{(k)*}}v\rangle+
%                                             \int\limits_{\cHplus\cap \norm{\xi}\geq 1}\langle
%                                             \xi,\E^{s\D R(0)}v\rangle
%                                             m(\D\xi)\D s +\langle x, \E^{t\D R(0)}v\rangle\right)^{2} \nonumber
\end{lemma}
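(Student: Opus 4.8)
The plan is to derive both identities by differentiating the exponential-affine formula for $(P_{t}^{(k)})_{t\geq 0}$ once, respectively twice, at $u=0$ and matching the outcome with the first two moments of the nonnegative random variable $Y\df\langle X_{t}^{(k)},v\rangle$. First I would note that everything in sight is integrable: recalling $\EXspec{x}{f(X_{t}^{(k)})}=P_{t}^{(k)}f(x)$ for $f\in\cB_{\rho}(\cHplus_{\textnormal{w}})$ and that $(P_{t}^{(k)})_{t\geq 0}$ is a generalized Feller semigroup with $P_{t}^{(k)}1=1$, Remark~\ref{rem:semigroup-on-b-rho} yields $\EXspec{x}{\rho(X_{t}^{(k)})}=P_{t}^{(k)}\rho(x)<\infty$, hence $\EXspec{x}{\|X_{t}^{(k)}\|^{2}}<\infty$ and, by Cauchy--Schwarz, $\EXspec{x}{\langle X_{t}^{(k)},v\rangle^{2}}<\infty$ for all $v\in\cH$. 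Fix now $x,v\in\cHplus$; since $\cHplus$ is self-dual, $Y\df\langle X_{t}^{(k)},v\rangle\geq 0$, and since $\lambda v\in\cHplus$ for $\lambda\geq 0$, Proposition~\ref{prop:Gk_generator} gives
\[
 g(\lambda)\df\EXspec{x}{\E^{-\lambda Y}}=\E^{-h(\lambda)},\qquad h(\lambda)\df\phi^{(k)}(t,\lambda v)+\langle x,\psi^{(k)}(t,\lambda v)\rangle,\quad h(0)=0.
\]

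For the right-hand side I would use dominated convergence (dominating function $Y\in L^{1}(\MP_{x})$) to obtain that $g$ is differentiable on $[0,\infty)$ (one-sidedly at $0$) with $g'(\lambda)=-\EXspec{x}{Y\E^{-\lambda Y}}$, so in particular $g'_{+}(0)=-\EXspec{x}{Y}$; and a second dominated convergence argument (with dominating function $Y^{2}\in L^{1}(\MP_{x})$, using $Y(1-\E^{-\lambda Y})/\lambda\leq Y^{2}$) to obtain that $\lambda\mapsto g'(\lambda)$ is right-differentiable at $0$ with second right derivative $g''_{+}(0)=\EXspec{x}{Y^{2}}$. On the other side, applying Proposition~\ref{prop:gateaux-diff} to the admissible parameter set $(b,B,m^{(k)},\mu^{(k)})$ and using the remark following it, the maps $u\mapsto\phi^{(k)}(t,u)$ and $u\mapsto\psi^{(k)}(t,u)$ have one-sided directional derivatives near $0$ and are twice one-sided differentiable at $0$; consequently $h$ is right-differentiable on a right neighbourhood of $0$ with $h'_{+}(0)=\D_{+}\phi^{(k)}(t,0)(v)+\langle x,\D_{+}\psi^{(k)}(t,0)(v)\rangle$, and $\lambda\mapsto h'_{+}(\lambda)$ is right-differentiable at $0$ with $h''_{+}(0)=\D^{2}_{+}\phi^{(k)}(t,0)(v,v)+\langle x,\D^{2}_{+}\psi^{(k)}(t,0)(v,v)\rangle$. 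The one-sided chain rule applied to $g=\E^{-h}$ gives $g'_{+}(0)=-h'_{+}(0)$ and $g''_{+}(0)=(h'_{+}(0))^{2}-h''_{+}(0)$; equating with the probabilistic computation yields $\EXspec{x}{Y}=h'_{+}(0)$ and $\EXspec{x}{Y^{2}}=(h'_{+}(0))^{2}-h''_{+}(0)$, which is~\eqref{eq:explicit-Xv-k} for $v\in\cHplus$ together with the $v=w\in\cHplus$ case of~\eqref{eq:explicit-Xv-square-k}, except that $\phi^{(k)},\psi^{(k)}$ appear where $\phi,\psi$ should.

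To repair this I would observe that, since $1/k\leq 1$, we have $\chi(\xi)=\xi$ on $\{\|\xi\|\leq 1\}$ and $m^{(k)}=m$, $\mu^{(k)}=\mu$ on $\{\|\xi\|> 1\}$, whence $\D F^{(k)}(0)=\D F(0)$ and $\D R^{(k)}(0)=\D R(0)$ by the formulas in Lemma~\ref{lem:R-one-sided-dif}; uniqueness for the linear equation~\eqref{eq:dif-eq-psi-1} (equivalently~\eqref{eq:variational-solution-3-0}) then gives $\D_{+}\psi^{(k)}(t,0)(v)=\E^{t\D R(0)}v=\D_{+}\psi(t,0)(v)$, and hence $\D_{+}\phi^{(k)}(t,0)(v)=\int_{0}^{t}\D F(0)(\E^{s\D R(0)}v)\,\D s=\D_{+}\phi(t,0)(v)$. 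This establishes~\eqref{eq:explicit-Xv-k} for $v\in\cHplus$ and~\eqref{eq:explicit-Xv-square-k} for $v=w\in\cHplus$. Finally I would pass to arbitrary $v,w\in\cH$: both sides of~\eqref{eq:explicit-Xv-k} are linear in $v$ (the right-hand side through $\D_{+}\psi(t,0)(\cdot)=\E^{t\D R(0)}(\cdot)$ and the corresponding formula for $\D_{+}\phi(t,0)$) and coincide on $\cHplus$, so they coincide on $\cH=\cHplus-\cHplus$; and both sides of~\eqref{eq:explicit-Xv-square-k} are bounded symmetric bilinear forms on $\cH\times\cH$ --- the left-hand side by Cauchy--Schwarz, the right-hand side by bilinearity and boundedness of $\D^{2}_{+}\phi^{(k)}(t,0)$ and $\D^{2}_{+}\psi^{(k)}(t,0)$ (see~\eqref{eq:variational-solution-3-1},~\eqref{eq:d2R_bdd}) and boundedness of the linear functional $v\mapsto\D_{+}\phi(t,0)(v)+\langle x,\D_{+}\psi(t,0)(v)\rangle$ --- that coincide on the diagonal $\{(v,v):v\in\cHplus\}$; by polarisation within $\cHplus$ they coincide on $\cHplus\times\cHplus$, hence on $\cH\times\cH$.

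I expect the crux to be the rigorous second-order step: interchanging the one-sided second derivative with the expectation rests on the finite second moment coming from the generalized Feller structure and on a careful dominated-convergence argument, and it has to be combined with the one-sided (not two-sided) chain rule for $g=\E^{-h}$, which in turn demands keeping precise track of what Proposition~\ref{prop:gateaux-diff} (and its subsequent remark) actually provides --- existence of the first one-sided derivative of $\phi^{(k)}(t,\cdot)$ and $\psi^{(k)}(t,\cdot)$ near $0$ and of the second one at $0$.
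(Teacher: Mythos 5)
Your proposal is correct and follows essentially the same route as the paper: differentiate the exponential-affine formula \eqref{eq:affine-X-tilde-k} once and twice at $u=0$ along rays in $\cHplus$, justify the interchange of derivative and expectation via the finite second moment from Remark~\ref{rem:semigroup-on-b-rho}, identify $\D_{+}\phi^{(k)}(t,0)=\D_{+}\phi(t,0)$ and $\D_{+}\psi^{(k)}(t,0)=\D_{+}\psi(t,0)$ from the formulas in Lemma~\ref{lem:R-one-sided-dif}, and extend to $v,w\in\cH$ by (bi)linearity and $\cH=\cHplus-\cHplus$. The only cosmetic difference is that you recover the mixed second-moment identity from the diagonal case by polarisation of bounded symmetric bilinear forms, whereas the paper differentiates directly in two directions $v$ and $w$; both are valid.
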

\begin{proof}
Let $k\in\MN$ arbitrary, but fixed. Recall from
Remark~\ref{rem:semigroup-on-b-rho} that for all $t\geq 0$:
\begin{align}\label{eq:second-moment-finite-2}
\EXspec{x}{\norm{X^{(k)}_{t}}^{2}}<\infty,\quad \forall x\in\cHplus.  
\end{align}
 We first show that the formulas
\eqref{eq:explicit-Xv-k} and \eqref{eq:explicit-Xv-square-k} holds for
$v,w\in\cHplus$ and subsequently extend these to $v,w\in \cH$.
% Let $\phi^{(k)}\colon\MRplus\times
% \cHplus\to \MRplus$ and $\psi^{(k)}\colon \MRplus\times\cHplus\to\cHplus$ be the unique solutions to the generalized
% Riccati equations in \eqref{eq:riccati-conc-k} asscoiated to $(b,B,m,\mu)$
% with initial values $\phi^{(k)}(0,u)=0$ and $\psi^{(k)}(0,u)=u$ for
% $u\in\cHplus$, the existence of which is guaranteed by Proposition
% \ref{prop:global_sol_finact}.
Let $u\in\cHplus$, $x\in\cHplus$ and $t\geq 0$, then we set 
\begin{align*}
 \Phi^{(k)}(t,u,x)\df\E^{-\phi^{(k)}(t, u)-\langle x, \psi^{(k)}(t, u)\rangle}, 
\end{align*}
and by the affine property of $(X_{t}^{(k)})_{t\geq 0}$ from equation \eqref{eq:affine-X-tilde-k} we have
\begin{align}\label{eq:expo-affine-theta-k}
\EXspec{x}{\E^{-\langle X^{(k)}_{t}, u\rangle}}=\Phi^{(k)}(t,u,x).
\end{align}
By Proposition \ref{prop:gateaux-diff} the right-hand side of equation
\eqref{eq:expo-affine-theta-k} is one-sided differentiable in $u\in\cHplus$ in
the direction $v$ for every $v\in\cHplus$. In particular, by applying the chain-rule at $u=0$ we have:
\begin{align}
  \D_{+}\Phi^{(k)}(t,0,x)(v)&
                      % \lim_{\theta\to 0+}\frac{\E^{-\phi^{(k)}(t,\theta v)-\langle x, \psi^{(k)}(t,
  % \theta v)\rangle}-\E^{-\phi^{(k)}(t,0)-\langle x, \psi^{(k)}(t,
  % 0)\rangle}}{\theta}\label{eq:explicit-Xv-right-k-0} \\
                                                                     =\big(-\D_{+}\phi^{(k)}(t,0)(v)-\langle x,\D_{+}\psi^{(k)}(t,0)(v)\rangle
                                    \big)\Phi^{(k)}(t,0,x) \nonumber\\ 
  &=-\D_{+}\phi^{(k)}(t,0)(v)-\langle x, \D_{+}\psi^{(k)}(t,0)(v)\rangle,\label{eq:explicit-Xv-right-k-1}  
\end{align}
where $\D_{+}\phi^{(k)}(t,0)=\D_{+}\phi(t,0)$ and $\D_{+}\psi^{(k)}(t,0)=\D_{+}\psi(t,0)$ for all $t\geq 0$ and $k\in\MN$, see Lemma~\ref{lem:R-one-sided-dif}.
Moreover, note that for $\theta\in\MRplus$ the random variable $\E^{-\langle
  X^{(k)}_{t},\theta v\rangle}$ is integrable and for $\MP_{x}$-almost all $\omega\in\Omega$ the
mapping $\theta\mapsto \E^{-\langle X^{(k)}_{t}(\omega),\theta v\rangle}$  
is differentiable. Due to equation~\eqref{eq:second-moment-finite-2} the term
\begin{align*}
 \sup\limits_{\theta \in [0,1]}\lvert\frac{\D}{\D \theta}\E^{-\langle X^{(k)}_{t},
  \theta v\rangle}\rvert= \sup\limits_{\theta \in [0,1]} \lvert-\langle
  X^{(k)}_{t}, v\rangle\E^{-\langle X^{(k)}_{t}, \theta v\rangle}\rvert
\end{align*}
is integrable. % where it is required that $\EXspec{x}{-\langle X^{(k)}_{t},
  % v\rangle}<\infty$ for all $t\geq 0$, which holds 
 Hence, all the
requirements for switching the derivative with respect to $\theta$ and the expectation with respect to
$\MP_{x}$ are fulfilled, thus the left-hand side of equation
\eqref{eq:expo-affine-theta-k} together with equation
\eqref{eq:explicit-Xv-right-k-1} yields:
\begin{align}\label{eq:explicit-Xv-right-2-k}
\EXspec{x}{\langle X^{(k)}_{t},v\rangle}=\D_{+}\phi(t,0)(v)+\langle x, \D_{+}\psi(t,0)(v)\rangle.
\end{align}
Again due to equation~\eqref{eq:second-moment-finite-2} we obtain by
differentiating both sides of equation \eqref{eq:expo-affine-theta-k} at $u=0$ twice in the direction $v$ and $w$ the
formula in \eqref{eq:explicit-Xv-square-k}.
% \begin{align}\label{eq:explicit-Xv-right-3-k}
% P^{(k)}_{t}\langle \cdot, v\rangle\langle \cdot, w\rangle(x)&=-\D^{2}_{+}\phi^{(k)}(t,0)(v,w)-\langle x,
%                              \D^{2}_{+}\psi^{(k)}(t,0)(v,w)\rangle \nonumber\\
%   &\quad+\big(\D_{+}\phi^{(k)}(t,0)(v)+\langle x, \D_{+}\psi^{(k)}(t,0)(v)\rangle\big).
% \end{align}
% Note that the left-hand side in equation \eqref{eq:expo-affine-theta-k} is now
% \begin{align*}
% P^{(k)}_{t}\E^{-\langle \cdot, v\rangle}(x)=\int_{\cHplus}\E^{-\langle
%   y,v\rangle}p^{(k)}_{t}(x,\D y),  
% \end{align*}
% instead of the expectation of $\langle X_{t}^{(k)},v\rangle^{2}$ with respect
% to $\MP_{x}$.
Note that for every $v\in\cH$ there exist
$v^{+},v^{-}\in\cHplus$ such that $v=v^{+}-v^{-}$, by linearity of the formula
\eqref{eq:explicit-Xv-k} in $v$, we have:
\begin{align*}
 \EXspec{x}{\langle X^{(k)}_{t}, v\rangle}&= \EXspec{x}{\langle X^{(k)}_{t},
                                 v^{+}\rangle}-\EXspec{x}{\langle X^{(k)}_{t},
                                 v^{-}\rangle}\\
                               &=\D_{+}\phi(t,0)(v^{+})-\D_{+}\phi(t,0)(v^{-})\\
  &\quad +\langle x,
    \D_{+}\psi(t,0)(v^{+})-\D_{+}\psi(t,0)(v^{-})\rangle\\
  &=\D_{+}\phi(t,0)(v)+\langle x, \D_{+}\psi(t,0)(v)\rangle.
\end{align*} 
By introducing the linear functional
\begin{align}\label{eq:linear-tensor}
\llangle \cdot, v\otimes w\rrangle\colon \cH\otimes\cH\to \MR\text{ defined by
  }\llangle x\otimes x, v\otimes w \rrangle\df\langle x, v\rangle\langle x, w\rangle,
\end{align}
we can write $\EXspec{x}{\langle X_{t}^{(k)}, v\rangle\langle X_{t}^{(k)},
w\rangle}=\EXspec{x}{\llangle X_{t}^{(k)}\otimes X_{t}^{(k)}, v\otimes w
\rrangle}$ for every $v,w\in\cHplus$ and we have
\begin{align}\label{eq:explicit-Xv-square-k-tensor}
\EXspec{x}{\llangle X_{t}^{(k)}\otimes X_{t}^{(k)}, v\otimes w
  \rrangle}&=-\llangle \D^{2}_{+}\phi^{(k)}(t,0)+
            \D^{2}_{+}\psi^{(k)}(t,0)^{*}(x), v\otimes w\rrangle \nonumber\\
  &\quad+\llangle \D_{+}\phi(t,0)\otimes \D_{+}\phi(t,0),v\otimes w\rrangle\nonumber\\
  &\quad+\llangle \D_{+}\phi(t,0)\otimes
    \D_{+}\psi(t,0)^{*}(x),v\otimes w\rrangle\nonumber\\
  &\quad+\llangle \D_{+}\psi(t,0)(x)\otimes \D_{+}\phi(t,0) ,
    v\otimes w\rrangle\nonumber\\
  &\quad+\llangle \D_{+}\psi(t,0)^{*}(x)\otimes
    \D_{+}\psi(t,0)^{*}(x), v\otimes w\rrangle,  
\end{align}
where we conveniently identified functionals on $\cH$ with
elements of $\cH$.  Written in this form the right-hand side in formula
\eqref{eq:explicit-Xv-square-k} reveals its linearity in $v\otimes w$ and for
$v\otimes w\in \cL_{2}(\cH)$, we have
\begin{align*}
v\otimes w=v^{+}\otimes w^{+}-v^{+}\otimes w^{-}-v^{-}\otimes w^{+}+v^{-}\otimes w^{-}  
\end{align*}
and thus expanding both sides by
linearity in equation \eqref{eq:explicit-Xv-square-k}, shows the validity of
the formula for all $v,w\in\cH$. 
\end{proof}
Note that by inserting the formulas from
\eqref{eq:variational-solution-3-0}--\eqref{eq:variational-solution-3-1} and~\eqref{eq:def_dR}--\eqref{eq:def_d2F} into
the corresponding terms in \eqref{eq:explicit-Xv-k} and
\eqref{eq:explicit-Xv-square-k}, the latter become explicit up to the parameters
$(b,B,m,\mu)$. To save some space, we give those explicit formulas only for the limit case in Proposition \ref{prop:explicit-formula} below.\\ 
%%%%%%%%%%%%%%%%%%%%%%%%%%%%%%%%%%%%%%%%%%%%%%%%%%%%%%%%%%%%%%%%%%%%%%%%%%%%%%%%
Using the formulas from Lemma~\ref{lem:uniform-bound-k}, we establish
uniform growth bounds for the semigroups $(P^{(k)}_t)_{t\geq 0}$ in the next
proposition.
% At the same time, we also strengthen the growth bounds of
% $(P^{(k)}_t)_{t\geq 0}$ for every $k\in\MN$, such that $\norm{P^{(k)}_{t}}\leq M\E^{w_{k}
%   t}$ holds for $M=1$, that is for all $k\in\MN$ the semigroup
    %     $(P_{t}^{(k)})_{t\geq 0}$ is quasi-contractive. However,
Let us note here that in general we do not obtain an uniform growth bound $w\in\MRplus$ with $M=1$:% This in turn has a favourable
% consequence for the path regularity of the generalized Feller processes
% $(X_{t}^{(k)})_{t\geq 0}$ for every $k\in\MN$ by Theorem 2.13 in \cite{CT20}:
\begin{proposition}\label{prop:uniform-bound-k} 
Let $(b,B,m,\mu)$ be an admissible parameter set conform
Definition~\ref{def:admissibility} and for every $k\in\MN$ let
$(P^{(k)}_t)_{t\geq 0}$ be the generalized Feller semigroup on
$\cB_{\rho}(\cH^+_{\textnormal{w}})$ associated with $(b,B,m^{(k)},\mu^{(k)})$, the
existence of which is guaranteed by Proposition~\ref{prop:Gk_generator}. Then
there exists a constant $w \in \MRplus$ and $M\geq 1$, both independent of $k\in\MN$, such that
\begin{align}\label{eq:quasi-contractive-uniform}
 \| P_t^{(k)} \|_{\cL(\cB_{\rho}(\cH^+_{\textnormal{w}}))} \leq M \e^{w t} \quad\text{for all}\; k\in \N, t\geq 0.
\end{align}
% Moreover, for every $k\in\MN$ the process $(X_{t}^{(k)})_{t\geq 0}$ has a
% version with c\`adl\`ag paths.
\end{proposition}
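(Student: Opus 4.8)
The plan is to estimate $\|P^{(k)}_t\|_{\cL(\cB_{\rho}(\cH^+_{\textnormal{w}}))}$ through the second moment of the associated generalized Feller process $(X^{(k)}_t)_{t\geq 0}$ and to extract the uniformity in $k$ from the explicit moment formulas of Lemma~\ref{lem:uniform-bound-k}. The observation that makes this work is that the first-order data $\D R^{(k)}(0)$ and $\D F^{(k)}(0)$ do \emph{not} depend on $k$ (because $m^{(k)}$, $\mu^{(k)}$ agree with $m$, $\mu$ on $\{\|\xi\|\geq 1\}$, cf.\ \eqref{eq:def_dR}--\eqref{eq:def_dF}), so that $\D_{+}\psi(t,0)=\e^{t\D R(0)}$ and the first-moment vector below are $k$-independent. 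First I would note that since $(P^{(k)}_t)_{t\geq 0}$ is positive with $P^{(k)}_t1=1$ and $\EXspec{x}{\|X^{(k)}_t\|^2}<\infty$ (Remark~\ref{rem:semigroup-on-b-rho}), any $f\in\cB_{\rho}(\cH^+_{\textnormal{w}})$ with $\|f\|_{\rho}\leq 1$ satisfies $|P^{(k)}_tf(x)|=|\EXspec{x}{f(X^{(k)}_t)}|\leq\EXspec{x}{\rho(X^{(k)}_t)}=1+\EXspec{x}{\|X^{(k)}_t\|^2}$, hence
\[
 \|P^{(k)}_t\|_{\cL(\cB_{\rho}(\cH^+_{\textnormal{w}}))}\leq\sup_{x\in\cH^+}\frac{1+\EXspec{x}{\|X^{(k)}_t\|^2}}{1+\|x\|^2}.
\]
It therefore suffices to prove $\EXspec{x}{\|X^{(k)}_t\|^2}\leq M_0\e^{wt}(1+\|x\|^2)$ for some $M_0,w$ independent of $k$.

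Fixing an orthonormal basis $(e_i)_{i\in\N}$ of $\cH$ and applying Lemma~\ref{lem:uniform-bound-k} with $v=w=e_i$, monotone convergence gives the decomposition
\[
 \EXspec{x}{\|X^{(k)}_t\|^2}=\Tr Q^{(k),x}_t+\|m^x_t\|^2,
\]
where $m^x_t\in\cH$ is the ($k$-independent) vector determined by $\langle m^x_t,v\rangle=\D_{+}\phi(t,0)(v)+\langle x,\D_{+}\psi(t,0)(v)\rangle$ and $Q^{(k),x}_t(v,w)=-\D^2_{+}\phi^{(k)}(t,0)(v,w)-\langle x,\D^2_{+}\psi^{(k)}(t,0)(v,w)\rangle$ is the (nonnegative) covariance form of $X^{(k)}_t$. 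By \eqref{eq:variational-solution-3-0} and \eqref{eq:dif-eq-phi-1} one has $m^x_t=\int_0^t(\e^{s\D R(0)})^{*}\beta\,\D s+(\e^{t\D R(0)})^{*}x$, where $\beta\in\cH$ represents $\D F(0)$ and $\|\beta\|\leq\|b\|+\int_{\cHpluso}\|\xi\|^2\dm$; together with $\|\e^{s\D R(0)}\|\leq\e^{s\|\D R(0)\|}$ and the $k$-independent bound $\|\D R(0)\|\leq\|B^{*}\|_{\cL(\cH)}+\|\mu(\cHpluso)\|$ (from \eqref{eq:dR_bdd} with $u_0=0$), this yields $\|m^x_t\|^2\leq C(1+t^2)\e^{2t\|\D R(0)\|}(1+\|x\|^2)$ with $C$ independent of $k$.

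For the trace term I would use \eqref{eq:variational-solution-3-1} for the parameter set $(b,B,m^{(k)},\mu^{(k)})$ (legitimate since $\D R^{(k)}(0)=\D R(0)$) and its counterpart for $\D^2_{+}\phi^{(k)}$ obtained from \eqref{eq:dif-eq-phi-2}. Summing over $i$ and using $\sum_i\langle(\e^{s\D R(0)})^{*}\xi,e_i\rangle^2=\|(\e^{s\D R(0)})^{*}\xi\|^2\leq\e^{2s\|\D R(0)\|}\|\xi\|^2$, one gets, for each $s\geq 0$, the scalar bound $\sum_i\big(-\D^2 F^{(k)}(0)(\e^{s\D R(0)}e_i,\e^{s\D R(0)}e_i)\big)=\int_{\cHpluso}\|(\e^{s\D R(0)})^{*}\xi\|^2\dmk\leq\e^{2s\|\D R(0)\|}\int_{\cHpluso}\|\xi\|^2\dm$ and the $\cH^+$-order bound $-\sum_i\D^2 R^{(k)}(0)(\e^{s\D R(0)}e_i,\e^{s\D R(0)}e_i)=\int_{\cHpluso}\tfrac{\|(\e^{s\D R(0)})^{*}\xi\|^2}{\|\xi\|^2}\dmuk\leq_{\cH^+}\e^{2s\|\D R(0)\|}\mu(\cHpluso)$, where $m^{(k)}\leq m$, $\mu^{(k)}\leq_{\cH^+}\mu$, and the interchange of the (monotone) summation with the integral is justified by monotone convergence and the regularity of $\cH^+$. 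Feeding these into the expressions for $\D^2_{+}\psi^{(k)}(t,0)$ and $\D^2_{+}\phi^{(k)}(t,0)$, using once more $\|\e^{r\D R(0)}\|\leq\e^{r\|\D R(0)\|}$, the Cauchy--Schwarz inequality against $\beta$, and the monotonicity of $\cH^+$, gives $0\leq\Tr Q^{(k),x}_t\leq C(1+t)\e^{2t\|\D R(0)\|}(1+\|x\|^2)$ with $C$ independent of $k$. Combining with the previous paragraph and absorbing the polynomial factors into a slightly larger exponential yields \eqref{eq:quasi-contractive-uniform} with $M:=1+M_0$; the additive $1$ coming from $\rho=1+\|\cdot\|^2$ together with the mean-squared contribution $\|m^x_t\|^2$ is exactly what prevents the choice $M=1$ in general.

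The main obstacle is the trace estimate in the last paragraph: one must produce a bound on $\Tr Q^{(k),x}_t$ that is genuinely uniform in $k$. Even though the first-order generators $\D R^{(k)}(0)$, $\D F^{(k)}(0)$ are $k$-independent, the second-order data $\D^2 R^{(k)}(0)$, $\D^2 F^{(k)}(0)$ are not, and bounding them uniformly in $k$ is precisely where the second-moment hypotheses $\int_{\cHpluso}\|\xi\|^2\dm<\infty$ and $\|\mu(\cHpluso)\|<\infty$ (together with the domination $m^{(k)}\leq m$, $\mu^{(k)}\leq_{\cH^+}\mu$) enter; moreover one must take care to justify exchanging the orthonormal-basis summation with the integration against the $\cH^+$-valued measure $\mu^{(k)}$ in order for the trace to be well-defined in the first place.
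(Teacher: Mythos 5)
Your proposal is correct and follows essentially the same route as the paper: both reduce the operator-norm bound to a uniform-in-$k$ second-moment estimate $\mathbb{E}_x[\rho(X_t^{(k)})]\leq C\rho(x)$, expand $\|X_t^{(k)}\|^2$ over an orthonormal basis via the explicit moment formulas of Lemma~\ref{lem:uniform-bound-k} and~\eqref{eq:variational-solution-3-0}--\eqref{eq:variational-solution-3-1}, exploit that $\D R^{(k)}(0)=\D R(0)$ and $\D F^{(k)}(0)=\D F(0)$ are $k$-independent, and control the second-order terms uniformly through $\|\mu^{(k)}(\cHpluso)\|\leq\|\mu(\cHpluso)\|$, $m^{(k)}\leq m$ and the second-moment hypotheses. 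The only cosmetic difference is that the paper invokes Remark~\ref{rem:semigroup-on-b-rho} to reduce to a bound on a short time interval $[0,\epsilon]$, whereas you bound all $t\geq 0$ directly from the explicit exponential factors.
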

\begin{proof}
%%%%%%%%%%%%%%%%%%%%%%%% Uniform growth-bound %%%%%%%%%%%%%%%%%%%%%%%%%%%%%%%%%%%%%%%%%%%%%%%%%
%%%%%%%%%%%%%%%%%%%%%%%%%%%%%%%%%%%%%%%%%%%%%%%%%%%%%%%%%%%%%%%%%%%%%%%%%%%%%%%%%%%%%%%%%%%%%%%
  Recall from Remark~\ref{rem:semigroup-on-b-rho}, that in order to show the
  existence of a $M\geq 1$ and $w\in\MRplus$ such that
  equation~\eqref{eq:quasi-contractive-uniform} holds, it suffices to show the
  existence of a $\epsilon>0$ and  $C\geq 0$, independent of $k\in\MN$, such that
\begin{align}\label{eq:quasi-contractive-uniform-suffice}
 \EXspec{x}{\rho(X_{t}^{(k)})}\leq C \rho(x)\,,\quad\forall t \in [0,\epsilon]\text{ and
  }x\in\cHplus.  
\end{align}
Let $k\in\MN$ be arbitrary, but fixed and denote by $(e_{n})_{n\in\MN}$ an ONB of
$\cH$, then by Parseval's identity and monotone convergence we have:
\begin{align*}
\EXspec{x}{\rho(X_{t}^{(k)})}=\EXspec{x}{1+\norm{X_{t}^{(k)}}^{2}}=1+\sum_{n=1}^{\infty}\EXspec{x}{\langle
  X_{t}^{(k)},e_{n}\rangle^{2}}\,,
\end{align*}
for every $t\geq 0$ and $x\in\cHplus$. By
equation~\eqref{eq:explicit-Xv-square-k}, in particular using the notation in
equation~\eqref{eq:explicit-Xv-square-k-tensor}, we have for all $n\in\MN$: 
\begin{align}
\EXspec{x}{\langle X_{t}^{(k)},e_{n}\rangle^{2}}&=\llangle
                                       -\D^{2}_{+}\phi^{(k)}(t,0)-\D^{2}_{+}\psi^{(k)}(t,0)^{*}(x),e_{n}\otimes e_{n}
                                       \rrangle \nonumber\\
                                      %\label{eq:uniform-bound-k-1}
  &\qquad+\llangle
    \big(\D_{+}\phi(t,0)+\D_{+}\psi(t,0)^{*}(x)\big)^{\otimes 2},e_{n}\otimes e_{n}\rrangle.\label{eq:uniform-bound-k-2} 
\end{align}
We show separately for the first and second terms on the right-hand side of equation~\eqref{eq:uniform-bound-k-2} 
%and the term in equation~\eqref{eq:uniform-bound-k-2}, 
that, when
summing over all $n\in\MN$, we find a $\epsilon>0$ and $C\geq 0$ such that equation~\eqref{eq:quasi-contractive-uniform-suffice} holds. Since
\begin{align*}
 \sum_{n=1}^{\infty}\langle
                                  \D_{+}\phi(t,0)+\D_{+}\psi(t,0)^{*}(x),e_{n}\rangle^{2}=\norm{
                                  \D_{+}\phi(t,0)+\D_{+}\psi(t,0)^{*}(x)}^{2}, 
\end{align*}
we deduce for the second term ion the right hand side of~\eqref{eq:uniform-bound-k-2}:
\begin{align*}
&\sum_{n=1}^{\infty}\llangle
  \big(\D_{+}\phi(t,0)+\D_{+}\psi(t,0)^{*}(x)\big)^{\otimes
                 2},e_{n}\otimes e_{n}\rrangle\leq  C(t)(1+\norm{x}^{2}),                               
\end{align*}
for 
\begin{align*}
C(t)=\big(\norm{\D_{+}\phi(t,0)}+\norm{\D_{+}\psi(t,0)^{*}}_{\cL(\cH)}\big)^{2}.  
\end{align*}
The terms $\norm{\D_{+}\phi(t,0)}$ and $\norm{\D_{+}\psi(t,0)^{*}}_{\cL(\cH)}$
are bounded for all $t\geq 0$.
% and moreover by monotonicity of $\D_{+}\phi^{(k)}$ and $\D_{+}\psi^{(k)}$ in $k\in\MN$, we have   $\norm{\D_{+}\phi^{(k)}(t,0)}_{\cL(\cH)}\leq
% \norm{\D_{+}\phi(t,0)}_{\cL(\cH)}<\infty$, and $\norm{\D_{+}\psi^{(k)}(t,0)}\leq
                 %                  \norm{\D_{+}\psi(t,0)}<\infty$ for all $t\geq 0$.
Therefore, we deduce the existence of $\epsilon>0$ and
$C\geq 0$, independent of $k\in\MN$, such that  
\begin{align}\label{eq:uniform-bound-k-1-latter}
\sum_{n=1}^{\infty}\llangle
  \big(\D_{+}\phi(t,0)+\D_{+}\psi(t,0)^{*}(x)\big)^{\otimes
  2},e_{n}\otimes e_{n}\rrangle\leq C (1+\norm{x}^{2}),
\end{align}
for all $t\in[0,\epsilon]$ and $x\in\cHplus$.
We continue with the first term on the right hand side of~\eqref{eq:uniform-bound-k-2}. Recall 
formulas~\eqref{eq:def_dF},~\eqref{eq:def_d2F},~\eqref{eq:dif-eq-phi-2},~\eqref{eq:variational-solution-3-0} and~\eqref{eq:variational-solution-3-1}, from which we obtain:
\begin{align}\label{eq:uniform-bound-k-3}
  &\llangle \D^{2}_{+}\psi^{(k)}(t,0)^{*}(x), e_{n}\otimes e_{n} \rrangle \nonumber\\
  &\quad= -\int_{0}^{t}
        \int_{\cHpluso}\langle\E^{s \D
                                                                           R(0)^{*}}\xi,e_{n}\rangle^{2}\big\langle
                                                                           x,\E^{(t-s)\D
                                                                           R(0)} \big\rangle\frac{\dmuk}{\norm{\xi}^{2}}
                                                                          \D
                                                                           s,
\end{align}
and 
\begin{align}\label{eq:uniform-bound-k-4}
\llangle \D^{2}_{+}\phi^{(k)}(t,0) , e_{n}\otimes e_{n} \rrangle &=-\int_{0}^{t} \big(\int_{\cHpluso}
        \langle 
                                                             \E^{s \D R(0)^{*}}\xi, e_{n}\rangle^{2}\, \dmk \nonumber\\
  &\quad+ 
   \llangle \D^{2}_{+}\psi^{(k)}(s,0)^{*}(b), e_{n}\otimes e_{n}\rrangle 
\,\big)\D s
  \nonumber\\
  &\quad 
  + 
\int_{0}^{t} 
    \int\limits_{\cHplus\cap \{\norm{\xi}\geq 1\}}
   \langle \langle
    \D^{2}_{+}\psi^{(k)}(s,0)^{*}(\xi), e_{n}\otimes e_{n}\rangle \rangle
    \,\dm \D s\,.
\end{align}
%where by equation~\eqref{eq:uniform-bound-k-3} we have:
%\begin{align*}
%&\llangle \D^{2}_{+}\psi^{(k)}(s,0)^{*}(b), e_{n}\otimes e_{n}\rrangle\\ &\quad= \int_{0}^{s}
   %     \int_{\cHpluso}\langle\E^{\tau \D
      %                                                                     R(0)^{*}}\xi,e_{n}\rangle^{2}\langle
         %                                                                  b,\E^{(s-\tau)\D
           %                                                                R(0)}\frac{\mu^{(k)}(\D\xi)}{\norm{\xi}^{2}}
             %                                                              \rangle\D
               %                                                            \tau,
%\end{align*}
%and the analogous term for $\llangle \D^{2}_{+}\psi^{(k)}(s,0)^{*}(\xi), e_{n}\otimes e_{n}\rrangle$.
Hence the two terms on the right hand side of equation~\eqref{eq:uniform-bound-k-2} can be estimated by
\begin{align*}
  \sum_{n=1}^{\infty}&\int_{0}^{t}\int_{\cHpluso}\langle\E^{s \D R(0)^{*}}\xi,e_{n}\rangle^{2}\langle
  x,\E^{(t-s)\D
  R(0)}\rangle\frac{\dmuk}{\norm{\xi}^{2}}
  \D
  s\\
  &\leq\big(\int_{0}^{t}\norm{\E^{s
  \D R(0)^{*}}}_{\cL(\cH)}^{2}\norm{\E^{(t-s)\D
  R(0)}}_{\cL(\cH)}\norm{\mu(\cHpluso)}\D s\big)\norm{x}, 
\end{align*}
and 
\begin{align*}
 &\sum_{n=1}^{\infty}\llangle \D^{2}_{+}\phi^{(k)}(t,0) , e_{n}\otimes e_{n}
   \rrangle\\
&\quad \leq 2(\norm{b}+\norm{\mu(\cHpluso)}+\int_{\cHpluso}\norm{\xi}^{2}+\norm{\xi-\chi(\xi)}\dm)\\
  &\quad\quad\times \int_{0}^{t}\int_{0}^{s}\norm{\E^{\tau \D R(0)^{*}}}^{2}_{\cL(\cH)}\norm{\E^{(s-\tau)\D R(0)}}_{\cL(\cH)} \D \tau\D s\,,
%   &\quad\leq\int_{0}^{t} \big(\int_{\cHpluso}        
%                                                              \norm{\E^{s \D R^{(k)}(0)^{*}}}^{2}_{\cL(\cH)}\norm{\xi}^{2}\, \dmk \nonumber\\
%   &\quad\quad+\int_{0}^{s}
%         \norm{\E^{\tau
%   \D R^{(k)}(0)^{*}}}_{\cL(\cH)}^{2}\norm{\E^{(s-\tau)\D
%   R^{(k)}(0)}}_{\cL(H)}\norm{\mu^{(k)}(\cHpluso)}\D \tau\big)\norm{b}\D s
%   \nonumber\\
%   &\quad\quad 
%   + 
% \int_{0}^{t} \int_{0}^{s}\big(\norm{\E^{\tau \D
%     R^{(k)}(0)^{*}}}_{\cL(\cH)}\norm{\E^{(s-\tau)\D R^{(k)}(0)}}_{\cL(\cH)}\norm{\mu^{(k)}(\cHpluso)}\D\tau\big)
%     \D s, \int_{\cHplus\cap \{\norm{\xi}\geq 1\}}\norm{\xi}
%     \,\dmk
\end{align*}
where we used that for all $k\in\MN$:
\begin{align*}
\norm{\mu^{(k)}(\cHpluso)}\leq \norm{\mu(\cHpluso)}<\infty  
\end{align*}
and
\begin{align*}
\int_{\cHpluso}\norm{\xi}^{2}+\norm{\xi-\chi(\xi)}\dmk\leq \int_{\cHpluso}\norm{\xi}^{2}+\norm{\xi-\chi(\xi)}\dm<\infty.  
\end{align*}
Therefore there exist $\epsilon>0$ and $\tilde{C}\geq 0$ such that
\begin{align*}
&\sum_{n=1}^{\infty}\llangle
  -\D^{2}_{+}\phi^{(k)}(t,0)-\D^{2}_{+}\psi^{(k)}(t,0)^{*}(x),e_{n}\otimes
  e_{n}\rrangle\\
  &\quad\quad\leq \tilde{C} (1+\norm{x}^{2}),
\end{align*}
for all $t \in [0,\epsilon]$ and $x\in\cHplus$. Taking the sum of the latter constant $\tilde{C}$ and the constant $C$ found in
equation~\eqref{eq:uniform-bound-k-1-latter} yields~\eqref{eq:quasi-contractive-uniform-suffice}.
\end{proof}
In the next step we show that the family $(P_{t})_{t\geq 0}$, defined by
$P_t\coloneqq\lim_{k\to\infty}P^{(k)}_t$ for $t\geq 0$, gives rise to a generalized Feller semigroup  
and deduce the existence of a generalized Feller process $(X_t)_{t\geq 0}$
with generator $\mathcal{G}$ as in formula~\eqref{eq:affine-generator-form}.
 \begin{proposition}\label{prop:affinejump-genFeller}
Let $(b,B,m,\mu)$ be an admissible parameter set conform Definition~\ref{def:admissibility}. Then there exists a generalized Feller
   semigroup $(P_t)_{t\geq 0}$ on $\cB_{\rho}(\cH^+_{\textnormal{w}})$ such that 
   \begin{align}\label{eq:expo-affine-form}
     \left( P_t\E^{-\langle \cdot,u\rangle}\right)(x)=\E^{-\phi(t,u)-\langle x,\psi(t,u)\rangle},
   \end{align}
   for all $t\geq 0$ and $x, u\in\cHplus$, where
   $(\phi(\cdot,u),\psi(\cdot,u))$ is the unique solution to
   the generalized Riccati equation~\eqref{eq:riccati-conc}. The semigroup $(P_t)_{t\geq 0}$ gives rise to
   a generalized Feller process $(X_{t})_{t\geq 0}$ in
   $(\cHplus_{\textnormal{w}},\norm{\cdot}^{2}+1)$ such that
   \begin{align*}
     \EXspec{x}{f(X_{t})}=P_tf(x),  \quad t\geq 0, \quad x\in \mathcal{H}^+\,,
   \end{align*}
   and the generator $\cG$ of $(P_t)_{t\geq 0}$ is of the form in
   equation~\eqref{eq:affine-generator-form} on $\cD$. 
 \end{proposition}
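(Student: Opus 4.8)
\emph{Proposal.} The plan is to realise $(P_t)_{t\geq 0}$ as the limit of the finite-activity semigroups $(P_t^{(k)})_{t\geq 0}$ of Proposition~\ref{prop:Gk_generator} via the Trotter--Kato type result~\cite[Theorem~3.2]{CT20}, and then to read off the exponential-affine formula, the process, and the generator from the explicit representations already at hand. Write $\cD$ for the Fourier span~\eqref{eq:span-Fourier}, which is dense in $\cB_\rho(\cH^+_{\textnormal{w}})$ by Lemma~\ref{lem:dense_subset_Brho}. One preliminary point: $\cD$ is a core for each $\cG^{(k)}$, because by~\eqref{eq:affine-X-tilde-k} we have $P_t^{(k)}\E^{-\langle\cdot,u\rangle}=\E^{-\phi^{(k)}(t,u)}\E^{-\langle\cdot,\psi^{(k)}(t,u)\rangle}$ with $\psi^{(k)}(t,u)\in\cH^+$ (Proposition~\ref{prop:global_sol_finact}), so $\cD$ is $(P_t^{(k)})$-invariant and dense, hence a core by~\cite[Chapter~II, Prop.~1.7]{EN00}.

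I would next single out the candidate limit operator on $\cD$: the linear extension of $\cG\,\E^{-\langle\cdot,u\rangle}\df\big(-F(u)-\langle\cdot,R(u)\rangle\big)\E^{-\langle\cdot,u\rangle}$, with $F,R$ as in~\eqref{eq:FR}. A direct computation from $(\E^{-\langle\cdot,u\rangle})'(x)=-\E^{-\langle x,u\rangle}u$ shows that on $\cD$ this operator coincides with the one in~\eqref{eq:affine-generator-form}, and (via Theorem~\ref{thm:charac_Brho}) that it maps $\cD$ into $\cB_\rho(\cH^+_{\textnormal{w}})$. By~\eqref{eq:Kolomgorov-Abs-k}, $\cG^{(k)}\E^{-\langle\cdot,u\rangle}=\big(-F^{(k)}(u)-\langle\cdot,R^{(k)}(u)\rangle\big)\E^{-\langle\cdot,u\rangle}$, so with $\E^{-\langle x,u\rangle}\leq 1$ and $\|x\|\leq\tfrac12\rho(x)$,
\begin{equation*}
 \big\|\cG^{(k)}\E^{-\langle\cdot,u\rangle}-\cG\,\E^{-\langle\cdot,u\rangle}\big\|_\rho\leq|F^{(k)}(u)-F(u)|+\tfrac12\|R^{(k)}(u)-R(u)\| .
\end{equation*}
The right-hand side tends to $0$ as $k\to\infty$ by Lemma~\ref{lem:Rk-R} and the elementary bound $|F^{(k)}(u)-F(u)|\leq\tfrac12\|u\|^2\int_{\{\|\xi\|\leq1/k\}}\|\xi\|^2\,\dm\to0$; hence $\cG^{(k)}f\to\cG f$ in $\cB_\rho(\cH^+_{\textnormal{w}})$ for all $f\in\cD$.

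Now I would invoke~\cite[Theorem~3.2]{CT20}: the uniform bound $\|P_t^{(k)}\|_{\cL(\cB_\rho(\cH^+_{\textnormal{w}}))}\leq M\E^{wt}$ of Proposition~\ref{prop:uniform-bound-k}, the core property, and the generator convergence give a generalized Feller semigroup $(P_t)_{t\geq 0}$ on $\cB_\rho(\cH^+_{\textnormal{w}})$ with $P_tf=\lim_kP_t^{(k)}f$ in $\cB_\rho(\cH^+_{\textnormal{w}})$, locally uniformly in $t$, whose generator extends $(\cG,\cD)$. The convergence on $\cD$ may also be checked directly: by~\eqref{eq:affine-X-tilde-k} and $|\E^{-a}-\E^{-b}|\leq|a-b|$ for $a,b\geq0$, the $\rho$-distance of $P_t^{(k)}\E^{-\langle\cdot,u\rangle}$ to $\E^{-\phi(t,u)-\langle\cdot,\psi(t,u)\rangle}$ is at most $|\phi^{(k)}(t,u)-\phi(t,u)|+\tfrac12\|\psi^{(k)}(t,u)-\psi(t,u)\|$, which vanishes by Proposition~\ref{prop:exist-uniq-solut}, uniformly for $t$ in compacts by Dini's theorem since $t\mapsto\|\psi^{(k)}(t,u)-\psi(t,u)\|$ is continuous and, by~\eqref{eq:phik_decreasing} and~\eqref{eq:monotonic}, nonincreasing in $k$. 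In particular $P_t\E^{-\langle\cdot,u\rangle}=\E^{-\phi(t,u)-\langle\cdot,\psi(t,u)\rangle}$, i.e.~\eqref{eq:expo-affine-form}, and $P_t\mathbf{1}=\lim_kP_t^{(k)}\mathbf{1}=\mathbf{1}$ by item~\ref{it:Gk_generator:Markovian} of Proposition~\ref{prop:Gk_generator}. Theorem~\ref{thm:Kol-extension-theorem} then furnishes the time-homogeneous $\cH^+$-valued Markov process $(X_t)_{t\geq0}$ with $\EXspec{x}{f(X_t)}=P_tf(x)$ for $f\in\cB_\rho(\cH^+_{\textnormal{w}})$, and Remark~\ref{rem:semigroup-on-b-rho} yields $\EXspec{x}{\rho(X_t)}\leq M\E^{wt}\rho(x)$. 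Finally, passing to the limit in the identity $P_t^{(k)}f=f+\int_0^tP_s^{(k)}\cG^{(k)}f\,\D s$ (using the uniform bound, $\cG^{(k)}f\to\cG f$, and $P_s^{(k)}\to P_s$ locally uniformly) gives $P_tf=f+\int_0^tP_s\cG f\,\D s$ for $f\in\cD$; since $(P_t)_{t\geq0}$ is strongly continuous~\cite[Theorem~3.2]{DT10}, $s\mapsto P_s\cG f$ is continuous, whence $f\in\dom(\cG)$ and $\cG f=\tfrac{d}{dt}\big|_{t=0+}P_tf$ equals the right-hand side of~\eqref{eq:affine-generator-form} by the computation of the second paragraph and linearity.

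The main difficulty I foresee is that $\cG$ is \emph{not} the term-by-term limit of $\cG^{(k)}_{\textnormal{det}}+\cG^{(k)}_{\textnormal{jump}}$: the compensated drift $\tilde{b}^{(k)}+\tilde{B}^{(k)}$ and the pure jump part each diverge as $k\to\infty$, and only their sum survives, thanks to the compensator $\langle\chi(\xi),f'(x)\rangle$ and the second-moment conditions on $\dm$ and $\dmu/\|\xi\|^2$. Thus all convergences must be obtained through the \emph{combined} operator and the explicit formulas~\eqref{eq:Kolomgorov-Abs-k} and~\eqref{eq:affine-X-tilde-k} together with the well-posedness and convergence results for the generalized Riccati equations of Section~\ref{sec:an-associated-class}, rather than term by term; verifying that all hypotheses of~\cite[Theorem~3.2]{CT20} are met (notably the locally uniform-in-$t$ convergence on $\cD$, which rests on $\psi^{(k+1)}\leq_{\cH^+}\psi^{(k)}$ and Dini's theorem) is where the technical bookkeeping concentrates.
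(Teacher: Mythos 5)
Your proposal is correct in substance and follows the same overall strategy as the paper (approximate by the finite-activity semigroups $(P^{(k)}_t)_{t\geq 0}$, use the uniform growth bound of Proposition~\ref{prop:uniform-bound-k}, pass to the limit, apply Theorem~\ref{thm:Kol-extension-theorem}), but the decisive convergence step is handled by a genuinely different mechanism. The paper verifies the actual hypothesis (ii) of~\cite[Theorem 3.2]{CT20}, which is a Cauchy-type estimate on $\sup_{s\in[0,T]}\|(\cG^{(n)}-\cG^{(k)})P^{(k)}_s f\|_{\rho}$ for $f\in\cD$; this forces one to control $F^{(n)}-F^{(k)}$ and $R^{(n)}-R^{(k)}$ along the whole orbit $\{\psi^{(k)}(s,u):s\in[0,T]\}$ via~\eqref{eq:upper-bound-phi}, and it is \emph{not} the same as the convergence $\cG^{(k)}f\to\cG f$ at fixed $f$ that you establish. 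Your opening framing of that theorem (``uniform bound $+$ core $+$ generator convergence'') is therefore imprecise: generator convergence on a common dense invariant subspace plus uniform bounds is not a sufficient hypothesis for a Trotter--Kato statement without an additional range condition for the limit operator, which is not available here. What makes your argument go through nonetheless is the ``direct check'' you supply: the monotonicity $\psi(t,u)\leq_{\cH^+}\psi^{(k+1)}(t,u)\leq_{\cH^+}\psi^{(k)}(t,u)$, the cone monotonicity~\eqref{eq:monotonic}, and Dini's theorem give locally-uniform-in-$t$ convergence of $P^{(k)}_t\E^{-\langle\cdot,u\rangle}$ in $\|\cdot\|_{\rho}$, which combined with Proposition~\ref{prop:uniform-bound-k} and the density of $\cD$ yields the limit semigroup (and its generalized Feller properties, after the routine checks of positivity and pointwise continuity at $t=0+$) without any appeal to~\cite[Theorem 3.2]{CT20}; your identification of the generator by passing to the limit in $P^{(k)}_tf=f+\int_0^tP^{(k)}_s\cG^{(k)}f\,\D s$ is sound and close to the paper's ``mimicking the proof of Proposition~\ref{prop:Gk_generator}''. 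One detail to add: the locally uniform convergence $\phi^{(k)}(\cdot,u)\to\phi(\cdot,u)$ is not delivered by Dini applied to $\psi$ and needs its own line, e.g.\ via $\phi^{(k)}(t,u)-\phi(t,u)=\int_0^t\big(F^{(k)}(\psi^{(k)}(s,u))-F(\psi(s,u))\big)\D s$, the bound $|F^{(k)}(v)-F(v)|\leq\tfrac12\|v\|^2\int_{\{\|\xi\|\leq 1/k\}}\|\xi\|^2\,\dm$, the Lipschitz continuity of $F$ on bounded sets, and~\eqref{eq:upper-bound-phi}. With that patch, your route is a legitimate and somewhat more self-contained alternative to the paper's.
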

 \begin{proof}
 Hereto we check that the conditions of Theorem 3.2 in \cite{CT20} hold.
 From
Proposition~\ref{prop:uniform-bound-k}, we know that the sequence of semigroups $(P^{(k)}_t)_{t\geq 0, k\in\MN}$
with generators $(\cG^{(k)})_{k\in\MN}$ satisfy the following growth bound 
\begin{align}\label{eq:uni-growth-bound}
  \norm{P^{(k)}_t}_{\cL(\cB_{\rho}(\cH^+_{\textnormal{w}}))}\leq M \E^{w t}, \qquad \forall \,\, n \in \mathbb{N} \quad \mbox{and}\,\, t\geq 0\,,
\end{align}
where $w\in\MR$.
% \begin{comment}

Recall the definition of $\mathcal{D}$ from equation~\eqref{eq:span-Fourier}
and recall from Lemma~\ref{lem:dense_subset_Brho} that $\mathcal{D}$ is a
dense subspace of $\cB_{\rho}(\cH^+_{\textnormal{w}})$. Thus (i) in Theorem
3.2 in \cite{CT20} is satisfied.

%Let us show that for every $t>0$ and
%$u\in\cHplus$ there exist a sequence $(c^{(n,k)}_{t,u})$ such that
%\begin{align}\label{eq:genFeller-approx}
%\norm{\cG^{(n)}P^{(k)}(s)\E^{-\langle u, \cdot\rangle}-\cG^{(k)}P^{(k)}(s)\E^{-\langle
  %u, \cdot\rangle}}_{\rho}\leq c_{u}^{(n,k)}\norm{\E^{-\langle u, \cdot \rangle}}_{\infty},  
%\end{align}
%for $n,k\in\MN$, $0\leq s\leq t$ with $c_{t,u}^{(n,k)}\to 0$ as
%$n,k\to\infty$. 
%and we note that
%\begin{align*}
%P^{(k)}_s\E^{-\langle u, \cdot \rangle}(x)=\E^{-\phi^{(k)}(s,u)-\langle \psi^{(k)}(s,u),x\rangle},  \qquad s\geq 0, x,u \in \mathcal{H}^+\,,
%\end{align*}
Note that the operator $\cG^{(n)}$, $n\in \mathbb{N}$, applied to the function
$\E^{-\phi^{(k)}(s,u)-\langle \cdot,\psi^{(k)}(s,u)\rangle}$, with
$(\phi^{(k)}(\cdot,u), \psi^{(k)}(\cdot,u))$ being a solution to
\eqref{eq:riccati-conc-k}, gives (see also equation~\eqref{eq:Kolomgorov-Abs-k})
\begin{align*}
&\left(\cG^{(n)}\E^{-\phi^{(k)}(s,u)-\langle \cdot,\psi^{(k)}(s,u)\rangle}\right)(x)\\
&\qquad = \E^{-\phi^{(k)}(s,u)}\cG^{(n)}\E^{-\langle \cdot,\psi^{(k)}(s,u)\rangle}(x)
  \\
  &\qquad  =\left(-F^{(n)}(\psi^{(k)}(s,u))-\langle
                                                                      R^{(n)}(\psi^{(k)}(s,u)),x\rangle\right)\E^{-\phi^{(k)}(s,u)-\langle x,\psi^{(k)}(s,u)\rangle}\,,
\end{align*}
for $x,u \in \mathcal{H}^+$, $s\geq 0$
From the latter and equation~\eqref{eq:affine-X-tilde-k}, we infer
\begin{align}\label{eq:upper-bound-a-b}
& \frac{1}{\|x\|^2+1}\left\lvert \cG^{(n)}P^{(k)}_{s}\E^{-\langle \cdot,u\rangle}(x)-\cG^{(k)}P^{(k)}_{s}\E^{-\langle
 \cdot, u\rangle}(x)\right\rvert \nonumber \\ 
  &\leq \frac{\E^{-\phi^{(k)}(s,u)-\langle
                      x,\psi^{(k)}(s,u)\rangle}}{\|x\|^2+1} \left[b_{s,u}^{(n,k)} +\|x\| a_{s,u}^{(n,k)}\right]\,,
\end{align}
where
\begin{align*}
  a_{s,u}^{(n,k)}&:= \left\|R^{(n)}(\psi^{(k)}(s,u))-R^{(k)}(\psi^{(k)}(s,u))\right\|
  \end{align*}
  and 
  \begin{align*}
  b_{s,u}^{(n,k)}&:=\left \lvert F^{(n)}(\psi^{(k)}(s,u))-F^{(k)}(\psi^{(k)}(s,u))\right \rvert 
  \end{align*}
 % \begin{align*}
  %& =\left\|\, \int_{\cHpluso} \left(\E^{-\langle
   %                                                   \xi,
    %                                                  \psi^{(k)}(s,u)\rangle}-1-\langle
     %                                                 \chi(\xi),\psi^{(k)}(s,u)\rangle\right)\right.\\
      %                                           &\qquad \qquad  \left.     \left(\one_{\{\norm{\xi}>1/n\}}-\one_{\{\norm{\xi}>1/k\}}\right)\frac{\D\mu^{(k)}(\xi)}{
      %                                                \norm{\xi}^{2}} \right\|\\
    %&\quad \leq \|\psi^{(k)}(s,u)\|^2 \|\mu(\mathcal{H}^+\setminus\{0\})\|\\
    % &\quad \leq \sup_{s\in [0,T]}\|\psi^{(1)}(s,u)\|^2 \|\mu(\mathcal{H}^+\setminus\{0\})\|<+\infty\,                                              
%\end{align*}
From the equations~\eqref{eq:exp_est} and~\eqref{eq:upper-bound-phi} we have,
for all $0\leq s\leq T<\infty$: 
\begin{align*}
  & \left\lvert \left(\E^{-\langle \xi, \psi^{(k)}(s,u)\rangle}-1-\langle  \chi(\xi),\psi^{(k)}(s,u)\rangle\right)\left(\one_{\{\norm{\xi}>1/n\}}-\one_{\{\norm{\xi}>1/k\}}\right)\right\rvert \\
   &\quad \leq \|\psi^{(k)}(s,u)\|^2  \|\xi\|^2 \one_{\{{\norm{\xi}\leq 1}\}}\\
     &\quad \leq \sup_{s\in [0,T]}\|\psi^{(1)}(s,u)\|^2 \|\xi\|^2  \one_{\{{\norm{\xi}\leq 1}\}}=: g(\xi)\,.                                                                                 
\end{align*}
Observe that for $h \in \mathcal{H}^+$, we have $\int_{\cHpluso} g(\xi)\, \frac{\langle \dmu, h\rangle}{\|\xi\|^2} <\infty$\,.
Hence Lemma~\ref{lem:characint} implies that $g(\cdot)/\|\cdot\|^2 \in \mathcal{L}^1(\cHplus, \mu)$ and from Theorem~\ref{thm:DCT}, we deduce that 
$\sup_{s\in [0,T]}a^{(n,k)}_{s,u}$ converges to $0$ as $n,k\to\infty$.
By the admissibility condition~\ref{eq:m-2moment} in Definition~\ref{def:admissibility}, we infer $\int_{\cHpluso} g(\xi)\,\dm <\infty$
and applying the dominated convergence theorem we also deduce that
$\sup_{s\in [0,T]}b_{s,u}^{(n,k)}$ converge to $0$ as $n,k\to\infty$.
Observing that $\phi^{(k)}(s,u)\in\MRplus$ and $\psi^{(k)}(s,u) \in
\mathcal{H}^+$ for all $s\geq 0$, we can bound $\E^{-\phi^{(k)}(s,u)-\langle
                      x,\psi^{(k)}(s,u)\rangle}$ by $1$ for all $x\in\cHplus$
                    and get from equation~\eqref{eq:upper-bound-a-b}, that for all $s>0$:
\begin{align*}
&\left\norm{\cG^{(n)}P^{(k)}_{s}\E^{-\langle \cdot,u\rangle}-\cG^{(k)}P^{(k)}_{s}\E^{-\langle
  \cdot, u \rangle}\right}_{\rho}\nonumber \\ 
  &\qquad \leq
                        \sup_{x\in\cHplus}\frac{\norm{x}+1}{\norm{x}^{2}+1}\left(a_{s,u}^{(n,k)}+b_{s,u}^{(n,k)}\right)\nonumber \\
                      &\qquad \leq \left(\sup_{s\in [0,T]}a_{s,u}^{(n,k)}+\sup_{s\in [0,T]}b^{(n,k)}_{s,u}\right)C_u
                        \norm{\E^{-\langle \cdot, u \rangle}}_{\infty}\,,  
\end{align*}
where $C_u = \sup_{x\in\cHplus}(\norm{x}+1)/(\norm{x}^{2}+1)$. Thus
condition (ii) in Theorem 3.2 in \cite{CT20} is satisfied with
$\norm{\cdot}_{\cD}=\norm{\cdot}_{\infty}$
%By the Proof of
%Proposition \ref{prop:global-sol} $\norm{\psi^{(k)}(s,u)}$ is uniformly bounded in $k$ on compact
%time-intervals  and hence the constant $K(u)$ on the right-hand side in equation
%\eqref{eq:Ku-constant} is positive and only depending on $u$ and $t$ and not
%on $k$.
and we deduce the existence of a generalized Feller semigroup
$(P_t)_{t\geq 0}$ with the same growth bound as the semigroup
$(P^{(k)}_t)_{t\geq 0}$ and such that $P_tf=\lim_{k\to\infty}P^{(k)}_tf$, for
all $f\in\cB_{\rho}(\cH^+_{\textnormal{w}})$, uniformly on compacts in time. Since $P_t1=1$, for all
$t\geq 0$, we  deduce from Theorem~\ref{thm:Kol-extension-theorem} that there 
exists a generalized Feller process $(X_{t})_{t\geq 0}$ such that
$P_tf(x)=\EXspec{x}{f(X_{t})}$  for all $t\geq 0$ and $x\in \mathcal{H}^+$.
The exponential affine formula~\eqref{eq:expo-affine-form} follows from
formula~\eqref{eq:affine-X-tilde-k} and the fact that
$\lim_{k\to\infty}\phi^{(k)}(t,u)=\phi(t,u)$ and
$\lim_{k\to\infty}\psi^{(k)}(t,u)=\psi(t,u)$ for all $t\geq 0$ and
$u\in\cHplus$. From this we further derive the particular form of the
generator $\cG$ on the space $\cD$ by noting that $t\mapsto P_{t}\E^{-\langle
  \cdot,u\rangle}(x)$ uniquely solves the abstract Cauchy problem associated
to $(\cG,\dom(\cG))$ and hence by mimicking the proof of the approximation case in
Proposition~\ref{prop:Gk_generator}, we conclude formula \eqref{eq:affine-generator-form}. 

%Consequently, we deduce the particular form
% of the infinitesimal generator $\cG$ on the space $\cD$ from the uniform
% convergence of the semigroups on compact time sets and the respective formulas
% of $\cG^{(k)}$ in item \ref{it:Gk_generator:sum} of Proposition~\ref{prop:Gk_generator}.
% Since
% the growth bound of the $(P_{t}^{(k)})_{t\geq 0}$ from Proposition
% \ref{prop:uniform-bound-k} is preserved under the limit, we conclude that also
% \begin{align*}
% P_{t}\rho\leq \E^{w t}\rho\quad\text{for all}\;t\geq 0,  
% \end{align*}
% holds and therefore by the same reasoning as in Proposition
% \ref{prop:uniform-bound-k} there exists a version of $(X_{t})_{t\geq}$ having
% c\`adl\`ag paths.
\end{proof}

% We can now state two corollaries of Proposition
% \ref{prop:affinejump-genFeller}, the first deals with the obtained kernels and
% its connection to classical Markov processes:
% \begin{corollary}
% Let $(b,B,m,\mu)$ be an admissible parameter set conform Definition~\ref{def:admissibility}.
% There exists a family of probability kernels $(p_{t}(x,\D\xi))_{t\geq 0}$ on
% $\cHplus\times\cB(\cHplus)$ associated to the generalized Feller process $X$ from Proposition
% \ref{prop:affinejump-genFeller} satisfying the Chapman-Kolmogorov
% equations. 
% \end{corollary}
% \begin{proof}
% By Proposition \ref{prop:affinejump-genFeller} there exists a generalized
% Feller process i.e. there exists a generalized Feller semigroup $(P(t))_{t\geq
%   0}$ on $\cB_{\rho}(\cH^+_{\textnormal{w}})$ such that
% \begin{align*}
%   P(t)f(x)=\EXspec{x}{f(X_{t})},
% \end{align*}
% and the process $(X_{t})_{t\geq 0}$ fulfills the Markov property. But that
% means, there exists a kernel $p_{t}(x,\D\omega)$ such that
% and $\EXspec{x}{f(X_{t})}=\int_{\Omega}f(X_{t}(\omega))\D p_{0}(x,\D\omega)$.
% But since the Borel $\sigma$-algebras $\cB(\cH_{\textnormal{w}})$ and $\cB(\cH)$ coincide,
% the kernels $p_{t}(x,\D\omega)$ are true Markov kernels and hence gives rise
% to a true Markov process on $\cH$.  
% \end{proof}
Analogous to the approximating processes $(X^{(k)}_{t})_{t\geq 0}$,
for $k\in\MN$ in Lemma~\ref{lem:uniform-bound-k}, we now deduce explicit formulas for the
expressions $\EXspec{x}{\langle X_{t}, v\rangle}$ as well as for
$\EXspec{x}{\langle X_{t},v \rangle^{2}}$, where $x\in\cHplus$, $t\geq 0$ and
$v\in\cHplus$. 
\begin{proposition}\label{prop:explicit-formula}
Let $(b,B,m,\mu)$ be an admissible parameter set conform Definition~\ref{def:admissibility}.
  Recall the definition of $\D R(0), \D^2 R(0)$, $\D F(0)$, and $\D^2 F(0)$ from~\eqref{eq:def_dR}--\eqref{eq:def_d2F}.
 Then  for all $v,w\in\cHplus$ the following formulas hold true: 
\begin{align}\label{eq:explicit-Xv}
\EXspec{x}{\langle X_{t}, v\rangle}&=\int_{0}^{t}\langle b, \E^{s\D R(0)}v\rangle +
          \int_{\cHplus\cap \{\norm{\xi}> 1\}}\langle
    \xi,\E^{s \D R(0)}v\rangle\,
    m(\D\xi)\D s+\langle x, \E^{t\D R(0)}v\rangle  
\end{align}
and 
\begin{align}\label{eq:explicit-Xv-square}
  & \EXspec{x}{\langle X_{t}, v\rangle \langle X_{t}, w\rangle }
\nonumber\\ & \quad 
   = -\int_0^t \D^2 F(0)(\E^{s \D R(0)} v, \E^{s \D R(0)} w ) \,\D s
\nonumber \\ & \quad \quad 
    - \int_0^t \int_{0}^{s} 
        \D F(0) \left( 
            \E^{(s-u)\D R(0) }
            \D^2 R(0) (
                \E^{u \D R(0)} v, \E^{u \D R(0)} w
                )
            \right)
      \,\D u\, \D s
\nonumber\\ & \quad \quad 
              - \int_{0}^{t}
        \left\langle x,
            \E^{(t-s)\D R(0) }
            \D^2 R(0)(
                \E^{s \D R(0)} v, \E^{s \D R(0)} w
            )
        \right\rangle
        \,\D s
\nonumber\\ & \quad \quad 
    + \left( 
        \int_0^t \D F(0) (\E^{s \D R(0)} v) \,\D s 
        + 
        \left\langle x,
        \E^{t \D R(0)} v
        \right\rangle
    \right)
\nonumber\\ & \quad \quad \quad  
    \times \left(
        \int_0^t \D F(0) (\E^{s \D R(0)} w) \,\D s 
        + 
        \left\langle x,
        \E^{t \D R(0)} w
        \right\rangle
    \right). 
\end{align}
Moreover, for $v \in \cHplus$, $\langle \cdot, v\rangle \in \dom(\cG)$ and 
\begin{equation}\label{eq:generator-of-xv}
\cG\langle \cdot, v\rangle(x) = \langle b+B(x), v\rangle +\int_{\cHplus \cap \{\|\xi\|>1\}}\langle \xi, v\rangle\, \nu(x, \D\xi)\,, \qquad x \in \cHplus\,.
\end{equation}
\end{proposition}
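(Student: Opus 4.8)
The plan is to derive all three assertions from Lemma~\ref{lem:uniform-bound-k} and the material of Section~\ref{sec:an-associated-class}, together with the convergence $P_t^{(k)}\to P_t$ of Proposition~\ref{prop:affinejump-genFeller}. First I would record two preliminary facts. For $v\in\cHplus$ the function $\langle\cdot,v\rangle$ lies in $\cB_{\rho}(\cH^+_{\textnormal{w}})$: it is weakly continuous on each compact set $\{\rho\leq R\}$, and $|\langle x,v\rangle|/\rho(x)\leq\norm{v}\norm{x}/(1+\norm{x}^{2})\to 0$ as $\norm{x}\to\infty$, so Theorem~\ref{thm:charac_Brho} applies; hence $\EXspec{x}{\langle X_t,v\rangle}=P_t\langle\cdot,v\rangle(x)$, and likewise $\EXspec{x}{\langle X^{(k)}_t,v\rangle}=P^{(k)}_t\langle\cdot,v\rangle(x)$. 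Moreover, by self-duality $\langle X_t,v\rangle,\langle X_t,w\rangle\geq 0$ for $v,w\in\cHplus$, and $\EXspec{x}{\norm{X_t}^{2}}<\infty$ by Remark~\ref{rem:semigroup-on-b-rho}, so $\langle X_t,v\rangle$, $\langle X_t,w\rangle$ and their product are integrable. Then \eqref{eq:explicit-Xv} follows by passing to the limit in \eqref{eq:explicit-Xv-k}: its right-hand side is independent of $k$ (the truncation only removes mass of $m,\mu$ near the origin, so $\D_{+}\phi^{(k)}(t,0)=\D_{+}\phi(t,0)$ and $\D_{+}\psi^{(k)}(t,0)=\D_{+}\psi(t,0)$, as observed in the proof of Lemma~\ref{lem:uniform-bound-k}), and $\EXspec{x}{\langle X^{(k)}_t,v\rangle}\to\EXspec{x}{\langle X_t,v\rangle}$; substituting $\D_{+}\psi(t,0)(v)=\E^{t\D R(0)}v$ from \eqref{eq:variational-solution-3-0}, $\D_{+}\phi(t,0)(v)=\int_0^t\D F(0)(\E^{s\D R(0)}v)\,\D s$ from \eqref{eq:dif-eq-phi-1}, and $\D F(0)w=\langle b,w\rangle+\int_{\cHplus\cap\{\norm{\xi}>1\}}\langle\xi,w\rangle\,\dm$ (formula \eqref{eq:def_dF} at $u=0$, since $\langle\xi,w\rangle-\langle\chi(\xi),w\rangle=\langle\xi,w\rangle\one_{\{\norm{\xi}>1\}}$) yields \eqref{eq:explicit-Xv}.

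For \eqref{eq:explicit-Xv-square} I would run, for the limiting process, the differentiation argument used in Lemma~\ref{lem:uniform-bound-k}. Starting from the affine identity $\EXspec{x}{\E^{-\langle X_t,\theta v+\eta w\rangle}}=\E^{-\phi(t,\theta v+\eta w)-\langle x,\psi(t,\theta v+\eta w)\rangle}$ of Proposition~\ref{prop:affinejump-genFeller}, I would differentiate in $\theta$ and then in $\eta$ at $\theta=\eta=0$, justifying both interchanges with $\EXspec{x}{\cdot}$ by dominated convergence with dominating functions $\langle X_t,v\rangle$ and $\langle X_t,v\rangle\langle X_t,w\rangle$; on the right-hand side, Proposition~\ref{prop:gateaux-diff} (twice one-sided differentiability of $u\mapsto\phi(t,u),\psi(t,u)$ at $0$, plus, by the remark following it, first one-sided differentiability at all small $u$, as needed to form the second) together with the chain and product rules for one-sided derivatives gives
\begin{align*}
\EXspec{x}{\langle X_t,v\rangle\langle X_t,w\rangle}
&=-\D^{2}_{+}\phi(t,0)(v,w)-\langle x,\D^{2}_{+}\psi(t,0)(v,w)\rangle\\
&\quad+\big(\D_{+}\phi(t,0)(v)+\langle x,\D_{+}\psi(t,0)(v)\rangle\big)\big(\D_{+}\phi(t,0)(w)+\langle x,\D_{+}\psi(t,0)(w)\rangle\big).
\end{align*}
Inserting the closed forms \eqref{eq:variational-solution-3-0}, \eqref{eq:variational-solution-3-1} and \eqref{eq:dif-eq-phi-2} for $\D_{+}\psi(t,0)$, $\D^{2}_{+}\psi(t,0)$, $\D^{2}_{+}\phi(t,0)$ and \eqref{eq:def_dF}--\eqref{eq:def_d2F} for $\D F(0)$, $\D^{2}F(0)$, $\D^{2}R(0)$ at $u=0$, and collecting terms, produces the five lines of \eqref{eq:explicit-Xv-square}. (Alternatively one may pass to the limit $k\to\infty$ in \eqref{eq:explicit-Xv-square-k}, using $m^{(k)}\uparrow m$, $\mu^{(k)}\uparrow\mu$ and dominated convergence on the right; but then, since $\langle\cdot,v\rangle\langle\cdot,w\rangle\notin\cB_{\rho}(\cH^+_{\textnormal{w}})$, the left-hand side needs a separate truncation argument with $\langle\cdot,v\rangle\langle\cdot,w\rangle\wedge n\in C_b(\cH^+_{\textnormal{w}})$, so I would prefer the direct route.)

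For the last assertion I would use \eqref{eq:explicit-Xv}, which gives $\tfrac1t\big(P_t\langle\cdot,v\rangle(x)-\langle x,v\rangle\big)=\tfrac1t\int_0^t\big(\langle b,\E^{s\D R(0)}v\rangle+\int_{\cHplus\cap\{\norm{\xi}>1\}}\langle\xi,\E^{s\D R(0)}v\rangle\,\dm\big)\D s+\big\langle x,\tfrac1t(\E^{t\D R(0)}-\sI)v\big\rangle$. As $t\to 0+$ the first summand converges, uniformly in $x$, to $\langle b,v\rangle+\int_{\cHplus\cap\{\norm{\xi}>1\}}\langle\xi,v\rangle\,\dm$, and $\tfrac1t\langle\cdot,(\E^{t\D R(0)}-\sI)v\rangle\to\langle\cdot,\D R(0)v\rangle$ in $\norm{\cdot}_{\rho}$ because $\D R(0)\in\cL(\cH)$ makes $\E^{t\D R(0)}$ norm-differentiable at $0$ while $\norm{x}/\rho(x)$ is bounded; hence $\langle\cdot,v\rangle\in\dom(\cG)$ with $\cG\langle\cdot,v\rangle(x)=\langle b,v\rangle+\langle x,\D R(0)v\rangle+\int_{\cHplus\cap\{\norm{\xi}>1\}}\langle\xi,v\rangle\,\dm$. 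Rewriting $\langle x,\D R(0)v\rangle=\langle B(x),v\rangle+\int_{\cHplus\cap\{\norm{\xi}>1\}}\langle\xi,v\rangle\tfrac{\langle\dmu,x\rangle}{\norm{\xi}^{2}}$ via \eqref{eq:def_dR} at $u=0$, and recalling $\nu(x,\D\xi)=\dm+\tfrac{\langle\dmu,x\rangle}{\norm{\xi}^{2}}$, yields \eqref{eq:generator-of-xv}.

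The main obstacle is establishing the displayed identity for $\EXspec{x}{\langle X_t,v\rangle\langle X_t,w\rangle}$: because this function has a non-vanishing $\rho$-normalised tail it is not in $\cB_{\rho}(\cH^+_{\textnormal{w}})$, so the identity cannot simply be read off from the action of $P_t$; the finiteness of the second moment from Remark~\ref{rem:semigroup-on-b-rho}, together with the one-sided differentiability package of Proposition~\ref{prop:gateaux-diff}, is precisely what bridges this gap, after which everything reduces to the routine bookkeeping of matching the substituted expression to \eqref{eq:explicit-Xv-square}.
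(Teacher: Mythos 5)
Your proposal is correct and follows essentially the same route as the paper: differentiate the exponential-affine formula at $u=0$ using Proposition~\ref{prop:gateaux-diff}, justify interchanging expectation and one-sided derivatives via the finite second moments from Remark~\ref{rem:semigroup-on-b-rho}, substitute the closed forms \eqref{eq:variational-solution-3-0}--\eqref{eq:variational-solution-3-1} and \eqref{eq:def_dR}--\eqref{eq:def_d2F}, and obtain \eqref{eq:generator-of-xv} by showing the $\|\cdot\|_{\rho}$-convergence of the difference quotient of $P_t\langle\cdot,v\rangle$ via the explicit first-moment formula. Your minor variant of deducing \eqref{eq:explicit-Xv} by passing to the limit in \eqref{eq:explicit-Xv-k} (whose right-hand side is $k$-independent) rather than redoing the differentiation for the limit process is equally valid, and your observation about why the second-moment identity cannot be read off from $\cB_{\rho}$-convergence matches the paper's reasoning.
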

\begin{proof}
Formulas~\eqref{eq:explicit-Xv} and~\eqref{eq:explicit-Xv-square} can be obtained analogous to the computation of the formulas \eqref{eq:explicit-Xv-k} and \eqref{eq:explicit-Xv-square-k} derived for the approximating case, combined with the explicit formulas~\eqref{eq:variational-solution-3-0}--\eqref{eq:variational-solution-3-1}.
As in the proof of Lemma~\ref{lem:uniform-bound-k} we use Proposition
\ref{prop:gateaux-diff} and the finite second moments of the process $(X_{t})_{t\geq 0}$ to
interchange the operations of the expectation and the one-sided
derivatives. 
%In contrast to the formulas~\eqref{eq:explicit-Xv-k} and~\eqref{eq:explicit-Xv-square-k} 
To obtain more explicit formulas, we consider the analogous of the formulas \eqref{eq:explicit-Xv-k} and \eqref{eq:explicit-Xv-square-k} and recall that $\D_{+}\phi(t,0)(v)$, $\D^{2}_{+}\phi(t,0)(v,v)$ can be expressed in terms of $\D F(0)$, $\D^2 F(0)$, $\D_{+}\psi(t,0)(v)$, and $\D_{+}^2\psi(t,0)(v,w)$, see~\eqref{eq:dif-eq-phi-1} and~\eqref{eq:dif-eq-phi-2}. Then, we recall the expressions \eqref{eq:variational-solution-3-0} and \eqref{eq:variational-solution-3-1} for $\D_{+}\psi(t,0)(v)$, and $\D_{+}^2\psi(t,0)(v,w)$.

To prove \eqref{eq:generator-of-xv}, observe that using the analogue of \eqref{eq:explicit-Xv-k}, we get
\begin{align*}
&\frac{1}{t}\Big|P_t \langle \cdot, v\rangle(x) -\langle x, v\rangle - \langle b+B(x), v\rangle -\int_{\cHplus \cap \{\|\xi\|>1\}}\langle \xi, v\rangle\, \nu(x, \D\xi)\Big|\\
&\qquad \leq \frac{1}{t}\Big|\D_+\phi(t,0)(v) - \langle b, v\rangle -\int_{\cHplus \cap \{\|\xi\|>1\}}\langle \xi, v\rangle\, m(\D \xi)\Big|\\
&\qquad \qquad +\frac{1}{t} \|x\|\Big\|\D_+\psi(t,0)(v) - v- B^*(v)- \int_{\cHplus \cap \{\|\xi\|>1\}} \langle\xi, v\rangle\, \frac{\mu(\D \xi)}{\|\xi\|^2}\Big\|\,.
\end{align*}
The latter together with formulas \eqref{eq:dif-eq-phi-1} and \eqref{eq:dif-eq-psi-1}, yield 
\begin{align*}
&\lim_{t\rightarrow 0^+}\sup_{x \in \cHplus} \frac{\frac{1}{t}\Big|P_t \langle \cdot, v\rangle(x) -\langle x, v\rangle - \langle b+B(x), v\rangle -\int_{\cHplus \cap \{\|\xi\|>1\}}\langle \xi, v\rangle\, \nu(x, \D\xi)\Big|}{1+\|x\|^2}\\
%&\quad\leq \lim_{t\rightarrow 0}\sup_{x \in \cHplus} \frac{(1/t)|d_+\phi(t,0)(v) - \langle b, v\rangle -\int_{\cHplus \cap \{\|\xi\|>1\}}\langle \xi, v\rangle\, m(\D \xi)|}{1+\|x\|^2}\\
%&\qquad \qquad + \frac{(1/t)|\langle x , d_+\psi(t,0)(v) -\langle x, v\rangle- \langle x, B^*(v)\rangle -\langle x, \int_{\cHplus \cap \{\|\xi\|>1\}} \langle\xi, v\rangle\, \frac{\mu(\D \xi)}{\|\xi\|^2}\rangle|}%{1+\|x\|^2}\\
%&\quad\leq \lim_{t\rightarrow 0}\frac{1}{t}|d_+\phi(t,0)(v) - \langle b, v\rangle -\int_{\cHplus \cap \{\|\xi\|>1\}}\langle \xi, v\rangle\, m(\D \xi)|\\
%&\qquad \qquad + \frac{1}{t} \|d_+\psi(t,0)(v) - v- B^*(v)- \int_{\cHplus \cap \{\|\xi\|>1\}} \langle\xi, v\rangle\, \frac{\mu(\D \xi)}{\|\xi\|^2}\|\\
&\qquad \leq  \Big|\D F(0)(v)  - \langle b, v\rangle -\int_{\cHplus \cap \{\|\xi\|>1\}}\langle \xi, v\rangle\, m(\D \xi)\Big| \\
&\qquad \qquad + \Big\|\D R(0)(v) - B^*(v)- \int_{\cHplus \cap \{\|\xi\|>1\}} \langle\xi, v\rangle\, \frac{\mu(\D \xi)}{{\|\xi\|^2}}\Big\|
\end{align*}
and recalling the formulas for $\D R(0)$ and $\D F(0)$ respectively in \eqref{eq:def_dR} and \eqref{eq:def_dF}, we conclude that $\langle \cdot, v\rangle \in \dom(\cG)$, for $v \in \cHplus$ and that \eqref{eq:generator-of-xv} holds.
\end{proof}
\begin{remark}\label{rem:second-moment}
\hfill
% \begin{enumerate}
As observed in Remark \ref{rem:comparison_finite_dim_admissability}, the second moment conditions are a consequence of our generalized Feller approach with weight function $\rho=\| \cdot \|^2 +1$. More specifically, the uniform bounds established in Proposition~\ref{prop:uniform-bound-k} rely on the existence of second moments as established in Lemma~\ref{lem:uniform-bound-k}. A natural question to ask is whether one could perform the analysis with a different (weaker) weight function. However, in the proof of Lemma~\ref{lem:X-semigroup} we consider the square root of the weight function $\sqrt{\rho}$,
more specifically, we need that $\sqrt{\rho(x)} \geq c\| x \|$, $x\in \cH^+$, for some constant $c\in (0,\infty)$.\\     
Naturally, the second moments of $m$ and $\mu$ are also used to derive the explicit formulas for the first and second moments of the affine process in Proposition~\ref{prop:explicit-formula}. Finally, we note that the existence 
of a \emph{first} moment of $\frac{\dmu}{\| \xi \|^2}$ is already used in Lemma~\ref{lemma:mono-Lipschitz} to ensure that the approximating mappings $R^{(k)}$ are Lipschitz continuous. 
%this \emph{can} be shown for the finite-variation processes $(X^{(k)}_t)_{t\geq 0}$, see~\cite[Remark 2.9]{CT20}.
% \end{enumerate}
\end{remark}

In general we do not obtain a version of the process $X$ in Proposition~\ref{prop:affinejump-genFeller} with c\`adl\`ag paths. By Theorem 2.13 in \cite{CT20} a c\`adl\`ag version exists when the associated semigroup $(P_{t})_{t\geq 0}$ is quasi-contractive on $\cB_{\rho}(\cH^+_{\textnormal{w}})$, i.e., if one can take $M=1$ in Proposition \ref{prop:uniform-bound-k}. We do not know whether this holds in general. However, we can show that $X$ admits a c\`adl\`ag version in the finite activity setting:

\begin{proposition}\label{prop:cadlag}
Assume the setting of Proposition~\ref{prop:affinejump-genFeller} 
and assume moreover that $m(\cHpluso)<\infty$ and that $\cH^{+}\setminus \{0\} \ni \xi \mapsto \| \xi \|^{-2}$ is $\mu$-integrable. Then there exists a version of $X$ with c\`adl\`ag paths.
\end{proposition}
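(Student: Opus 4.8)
The plan is to invoke the c\`adl\`ag criterion~\cite[Theorem 2.13]{CT20}: it suffices to show that, under the two extra hypotheses, the semigroup $(P_t)_{t\ge0}$ of Proposition~\ref{prop:affinejump-genFeller} is quasi-contractive with respect to \emph{some} admissible weight function on $\cHplus_{\textnormal{w}}$ equivalent to $\rho$. The key observation is that $m(\cHpluso)<\infty$ and $\mu$-integrability of $\xi\mapsto\|\xi\|^{-2}$ are exactly what is needed to run the construction of Section~\ref{ssec:gener-fell-semigr} \emph{without} truncating the small jumps --- i.e.\ directly at ``$k=\infty$'' --- thereby avoiding the further limit in $k$ (Proposition~\ref{prop:affinejump-genFeller}) which is responsible for the loss of quasi-contractivity in general. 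Indeed, under these hypotheses $I_\mu\df\int_{\cHpluso}\|\xi\|^{-2}\,\dmu\in\cHplus$ exists, $\xi\mapsto\|\xi\|$ is $m$-integrable, $\xi\mapsto\|\xi\|^{-1}\one_{\{\|\xi\|\le1\}}$ is $\mu$-integrable, so that $\tilde b\df b-I_m\in\cHplus$ and $\tilde B\in\cL(\cH)$ given by $\tilde B(x)\df B(x)-\int_{\{0<\|\xi\|\le1\}}\xi\,\frac{\langle\dmu,x\rangle}{\|\xi\|^2}$ are well-defined, $\cH\ni x\mapsto\tilde b+\tilde B(x)$ is quasi-monotone with respect to $\cHplus$ by the admissibility conditions, and the kernel $\nu(x,\D\xi)=\dm+\frac{\langle\dmu,x\rangle}{\|\xi\|^2}$ is finite with $\nu(x,\cHpluso)=m(\cHpluso)+\langle I_\mu,x\rangle$ and $\int_{\cHpluso}\|\xi\|^2\,\nu(x,\D\xi)=\int_{\cHpluso}\|\xi\|^2\,\dm+\langle\mu(\cHpluso),x\rangle$, both of which grow at most linearly in $\|x\|$.

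Next I would reproduce verbatim the analogues of Lemmas~\ref{lemma:Xdet_positive_decreasing},~\ref{lem:Ptk_bdd},~\ref{lem:X-semigroup} and~\ref{lem:G-jump} with $(\tilde b,\tilde B,\nu)$ in place of $(\tilde b^{(k)},\tilde B^{(k)},\nu^{(k)})$, and apply~\cite[Proposition 3.3]{CT20} exactly as in Step~1 of the proof of Proposition~\ref{prop:Gk_generator}: the linear growth of $\nu(x,\cHpluso)$ and of $\int\|\xi\|^2\,\nu(x,\D\xi)$ yields the three integrability bounds required there ($\int\nu(x,\D\xi)\le K\sqrt{\rho(x)}$, $\int\sqrt{\rho(x+\xi)}\,\nu(x,\D\xi)\le K\rho(x)$, $\int\rho(x+\xi)\,\nu(x,\D\xi)\le K\rho(x)^2$) and, crucially, the constant $\tilde\omega$ in the analogue of~\eqref{eq:tilde_omega_est} is \emph{finite}. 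This produces a generalized Feller semigroup $(\hat P_t)_{t\ge0}$ on $\cB_{\rho}(\cHplus_{\textnormal{w}})$ with $\hat P_t1=1$, $\cD\subseteq\dom(\hat\cG)$ and $\hat\cG f=(\cG_{\textnormal{det}}+\cG_{\textnormal{jump}})f$ on $\cD$; since absorbing the truncation function $\chi$ into the drift leaves $F$ and $R$ in~\eqref{eq:FR} unchanged, the computation~\eqref{eq:Kolomgorov-Abs-k} (now with the full $\nu$) together with the abstract Cauchy problem argument of Step~5 of that proof gives $(\hat P_t\E^{-\langle\cdot,u\rangle})(x)=\E^{-\phi(t,u)-\langle x,\psi(t,u)\rangle}$, with $(\phi(\cdot,u),\psi(\cdot,u))$ the unique solution of~\eqref{eq:riccati-conc}. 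As $\cD$ is dense in $\cB_{\rho}(\cHplus_{\textnormal{w}})$ (Lemma~\ref{lem:dense_subset_Brho}) and both $(\hat P_t)$ and $(P_t)$ are bounded on $\cB_{\rho}(\cHplus_{\textnormal{w}})$ and coincide on $\cD$, we conclude $\hat P_t=P_t$ for all $t\ge0$; hence the generalized Feller process attached to $(\hat P_t)$ via Theorem~\ref{thm:Kol-extension-theorem} is a version of $X$.

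To finish, set $\omega\df2\|\tilde B\|_{\cL(\cH)}$ and $\hat\rho\df\sup_{t\ge0}\E^{-\omega t}P_t^{(\textnormal{det})}\rho$, where $P^{(\textnormal{det})}$ is the transport semigroup of the flow $\dot y=\tilde b+\tilde B(y)$. By~\cite[Remark 2.9]{CT20} (used in Step~2 of the proof of Proposition~\ref{prop:Gk_generator}) $\hat\rho$ is an admissible weight function equivalent to $\rho$, so $\cB_{\hat\rho}(\cHplus_{\textnormal{w}})=\cB_{\rho}(\cHplus_{\textnormal{w}})$, and item~a) in the output of~\cite[Proposition 3.3]{CT20}, passed to the limit along the bounded-jump approximants ($\hat P^{(n)}_s\to\hat P_s=P_s$ in $\cB_{\rho}(\cHplus_{\textnormal{w}})$, hence in $\cB_{\hat\rho}(\cHplus_{\textnormal{w}})$), yields $\|P_t\|_{\cL(\cB_{\hat\rho}(\cHplus_{\textnormal{w}}))}\le\E^{(\omega+\tilde\omega)t}$ for all $t\ge0$. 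Since moreover $P_t1=1$ and $\cB_{\hat\rho}(\cHplus_{\textnormal{w}})$ is separable (Corollary~\ref{cor:sepa}),~\cite[Theorem 2.13]{CT20} then provides a version of $X$ with c\`adl\`ag paths. The main obstacle is the second paragraph: verifying that every estimate of Section~\ref{ssec:gener-fell-semigr} and every hypothesis of~\cite[Proposition 3.3]{CT20} survives the removal of the small-jump truncation --- the only genuinely new input being that the two extra hypotheses force $\nu(x,\cdot)$ to have finite mass and finite second moment, both growing linearly in $\|x\|$, which keeps $\tilde\omega$ finite; the identification $\hat P_t=P_t$ is then a routine density argument.
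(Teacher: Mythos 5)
Your proposal is correct and follows essentially the same route as the paper's proof: reduce via~\cite[Theorem 2.13]{CT20} to quasi-contractivity with respect to an admissible weight equivalent to $\rho$, observe that the finite-activity hypotheses let one run Proposition~\ref{prop:Gk_generator} directly at $k=\infty$ so that $\tilde{\omega}_{\infty}<\infty$, and read off quasi-contractivity from item~a) of Step~2. You merely spell out more details than the paper does (the well-definedness of $\tilde b$, $\tilde B$ and $\nu$ without truncation, and the identification $\hat P_t=P_t$ via the affine formula and density of $\cD$), which the paper leaves implicit.
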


\begin{proof}
By~\cite[Theorem 2.13]{CT20} it in fact suffices to prove that the generalized
Feller semigroup $(P_t)_{t\geq 0}$ associated to $X$ is quasi-contractive on
$\cB_{\tilde{\rho}}(\cH^+_{\textnormal{w}})$, where $\tilde{\rho}\colon \cH^+
\rightarrow [0,\infty)$ is an admissible weight function such that its
associated norm $\norm{\cdot}_{\tilde{\rho}}$ is equivalent to $\norm{\cdot}_{\rho}$. Note that in the finite activity setting we can apply Proposition~\ref{prop:Gk_generator} with $k=\infty$ (with the understanding that $m^{(\infty)} := m$ and $\mu^{(\infty)}:= \mu$) to directly obtain $(P_t)_{t\geq 0}$ (i.e., no approximation over $k$ is necessary). In particular $\tilde{\omega}_{\infty}<\infty$, where $\tilde{\omega}_{\infty}$ is  
defined by taking $k=\infty$ in~\eqref{eq:tilde_omega_est}. It then follows from statement a) on page~\pageref{it:quasi-contractive} that $(P_t)_{t \geq 0}$ is quasi-contractive 
on $\cB_{\tilde{\rho}_{\infty}}(\cH^+_{\textnormal{w}})$ where
$\tilde{\rho}_{\infty}$ is an admissible weight function with associated norm
equivalent to $\norm{\cdot}_{\rho}$.
\end{proof}

  In the next section we give the proof of Theorem~\ref{thm:existence-affine-process}. The proof is based on collecting the results
  from this section and transferring from a generalized Feller setting to the classical
  setting that we used for presenting the results in Section~\ref{sec:setting-main-result}.
\subsection{Proof of Theorem \ref{thm:existence-affine-process}}\label{proof:existence-affine-process}
    Let $(b,B,m,\mu)$ be an admissible parameter set. Then by
    Proposition~\ref{prop:affinejump-genFeller} there exists a generalized Feller semigroup
    $(P_{t})_{t\geq 0}$ and the associated generalized Feller process
    $(X_{t})_{t\geq 0}$ in $\cHplus$ such that
    \begin{align*}
     \EXspec{x}{f(X_{t})}=P_{t}f(x)\quad \text{for}\;t\geq 0,
    \end{align*}
    and the Markov property \eqref{eq:Kol-extension-theorem-1} holds. The
    existence of constants $M,\omega \in [1,\infty)$ such
    that~\eqref{eq:exp_bound} is satisfied follows from
    Remark~\ref{rem:semigroup-on-b-rho}. The space $\cH$ is a separable Hilbert
    space and hence the Borel-$\sigma$-algebras $\cB(\cHplus)$ and
    $\cB(\cHplus_{\textnormal{w}})$ coincide. This means that the transition kernels
    $(p_{t}(x,\D y))_{t\geq 0}$ defining the semigroup $(P_{t})_{t\geq 0}$
    stay unaffected under the change of topology and hence the process
    $(X_{t})_{t\geq 0}$ is also a Markov process in $\cHplus$ with the strong
    topology.\\
    The asserted exponential-affine formula in \eqref{eq:laplace-transform}
    is precisely formula~\eqref{eq:expo-affine-form} from Proposition
    \ref{prop:affinejump-genFeller}. By this and
    Proposition~\ref{prop:exist-uniq-solut} we have for all $x\in\cHplus$:     
    \begin{align}\label{eq:weak-generator-last}
     \lim_{t\to 0+}\frac{P_{t}\E^{-\langle \cdot,u\rangle}(x)-\E^{-\langle
      \cdot, u\rangle}(x)}{t}&=\lim_{t\to 0+}\frac{\E^{-\phi(t,u)-\langle
      x,\psi(t,u)\rangle}-\E^{-\langle x,u\rangle}}{t}\nonumber\\
      &=(-F(u)-\langle x,
      R(u)\rangle)\E^{-\langle x, u\rangle}. 
    \end{align}
    In particular, we see that $\cA(\cD)\subseteq C_{b}(\cHplus)$ and since
    $(P_{t})_{t\geq 0}$ is a strongly continuous semigroup on
    $\cB_{\rho}(\cHplus_{\textnormal{w}})$ we have $\left( P_{t}\E^{-\langle \cdot,u\rangle}\right)(x)=\E^{-\langle \cdot,
      x\rangle}(x)+\int_{0}^{t}
      \left( P_{s}\cA \E^{-\langle \cdot, u\rangle}\right)(x)\D s$. 
    Consequently, we have shown that $\cD\subseteq \dom(\cA)$ and from
    formula~\eqref{eq:weak-generator-last} we see that
    formula~\eqref{eq:affine-generator-form} holds true on $\cD$.
\section{Conclusions and Outlook}\label{sec:conclusions-outlook}
With Theorem~\ref{thm:existence-affine-process} we have proven the existence of affine Markov processes in
the cone of positive self-adjoint Hilbert-Schmidt operators by a
novel approach inspired by~\cite{CT20}. 
%We accomplished to
%extend affine processes with state-dependent jumps towards a
%purely infinite dimensional setting. 
In particular, our approach relies on the theory of generalized Feller processes, taking the weight function $\rho=\norm{\cdot}^{2}+1$. This approach requires the existence of first and second moments of the jump measures $m$ and $\mu$. A beneficial by-product is that we obtain explicit formulas for the first and second moments of the affine Markov process, see Proposition~\ref{prop:explicit-formula}. See Remark~\ref{rem:second-moment} for a discussion regarding the necessity of the second-moment condition.\par
Below, we discuss and motivate three further directions of research.
\subsubsection*{{\emph On relaxing the condition on existence of moments.}} A possible direction of further research is to investigate whether one can adapt the proof in such a way to allow for the weight function
$\rho=\norm{\cdot}+1$. In this case a first moment conditions on $m$ and $\mu$
should suffice. On a more abstract level, the question arises whether it is
possible to establish existence without any moment conditions, as can be done
in the finite dimensional setting where the cone of interest does not have
empty interior. Another tantalizing question is to what degree an infinite dimensional affine process on the cone of positive self-adjoint Hilbert-Schmidt operators allows for diffusion. It is clear from~\cite{BS18} that certain constructions are possible. 
%Another open question regards the the necessity of our admissible parameter conditions~\ref{def:admissibility}. We will explore this in future work.
\par 
\subsubsection*{{\emph On the construction of stochastic volatility models.}} Our main motivation for considering affine processes on the space of positive
self-adjoint Hilbert-Schmidt operators is that such processes qualify as
infinite dimensional stochastic covariance processes. Hence we consider in
\cite{cox2021infinitedimensional} stochastic volatility models in Hilbert spaces, where the introduced class of affine pure-jump
processes will be used for modeling the operator-valued instantaneous
variance process.
% of some Hilbert space valued processes.
Specifically, we will consider a process $(Y_{t})_{t\geq 0}$ in a Hilbert space $(H, \langle \cdot, \cdot\rangle)$ given by 
\begin{align}\label{eq:stochastic-vola-model}
 \D Y_{t}=\cA Y_{t}\, \D t+\sigma_{t}Q^{1/2}\,\D W_{t}, \qquad t\geq0\,,
\end{align}
where $\cA\colon\dom(\cA)\subseteq H\to H$ is a possibly unbounded operator
with dense domain $\dom(\cA)$, $(W_{t})_{t\geq 0}$ is a cylindrical Brownian
motion in $H$, $Q\in \mathcal{H}^+$, and $(\sigma_{t})_{t\geq 0}$
is an operator valued stochastic process given by the square-root of an affine 
pure-jump process, the existence of which is guaranteed by our main result Theorem \ref{thm:existence-affine-process}.\par 
\subsubsection*{{\emph On considering a different state space for the covariance process.}} Note that we take $\sigma$ in~\eqref{eq:stochastic-vola-model} to be the \emph{square root} of an affine process in order to obtain that $Y$ is again affine. However, this means that the `natural' state space for $\sigma$ is not the cone of positive self-adjoint Hilbert-Schmidt operators, but the cone of positive self-adjoint \emph{trace class} operators. Unfortunately, this is no longer a cone in a Hilbert space. As self-duality of the cone was used at various instances in the proof of Theorem~\ref{thm:existence-affine-process}, it is not clear how much can be salvaged if we consider trace class operators. This would be a further interesting direction of research.
\appendix
\section{A comparison theorem}\label{sec:comparison-theorem}
A more general version of the following comparison theorem can be found, e.g., as~\cite[Theorem 5.4]{Dei77}. 
\begin{theorem}\label{thm:comparison-theorem}
Let $(H, (\cdot, \cdot))$ be a Hilbert space, $K \subset H$ a cone, let $T>0$, and let $F\colon [0,T]\times H\rightarrow H$. Assume that $F(t,\cdot)$ is quasi-monotone with respect to $K$
for all $t\in [0,T]$, and that there exists a constant $L\in [0,\infty)$ such that
\begin{equation}
 \| F(t,x) - F(t,y)\|_{H} \leq L \| x-y\|_H,\quad t\in [0,T], x,y\in H.
\end{equation}
Let $f,g\in C^{1}([0,T],H)$
satisfy $f(0)\leq_{K} g(0)$ and $f'(t)- F(t,f(t)) \leq_{K} g'(t)-F(t,g(t))$ for all $t\in [0,T]$. 
Then $f(t)\leq_K g(t) \in K$ for all $t\in [0,T]$.
\end{theorem}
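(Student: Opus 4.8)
The plan is to reduce the statement to a scalar Gronwall estimate for the squared distance of $w(t):=g(t)-f(t)$ to the cone $K$. Note first that $K$ is self-dual (as in every application of this theorem, e.g.\ $K=\cHplus$), hence closed and convex, so the metric projection $P\colon H\to K$ is well-defined and $1$-Lipschitz, and the map $x\mapsto d(x,K)^2=\|x-Px\|^2$ is of class $C^1$ with gradient $x\mapsto 2(x-Px)$. From the variational inequality $\langle x-Px,y-Px\rangle\le 0$ valid for all $y\in K$, combined with the cone property of $K$, one extracts the two facts that will drive the proof: $\langle x-Px,Px\rangle=0$ and $-(x-Px)\in K^{*}=K$ for every $x\in H$.

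Next I would set $\varphi(t):=\|w(t)-Pw(t)\|^2$; since $w\in C^1([0,T],H)$, the chain rule gives $\varphi'(t)=2\langle w(t)-Pw(t),w'(t)\rangle$. Writing
\[
 w'(t)=\Bigl[\bigl(g'(t)-F(t,g(t))\bigr)-\bigl(f'(t)-F(t,f(t))\bigr)\Bigr]+\bigl(F(t,g(t))-F(t,f(t))\bigr),
\]
the bracketed term lies in $K$ by hypothesis, while $-(w(t)-Pw(t))\in K$, so this term contributes $\le 0$ to $\varphi'(t)$ and may be discarded.

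For the remaining term $F(t,g(t))-F(t,f(t))$ I would interpolate through the point $v(t):=f(t)+Pw(t)$, which satisfies $f(t)\le_{K}v(t)$ (because $v(t)-f(t)=Pw(t)\in K$) and $g(t)-v(t)=w(t)-Pw(t)$. Splitting $F(t,g(t))-F(t,f(t))=\bigl(F(t,g(t))-F(t,v(t))\bigr)+\bigl(F(t,v(t))-F(t,f(t))\bigr)$: the first difference is controlled by the Lipschitz bound, $\langle w(t)-Pw(t),F(t,g(t))-F(t,v(t))\rangle\le L\|w(t)-Pw(t)\|\,\|g(t)-v(t)\|=L\,\varphi(t)$; for the second, the vector $u:=-(w(t)-Pw(t))$ lies in $K$ and satisfies $\langle v(t)-f(t),u\rangle=\langle Pw(t),Pw(t)-w(t)\rangle=-\langle Pw(t),w(t)-Pw(t)\rangle=0$, whence quasi-monotonicity of $F(t,\cdot)$ gives $\langle F(t,v(t))-F(t,f(t)),u\rangle\ge 0$, i.e.\ $\langle w(t)-Pw(t),F(t,v(t))-F(t,f(t))\rangle\le 0$. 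Collecting the estimates yields $\varphi'(t)\le 2L\,\varphi(t)$ on $[0,T]$, and $\varphi(0)=d(g(0)-f(0),K)^2=0$ since $f(0)\le_{K}g(0)$. Gronwall's lemma then forces $\varphi\equiv 0$, i.e.\ $g(t)-f(t)=Pw(t)\in K$ for all $t\in[0,T]$, which is the assertion.

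The points requiring care are the $C^1$-regularity of $t\mapsto d(w(t),K)^2$ and the precise variational identities for the projection onto a closed convex cone; both are classical (Moreau's decomposition), and a cone in the sense used in this paper is automatically convex, so $P$ is at our disposal. Everything else is short bookkeeping: the crux is that testing the quasi-monotonicity inequality against the particular vector $-(w(t)-Pw(t))$ converts the absence of a one-sided Lipschitz bound for $F$ at the boundary of $K$ into the harmless term $L\varphi(t)$.
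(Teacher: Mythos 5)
Your argument is correct. Note that the paper itself gives no proof of Theorem~\ref{thm:comparison-theorem}; it only points to the more general result in \cite[Theorem 5.4]{Dei77}, so there is no in-paper proof to compare against, and your self-contained Hilbert-space argument is a genuine addition. It is in the spirit of the classical distance-function proofs: the identities $\langle x-Px,Px\rangle=0$ and $-(x-Px)\in K$ are Moreau's decomposition for the self-dual cone $K$. Your appeal to self-duality is legitimate and not an extra hypothesis smuggled in: quasi-monotonicity is only defined in Definition~\ref{def:quasi-mono} for self-dual cones, so the assumption that $F(t,\cdot)$ is quasi-monotone with respect to $K$ already presupposes it, and self-duality also yields closedness and convexity of $K$, so the metric projection $P$ exists. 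The decomposition of $w'$, the interpolation through $v=f+Pw$, the application of quasi-monotonicity with the test vector $u=-(w-Pw)$ (which does satisfy $u\in K$ and $\langle v-f,u\rangle=\langle Pw,Pw-w\rangle=0$), and the $C^1$-regularity of $d(\cdot,K)^2$ with gradient $2(\mathrm{id}-P)$ are all sound, and Gronwall closes the argument. Two small remarks: (i) for the Gronwall step an upper Dini derivative bound $D^{+}\varphi\leq 2L\varphi$ would already suffice, so you could weaken the regularity discussion; (ii) what you prove is $g(t)-f(t)\in K$, i.e.\ $f(t)\leq_K g(t)$; the extra assertion ``$g(t)\in K$'' in the statement does not follow in general without assuming $f$ takes values in $K$ --- in all of the paper's applications this is the case (e.g.\ $f\equiv 0$), so nothing essential is lost, but it is worth flagging that your proof (correctly) does not deliver it.
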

\section{Integration with respect to a vector-valued measure}\label{sec:integr-with-resp}
We summarize some results on vector-valued measures and integration.
The theory goes back to the work of Bartle, Dunford, and Schwartz 
(see, e.g.,~\cite{BDS55}) and Lewis (\cite{Lew70}).
A good overview can be found in~\cite[Chapter 2]{Pan08}. 
As we work in the Hilbert-space setting (in particular, as Hilbert spaces are reflexive), the theory simplifies considerably.\par 

Throughout this section let $(S,\cF)$ be a measurable space, 
let $(H,\langle \cdot, \cdot \rangle_H)$
be a real Hilbert space, and let $\mu\colon \cF\rightarrow H$ be 
an $H$-valued measure.

\begin{definition}
% Let $\mu \colon \cF \rightarrow H$ be an $H$-valued measure. 
We say that $f\colon S\rightarrow \R$ is \emph{$\mu$-integrable} if the following two conditions are satisfied:
\begin{enumerate}
 \item $f$ is $\langle \mu, h \rangle$-integrable for all $h\in H$ (i.e., $f\colon S \rightarrow \R$ is measurable 
 and $\int_{S} |f|\D|\langle \mu, h \rangle| < \infty$ for all $h\in H$), and 
\item for all $A\in \cF$  there exists an $h_A\in H$ such that for all $h\in H$ 
we have $\langle h_A, h \rangle_H = \int_{A} f \D\langle \mu, h \rangle$.
\end{enumerate}
In this case we denote $h_A$ by $\int_A f \,\D\mu$. In addition, we define 
\begin{equation}
 \mathcal{L}^1(S,\mu) := 
 \{ 
    f \colon S \rightarrow \R \colon 
    f\text{ is $\mu$-integrable}
 \}
\end{equation}
\end{definition}
\begin{example}
If $f$ is a $\cF$-simple function, then $f\in \mathcal{L}^1(S,\mu)$.
\end{example}
The following characterisation is useful (see also~\cite[p.163]{Lew70}): 
\begin{lemma}\label{lem:characint}
% Let $\mu \colon \cF \rightarrow H$ be an $H$-valued measure. Then 
We have that $f\in \mathcal{L}^1(S,\mu)$ if and only if 
$f$ is $\langle \mu, h \rangle$-integrable for all $h\in H$.
\end{lemma}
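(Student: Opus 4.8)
The plan is to prove Lemma~\ref{lem:characint}, i.e.\ that for an $H$-valued measure $\mu$, a measurable function $f\colon S\to\R$ that is $\langle\mu,h\rangle$-integrable for every $h\in H$ is automatically $\mu$-integrable. One direction is trivial: if $f\in\cL^1(S,\mu)$ then condition (1) of the definition already gives $\langle\mu,h\rangle$-integrability for all $h$. So the content is the converse, and the only thing to check is condition (2): for every $A\in\cF$ there is $h_A\in H$ with $\langle h_A,h\rangle_H=\int_A f\,\D\langle\mu,h\rangle$ for all $h\in H$.

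First I would fix $A\in\cF$ and consider the map $T_A\colon H\to\R$ defined by $T_A(h)=\int_A f\,\D\langle\mu,h\rangle$. This is well-defined by hypothesis, and it is linear in $h$ because $\langle\mu,\alpha h_1+\beta h_2\rangle=\alpha\langle\mu,h_1\rangle+\beta\langle\mu,h_2\rangle$ as signed measures (immediate from $\langle\mu(B),\alpha h_1+\beta h_2\rangle_H=\alpha\langle\mu(B),h_1\rangle_H+\beta\langle\mu(B),h_2\rangle_H$), so $\int_A f\,\D\langle\mu,\alpha h_1+\beta h_2\rangle=\alpha\int_A f\,\D\langle\mu,h_1\rangle+\beta\int_A f\,\D\langle\mu,h_2\rangle$ by linearity of the integral. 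By the Riesz representation theorem on the Hilbert space $H$, once I know $T_A$ is a \emph{bounded} linear functional, there is a unique $h_A\in H$ representing it, and that is exactly condition (2); setting $\int_A f\,\D\mu := h_A$ finishes the proof. So the whole proof reduces to showing $T_A$ is bounded, i.e.\ $\sup_{\|h\|_H\le 1}\bigl|\int_A f\,\D\langle\mu,h\rangle\bigr|<\infty$.

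The main obstacle is precisely this uniform bound, and the natural tool is the closed graph theorem (or the uniform boundedness principle). Consider the linear map $\Lambda\colon H\to L^1(S,|\mu|_w)$ — or more elementarily, argue as follows. For each $n\in\N$ let $f_n = f\,\one_{\{|f|\le n\}}$, which is bounded and measurable, hence $\mu$-integrable (it is a bounded measurable function, and one checks $\int_A f_n\,\D\mu$ exists by approximating by simple functions using $\sigma$-additivity of $\mu$, or it can be taken as known). Define $T_{A,n}(h)=\int_A f_n\,\D\langle\mu,h\rangle$; each $T_{A,n}$ is bounded since $|T_{A,n}(h)|\le n\,|\langle\mu,h\rangle|(A)\le n\,\|\mu(A)\|_H\cdot\text{(something)}$ — more carefully, $|\langle\mu,h\rangle|(S)\le C\|h\|_H$ where $C$ is the semivariation of $\mu$, which is finite because $\mu$ is an $H$-valued (strongly $\sigma$-additive, by Pettis) measure on the $\sigma$-algebra $\cF$; this is a standard fact (see~\cite[Chapter~2]{Pan08}). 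By hypothesis $T_{A,n}(h)\to T_A(h)$ for every fixed $h\in H$ (dominated convergence against the finite measure $|\langle\mu,h\rangle|$, with dominating function $|f|\in L^1(|\langle\mu,h\rangle|)$). Thus $T_A$ is a pointwise limit of bounded linear functionals on the Banach space $H$, so by the uniform boundedness principle $\sup_n\|T_{A,n}\|<\infty$ and hence $\|T_A\|\le\liminf_n\|T_{A,n}\|<\infty$; in particular $T_A$ is bounded. Then Riesz representation yields $h_A$ as above.

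I would close by remarking that uniqueness of $h_A$ is automatic from Riesz, so $\int_A f\,\D\mu$ is well-defined, and that the argument in fact shows $\cL^1(S,\mu)$ coincides with the space of measurable $f$ that lie in $L^1(|\langle\mu,h\rangle|)$ for all $h$, which is the useful reformulation. The one genuinely nontrivial input is the finiteness of the semivariation of $\mu$ (equivalently, boundedness of $h\mapsto|\langle\mu,h\rangle|(S)$), which I would either cite from~\cite{BDS55} or~\cite[Chapter~2]{Pan08}, or derive quickly from strong $\sigma$-additivity via the Nikodým boundedness theorem; everything else is routine functional analysis.
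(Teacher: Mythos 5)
Your proof is correct and follows essentially the same route as the paper's: both realize $T_A(h)=\int_A f\,\D\langle \mu,h\rangle$ as a pointwise limit of bounded linear functionals, invoke the uniform boundedness principle, and conclude with the Riesz representation theorem. The only difference is that the paper approximates $f$ by simple functions dominated by $|f|$ (whose integrals lie in $H$ by definition, so the approximating functionals are automatically bounded and no circularity issue arises), whereas your truncations $f\one_{\{|f|\leq n\}}$ force you to import the finiteness of the semivariation of $\mu$ as an extra --- though standard --- input.
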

\begin{proof}
Let $(f_n)_{n\in\N}$ be a sequence of simple functions such that $f_n \rightarrow f$ $\mu$-a.s.\
and $|f_n| \leq |f|$ for all $n\in \N$. 
Let $A\in \cF$. Note that the mapping $T\colon H \rightarrow \R$, $T(h) = \int_A f \D\langle \mu, h \rangle $
is linear and that
\begin{align*}
T(h) = 
\lim_{n\rightarrow \infty} \int_A f_n\D\langle \mu, h\rangle = \lim_{n\rightarrow \infty} \langle \int_A f_n\,\D\mu, h\rangle_H,   
\end{align*}
for all $h\in H$ by the dominated convergence theorem. 
It follows from this and the uniform boundedness principle that $\sup_{n\in \N} \|  \int_A f_n \,\D\mu \|_H < \infty$,
whence $T\in H^*$. The Riesz representation theorem thus ensures that there exists an $h_A \in H$
such that $\langle h_A, h \rangle_H = T(h)$ for all $h\in H$.
\end{proof}
\begin{corollary}\label{cor:simpleintcond}
% Let $\mu \colon \cF \rightarrow H$ be an $H$-valued measure. 
If $f\in \mathcal{L}^1(S,\mu)$
and $g\colon S \rightarrow \R$ is measurable and satisfies $|g|\leq f$ $\mu$-a.s., then $g\in \mathcal{L}^1(S,\mu)$.
In particular, $\mathcal{L}^1(S,\mu)$ contains all bounded measurable $\R$-valued functions on $S$.
\end{corollary}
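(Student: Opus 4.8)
The plan is to deduce everything from the characterisation in Lemma~\ref{lem:characint}: a measurable $\R$-valued function on $S$ lies in $\mathcal{L}^1(S,\mu)$ precisely when it is $\langle\mu,h\rangle$-integrable for every $h\in H$. Hence it suffices to verify this scalar integrability condition for $g$.

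First I would record the elementary fact that every $\mu$-null set $N\in\cF$ is $\langle\mu,h\rangle$-null, and therefore $|\langle\mu,h\rangle|$-null, for each $h\in H$; this is immediate from the definition of a $\mu$-null set (the scalar measure $\langle\mu,h\rangle$ is absolutely continuous with respect to the (semi)variation of $\mu$). Consequently the hypothesis $|g|\le f$ $\mu$-a.s.\ upgrades, for each fixed $h\in H$, to $0\le|g|\le f$ outside a $|\langle\mu,h\rangle|$-null set (in particular $f\ge0$ off a $\mu$-null set). Monotonicity of the ordinary integral then gives
\begin{align*}
\int_S|g|\,\D|\langle\mu,h\rangle|\le\int_S f\,\D|\langle\mu,h\rangle|=\int_S|f|\,\D|\langle\mu,h\rangle|<\infty,
\end{align*}
where finiteness of the right-hand side is exactly the $\langle\mu,h\rangle$-integrability of $f$ furnished by Lemma~\ref{lem:characint} applied to the assumption $f\in\mathcal{L}^1(S,\mu)$. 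Since $g$ is measurable by hypothesis and $h\in H$ was arbitrary, $g$ is $\langle\mu,h\rangle$-integrable for every $h$, and a second application of Lemma~\ref{lem:characint} yields $g\in\mathcal{L}^1(S,\mu)$.

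For the final assertion I would specialise the above to the constant function $f\equiv\|g\|_{\infty}:=\sup_{s\in S}|g(s)|$, which is finite because $g$ is bounded. A constant function is $\cF$-simple, hence lies in $\mathcal{L}^1(S,\mu)$ by the Example preceding Lemma~\ref{lem:characint}, and $|g|\le\|g\|_{\infty}$ holds everywhere on $S$; the first part then gives $g\in\mathcal{L}^1(S,\mu)$. I do not expect a genuine obstacle here: the only step requiring a moment's care is the passage from ``$\mu$-a.s.'' to an almost-everywhere statement for the scalar total-variation measures $|\langle\mu,h\rangle|$, so that monotonicity of the scalar integral may be invoked; everything else follows directly from Lemma~\ref{lem:characint} together with the fact that $\cF$-simple functions are $\mu$-integrable.
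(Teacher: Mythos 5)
Your argument is correct and is exactly the intended deduction: the paper gives no separate proof of this corollary precisely because it follows immediately from Lemma~\ref{lem:characint} by monotone domination of the scalar integrals $\int_S|g|\,\D|\langle\mu,h\rangle|\le\int_S|f|\,\D|\langle\mu,h\rangle|<\infty$, with the bounded case handled by comparison with a constant (simple) function. Your additional care about passing from ``$\mu$-a.s.'' to ``$|\langle\mu,h\rangle|$-a.s.'' is a reasonable point to make explicit, but does not change the route.
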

By~\cite[Corollary 1.4]{Lew70} we have, for any $(E_n)_{n\in \N}$ in $\cF$
converging to $E\in \cF$, that
\begin{equation}\label{eq:meas_continuous} 
  \lim_{n\rightarrow \infty} \mu(E_n) = \mu (E).
\end{equation}
Moreover, the dominated convergence theorem remains valid for $H$-valued measures:
\begin{theorem}[Theorem 2.1.7 in \cite{Pan08}]\label{thm:DCT}
% Let $\mu \colon \cF\rightarrow H$ be an $H$-valued measure, l
Let $g\in L^1(S,\mu)$, let $f\colon S\rightarrow \R$
be $\mu$-measurable and let $(f_n)_{n\in \MN}$
be a sequence of $\mu$-measurable functions on $S$ satisfying $|f_n(s)|\leq g(s)$ for all $s\in S$, $n\in \MN$,
and $\lim_{n\rightarrow \infty} f_n(s) = f(s)$ for all $s\in S$. Then $f, f_n \in L^1(S,\mu)$, $n\in \MN$,
and 
\begin{equation}
 \lim_{n\rightarrow \infty}
 \left\| 
    \int_S f_n \D\mu
    -
    \int_S f \D\mu
 \right\|_H
 =
 0\,.
\end{equation}

\end{theorem}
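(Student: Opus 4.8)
The plan is to reduce the statement to the classical (scalar) dominated convergence theorem applied to the real measures $\langle \mu, h\rangle$, $h\in H$, and then upgrade the resulting pointwise-in-$h$ convergence to norm convergence by a uniform-boundedness argument, exactly in the spirit of the proof of Lemma~\ref{lem:characint}. First I would record that the hypotheses immediately give $f, f_n \in L^1(S,\mu)$: indeed $g\in L^1(S,\mu)$ means $g$ is $\langle\mu,h\rangle$-integrable for every $h\in H$ (Lemma~\ref{lem:characint}), and since $|f_n|\le g$ and, passing to the limit, $|f|\le g$, Corollary~\ref{cor:simpleintcond} shows $f_n, f\in L^1(S,\mu)$ as well.

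Next, fix $h\in H$. The total-variation measure $|\langle\mu,h\rangle|$ is a finite positive measure on $(S,\cF)$, the function $g$ is $|\langle\mu,h\rangle|$-integrable, $|f_n|\le g$ pointwise, and $f_n\to f$ pointwise; the classical dominated convergence theorem therefore yields
\begin{align*}
\lim_{n\to\infty}\Big\langle \int_S f_n\,\D\mu, h\Big\rangle_H
=\lim_{n\to\infty}\int_S f_n\,\D\langle\mu,h\rangle
=\int_S f\,\D\langle\mu,h\rangle
=\Big\langle \int_S f\,\D\mu, h\Big\rangle_H.
\end{align*}
Thus the sequence $\big(\int_S f_n\,\D\mu\big)_{n\in\MN}$ converges weakly in $H$ to $\int_S f\,\D\mu$. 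In particular it is weakly bounded, hence norm bounded by the uniform boundedness principle: $\sup_{n}\big\|\int_S f_n\,\D\mu\big\|_H<\infty$.

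To obtain norm convergence, I would apply the same reasoning to the sequence $|f-f_n|$: it is dominated by $2g\in L^1(S,\mu)$ (again via Corollary~\ref{cor:simpleintcond}), it tends to $0$ pointwise, and it is nonnegative. For any fixed $A\in\cF$ and $h\in H$ with $\|h\|_H\le 1$, classical DCT gives $\int_A |f-f_n|\,\D|\langle\mu,h\rangle|\to 0$, and since $\big|\big\langle \int_A (f-f_n)\,\D\mu, h\big\rangle_H\big|\le \int_A |f-f_n|\,\D|\langle\mu,h\rangle|$, taking the supremum over $A$ and over $\|h\|_H\le 1$ and using the identity $\big\|\int_S (f-f_n)\,\D\mu\big\|_H=\sup_{\|h\|_H\le 1}\big|\int_S (f-f_n)\,\D\langle\mu,h\rangle\big|$ would seem to finish the argument --- except that the scalar bound $\int_S |f-f_n|\,\D|\langle\mu,h\rangle|\to 0$ is \emph{not} uniform in $h$, which is the one genuine obstacle. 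The standard remedy (Pettis/Bartle--Dunford--Schwartz) is the notion of uniform integrability relative to the vector measure: one shows that the set function $A\mapsto \int_A g\,\D\mu$ is $\mu$-continuous, i.e.\ $\|\mu(A)\|_H\to 0$ implies $\int_A g\,\D\mu\to 0$ uniformly, and combines this with an Egorov-type argument (on the finite measure space obtained from a Rybakov control measure $|\langle\mu,h_0\rangle|$ for suitable $h_0$) to localise $|f-f_n|$ onto a set where convergence is uniform and a set of small $\mu$-variation. Since the excerpt permits me to invoke results ``stated earlier'', and this is precisely the content cited as Theorem~2.1.7 of~\cite{Pan08}, I would in the paper simply cite that reference; but the honest proof sketch is: reduce to scalar DCT for weak convergence and boundedness, then use $\mu$-uniform integrability of the dominating function $g$ together with Egorov on a control measure to pass from weak to norm convergence. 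The main obstacle is thus exactly this last passage --- controlling $\big\|\int_S (f-f_n)\,\D\mu\big\|_H$ uniformly over the unit ball of $H$, for which the finiteness of the semivariation of $\mu$ (automatic for $H$-valued, hence bounded, vector measures) is the key input.
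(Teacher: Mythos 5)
The paper offers no proof of this statement---it is quoted verbatim as Theorem~2.1.7 of~\cite{Pan08}---so your decision to cite that reference is exactly what the authors do. Your accompanying sketch is a faithful outline of the standard Bartle--Dunford--Schwartz argument: domination gives $f,f_n\in \mathcal{L}^1(S,\mu)$ via Corollary~\ref{cor:simpleintcond}, the scalar dominated convergence theorem applied to each $\langle\mu,h\rangle$ gives weak convergence of the integrals, and you correctly isolate the one nontrivial step (uniformity over the unit ball of $H$), naming the right tools (uniform $\mu$-continuity of $A\mapsto\int_A g\,\D\mu$, Egorov with respect to a Rybakov control measure) to close it.
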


Finally, let $K\subset H$ be a self-dual cone and assume that $\mu \colon\cF\rightarrow K$
is a $K$-valued measure. In this case we have $0\leq_K \mu(E)\leq_K \mu(F)$ for all $E,F\in \cF$ 
satisfying $E\subseteq F$, and thus also (by monotonicity of $K$) 
\begin{equation}\label{eq:measuremonotone}
 \|\mu(E)\|_H \leq \| \mu(F)\|_{H}.
\end{equation}
Moreover, as $K$ is self-dual, $\langle \mu, h\rangle$ is a positive measure for all $h\in K$,
whence (again by self-duality) we have 
\begin{equation}\label{eq:int_pos1}
 f\in L^1(S,\mu), f\geq 0 \Rightarrow \int_S f\,\D\mu\in K.
\end{equation}
In particular, if $f\in L^1(S,\mu)$ is positive,
and $E\in \cF$, then 
\begin{equation}\label{eq:int_pos2}
  \int_E f \,\D\mu \leq_{K} \esssup_{s\in E}f(s) \mu(E).
\end{equation}
This combined with the monotonicity of $K$ implies that 
for every every $f\in L^1(S,\mu)$ and every $E\in \cF$ we have 
(by considering $f^+$ and $f^-$ separately) that
\begin{equation}\label{eq:normintest}
 \left\|
    \int_E f \,\D\mu 
 \right\|_{H} 
 \leq \esssup_{s\in E}|f(s)| \| \mu(E) \|_{H}.
\end{equation}
\section{Proof of Proposition \ref{prop:gateaux-diff}}\label{sec:proof-lemma-refl}
To prove Proposition~\ref{prop:gateaux-diff}, we need the following 
consequence of the fundamental theorem of calculus:
\begin{lemma}\label{lem:onesided_FTC}
Let $X,Y$ be Banach spaces, let $F\colon D\subset X \rightarrow Y$, let $x,y\in D$
and assume that the one-sided derivative of $F$ in $z$ exists in the direction $y-x$ for all $z\in \{x+s(y-x)\colon s\in [0,1]\}$ and that the mapping 
\begin{equation} [0,1]\ni s \mapsto \D_{+}F(x+s(y-x))(y-x) \in Y\end{equation} is continuous. 
Then $F(y) - F(x) = \int_{0}^{1} \D_{+}F(x+s(y-x))(y-x)\,\D s$.
\end{lemma}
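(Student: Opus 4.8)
The plan is to reduce the statement to the classical fundamental theorem of calculus by composing with continuous linear functionals on $Y$, which turns the vector-valued one-sided derivative into an ordinary one-sided derivative of a real-valued function. First I would fix $x,y\in D$, set $\gamma(s) = x+s(y-x)$ for $s\in[0,1]$ (so $\gamma(s)\in D$ by convexity of $D$, which is part of the hypotheses under which $\D_+$ is defined), and define $g\colon[0,1]\to Y$ by $g(s) = F(\gamma(s))$. For an arbitrary $\lambda^*\in Y^*$ consider $h(s) = \langle \lambda^*, g(s)\rangle = \langle \lambda^*, F(\gamma(s))\rangle$. The key observation is that for $s\in[0,1)$ the right-hand derivative of $h$ at $s$ equals $\langle \lambda^*, \D_+F(\gamma(s))(y-x)\rangle$: indeed, for small $\varepsilon>0$ we have $\gamma(s+\varepsilon) = \gamma(s)+\varepsilon(y-x)$, so
\begin{equation*}
 \frac{h(s+\varepsilon)-h(s)}{\varepsilon} = \Big\langle \lambda^*, \frac{F(\gamma(s)+\varepsilon(y-x))-F(\gamma(s))}{\varepsilon}\Big\rangle \xrightarrow[\varepsilon\to0+]{} \langle \lambda^*, \D_+F(\gamma(s))(y-x)\rangle,
\end{equation*}
using that $\D_+F(\gamma(s))(y-x)$ exists by hypothesis and that $\lambda^*$ is continuous; an analogous computation with $\varepsilon<0$ gives the left-hand derivative at $s\in(0,1]$. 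By the continuity assumption, $s\mapsto \langle \lambda^*, \D_+F(\gamma(s))(y-x)\rangle$ is continuous on $[0,1]$.

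Next I would invoke the following elementary real-variable fact: if $h\colon[0,1]\to\R$ is continuous, has a right-hand derivative $h'_+(s)$ at every $s\in[0,1)$, and $s\mapsto h'_+(s)$ extends continuously to $[0,1]$, then $h$ is continuously differentiable and $h(1)-h(0)=\int_0^1 h'_+(s)\,\D s$. (This is standard; one way is to note that $h(s) - \int_0^s h'_+(r)\,\D r$ has vanishing right-hand derivative everywhere and is continuous, hence constant.) Here $h$ is continuous because $s\mapsto F(\gamma(s))$ is continuous: this itself follows from the fundamental theorem applied on subintervals, or more directly from the fact that a function on an interval with a bounded one-sided derivative is Lipschitz, and $\D_+F(\gamma(\cdot))(y-x)$ is bounded on $[0,1]$ by continuity on the compact interval. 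Applying this to $h$ with $h'_+(s) = \langle \lambda^*,\D_+F(\gamma(s))(y-x)\rangle$ yields
\begin{equation*}
 \langle \lambda^*, F(y) - F(x)\rangle = h(1)-h(0) = \int_0^1 \langle \lambda^*, \D_+F(\gamma(s))(y-x)\rangle\,\D s = \Big\langle \lambda^*, \int_0^1 \D_+F(\gamma(s))(y-x)\,\D s\Big\rangle,
\end{equation*}
where the last equality uses that the ($Y$-valued, Bochner/Riemann) integral of the continuous integrand commutes with $\lambda^*$. Since $\lambda^*\in Y^*$ was arbitrary and $Y^*$ separates points of $Y$ (Hahn--Banach), we conclude $F(y)-F(x) = \int_0^1 \D_+F(\gamma(s))(y-x)\,\D s$.

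The main obstacle — though a mild one — is making sure that the Riemann integral $\int_0^1 \D_+F(\gamma(s))(y-x)\,\D s$ actually exists as an element of $Y$ and that scalarization is legitimate; this is fine because the integrand is a continuous map from the compact interval $[0,1]$ into the Banach space $Y$, hence Riemann-integrable with values in $Y$, and continuous linear functionals pass through such integrals. A secondary point requiring a line of care is the passage from "has a continuous right-hand derivative" to "is $C^1$ with that derivative" for the scalar function $h$; I would either cite this as standard or include the two-line argument via the auxiliary function $h(s)-\int_0^s h'_+$. No other subtleties arise: the convexity of $D$ is exactly what guarantees $\gamma(s)\in D$ for all $s\in[0,1]$ so that $\D_+F(\gamma(s))(y-x)$ is well-defined throughout.
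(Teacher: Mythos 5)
Your argument is in essence the paper's own: both rest on the observation that the auxiliary function $t\mapsto F(x+t(y-x))-F(x)-\int_0^t\D_{+}F(x+s(y-x))(y-x)\,\D s$ is continuous with identically vanishing right derivative, hence constant. You scalarize with a functional $\lambda^*\in Y^*$ first and recover the $Y$-valued identity via Hahn--Banach, whereas the paper runs the same argument directly with the $Y$-valued function; since the integrand is continuous on a compact interval, the $Y$-valued Riemann integral and the ``vanishing right derivative implies constant'' principle both work directly in $Y$, so the scalarization neither gains nor loses anything.

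There is, however, one step that is genuinely wrong as written: your justification that $h$ (equivalently $s\mapsto F(x+s(y-x))$) is continuous. The appeal to ``the fundamental theorem applied on subintervals'' is circular, and the claim that a bounded one-sided derivative forces Lipschitz continuity is false for discontinuous functions: $F=\one_{[1/2,\infty)}\colon\R\to\R$ has right derivative identically $0$ (so the derivative map is continuous and bounded), yet $F(1)-F(0)=1\neq\int_0^1 0\,\D s$ --- this is in fact a counterexample to the lemma as literally stated. Existence of the one-sided derivative only gives \emph{right}-continuity along the segment, which does not suffice. Continuity of $F$ along the segment must therefore be treated as an implicit additional hypothesis; it holds for every function the lemma is applied to in the paper ($R$, $F$, $\psi(t,\cdot)$, $\D_{+}\psi$, all continuous), and the paper's own one-line proof tacitly assumes it too when asserting that ``any function with right derivative equal to zero is constant.'' A second, harmless slip: your remark that the analogous computation with $\varepsilon<0$ yields the left-hand derivative of $h$ is not supported by the hypotheses, which provide one-sided derivatives only in the direction $y-x$ and not $-(y-x)$; but you never use that remark.
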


\begin{proof}
The continuity of $[0,1]\ni s \mapsto \D_{+}F(x+s(y-x))(y-x) \in Y$ and the fundamental theorem of calculus
imply that the right derivative of the mapping 
$[0,1]\ni t \mapsto \left( F(x+t(y-x)) - F(x) - \int_{0}^{t} \D_{+}F(x+s(y-x))(y-x)\,\D s \right) \in Y$ 
equals zero. As any function with right derivative equal to zero is constant, this leads to the desired assertion.
\end{proof}

\begin{proof}[Proof of Proposition~\ref{prop:gateaux-diff}]
Note that in order to prove that the \emph{second} directional derivative in
$0$ of a mapping exists, we need that its 
\emph{first} directional derivative exists in $u\in\cH^+$ for all $u\in \cH^+$ sufficiently small. 
Hence, we begin by proving that the first derivative of $u\mapsto \psi(t,u)$ exists in $u$ in the direction $v$
for all $u,v\in \cH^+$ and all $t\in [0,\infty)$. To this end we fix $u,v\in \cH^+$.\par 
Recall the definition of the operators $\D R(u)\in \cL(\cH)$ and $\D^2 R(u) \in \cL^{(2)}(\cH\times \cH, \cH)$ from~\eqref{eq:def_dR} and \eqref{eq:def_d2R}. Define
the operator $C_{\theta}(t)\in\cL(\cH)$, $\theta, t\in [0,\infty)$, by 
\begin{equation}\label{eq:def_Ctheta}
 C_{\theta}(t)w 
 = 
 \int_{0}^{1} 
    \D R\left(
        \psi(t,u) 
        + 
        s(\psi(t,u+\theta v) - \psi(t,u))
    \right)
    w
 \,\D s 
\end{equation}
(note that the integral is well-defined as the integrand is continuous in $s$ 
by~\eqref{eq:dR_lipschitz} and bounded by~\eqref{eq:expgrowth_sol_riccati} and~\eqref{eq:dR_bdd}). 
Lemma~\ref{lem:onesided_FTC},~\eqref{eq:R-one-sided-derivative}, 
the fact that $(1-s) \psi(t,u) + s \psi(t,u+\theta v) \in \cH^+$ for all $s\in [0,1]$, $t\in [0,\infty)$,
and the fact that $\psi(t,u+\theta v) \geq_{\cH^+} \psi(t,u)$ for all $t\in [0,\infty)$ by~\eqref{eq:monotone_sol_riccati} imply that 
\begin{equation*}
 C_{\theta}(t)(\psi(t,u+\theta v)- \psi(t,u)) = R(\psi(t,u+\theta v))-R(\psi(t,u)),\quad 
 \theta, t\in [0,\infty).
\end{equation*}
This and~\eqref{eq:riccati-conc} imply
\begin{equation*}
 \frac{\partial}{\partial t}
 (\psi(t,u+\theta v) - \psi(t,u))
 =
 C_{\theta}(t)
 (\psi(t,u+\theta v) - \psi(t,u)),\quad \theta, t\in [0,\infty).
\end{equation*}
It follows that
\begin{equation*}
 \psi(t,u+\theta v) - \psi(t,u)
 =
 \theta \exp\left(\int_{0}^{t} C_{\theta}(s)\,\D s\right)v,\quad \theta, t\in [0,\infty)
\end{equation*}
(note that $\int_{0}^{t} C_{\theta}(s) \,\D s$ is well-defined in $\cL(\cH)$
as the $\cL(\cH)$-valued integrand is continuous in $s$ 
by~\eqref{eq:dR_lipschitz} and bounded due to~\eqref{eq:dR_bdd}).
This implies that for all $\theta\in (0,\infty)$ we have  
\begin{equation}\label{eq:diff_est_phi}
\begin{aligned}
& \left\|
 \frac{\psi(t,u+\theta v) - \psi(t,u)}
 {\theta}
 -
 \exp\left(\int_{0}^{t} C_{0}(s)\,\D s\right)v
\right\|
 \\ & =
 \left\|
 \left(
 \exp\left(\int_{0}^{t} C_{\theta}(s)\,\D s\right)
 - 
 \exp\left(\int_{0}^{t} C_{0}(s)\,\D s\right)
 \right)v
 \right\|.
\end{aligned}
\end{equation}
Using the identity $\| \E^{A} - \E^{B} \|_{\cL(\cH)} \leq \| A-B\|_{\cL(\cH)} \E^{\| A \|_{\cL(\cH)} \vee \| B \|_{\cL(\cH)}}$,
$A,B\in \cL(\cH)$, we obtain from~\eqref{eq:def_Ctheta},~\eqref{eq:diff_est_phi},~\eqref{eq:expgrowth_sol_riccati},~\eqref{eq:continuity_sol_riccati}, and~\eqref{eq:dR_lipschitz}
that the one-sided derivative $\D_{+}\psi(t,u)(v)$ exists.
%of $\psi(\cdot, t)$ in $u$ in the direction $v$ exists.
Moreover, the fact that $C_0(t)v = \D R(\psi(t,u))v$ implies that $t\mapsto \D_{+}\psi(t,u)(v)$ is the solution to the following ODE
  \begin{align}\label{eq:dif-eq-phi-1-everywhere}
    \frac{\partial}{\partial t} \D_{+}\psi(t,u)(v)= \D R(\psi(t,u))\big(\D_{+}\psi(t,u)(v)\big),
    \quad t\geq 0;
    \quad \D_{+}\psi(0,u)(v)=v.
  \end{align} 
This together with the quasi-monotonicity of $\D R(\psi(t,u))$ (see Lemma~\ref{lem:R-one-sided-dif})
and Theorem~\ref{thm:comparison-theorem} implies that $\D_{+}\psi(t,u)(v) \in \cH^+$.
Regarding the derivative of $\phi$, note that estimates analogous to~\eqref{eq:dR_bdd} and~\eqref{eq:dR_lipschitz} hold for $\D F$, which, in combination with the fact that $\D_{+}\psi(t,u)(v) \in \cH^+$,~\eqref{eq:riccati-conc},~\eqref{eq:F-one-sided-derivative}, and Lemma~\ref{lem:onesided_FTC} implies that
\begin{align*}
 & \frac{\phi(t,u+\theta v) - \phi(t,u)}
 {\theta}
 \\ & =
 \int_{0}^{t}
    \int_{0}^{1}
        \D F(\psi(s,u)+r(\psi(s,u+\theta v)-\psi(s,u))) 
    \,\D r
    \frac{\psi(s,u+\theta v) - \psi(s,u)}
    {\theta}
 \,\D s
\end{align*}
for all $\theta \in (0,\infty), t\in [0,\infty)$. This in combination %with~\eqref{eq:expgrowth_sol_riccati} 
with~\eqref{eq:continuity_sol_riccati} and~\eqref{eq:monotone_sol_riccati}
implies that the dominated convergence theorem can be applied to obtain that $\D_+ \phi(t,u)$ exists 
for all $t$ and satisfies
\begin{align}\label{eq:dif-eq-psi-1-everywhere}
 \frac{\partial}{\partial t} \D_{+}\phi(t,u)(v)= \D F(\psi(t,u))\big(\D_{+}\psi(t,u)(v)\big),
    \quad t\geq 0;
    \quad \D_{+}\phi(0,u)(v)=0.
  \end{align} 
This proves in particular that $u\mapsto (\phi(t,u),\psi(t,u))$ is differentiable
in $0$ in the direction $v\in \cH^+$ for all $v\in \cH^+$ and that the corresponding derivatives 
solve the ODEs~\eqref{eq:dif-eq-phi-1} and~\eqref{eq:dif-eq-psi-1}.\par 
We now turn to the second derivative in $0$. To this end, fix $v,w\in \cH^+$ and observe that 
Lemma~\ref{lem:onesided_FTC}, the boundedness and continuity of $\D^2 R$ (see Lemma~\ref{lem:R-one-sided-dif}),~\eqref{eq:R-one-sided-dif-second} and the fact that $\psi(t,\theta v),\D_+\psi(t,\theta v)\in \cH^+$ for all $\theta\in [0,\infty)$
imply that
\begin{align*}
 \frac{\partial}{\partial t}
 \left(
    \D_+\psi(t,\theta v)(w) - \D_+\psi(t,0)(w)
 \right)
 & =
 \int_{0}^{1} \D^2 R(s\psi(t,\theta v))(\D_+\psi(t,\theta v)(w),\psi(t,\theta v)) \,\D s 
 \\ &\quad +
 \D R(0) \left(
    \D_+\psi(t,\theta v)(w) - \D_+\psi(t,0)(w)
 \right)
\end{align*}
for all $\theta \in [0,\infty), t\in [0,\infty)$.
As $\D_+\psi(0,\theta v)(w) - \D_+\psi(0,0)(w)=0$ this implies
\begin{align}
& \frac{
    \D_+\psi(t,\theta v)(w) - \D_+\psi(t,0)(w)
 }{\theta}
\notag \\
& =
 \int_{0}^{t}
    \E^{(t-r)\D R(0)}
    \int_{0}^{1} 
        \D^2 R(s\psi(r,\theta v))
        \left(
            \D_+\psi(r,\theta v)(w) ,
            \frac{\psi(r,\theta v)}{\theta}
        \right)
    \,\D s
 \,\D r \label{eq:diff_eq_d2R}
\end{align}
for all $\theta \in (0,\infty), t\in [0,\infty)$.
Note that~\eqref{eq:continuity_sol_riccati},~\eqref{eq:dR_lipschitz}, and~\eqref{eq:dif-eq-phi-1-everywhere} imply that $\lim_{\theta \to 0+}\D_+\psi(t,\theta v)(w)=\D_{+}\psi(t,0)(w)$. Moreover, we have already established that  
$\lim_{\theta \to 0+} \frac{\psi(t,\theta v)}{\theta} = \D_+ \psi(t,0)(v)$. Combining 
these observations with~\eqref{eq:expgrowth_sol_riccati},~\eqref{eq:d2R_bdd}, and~\eqref{eq:diff_eq_d2R}
implies that $\D^2_+\psi(t,0)(v,w)$ exists and that $\D^2_+\psi(t,0)(v,w)$ satisfies~\eqref{eq:dif-eq-phi-2}. We leave it to the reader to now verify that also $\D^2_+ \phi(t,u)(v,w)$ exists and that $\D^2_+ \phi(t,u)(v,w)$ satisfies~\eqref{eq:dif-eq-psi-2}.
\end{proof}

% \bibliographystyle{abbrv}

%%%%%%%%%%%%%%%%%%%%%%%%%%%%%%%%%%%%%%%%%%%%%%%
\end{document}